\theoremstyle{}
{\theoremstyle{definition}
\newtheorem{dfn}{Definition}[section]}
\newtheorem{prop}[dfn]{Proposition}
\newtheorem{nota}[dfn]{Notation}
\newtheorem{thm}[dfn]{Theorem}
{\theoremstyle{definition}
\newtheorem{rem}[dfn]{Remark}}
\newtheorem{lem}[dfn]{Lemma}
\newtheorem{cor}[dfn]{Corollary}
\theoremstyle{definition}
\newtheorem{exa}[dfn]{Example}}
\tikzset{
        DB/.style={circle,draw=black,circle,fill=white,inner sep=0pt, minimum size=5pt},
        DW/.style={circle,draw=black,fill=black,inner sep=0pt, minimum size=5pt},
        cvertex/.style={circle,draw=black,fill=white,inner sep=1pt,outer sep=3pt},
        vertex/.style={circle,fill=black,inner sep=1pt,outer sep=3pt},
        star/.style={circle,fill=yellow,inner sep=0.75pt,outer sep=0.75pt},
        tvertex/.style={inner sep=1pt,font=\scriptsize},
	pvertex/.style={circle,inner sep=1pt,outer sep=2pt,font=\scriptsize},
        gap/.style={inner sep=0.5pt,fill=white}
}
\setlist[enumerate]{format=\normalfont}
\numberwithin{equation}{section}
\newcommand{\ii}{\kern 1pt {\rm i}\kern 1pt }
\newcommand{\Spec}{\operatorname{Spec}}
\newcommand{\Supp}{\operatorname{Supp}}
\newcommand{\Hom}{\operatorname{Hom}}
\newcommand{\Mor}{\operatorname{Mor}}
\newcommand{\Ext}{\operatorname{Ext}}
\newcommand{\Tor}{\operatorname{Tor}}
\newcommand{\RHom}{\operatorname{{\mathbf R}Hom}}
\newcommand{\End}{\operatorname{End}}
\newcommand{\Pic}{\operatorname{Pic}}
\newcommand{\Auteq}{\operatorname{Auteq}}
\newcommand{\coh}{\operatorname{coh}}
\newcommand{\fmod}{\operatorname{mod}}
\newcommand{\refl}{\operatorname{ref}}
\newcommand{\add}{\operatorname{add}}
\newcommand{\proj}{\operatorname{proj}}
\newcommand{\Kb}{\operatorname{K^b}}
\newcommand{\Km}{\operatorname{K}^{-}\!}
\newcommand{\Perf}{\operatorname{Perf}}
\newcommand{\CM}{\operatorname{CM}}
\newcommand\Db{\mathop{\rm{D}^b}}
\def\EG{\mathop{\sf EG}\nolimits}
\newcommand{\Image}{\operatorname{Im}}
\newcommand{\Ker}{\operatorname{Ker}}
\newcommand{\Cok}{\operatorname{Cok}}
\newcommand{\LatticePoint}[1][]{%
\begin{tikzpicture}
\filldraw (0,0) circle (2pt);
\end{tikzpicture}
}
\newcommand{\rk}{\operatorname{\sf rk}}
\newcommand{\Stab}{\operatorname{Stab}}
\newcommand{\cStab}[1]{\mathrm{Stab}_{#1}^{\kern -0.5pt \circ}\kern -0.2pt}
\newcommand{\nStab}[1]{\mathrm{Stab}_{#1}\kern -0.1pt}
\newcommand{\TitsR}{{\sf Tits}_{\kern 1pt \bR}}
\newcommand{\CTitsR}{{\sf CTits}_{\kern 1pt \bR}}
\newcommand{\CTitsC}{{\sf CTits}_{\kern 1pt \bC}}
\newcommand{\cAut}[1]{\mathrm{Aut}_{#1}^{\kern -0.5pt \circ}\kern -0.2pt}
\newcommand{\Per}{{}^{0}\!\operatorname{Per}}
\def\MutTo{\mathop{\sf MutTo}\nolimits}
\def\Targ{\mathop{\sf Term}\nolimits}
\def\Br{\mathop{\sf PBr}\nolimits}
\def\Tr{\mathop{\sf Tr}\nolimits}
\newcommand{\Mut}{\operatorname{Mut}}
\newcommand{\Flop}{\operatorname{\sf Flop}}
\newcommand{\Flops}{\operatorname{Flop}}
\newcommand{\Level}{\operatorname{\sf Level}}
\newcommand{\Alcove}{\operatorname{\sf Alcove}}
\newcommand{\cA}{\mathcal{A}}
\newcommand{\cB}{\mathcal{B}}
\newcommand{\cG}{\mathcal{G}}
\newcommand{\cH}{\mathcal{H}}
\newcommand{\cO}{\mathcal{O}}
\newcommand{\cP}{\mathcal{P}}
\newcommand{\cY}{\mathcal{Y}}
\newcommand{\cZ}{\mathcal{Z}}
\newcommand{\bC}{\mathbb{C}}
\newcommand{\bE}{\mathbb{E}}
\newcommand{\bH}{\mathbb{H}}
\newcommand{\bP}{\mathbb{P}}
\newcommand{\bQ}{\mathbb{Q}}
\newcommand{\bR}{\mathbb{R}}
\newcommand{\bU}{\mathbb{U}}
\newcommand{\bZ}{\mathbb{Z}}
\newcommand{\m}{\mathfrak{m}}
\newcommand{\scrA}{\EuScript{A}}
\newcommand{\scrB}{\EuScript{B}}
\newcommand{\scrC}{\EuScript{C}}
\newcommand{\scrD}{\EuScript{D}}
\newcommand{\scrE}{\EuScript{E}}
\newcommand{\scrF}{\EuScript{F}}
\newcommand{\scrG}{\EuScript{G}}
\newcommand{\scrH}{\EuScript{H}}
\newcommand{\scrI}{\EuScript{I}}
\newcommand{\scrJ}{\EuScript{J}}
\newcommand{\scrK}{\EuScript{K}}
\newcommand{\scrL}{\EuScript{L}}
\newcommand{\scrM}{\EuScript{M}}
\newcommand{\scrN}{\EuScript{N}}
\newcommand{\scrO}{\EuScript{O}}
\newcommand{\scrP}{\EuScript{P}}
\newcommand{\scrQ}{\EuScript{Q}}
\newcommand{\scrS}{\EuScript{S}}
\newcommand{\scrT}{\EuScript{T}}
\newcommand{\scrU}{\EuScript{U}}
\newcommand{\scrV}{\EuScript{V}}
\newcommand{\scrW}{\EuScript{W}}
\newcommand{\dsG}{\mathds{G}}
\newcommand{\dsN}{\mathds{N}}
\newcommand{\simto}{\xrightarrow{\sim}}
\renewcommand{\l}{\langle}
\renewcommand{\r}{\rangle}
\newcommand{\Delt}{\Updelta_{\kern 0.05em 0}}
\newcommand{\DeltAff}{\Updelta_{\kern 0.05em 0}^{\aff}}
\newcommand{\WDelt}{W_{\kern -0.1em \Updelta}}
\newcommand{\WDeltaff}{W_{\kern -0.1em \Updelta_{\aff}}}
\newcommand{\Wkern}[1]{W_{\kern -0.1em #1}\kern 0.05em}
\newcommand{\wo}[1]{w_{\kern -0.075em #1}}
\newcommand{\wop}[1]{w^{\phantom J}_{\kern -0.1em #1}}
\newcommand{\GammaJ}{\Upgamma_{\kern -0.05em J}}
\newcommand{\GammaS}{\Upgamma_{\kern -0.05em \scrJ}}
\newcommand{\Cone}[1]{{\sf TCone}{ (#1)}}
\newcommand{\xlup}[1]{{}^{#1}\kern -0.15em x}
\newcommand{\xlupmax}[1]{{}^{#1}\kern -0.25em x}
\newcommand{\iDelta}{\iota_{\kern -0.075em \Updelta}}
\newcommand{\Phisub}[1]{\Phi_{\kern -0.1em #1}}
\newcommand{\aff}{\operatorname{\mathsf{aff}}\nolimits}
\newcommand\Curve{\mathrm{C}}
\def\Cl{\mathop{\rm Cl}\nolimits}
\def\Id{\mathop{\rm{Id}}\nolimits}
\newcommand{\step}[1]{\medskip\emph{Step #1}:}
\newcolumntype{C}[1]{>{\centering\let\newline\\\arraybackslash\hspace{0pt}}m{#1}}
\def\EeightFourScalex{0.5}
\def\EeightFourScaley{0.5}
\newcommand{\EeightFour}[1][]{%
\begin{tikzpicture}[xscale=\EeightFourScalex,yscale=\EeightFourScaley]
\coordinate (A) at (0,0);
\node at (A) {\LatticePoint};
\coordinate (B) at (0,2);
\coordinate (C) at (0,4);
\coordinate (Ap) at (4,0);
\coordinate (Bp) at (4,2);
\coordinate (Cp) at (4,4);
\coordinate (b) at (2,4);
\coordinate (bp) at (2,0);
\draw (A)--(C);
\draw (Ap)--(Cp);
\draw (A)--(Ap);
\draw (B)--(Bp);
\draw (C)--(Cp);
\draw (b)--(bp);
\draw (A)--(b);
\draw (A)--(Cp);
\draw (C)--(bp);
\draw (C)--(Ap);
\draw (Ap)--(b);
\draw (Cp)--(bp);
\end{tikzpicture}
}
\begin{document}

\title[]{Stability Conditions for $3$-fold Flops}

\author{Yuki Hirano}
\address{Y.~Hirano, Department of Mathematics, Kyoto University, Kitashirakawa-Oiwake-cho, Sakyo-ku, Kyoto, 606-8502, Japan.}\email{y.hirano@math.kyoto-u.ac.jp}
\author{Michael Wemyss}
\address{M.~Wemyss, The Mathematics and Statistics Building, University of Glasgow, University Place, Glasgow, G12 8QQ, UK.}
\email{michael.wemyss@glasgow.ac.uk}

\begin{abstract} 
Let $f\colon X\to\Spec R$ be a $3$-fold flopping contraction, where $X$ has at worst Gorenstein terminal singularities and $R$ is complete local. We describe the space of Bridgeland stability conditions on the null subcategory $\scrC$ of $\Db(\coh X)$, which consists of those complexes that derive pushforward to zero, and also on the affine subcategory $\scrD$, which consists of complexes supported on the exceptional locus. We show that a connected component $\cStab{}{\scrC}$ of $\Stab{\scrC}$ is the universal cover of the complexified complement of the real hyperplane arrangement associated to $X$ via the Homological MMP, and more generally that $\cStab{n}\scrD$ is a regular covering space of the infinite hyperplane arrangement constructed in \cite{IW9}.  Neither arrangement is Coxeter in general. As a consequence, we give the first description of the Stringy K\"ahler Moduli Space (SKMS) for all smooth irreducible $3$-fold flops.  The answer is surprising: we prove that the SKMS is always a sphere, minus either $3,4,6,8,12$ or $14$ points, depending on the length of the curve. 
\end{abstract}
\thanks{YH was supported by  JSPS KAKENHI 19K14502 and The Max Planck Institute for Mathematics, and MW by EPSRC grants~EP/R009325/1 and EP/R034826/1.}
\maketitle{}

\section{Introduction}
Our setting is $3$-fold flopping contractions, namely $f\colon X\to \Spec R$, where $(R,\m)$ is a three-dimensional complete local Gorenstein $\mathbb{C}$-algebra with at worst terminal singularities.  We allow $X$ to be singular, with $X$ having at worst terminal singularities.  Consider the fibre $\Curve\colonequals f^{-1}(\m)$, which with its reduced scheme structure is well-known to decompose into a union of $n$ irreducible curves, each isomorphic to $\bP^1$. 

Given this setup, consider the following two subcategories of $\Db(\coh X)$
\begin{align*}
\scrC&\colonequals \{ \scrF\in\Db(\coh X)\mid \mathbf{R
}f_*\scrF=0\} \\
\scrD&\colonequals \{ \scrF\in\Db(\coh X)\mid \Supp\scrF\subseteq \Curve\}.
\end{align*}
It is a fundamental question to describe the spaces of stability conditions on $\scrC$ and $\scrD$, and to use this to help describe the autoequivalence group of $\Db(\coh X)$.  Both $\scrC$ and $\scrD$ have finite length hearts, and it is well-known from surfaces \cite{B3} that stability conditions on $\scrC$ should exhibit `finite-type' ADE behaviour, whilst $\scrD$ should be the `affine' version.  

One of the problems is that traditional finite and affine Coxeter groups do not suffice in this setting. On one hand, it is possible that $\scrC$ is controlled by a Coxeter arrangement that does not have an associated affine Coxeter group.  On the other hand, the category $\scrD$ predicts that such an affine object `exists'. Even worse, it is possible that $\scrC$ is controlled by a simplicial hyperplane arrangement that is not Coxeter. In that case, the affine object that controls $\scrD$ is even less clear.  Making precise statements about both $\scrC$ and $\scrD$  is in fact one of the main outcomes of this paper.

\subsection{Hidden t-structures}
Our approach to this problem is noncommutative, and necessarily so.  One of our new insights is that many of the t-structures that arise in the stability manifold of $\scrD$ are `hidden', in the sense that they are not obviously part of the birational geometry, nor are they translations of the birational geometry by line bundle twists.  However, they do have very conceptual noncommutative interpretations. To describe this in more detail, it is helpful to first briefly review the known cases.

The first partial solution to describing stability conditions on $\scrC$ and $\scrD$ in this $3$-fold setting is due to Toda \cite{T08}, who worked under two additional assumptions: (1) $X$ is smooth, and (2) for a generic hyperplane section $H\hookrightarrow\Spec R$,  the pullback $X\times_R H$ is smooth.  Both conditions are restrictive for different reasons, with the second being the least natural, and by far the most problematic to remove.  The crucial point is that, under these additional assumptions, the dual graph is an ADE Dynkin diagram.  When this happens, the traditional language of finite and affine Weyl groups suffice, and the relevant t-structures are all described by perverse sheaves and their tensors by line bundles. Toda \cite{T08} packages this together to describe a component of normalised stability conditions on $\scrD$ as a regular covering of the complexified complement of the associated affine root hyperplane arrangement.  Furthermore, the Galois group has a very satisfying geometric description, as those compositions of flop functors and line bundle twists that act trivially on K-theory.

Alas, these satisfyingly geometric statements all fail without assumption (2).  Perhaps counter-intuitively, the hardest case turns out to be the most elementary one: that of a single-curve flop.  In this case, the flopping curve has an associated \emph{length} invariant $\ell$, which is some number between one and six.  The assumption (2) holds if and only if the curve has length one.  Evidently, this is quite restrictive.

 One of our key observations is that, in the general situation of a 3-fold flop $X\to\Spec R$, tracking under flop functors and line bundle twists does not suffice.  To illustrate this visually in the case of a two curve flop, we will show below that stability conditions on $\scrD$ are controlled by infinite hyperplane arrangements $\scrH^{\aff}\subseteq \mathbb{R}^n$ such as that shown in Figure~\ref{fig: Figure 1}.
\begin{figure}[ht]
\[
\begin{array}{c}
\includegraphics[angle=0,scale = 0.75]{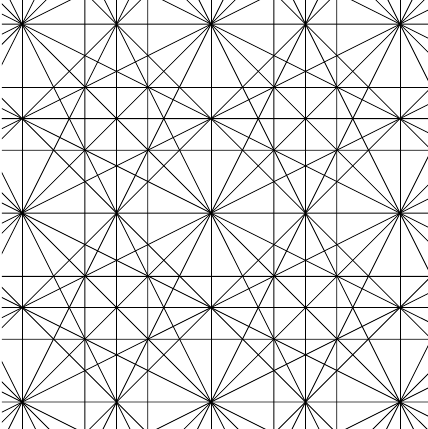}
\end{array}
\]
\caption{Example of hyperplane arrangement $\scrH^{\aff}$}\label{fig: Figure 1}
\end{figure}
In general the hyperplane arrangements are quite complicated, and there are many more chambers than one might naively expect.  The above example has an obvious $\bZ^2$ action, given by tensoring by the line bundles corresponding to the two curves.  However, this action jumps the central chamber over many intermediate t-structures.   These all turn out to be  hearts of noncommutative resolutions, and their variants.

To circumvent this problem, which occurs even in the case of a single curve flop, we appeal to recent advances in noncommutative resolutions and their mutation theory.  In the process, we will recover a conceptual understanding of the hidden t-structures, give a  full description of (a component of) normalised stability conditions on $\scrD$, and for the first time compute the Stringy K\"ahler Moduli Space.

\subsection{Main Stability Results}
Describing stability conditions on $\scrC$ turns out to be quite easy. Working in our most general setup $f\colon X\to \Spec R$, we first prove the following, which is entirely parallel to \cite[Section 6, ArXiv v2]{T08}.  Below the hyperplane arrangement $\scrH\subset\mathbb{R}^n$ need not be ADE, or even Coxeter, but nevertheless tracking under the Flop functors still produces the chambers of the stability manifold for the category $\scrC$.  As notation, consider the set $\Flops(X)$ consisting of all those pairs $(F,Y)$ where $Y\to\Spec R$ is obtained from $X$ through an iterated chain of simple flops, and $F$ is a composition of flop functors and their inverses, from $\Db(\coh Y)$ to $\Db(\coh X)$.
\begin{thm}[\ref{chambers in cStab C}, \ref{regular covering}]\label{chambers intro}
There is a union of chambers
\[
\cStab{}\scrC=\bigcup_{(Y,F)\in\,\Flops(X)} F(U),
\]
where $U$ is defined in Notation~\ref{chamber notation}, and furthermore the natural map
\[
\cZ\colon \cStab{}\scrC\to \bC^{n}\backslash\scrH_{\bC}\\
\]
is the universal cover of the complexified complement of $\scrH$.
\end{thm}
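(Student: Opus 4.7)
The overall strategy follows the template of Bridgeland \cite{B3} and Toda \cite{T08}, but with the finite Coxeter combinatorics replaced by the (possibly non-Coxeter) simplicial arrangement $\scrH$, and with Weyl-group elements replaced by morphisms in the flop groupoid $\Flops(X)$. The idea is to use the Homological MMP to tile $\cStab{}\scrC$ by copies $F(U)$ of a distinguished chamber, and then upgrade this tiling to a covering space statement via simple connectivity.

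First I would fix $U$ as the locus of stability conditions whose heart is the null-category heart cut out from the perverse sheaves $\Per(X/R)$ on $X$, and whose $n$ simples $\scrS_1,\ldots,\scrS_n$ have central charges strictly in the open upper half-plane. Since $\scrC$ carries a length heart with these finitely many simples, Bridgeland's deformation of the central charge shows that $\cZ|_U$ is a local homeomorphism onto the open set cut out by $\Image Z(\scrS_i)>0$. The defining hyperplanes $\Image Z(\scrS_i)=0$ agree on the nose with the walls of $\scrH$ through the origin, by the construction of $\scrH$ from the MMP data in \cite{IW9}, so $\cZ|_U$ is a homeomorphism onto the corresponding complexified open chamber of $\scrH$.

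The core step is that crossing the $i$-th codimension-one wall of $U$ replaces $X$ by its simple flop $Y_i$ and $U$ by $\Flop_i(U_{Y_i})$, where $U_{Y_i}$ is the analogous chamber built for $Y_i$. This is the stability-theoretic shadow of the Homological MMP: the simple flop functor tilts the perverse heart of $Y_i$ to the perverse heart of $X$ along $\scrS_i$, which is exactly the effect of crossing the wall $\Image Z(\scrS_i)=0$. Iterating along arbitrary compositions of simple flops, every $(Y,F)\in\Flops(X)$ yields a chamber $F(U)\subseteq\Stab\scrC$ sitting over the corresponding complexified chamber of $\scrH$. Injectivity of the labelling $(Y,F)\mapsto F(U)$ is the statement that distinct chambers of $\scrH$ produce distinct hearts, which is recorded in the MMP combinatorics; exhaustiveness is a standard connectivity argument, since any $\sigma\in\cStab{}\scrC$ can be joined to the interior of $U$ by a generic path, and the sequence of walls crossed along the way reads off a composite flop functor that brings $\sigma$ into $U$.

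The covering statement then splits into two pieces. Local homeomorphy of $\cZ$ is Bridgeland's deformation theorem applied chamber by chamber, and combined with the tiling of the previous step this makes $\cZ\colon\cStab{}\scrC\to \bC^n\setminus\scrH_\bC$ a covering. To identify it with the universal cover, I would prove that $\cStab{}\scrC$ is simply connected: any loop can be homotoped, using local finiteness of the wall-and-chamber decomposition, to a concatenation of elementary loops around codimension-two strata, and each such loop must be shown to lift to a null-homotopic loop in $\cStab{}\scrC$. The main obstacle is precisely this last point: because $\scrH$ is not Coxeter in general, one cannot invoke Salvetti/Deligne to write down generators and relations for $\pi_1(\bC^n\setminus\scrH_\bC)$, so each relation must instead be read off from coherence of the Homological MMP, i.e.\ from the identity satisfied by the composition of simple flop functors encountered on passing once around a codimension-two face of $\scrH$. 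Establishing this coherence in full generality, and matching it to the combinatorics of $\scrH$, is the principal technical hurdle of the argument.
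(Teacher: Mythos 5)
Your chamber-tiling step is on the right track and matches the paper's strategy in spirit: the distinguished chamber $U$ is cut out by $\Image Z[\scrS_i]>0$ on the perverse heart, adjacent chambers are obtained by simple tilts, and Lemma~\ref{simple tilt} identifies these simple tilts with the (mutation = flop) functors, so iterating produces the decomposition of Theorem~\ref{chambers in cStab C}.

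However, there are two genuine gaps. First, you assert that injectivity of the labelling $(Y,F)\mapsto F(U)$ ``is recorded in the MMP combinatorics,'' but this conflates two different statements. That distinct chambers of $\scrH$ correspond to distinct modifying modules is Theorem~\ref{HomMMP finite summary}; what is actually needed is that if $F_1(U)\cap F_2(U)\neq\emptyset$ then $F_1\cong F_2$ \emph{functorially}, not merely on K-theory. This is precisely the content of Theorem~\ref{action free} and Corollary~\ref{key functorial identity}, which the paper flags as one of its main technical contributions: a composition of mutation/flop functors that preserves the standard heart must be isomorphic to the identity. The proof passes through a Fourier--Mukai/skyscraper argument (Proposition~\ref{G is the identity}) that has no combinatorial analogue, and without it you cannot conclude that the union is parametrised by $\Flops(X)$ rather than by some quotient. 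Relatedly, ``local homeomorphism chamber-by-chamber plus a tiling'' does not by itself yield a covering map; the paper instead shows the $\Br\scrC$-action is properly discontinuous using the Bridgeland metric and \cite[6.4]{B07} (that stability conditions at distance $<1$ with the same central charge coincide), and then identifies $\cStab{}\scrC/\Br\scrC$ with $\Uptheta_{\bC}\backslash\scrH_{\bC}$ in Theorem~\ref{quot homeo}.

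Second, your plan for universality --- proving simple connectivity of $\cStab{}\scrC$ by homotoping loops to elementary loops around codimension-two strata and verifying coherence of the flop functors --- is not what the paper does, and you correctly identify it as a serious hurdle. The paper sidesteps it entirely: once regularity of the cover is established, universality is equivalent to injectivity of the surjection $\uppi_1(\Uptheta_{\bC}\backslash\scrH_{\bC})\twoheadrightarrow\Br\scrC$, and this is exactly the faithfulness theorem of \cite{HW} (valid in the present generality by \cite[Part~4]{IW9}). Your proposed route would essentially require reproving that faithfulness result from scratch; the coherence relations you allude to around codimension-two faces are nontrivial precisely because $\scrH$ need not be Coxeter, and verifying them directly is the content of an entire separate paper.
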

The hyperplane arrangement $\scrH$ can be described in various ways, and this is explained in Section~\ref{hyperplane section}.  The main content of the above theorem is to establish that the map is a regular covering map; our previous work \cite{HW} on the faithfulness of the action then establishes that the cover is universal.  From this, the seminal work of Deligne \cite{Deligne} on the $K(\uppi,1)$ conjecture for simplicial hyperplane arrangements immediately confirms the following.
\begin{cor}[\ref{contractible}]
$\cStab{}\scrC$ is contractible.
\end{cor}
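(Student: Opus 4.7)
The plan is to combine Theorem~\ref{chambers intro} with Deligne's theorem on simplicial complexified hyperplane complements. By the second half of Theorem~\ref{chambers intro}, the natural map
\[
\cZ\colon \cStab{}\scrC \longrightarrow \bC^{n}\setminus \scrH_{\bC}
\]
is a universal covering map, so it suffices to show the base $\bC^{n}\setminus \scrH_{\bC}$ is a $K(\uppi,1)$ space; its universal cover, namely $\cStab{}\scrC$, will then have trivial higher homotopy groups. Since $\cStab{}\scrC$ is a complex manifold (in particular, has the homotopy type of a CW complex), a Whitehead-type argument then forces contractibility.

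First I would verify, by referring to the construction of $\scrH$ in the section on hyperplane arrangements mentioned in the excerpt (where it is noted that $\scrH$ is simplicial though not necessarily Coxeter), that $\scrH$ is a finite real simplicial hyperplane arrangement in $\bR^{n}$. This is the essential input, and is precisely the hypothesis of Deligne's $K(\uppi,1)$ theorem \cite{Deligne}. Applying that theorem, the complexified complement $\bC^{n}\setminus \scrH_{\bC}$ is aspherical, i.e.\ $\uppi_{i}(\bC^{n}\setminus \scrH_{\bC})=0$ for all $i\geq 2$.

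Now lifting homotopy groups along the universal cover $\cZ$, we obtain $\uppi_{i}(\cStab{}\scrC)=\uppi_{i}(\bC^{n}\setminus \scrH_{\bC})=0$ for $i\geq 2$, while $\uppi_{1}(\cStab{}\scrC)=0$ by definition of universal cover. Since $\cStab{}\scrC$ is an open subset of the complex manifold $\Stab \scrC$, it carries the structure of a CW complex, so by Whitehead's theorem the vanishing of all homotopy groups implies it is contractible.

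The only step that requires any care is confirming simpliciality of $\scrH$, as the rest is a direct appeal to \cite{Deligne}. Since $\scrH$ need not be Coxeter, one cannot quote the stronger results of Brieskorn or Deligne for reflection arrangements; one must instead invoke the general simplicial form of Deligne's theorem. Given the description of $\scrH$ via the Homological MMP referenced above Theorem~\ref{chambers intro}, simpliciality is built into the construction, so this obstacle is really just a matter of citing the appropriate statement from the hyperplane-arrangement section rather than re-proving it.
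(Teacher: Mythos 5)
Your proof is correct and takes the same route as the paper: identify $\cStab{}\scrC$ as the universal cover of $\bC^n\setminus\scrH_\bC$ via Theorem~\ref{regular covering}, recall that $\scrH$ is a finite simplicial arrangement (established in the paper just after \eqref{finite H}), and invoke Deligne's $K(\uppi,1)$ theorem. The paper compresses the homotopy-theoretic bookkeeping (Whitehead, CW structure of the manifold) into a single sentence, whereas you spell it out, but the content is identical.
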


However, the main content in this paper is our description of stability conditions on the category $\scrD$, which is much harder.  Passing to a more noncommutative viewpoint, by the HomMMP \cite[4.2]{HomMMP} we first observe that the union in Theorem~\ref{chambers intro} can be reindexed using instead those pairs $(\Upphi,L)$ where $\Upphi$ is a chain of mutation functors and their inverses, and $L$ belongs to the mutation class $\Mut_0(N)$ of $N$ described in \eqref{ass modi}, where mutation at the submodule $R$ is \emph{not} allowed.  Disregarding this last restriction, and thus allowing mutation at all summands, gives an infinite set $\Mut(N)$.  Via \cite{IW9}, this turns out to index the chambers in a corresponding infinite hyperplane arrangement $\scrH^{\aff}$.   

The following is our main result.  The Galois group $\Br\scrD$ is by definition all compositions of mutation functors and their inverses that start and finish at our fixed $\End_R(N)$. 

\begin{thm}[\ref{chambers in cStab C}, \ref{regular covering}]\label{normalised stab intro}
There is a union of chambers
\[
\cStab{n}\scrD=\bigcup_{(\Upphi,L)}\Upphi(\dsN_L).
\]
where $\dsN_L$ is defined in Notation~\ref{chamber notation}, and $\Upphi$ are compositions of mutation functors and their inverses.  Furthermore, the natural forgetful map
\[
\cZ\colon \cStab{n}\scrD\to \bC^{n}\backslash\scrH^{\aff}_{\bC}\\
\]
is a regular covering map, with Galois group $\Br\scrD$.
\end{thm}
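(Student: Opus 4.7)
The plan is to mirror the strategy for Theorem~\ref{chambers intro} on $\cStab{}\scrC$, but replacing the finite mutation class $\Mut_0(N)$ and the finite arrangement $\scrH$ by the infinite class $\Mut(N)$ and the infinite arrangement $\scrH^{\aff}$ from \cite{IW9}. The foundational step is to fix, for every $L\in\Mut(N)$, a tilting-induced equivalence $\scrD\simeq\Db(\fmod\End_R(L))_{\scrD}$, and then define $\dsN_L$ as the set of normalised stability conditions whose heart under this equivalence is the module heart $\fmod\End_R(L)$. Bridgeland deformation theory makes $\dsN_L$ an open cell of $\cStab{n}\scrD$ and identifies it, via the forgetful map $\cZ$, homeomorphically with an open chamber of $\bC^{n}\setminus\scrH^{\aff}_{\bC}$. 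Mutation at a summand of $L$ produces a simple tilt of the heart, so the corresponding functor $\Upphi$ transports $\dsN_L$ to an adjacent cell $\dsN_{L'}$, matching precisely the crossing of a real codimension one wall of $\scrH^{\aff}$.

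Assuming this cell structure, the next step is to show that the union $\bigcup_{(\Upphi,L)}\Upphi(\dsN_L)$ exhausts $\cStab{n}\scrD$. I would argue this by path deformation: any $\upsigma\in\cStab{n}\scrD$ can be joined by a generic path to a basepoint in $\dsN_N$, and along this path only finitely many walls are crossed, each inducing a simple tilt of the heart. The cornerstone ingredient, which needs to be proved directly rather than reduced to the $\scrC$ case, is that every finite-length heart arising along the path is of the form $\fmod\End_R(L)$ for some $L\in\Mut(N)$. This is the affine analogue of \cite[4.2]{HomMMP}, and is exactly the combinatorial statement that the IW9 mutation graph is expected to encode.

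The final step is to deduce that $\cZ$ is a regular covering with Galois group $\Br\scrD$. Local homeomorphism is automatic from the chamberwise description, and path lifting follows because a generic path in $\bC^{n}\setminus\scrH^{\aff}_{\bC}$ crosses a finite sequence of walls, each of which lifts to a mutation functor on the stability side; iterating this produces a continuous lift. The deck group acts by precomposition with elements of the mutation groupoid whose source and target are both $\End_R(N)$, and this group is by definition $\Br\scrD$. Freeness is immediate, since a deck transformation fixing a point is an autoequivalence fixing both a heart and its simples. Regularity then reduces to transitivity on the fibre over a chosen basepoint, which holds by construction: the fibre is indexed exactly by those pairs $(\Upphi,N)$ with $\Upphi\in\Br\scrD$.

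The principal obstacle is that $\scrH^{\aff}$ is genuinely infinite and in general neither Coxeter nor the root arrangement of any affine Weyl group, so the combinatorial matching of chambers, walls, and mutations cannot be imported from the classical finite affine setting of \cite{T08}. In particular, the hidden t-structures corresponding to noncommutative resolutions with no geometric origin must be accounted for explicitly. Establishing that the wall-crossing structure on $\cStab{n}\scrD$ is modelled cell-by-cell on $\scrH^{\aff}$ via the mutation groupoid of \cite{IW9}, and that no further hearts intervene, is the technical crux that distinguishes this affine case from its finite counterpart.
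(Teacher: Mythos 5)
Your outline correctly captures the broad architecture — chamber decomposition via mutation, path deformation to show exhaustion, wall-crossing matching simple tilts — and you correctly identify the affine tracking combinatorics of \cite{IW9} as a nontrivial input. But there is a serious gap in your covering-space argument, precisely at the point the paper flags as its main technical contribution.

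You write that \emph{``freeness is immediate, since a deck transformation fixing a point is an autoequivalence fixing both a heart and its simples.''} This is not immediate; indeed it is the crux. What you get for free is that a pure braid $\Upphi_\upalpha \in \Br\scrD$ fixing a point of $\cStab{n}\scrD$ restricts to an abelian autoequivalence of the finite-length heart $\scrB_L$, equivalently sends simples to simples. What you then \emph{need} is that this forces $\Upphi_\upalpha \cong \Id$ as a functor; without this, $\Br\scrD$ does not act faithfully as deck transformations, and the quotient-by-$\Br\scrD$ identification in Theorem~\ref{quot homeo} and the covering-space criterion $(\ast)$ in Theorem~\ref{regular covering} both break down. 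The implication is highly nontrivial: the paper devotes Theorem~\ref{action free}, Proposition~\ref{t-stucture transfer} (using the depth lemma and Auslander--Buchsbaum to drag the t-structure preservation along the mutation path back to the geometric vertex), and Proposition~\ref{G is the identity} (using Karmazyn's $\star$-generated stability characterization of skyscrapers and $R$-linear Fourier--Mukai rigidity) to establish it, and explicitly notes that the argument \emph{``relies heavily on the isolated cDV assumption.''} Your proposal cannot be completed without supplying this step, and there is no classical substitute because in the non-Coxeter / non-ADE case one cannot simply invoke the Fourier--Mukai dictionary at every intermediate heart — most of those hearts are the ``hidden'' noncommutative ones with no direct geometric model.

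Two smaller points. First, surjectivity of $\cZ$ onto the complexified complement is not automatic from path lifting: it requires the combinatorial tracking results (Propositions~\ref{complexified tracking 2} and \ref{E lemma}) proved in the appendix, which show that the boundary of $\overline{\dsN}_\upbeta$ never lands on $\scrH^{\aff}_{\bC}$; this is subtler than the finite $\scrC$ case because one must control which subset $\scrI'$ of coordinates vanishes and know it is proper (Lemma~\ref{removing gives finite}). Second, your description of the chambers as ``the union is all of $\cStab{n}\scrD$'' elides the step $\cStab{}\scrD \cap \nStab{n}\scrD = \cStab{n}\scrD$, which the paper proves using path-connectedness of $\bE_+$ (Lemma~\ref{path connected}) and the commutative diagram of Proposition~\ref{stability track comm diagram}; this is needed before the normalised chamber description can even be stated.
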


Whilst passing to noncommutative resolutions (and their variants) provides the conceptual framework to tackle the above problem, and to understand the extra t-structures, their appearance comes at a significant cost.  Namely, it becomes much harder to argue when functors are the identity, and thus to establish that $\cZ$ is a regular covering map.  The following is one of our main technical results, which heavily uses the isolated cDV assumption.

\begin{thm}[\ref{key functorial identity}]
Suppose that $\Gamma$ is an arbitrary modifying algebra (or noncommutative crepant resolution) of $R$, where $R$ is isolated cDV.  Consider an equivalence 
\[
G\colon \Db(\fmod\Gamma)\to\Db(\fmod\Gamma)
\]
obtained as an arbitrarily long sequence of mutation functors and their inverses.  If $G$ restricts to an equivalence $\fmod\Gamma\to\fmod\Gamma$, then $G\cong\Id$.
\end{thm}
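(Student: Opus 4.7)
The plan is to first reduce to a Morita-type autoequivalence and then to show that, under the isolated cDV hypothesis, the only Morita autoequivalence realisable as a composition of mutations is the identity.

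Since $G$ preserves the standard heart $\fmod\Gamma$, it restricts to an exact autoequivalence of the abelian category $\fmod\Gamma$. Classical Morita theory then yields an invertible $\Gamma$-bimodule $M$, projective on both sides, with $G\cong-\otimes^{\mathbf{L}}_\Gamma M$. It therefore suffices to show that $M\cong\Gamma$ as bimodules.

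Decomposing $\Gamma=\End_R(N)$ with $N=\bigoplus_{i=0}^n N_i$ indecomposable, write $S_0,\ldots,S_n$ for the corresponding simple $\Gamma$-modules. The autoequivalence $G$ permutes these simples by some $\sigma\in\mathfrak{S}_{n+1}$, and $M$ is correspondingly a $\sigma$-twisted bimodule. The central step will be to show $\sigma=\Id$. Here I would track the effect of each individual mutation functor occurring in the composition defining $G$: each such functor arises from an explicit tilting bimodule in the Iyama--Wemyss theory, so the composition is expressed as an iterated tensor product of tilting bimodules with a controlled effect on the decomposition of $N$. After the full composition, one recovers $\End_R(N)$, but possibly with a permuted labelling of summands. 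Under the isolated cDV hypothesis, the rigidity of the local structure forces the achievable permutation to be trivial: all simples except possibly $S_0$ are finite dimensional and distinguished by their K-theoretic data, the summand $R$ inside $N$ is characterised as the unique reflexive rank-one summand, and any non-trivial $\sigma$ would contradict the requirement that $G$ preserve the distinguished projective $\Hom_R(N,R)$ together with the dimension vectors of all finite length simples. Once $\sigma=\Id$, the bimodule $M$ is locally free of rank one over each block of $\Gamma$, hence represents a class in $\Pic(Z(\Gamma))=\Pic(R)$, and since $R$ is local we have $\Pic(R)=1$, whence $M\cong\Gamma$ and therefore $G\cong\Id$.

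The main obstacle lies in the middle paragraph: establishing $\sigma=\Id$ requires careful tracking of mutation compositions through a potentially long and non-obviously trivial sequence, and is the step in which the isolated cDV hypothesis is indispensable. Without this hypothesis, simples of $\fmod\Gamma$ can fail to be finite dimensional, the K-theoretic invariants lose their distinguishing power, and genuine non-trivial outer automorphisms of $\Gamma$ can be realised by mutation compositions that preserve the heart, so the statement would fail.
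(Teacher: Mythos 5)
Your opening reduction is sound: since $G$ preserves the standard heart and $\Gamma$ is basic, $G$ is induced by an invertible $\Gamma$-bimodule, so $G$ is Morita. The step where $\sigma = \Id$ is actually easier than you suggest: by Proposition~\ref{theta trivial} the composition of mutations acts as the identity on K-theory, and a Morita autoequivalence acting trivially on K-theory must fix each simple, since the classes of simples form a basis. No rank or rank-one arguments are needed for this, and indeed your claim that $R$ is ``the unique reflexive rank-one summand'' is false in general (several summands of $N$ can have rank one).

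The genuine gap is in your final step. You assert that once $\sigma=\Id$, the invertible bimodule $M$ ``represents a class in $\Pic(Z(\Gamma))=\Pic(R)$''. This is not justified and is generally false for noncommutative $\Gamma$: invertible bimodules are classified by $\Pic(\Gamma)$, whose outer part is the outer automorphism group of $\Gamma$, and an $R$-linear automorphism fixing all simples up to isomorphism can still be outer. Central bimodules (those giving a class in $\Pic(Z(\Gamma))$) are precisely those for which the left and right $Z(\Gamma)$-actions coincide, which does not follow from fixing the simples. The paper's proof of this last step is substantially geometric, and goes through Proposition~\ref{G is the identity}: it only applies directly to the distinguished algebra $\Lambda=\End_R(N)$ coming from the flopping contraction, so the paper first transfers the statement from $\Lambda_L$ to $\Lambda$ by conjugating with a positive path and applying Proposition~\ref{t-stucture transfer} repeatedly (this transfer is itself delicate, using the exchange sequences, isolatedness, depth and Auslander--Buchsbaum). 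Then it passes through the tilting equivalence $\Db(\coh X)\simeq\Db(\fmod\Lambda)$, shows via King stability that skyscrapers of closed points map to skyscrapers, invokes Fourier--Mukai theory to write the functor as $\varphi_*\circ(-\otimes\scrL)$, and uses $R$-linearity together with the fact that $\varphi$ must restrict to the identity on the dense open complement of the exceptional locus to conclude $\varphi=\Id$ and $\scrL\cong\scrO_X$. Without some replacement for this geometric input, your $\Pic(R)$ claim does not close the argument.
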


There are additional variants to the above, summarised in Theorem~\ref{action free}, which may be of independent interest.

\subsection{Autoequivalence and SKMS results}
Aside from producing new invariants for $3$-fold flops, linking to classification problems, autoequivalences, noncommutative resolutions and deformation theory, one of our main motivations for establishing Theorem~\ref{normalised stab intro} is that it provides the first mechanism to compute the fabled stringy K\"ahler moduli space (SKMS).  To do this requires some additional work, since following and generalising \cite{T08} we view the SKMS as the quotient of $\cStab{n}\scrD$ by a certain group $\cAut{}\scrD$.  

The definition of the group $\cAut{}\scrD$ is a rather subtle point, since in this local model everything is relative to the base $\Spec R$, and so everything should respect this structure.  In particular, $\cAut{}\scrD$ should not  contain isomorphisms between flopping contractions unless they preserve the $R$-scheme structure. We achieve this by defining  $\cAut{}\scrD$ to be the group of \emph{$R$-linear} Fourier--Mukai equivalences $\scrD\to\scrD$ that preserve  $\cStab{n}\scrD$.  Even for a single curve flop with $\ell=1$, the restriction to $R$-linear functors is necessary for the mathematically defined SKMS \cite[p6169]{T08} to coincide with the physical version \cite[Figure 1]{Aspinwall}.

The intrinsically defined group $\cAut{}\scrD$ has the following more concrete description. 

\begin{thm}[\ref{semidirect main}]\label{group iso intro}
$\cAut{}\scrD\cong \Br\scrD\rtimes\Pic X$.
\end{thm}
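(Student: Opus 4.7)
The plan is to realise both $\Br\scrD$ and $\Pic X$ as subgroups of $\cAut{}\scrD$, verify that their intersection is trivial and that $\Pic X$ normalises $\Br\scrD$, and finally establish surjectivity; I expect the surjectivity argument to be the main technical obstacle.

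First I would construct the two embeddings.  Each element of $\Br\scrD$ is, by definition, a composition of mutation functors and their inverses, hence an $R$-linear autoequivalence of $\scrD$.  By Theorem~\ref{normalised stab intro} these functors are precisely the deck transformations of the regular cover $\cZ\colon \cStab{n}\scrD\to\bC^{n}\setminus\scrH^{\aff}_{\bC}$, so they automatically preserve $\cStab{n}\scrD$ and land inside $\cAut{}\scrD$; the embedding is injective because the deck action is faithful.  For $L\in\Pic X$, the functor $(-)\otimes L$ is an $R$-linear Fourier--Mukai autoequivalence of $\Db(\coh X)$ restricting to $\scrD$, and its induced action on $\bC^{n}\setminus\scrH^{\aff}_{\bC}$ is a translation by the class of $L$, which preserves the periodic arrangement $\scrH^{\aff}$ and hence preserves $\cStab{n}\scrD$.

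Second, the two images in $\cAut{}\scrD$ intersect trivially, since elements of $\Br\scrD$ act trivially on the base while nontrivial line bundle twists induce nontrivial translations.  Moreover $\Pic X$ normalises $\Br\scrD$: conjugation of a mutation functor by a line bundle twist is again a composition of mutation functors, because tensoring the summands of $N$ by a line bundle gives a bijection of the mutation class $\Mut(N)$ that intertwines the two mutation systems.  These two facts identify the subgroup generated by the two images with the semidirect product $\Br\scrD\rtimes\Pic X$.

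For surjectivity, given $G\in\cAut{}\scrD$, the action of $G$ on $\cStab{n}\scrD$ permutes the chambers $\Upphi(\dsN_L)$ from Theorem~\ref{normalised stab intro}.  Pick a stability condition $\sigma$ in the distinguished chamber associated to the heart of $\End_R(N)$, and let $(\Upphi_0,L_0)$ index the chamber containing $G(\sigma)$.  A direct computation shows that $\Upphi_0\circ\bigl((-)\otimes L_0\bigr)\in\Br\scrD\rtimes\Pic X$ sends $\sigma$ into the same chamber, so after composing $G$ with an appropriate element of $\Br\scrD\rtimes\Pic X$ we obtain $H\in\cAut{}\scrD$ that preserves the distinguished chamber, and hence fixes the standard heart up to isomorphism.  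The hard part is the concluding rigidity step: one must show that an $R$-linear Fourier--Mukai autoequivalence of $\scrD$ fixing this heart is isomorphic to the identity.  This is a Bondal--Orlov-type rigidity in a singular, relative setting, enabled by the $R$-linearity and the isolated cDV hypothesis, and is precisely the content of the variants of Theorem~\ref{key functorial identity} collected in Theorem~\ref{action free}.  Once that is in hand, $H\cong\Id$ and $G\in\Br\scrD\rtimes\Pic X$, completing the proof.
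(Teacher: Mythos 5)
Your outline is sound until the surjectivity step, where there is a genuine gap in the crucial place. You write that the chamber containing $G(\sigma)$ is ``indexed by $(\Upphi_0,L_0)$'' with $\Upphi_0\in\Br\scrD$ and $L_0\in\Pic X$, but this presupposes exactly the thing that has to be proved. By Theorem~\ref{chambers in cStab C}, the chambers of $\cStab{n}\scrD$ are indexed by paths $\upbeta\colon C_M\to C_+$ in $\dsG^{\aff}$ for \emph{arbitrary} $M\in\Mut(N)$, and the entire point of the paper is that $\Mut(N)$ contains many modules that are \emph{not} of the form $\mathsf{L}\cdot N$ for $\mathsf{L}\in\langle\mathsf{L}_1,\ldots,\mathsf{L}_n\rangle$ --- these are the ``hidden t-structures.'' The subgroup $\Br\scrD\rtimes\Pic X$ can only move the distinguished chamber $\dsN$ to chambers indexed by paths whose source is one of the class-group translates $C_{\mathsf{L}\cdot N}$, so most chambers are not reachable, and a priori $G(\dsN)$ could land in an unreachable one. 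Nothing in your proposal rules this out.

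This is where the $R$-linearity and the normalisation constraint must be brought to bear, and it is the technical heart of the paper's Step~1. Concretely: $\Upphi_\upbeta^{-1}\circ G$ is an $R$-linear Morita equivalence, hence induced by an $R$-algebra isomorphism $\Lambda\to\Lambda_\upbeta=\End_R(M)$; Lemma~\ref{iso lemma} then forces the ranks of the summands of $M$ to agree (up to permutation) with those of $N$, so $M$ has a rank-one summand; and Lemma~\ref{rank one} shows that a module in $\Mut(N)$ with a rank-one summand matching $\scrP_0$ must be $\mathsf{L}\cdot N$ for some $\mathsf{L}$ in the class group lattice. Without some version of this argument, the step ``after composing $G$ with an appropriate element of $\Br\scrD\rtimes\Pic X$ we obtain $H$ preserving the distinguished chamber'' does not follow. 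Two smaller points: the rigidity used at the end is Proposition~\ref{G is the identity} (a Fourier--Mukai/geometric argument requiring control of all simples and their ranks), not Theorem~\ref{action free}, which applies only to compositions of mutation functors; and the paper sidesteps the normality claim you make by instead proving $\Br\scrD$ equals the kernel of the $K$-theory action inside $\cAut{}\scrD$ (Step~2), which makes normality immediate without needing to compute conjugates of mutation functors by line bundle twists directly.
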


Combining Theorem~\ref{normalised stab intro} with Proposition~\ref{group iso intro} and an elementary hyperplane calculation allows us to finally compute the SKMS, as  $\cStab{n}\scrD/\cAut{}\scrD$, for smooth irreducible flops, generalising \cite[p6169]{T08} and \cite[Figure 1]{Aspinwall}.  There does not appear to be any predictions or conjectures in the literature for what the SKMS should be for higher lengths.  Perhaps this is just as well, since the result is quite surprising.

\begin{cor}[\ref{SKMS text}]
For a smooth irreducible flop $X\to\Spec R$ of length $\ell$, the  SKMS is always a $2$-sphere, with holes removed at both the north and south pole, together with the following number of holes removed from the equator. 
\[
\begin{tabular}{C{1cm}C{0.4cm}C{0.4cm}C{0.4cm}C{0.4cm}C{0.4cm}C{0.4cm}C{0.4cm}C{0.4cm}}\toprule
\textnormal{$\ell$}&$1$&$2$&$3$&$4$&$5$&$6$\\
\midrule
\textnormal{Holes}&$1$&$2$&$4$&$6$&$10$&$12$\\
\bottomrule
\end{tabular}
\]
\end{cor}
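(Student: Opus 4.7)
The strategy is to assemble the SKMS as the two-stage quotient of $\cStab{n}\scrD$, first by $\Br\scrD$ and then by $\Pic X$, exploiting the semidirect product decomposition of $\cAut{}\scrD$; then to identify the result as a punctured cylinder, which after compactification becomes a $2$-sphere; and finally to extract the equatorial count from the mutation classification.

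Smooth irreducibility forces $n=1$, so $\scrH^{\aff}\subset\bR$ is a periodic set of points and $\Pic X\cong\bZ$. Combining Theorem~\ref{normalised stab intro} and Theorem~\ref{group iso intro}, I would first divide $\cStab{n}\scrD$ by $\Br\scrD$ to land in (a component of) $\bC\setminus\scrH^{\aff}_\bC$, then divide further by the $\Pic X$-factor in the semidirect product $\cAut{}\scrD\cong\Br\scrD\rtimes\Pic X$. The residual $\bZ$-action is translation by the fundamental period of $\scrH^{\aff}$, so the exponential map gives
\[
\mathrm{SKMS}\;\cong\;(\bC\setminus\scrH^{\aff}_\bC)/\Pic X\;\cong\;\bC^{*}\setminus\{k\text{ points}\},
\]
where $k$ is the number of hyperplanes of $\scrH^{\aff}$ in one fundamental domain. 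Since $\bC^{*}=S^{2}\setminus\{0,\infty\}$ as topological spaces, this becomes $S^{2}$ minus $k+2$ points, of which two sit at the poles (the ends of the cylinder) and $k$ sit on the equator.

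The final step is to pin down $k$ in terms of the length invariant $\ell$. By the chamber description in Theorem~\ref{normalised stab intro}, chambers of $\scrH^{\aff}$ are indexed by the mutation class $\Mut(N)$, and modding out by $\Pic X$ leaves the quotient $\Mut(N)/\bZ$ of iterated mutation graph orbits. For single-curve flops of length $\ell$, the full combinatorics of this mutation graph is catalogued in \cite{IW9}, and a direct enumeration of orbits yields $k=1,2,4,6,10,12$ for $\ell=1,\ldots,6$.

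The main obstacle is verifying that the residual $\Pic X$-action really is translation by the fundamental period of $\scrH^{\aff}$, and not by a proper refinement of it. This is what makes both the exponential map land cleanly on $\bC^{*}$ and the count $k$ well-defined; it should follow by unwinding the effect of tensoring with line bundles on central charges at the level of the chamber structure of $\scrH^{\aff}$. Once this identification is in place, the numerical values are a purely combinatorial tabulation against the mutation graphs of \cite{IW9}, and the topological conclusion that the SKMS is $S^{2}$ minus $k+2$ points is immediate.
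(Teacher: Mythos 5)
Your proposal follows essentially the same route as the paper: quotient $\cStab{n}\scrD$ first by $\Br\scrD$ via Theorem~\ref{regular covering} to land in $\Level_{\bC}\setminus\scrH^{\aff}_{\bC}$, then quotient further by $\Pic X\cong\bZ$ using the semidirect product in Theorem~\ref{semidirect main}, identify the quotient cylinder with $S^2$ minus the poles, and count equatorial holes as walls per period. The one obstacle you flag — confirming that the $\Pic X$-action really is translation by the minimal period of $\scrH^{\aff}$ rather than a refinement — is exactly what the paper settles: Lemma~\ref{Pic action K} computes the effect of $-\otimes\scrL_i$ on $\Level_{\bC}$ as $y_i\mapsto y_i+1$, and Proposition~\ref{affine hyper calc} then reads off the numbers $1,2,4,6,10,12$ of walls per unit interval by running the local wall-crossing rules of \cite{IW9} against the Katz--Morrison classification of general elephants for irreducible smooth flops.
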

For example, when $\ell=4$, it follows that the SKMS is
\[
\begin{tikzpicture}[scale=1.2]
\draw[gray,densely dotted] (1,0) arc (0:180:1cm and 0.2cm);
\draw[densely dotted] (1,0) arc (0:-180:1cm and 0.2cm)
coordinate[pos=0.8] (A) coordinate[pos=0.67] (B) coordinate[pos=0.56] (C) coordinate[pos=0.45] (D) coordinate[pos=0.33] (E) coordinate[pos=0.2] (F);
\draw ([shift=(-84:1cm)]0,0) arc (-84:84:1cm)  
[bend left] to (96:1cm)
arc (96:264:1cm)
[bend left] to cycle; 
\draw[densely dotted] ([shift=(86.75:1cm)]0,0) arc  (86.75:94:1cm);
\draw[densely dotted] ([shift=(-86.75:1cm)]0,0) arc  (-86.75:-94:1cm);
\draw[draw=none] (1,-0.2) arc (0:-180:1cm and 0.2cm)
coordinate[pos=0.8] (a) coordinate[pos=0.67] (b) coordinate[pos=0.56] (c) coordinate[pos=0.45] (d) coordinate[pos=0.33] (e) coordinate[pos=0.2] (f);
\filldraw[fill=white,draw=black] (A) circle (2pt);
\filldraw[fill=white,draw=black] (B) circle (2pt);
\filldraw[fill=white,draw=black] (C) circle (2pt);
\filldraw[fill=white,draw=black] (D) circle (2pt);
\filldraw[fill=white,draw=black] (E) circle (2pt);
\filldraw[fill=white,draw=black] (F) circle (2pt);
\node at (a) {$\scriptstyle 1$};
\node at (b) {$\scriptstyle 4$};
\node at (c) {$\scriptstyle 3$};
\node at (d) {$\scriptstyle 2$};
\node at (e) {$\scriptstyle 3$};
\node at (f) {$\scriptstyle 4$};
\end{tikzpicture}
\]
where we refer the reader to Theorem~\ref{SKMS text} for more details, including an explanation of the numerics, and the labelling of the holes on the equator.

\subsection*{List of Notation} 
A list of notation is provided in Appendix~\ref{notation appendix}.

\subsection*{Acknowledgements} 
We thank Jenny August for comments and suggestions on Appendix~\ref{comb appendix}, Will Donovan for many helpful remarks, Greg Stevenson for vanquishing minimal projective resolutions from an early version of the proof of Lemma~\ref{LemmaA}, and Yukinobu Toda for conversations regarding \cite{T08}.

\section{Flops via Noncommutative Methods}\label{sec:FlopsviaNC}

In this section, we recall  the basics of modification algebras, tilting, mutations of modifying modules, and the relationship to flops, mainly to set notation. Throughout $R$ is a three dimensional complete local Gorenstein normal $\bC$-algebra,  and $f\colon X\to \Spec R$ is a flopping contraction as in the introduction. Furthermore, write $\Curve$ for $f^{-1}(\m)$ endowed with reduced scheme structure.  It is well known that $\Curve=\bigcup_{i=1}^n \Curve_i$ is a union of $n$ $\bP^1$s.
 
 \subsection{Tilting and Modification Modules}\label{sec:tilt and modify} 
Since $R$ is complete local,  there exist line bundles $\scrL_1,\hdots, \scrL_n\in \Pic(X)$ such that $\scrL_i \cdot \Curve_j=\updelta_{ij}$. Set $\scrV_0\colonequals \scrO_X$, and write $\scrV_i$ for the vector bundle arising as the universal extension
\begin{equation}
0\to \scrO_{X}^{\oplus r_i-1}\to \scrV_i\to \scrL_i\to0,\label{extension}
\end{equation}
associated to a minimal set of $r_i-1$ generators of the $R$-module $\mathrm{H}^1(X,\scrL_i^*)$. Then by \cite[3.5.5]{VdB} the vector bundle $\scrV_X\colonequals \bigoplus_{i=0}^n\scrV_i^*$ is tilting, and so after setting 
\[
\Lambda\colonequals \End_X(\scrV_X)\cong\End_R(f_*\scrV_X),
\]  
the functor 
\begin{equation}
\Uppsi\colonequals \RHom_X(\scrV_X,-)\colon \Db(\coh X)\to \Db(\fmod \Lambda)\label{tilt equiv}
\end{equation} 
is an equivalence. Our approach to stability conditions will be through noncommutative methods. Recall that $\CM R$ denotes the category of (maximal) Cohen--Macaulay $R$-modules, and $\refl R$ denotes the category of reflexive $R$-modules.  A reflexive $R$-module $L\in \refl R$ is called {\it modifying} if $\End_{R}(L)\in \CM R$.

In the flops setting, for the fixed $f\colon X\to\Spec R$, consider the underived direct image $N_i\colonequals f_*(\scrV_i^*)\in \fmod R$. Note that $N_0\cong R$.  Throughout, we set
\begin{eqnarray}
N\colonequals f_*({\scrV_X})\cong \bigoplus_{i=0}^n N_i.\label{ass modi}
\end{eqnarray}
It is known that $N\in\CM R$, and $N$ is a modifying $R$-module \cite[3.2.10]{VdB}.

\subsection{Mutations and Equivalences}\label{sec:mut and equiv}
 Given  any modifying $R$-module $L=\bigoplus_{j=0}^nL_j$ with each $L_j$ indecomposable, there is an operation, called {\it mutation at $L_i$}, that gives a new modifying $R$-module written $\upnu_i L$.    We briefly recall the construction here. Set 
\[
  L_i^{c}\colonequals \bigoplus_{j\neq i}L_j,
\] 
so that  $L=L_i\oplus L_i^c$, and consider the {\it minimal right $\add (L_i^c)^*$-approximation} 
\begin{eqnarray}
a_i\colon U_i\to L_i^*\label{minimal app}
\end{eqnarray}
of $L_i^*$, which by definition means that
\begin{enumerate}
\item $U_i\in\add(L_i^c)^*$ and $a_i\circ(-)\colon \Hom_R((L_i^c)^*,U_i)\to\Hom_R((L_i^c)^*,L_i^*)$ is surjective,
\item If $b\in\End_R(U_i)$ satisfies $a_i=a_i \circ b$, then $b$ is an isomorphism.
\end{enumerate}
Since $R$ is complete, such an $a_i$ exists and is unique up to isomorphism. The (left) mutation of $L$ at $L_i$ is then defined to be
\[
\upnu_i L\colonequals (\Ker a_i)^*\oplus L_i^c.
\] 
The following properties are known.

\begin{prop}\label{basicmodi}
With notation as above,  in particular $R$ is isolated cDV, the following statements hold.
\begin{enumerate} 
\item\label{basicmodi 1} The mutation $\upnu_i L$ is a modifying $R$-module.
\item\label{basicmodi 3} There is an isomorphism $\upnu_i\upnu_iL\cong L$.
 \end{enumerate}
\end{prop}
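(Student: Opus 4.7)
My plan is to prove both parts by a direct analysis of the approximation sequence
\[
0 \to K_i \to U_i \xrightarrow{a_i} L_i^* \to 0,
\]
(surjectivity of $a_i$ follows since $R \in \add L$ and $(L_i^c)^*$ therefore contains $R$, so any map from $R$ to $L_i^*$ factors through $U_i$). The whole argument runs on two observations: (i) under the standing Gorenstein and reflexivity hypotheses, $R$-duality $(-)^* = \Hom_R(-, R)$ is exact on suitable short exact sequences of CM modules, and (ii) minimal right approximations dualize to minimal left approximations.

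\textbf{Part (1).} First I would verify that $K_i$ is reflexive, using that it sits inside the reflexive module $U_i$ and applying depth lemma together with $\depth_R U_i = 3$ and $\depth_R L_i^* \ge 2$. Hence $\upnu_i L = K_i^* \oplus L_i^c$ is a reflexive $R$-module. To show it is modifying, I would apply $\Hom_R(\upnu_i L, -)$ to the dualized exact sequence
\[
0 \to L_i \to U_i^* \to K_i^* \to 0,
\]
obtained by $R$-dualizing the approximation sequence (short exactness is preserved because $\Ext^1_R(L_i^*,R) = 0$, since $L_i^*$ is CM over the $3$-dimensional Gorenstein ring $R$). Since $\Hom_R(\upnu_i L, L_i)$ and $\Hom_R(\upnu_i L, U_i^*)$ lie in $\CM R$ (the first because $L$ is modifying and $L_i^c \in \add L$ together with $\Hom_R(K_i^*, L_i) = \Hom_R(L_i^*, K_i) = 0$ by the approximation property, the second because $U_i^* \in \add L_i^c \subseteq \add L$), a depth chase forces $\Hom_R(\upnu_i L, K_i^*) \in \CM R$. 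Combined with $\Hom_R(\upnu_i L, L_i^c) \in \CM R$, which is immediate, this gives $\End_R(\upnu_i L) \in \CM R$.

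\textbf{Part (2).} For the involution, the point is that the dualized sequence $0 \to L_i \to U_i^* \to K_i^* \to 0$ already carries all the data needed to perform mutation at the $i$-th summand $K_i^*$ of $\upnu_i L$. By construction $U_i^* \in \add L_i^c$, so $U_i^{**} \in \add(L_i^c)^*$. Dualizing this sequence once more recovers the original $0 \to K_i \to U_i \to L_i^* \to 0$ after applying $K_i^{**} \cong K_i$ and $U_i^{**} \cong U_i$ (reflexivity, established in Part (1)). This sequence is then a right $\add(L_i^c)^*$-approximation of $(K_i^*)^* \cong K_i$ whose kernel is $L_i^*$, and hence upon $R$-dualization provides the new $(\ker)^* \cong L_i$ summand. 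Thus $\upnu_i \upnu_i L \cong L_i \oplus L_i^c = L$, provided the chosen approximation is still minimal.

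\textbf{Main obstacle.} The delicate point is minimality: dualization does not automatically preserve the minimality condition, and one must rule out that the dualized approximation acquires a trivial summand. I would handle this by the standard argument that a non-minimal approximation would split off a summand of $U_i$ mapping isomorphically to part of $L_i^*$, which would contradict minimality of the original $a_i$ under the duality $\Hom_R((L_i^c)^*, (L_i^c)^*) \cong \Hom_R(L_i^c, L_i^c)$. This minimality-preserving duality is precisely what ties $\upnu_i \upnu_i L$ back to $L$ and not merely to $L$ up to a projective summand.
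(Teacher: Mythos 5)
The paper's proof of both parts consists only of pointing to the literature: Part~(\ref{basicmodi 1}) to \cite[\S6]{IW1}, and Part~(\ref{basicmodi 3}) to \cite[\S7]{IW9}, with the explicit warning that the latter is ``specific to cDV singularities.'' Your attempt to give a self-contained proof of Part~(\ref{basicmodi 3}) is where the real issue lies, so let me focus on that.

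The gap in Part~(\ref{basicmodi 3}) is a handedness mismatch, not a minimality subtlety. To compute $\upnu_i\upnu_iL$, you must produce a minimal \emph{right} $\add(L_i^c)^*$-approximation of $(K_i^*)^*\cong K_i$, i.e.\ a surjection $V_i\to K_i$ with $V_i\in\add(L_i^c)^*$, and show its kernel is $L_i^*$. But the sequence $0\to K_i\to U_i\xrightarrow{a_i} L_i^*\to 0$ — which is what you recover after the double dualization — has $K_i$ as the \emph{kernel}, not the target, so it is not a right approximation of $K_i$ at all. The most one can extract from it is that the inclusion $K_i\hookrightarrow U_i$ is a \emph{left} $\add(L_i^c)^*$-approximation of $K_i$ (and even that needs an $\Ext^1$-vanishing argument). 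What your dualization argument actually establishes is the formal two-sided inverse statement ``the opposite-handed mutation undoes $\upnu_i$'' — this is the content of \cite[6.10]{IW1} and holds in complete generality. The statement $\upnu_i\upnu_iL\cong L$ with the \emph{same} handedness is strictly stronger: it asserts that the minimal right $\add(L_i^c)^*$-approximation of $K_i$ happens to be the $R$-dual of the left one, and this is false for general modifying modules. Proving it requires input specific to isolated cDV singularities (roughly, that the relevant exchange sequences are symmetric, which is tied to the contraction algebras and the dimension-$3$ Gorenstein geometry); this is exactly what \cite[\S7]{IW9} does and why the paper defers to it rather than arguing directly.

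A secondary point on Part~(\ref{basicmodi 1}): your claim that $\Hom_R(K_i^*,L_i)=0$ ``by the approximation property'' does not follow. The approximation property constrains maps $V\to L_i^*$ with $V\in\add(L_i^c)^*$, and by duality maps $L_i\to V^*$; it says nothing about $\Hom_R(L_i^*,K_i)$, which is what you need after applying duality. In fact this Hom-space is typically nonzero — if it always vanished, mutation would not produce interesting derived equivalences. The correct route (as in \cite[\S6]{IW1}) is to show $\Hom_R(\upnu_iL,K_i^*)$ is CM directly via a depth chase on the sequence obtained by applying $\Hom_R(\upnu_iL,-)$, without any vanishing claim; but this requires controlling the relevant $\Ext^1$ term, which is where the substantive work lives and which your sketch elides.
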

\begin{proof}
The first part is general; see e.g.\ \cite[\S6]{IW1}.  The second part is specific to isolated cDV singularities \cite[9.28]{IW9}.
\end{proof}

\begin{dfn}\label{exchange def}
Fix the modifying module $N$ from \eqref{ass modi}. Write $\Mut(N)$ for the set of isomorphism classes of all iterated mutations of $N$, and $\Mut_0(N)$ for the subset consisting of those iterated mutations of $N$ at only the $(i\neq0)$-th summands. The exchange graph $\EG(N)$ is the graph whose vertices are the elements of $\Mut(N)$, and two vertices in $\EG(N)$ are joined by an edge if and only if the corresponding modifying modules are related by a mutation at an indecomposable summand.  The exchange graph $\EG_0(N)$ is the full subgraph whose vertices are the elements of $\Mut_0(N)$.
\end{dfn}  
Alternatively, the exchange graph $\EG_0(N)$ is the full subgraph whose vertices correspond to CM modules. We once and for all fix a decomposition $N=R\oplus N_1\oplus \hdots\oplus N_n$, where $N_0=R$.  Via the Coxeter-style combinatorics in Section~\ref{hyperplane section}, this fixed decomposition induces an ordering on the summands of all other elements $L$ of $\Mut_0(N)$, such that locally crossing a wall locally labelled $s_i$ always corresponds to replacing the $i$th summand.  In this way,  there is a global labelling on the edges of both $\EG_0(N)$ and $\EG(N)$ using the sets $\{s_1,\hdots,s_n\}$ and $\{s_0,s_1,\hdots,s_n\}$  respectively.

The mutation of a modifying $R$-module $L$ gives rise to a derived equivalence between $\Gamma\colonequals \End_R(L)$ and $\upnu_i\Gamma\colonequals \End_R(\upnu_i L)$,  induced by a tilting bimodule $T_i$.  Since $R$ is isolated, in fact $T_i=\Hom_R(L,\upnu_iL)$ by \cite[6.14]{IW1}, and the following functor is an equivalence:
\begin{eqnarray}
\Upphi_{i}\colonequals \RHom_{\Gamma}(T_i,-)\colon \Db(\fmod \Gamma)\xrightarrow{\sim}\Db(\fmod \upnu_i\Gamma).\label{mutation eqv}
\end{eqnarray}
The functor $\Upphi_i$ is called the {\it mutation functor} at the summand $i$.

\subsection{Flops and Mutation}

Recall that the exceptional locus of $f$, given reduced scheme structure, decomposes into $n$ copies of $\mathbb{P}^1$, namely $\Curve=\bigcup_{i=1}^n \Curve_i$. For each $\Curve_i$ there exists a flopping contraction $g_i\colon X\to Y_i$ which contracts only $\Curve_i$, and the flopping contraction $f\colon X\to \Spec R$ factors through $g_i$.  Furthermore, there exists a flop $g_i^+\colon X_i^+\to Y_i$ of $g_i$ such that the following diagram commutes
\[
\begin{tikzcd}
X\arrow[rd, "g_i"] \arrow[rdd, "f"']\arrow[rr, "",dashed]&& X_i^+\arrow[ld, "g_i^+"']\arrow[ldd, "f_i^+"]\\
&Y_i\arrow[d, "h_i"]&\\
&\Spec R&
\end{tikzcd}
\]
where $f_i^+\colonequals h_i\circ g_i^+$.  Then $(f^{+}_i)^{-1}(\m)$, with reduced scheme structure, is  the union of $n$ irreducible curves $\bigcup_{j=1}^n\Curve_j^+$, where for $j\neq i$ each $\Curve_j^+$ is the proper transformation of $\Curve_j$, and if $j= i$ then $\Curve_i^+$ is the flopped curve.

\begin{thm}[{\cite[4.2]{HomMMP}}]\label{flopmut}
With notation as above, the following hold.
\begin{enumerate}
\item There is an isomorphism of $R$-modules $\mathrm{H}^0(X_i^+,\scrV_{X_i^+})\cong \upnu_i N$.

\item The following diagram of equivalences is functorially commutative
\[
\begin{tikzpicture}
\node (A1) at (0,0) {$\Db(\coh X)$};
\node (A2) at (3.5,0) {$\Db(\coh X_i^+)$};
\node (B1) at (0,-1.5) {$\Db(\fmod \Lambda)$};
\node (B2) at (3.5,-1.5) {$\Db(\fmod \upnu_i\Lambda)$};
\draw[->] (A1) -- node[above] {\scriptsize $\Flop_i$} (A2);
\draw[->] (B1) -- node[above] {\scriptsize $\Upphi_i$} (B2);
\draw[->] (A1) -- node[left] {\scriptsize $\RHom_X(\scrV_X,-)$} (B1);
\draw[->] (A2) -- node[right] {\scriptsize $\RHom_{X_i^+}(\scrV_{X_i^+},-)$} (B2);
\end{tikzpicture}
\]
where ${\sf Flop}_i\colon \Db(\coh X)\to \Db(\coh X_i^+)$ is the quasi-inverse of the Bridgeland--Chen flop functor \cite{B02,Chen}.
\end{enumerate}
\end{thm}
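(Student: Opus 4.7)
The plan is to establish part (1) first, and then deduce part (2) by a uniqueness argument for equivalences generated by tilting bundles. For (1), the identification splits naturally into two cases according to whether the summand index $j$ equals $i$ or not. For each $j\neq i$, since the flop $X\dashrightarrow X_i^+$ is an isomorphism away from $\Curve_i\cup \Curve_i^+$, the proper transform identifies $\Curve_j$ with $\Curve_j^+$, and one may choose the dual line bundles $\scrL_j^+\in\Pic(X_i^+)$ compatibly with $\scrL_j\in\Pic(X)$ via pushforward to $Y_i$ and pullback. The obstruction spaces $\mathrm{H}^1(X_i^+,\scrL_j^{+*})$ and $\mathrm{H}^1(X,\scrL_j^*)$ are then canonically identified as $R$-modules, so the universal extensions \eqref{extension} on either side produce vector bundles with $f^+_*\scrV_j^{+*}\cong N_j$ for all $j\neq i$.

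The nontrivial content of (1) is the identification of the remaining summand $N_i^+\colonequals f^+_*\scrV_i^{+*}$ with the mutation $(\Ker a_i)^*$. The plan is to apply the inverse of the flop functor, $\Flop_i^{-1}$, to $\scrV_i^{+*}$ and analyse the resulting object on $X$. Using $R$-linearity of $\Flop_i$ and a local computation of how line bundles transform across the flop, $\Flop_i^{-1}(\scrV_i^{+*})$ can be shown to fit in a distinguished triangle in $\Db(\coh X)$ of the form
\[
\Flop_i^{-1}(\scrV_i^{+*})\to U\to \scrV_i^*\to
\]
with $U\in\add\bigl(\bigoplus_{j\neq i}\scrV_j^*\bigr)$. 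Applying $f_*$ and using that all objects involved lie in $\CM R$ so that higher direct images vanish, this descends to a short exact sequence of $R$-modules whose right-hand map is, by transport of minimality from the $X_i^+$ side under the equivalence $\Flop_i^{-1}$, precisely the minimal right $\add(N_i^c)^*$-approximation $a_i$ of \eqref{minimal app}, where $N_i^c\colonequals \bigoplus_{j\neq i}N_j$. Dualising then identifies $N_i^+\cong(\Ker a_i)^*$, completing~(1).

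Part (2) follows by a standard uniqueness argument. Both composites
\[
\Upphi_i\circ\RHom_X(\scrV_X,-)\quad\text{and}\quad \RHom_{X_i^+}(\scrV_{X_i^+},-)\circ\Flop_i
\]
are $R$-linear derived equivalences $\Db(\coh X)\to \Db(\fmod\upnu_i\Lambda)$ which, by part (1) together with the identifications $\End_{X_i^+}(\scrV_{X_i^+})\cong \End_R(\upnu_i N)\cong \upnu_i\Lambda$, send the tilting generator $\scrV_X$ to an object isomorphic to the tilting bimodule $T_i=\Hom_R(N,\upnu_i N)$ used in \eqref{mutation eqv}. A Rickard-type (or Orlov-style on the geometric side) uniqueness theorem then upgrades agreement on a tilting generator, together with compatibility with morphisms out of it, to a natural isomorphism of functors, giving the functorial commutativity claimed.

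The main obstacle is the geometric-to-noncommutative transfer in step (1): constructing the distinguished triangle above and verifying that the induced map on $R$-modules is the \emph{minimal} approximation. This is precisely where the isolated cDV assumption enters, via Proposition~\ref{basicmodi}, ensuring the rigidity of modifying modules under mutation; combined with $R$ being complete local (so that minimal approximations exist and are unique up to isomorphism), this rigidity is what allows one to pin down $\Flop_i^{-1}(\scrV_i^{+*})$ abstractly, without an explicit geometric computation.
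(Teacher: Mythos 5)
This theorem is stated by citation to \cite[4.2]{HomMMP} and is not proved in the present paper, so there is no internal proof to compare against; the assessment is of your argument on its own terms.

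Your sketch of (1) is plausible in outline, but the technically essential point is asserted rather than argued. Producing the triangle $\Flop_i^{-1}(\scrV_i^{+*})\to U\to\scrV_i^*\to$ with $U\in\add\bigl(\bigoplus_{j\neq i}\scrV_j^*\bigr)$, and then showing that applying $f_*$ turns the second map into the \emph{minimal} right $\add(N_i^c)^*$-approximation, is exactly the content of the theorem. Proposition~\ref{basicmodi} cannot substitute for this: the involutivity $\upnu_i\upnu_i L\cong L$ is a property of the mutation operation, not a device for identifying $\Flop_i^{-1}(\scrV_i^{+*})$ or for promoting a surjection to a minimal approximation. The arguments in \cite{HomMMP} instead work through the factorisation $X\to Y_i\to\Spec R$ and Van den Bergh's tilting bundle over the one-curve contraction $g_i$, which makes the exchange sequence geometrically visible.

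Part (2) contains a genuine logical gap. You argue that part (1), together with $\End_{X_i^+}(\scrV_{X_i^+})\cong\upnu_i\Lambda$, implies both composites send $\scrV_X$ to $T_i$. For the composite $\RHom_{X_i^+}(\scrV_{X_i^+},-)\circ\Flop_i$ this requires knowing $\Flop_i(\scrV_X)$. But part (1) only identifies $\scrV_{X_i^+}$ and its pushforward; it says nothing about $\Flop_i(\scrV_X)$, which is an a priori unrelated object of $\Db(\coh X_i^+)$. Determining where the Bridgeland--Chen flop functor sends $\scrV_X$ is precisely what one is trying to establish, so the proposed Rickard-type reduction to tilting generators is circular at this step. (There is also a secondary issue: with the paper's convention $\Upphi_i=\RHom_\Lambda(T_i,-)$, the left composite sends $\scrV_X\mapsto\Lambda\mapsto\RHom_\Lambda(T_i,\Lambda)$, which is a complex and not isomorphic to $T_i$ in general.) Closing the gap requires a direct identification of the two-sided tilting complex realising $\Flop_i$ under the given tilting equivalences, which is exactly what \cite[4.2]{HomMMP} supplies.
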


\subsection{\texorpdfstring{$R$}{R}-linear equivalences}
In our flops setup $f\colon X\to\Spec R$, the category $\coh X$ is $R$-linear, and thus so too is $\Db(\coh X)$.  Autoequivalences that preserve this structure will be particularly important later. Here we briefly recall the $R$-linear structure, and give some preliminary results.

Since $\Spec R$ is an affine scheme,  there is a bijection
\begin{equation}
\Mor(X,\Spec R) \longleftrightarrow \Hom(R,\scrO_X(X)).\label{morphism bijection}
\end{equation}
Given $g\colon X\to \Spec R$, we will write $\mathsf{g}\colon R\to \scrO_X(X)$ for the corresponding morphism.

For $a\in\Hom_{\coh X}(\scrF,\scrG)$ and $\uplambda\in \scrO_X(X)$, consider $\uplambda\cdot a\in \Hom_{\coh X}(\scrF,\scrG)$ defined by 
\[
(\uplambda\cdot a)(x)\colonequals \uplambda|_U \cdot a(x)\in \scrG(U)
\] 
for all $x\in \scrF(U)$.  Under this action, $\coh X$ is an $\scrO_X(X)$-linear category.  The morphism $f\colon X\to\Spec R$ then gives $\coh X$ the structure of an $R$-linear category,  via $\mathsf{f}\colon R\to \scrO_X(X)$. 

The following two results are general, and are not specific to our flops setup.  Both are well-known, but for lack of reference we provide the proof.  
\begin{prop}\label{iso is R linear}
Consider $R$-schemes $f\colon X\to \Spec R$, $g\colon Y\to \Spec R$, and a morphism $h\colon X\to Y$. Writing $\mathsf{h}\colon \scrO_Y(Y)\to\scrO_X(X)$ for the corresponding morphism,  then the following are equivalent.
\begin{enumerate}
\item $g\circ h=f$.
\item $\mathsf{h}\circ \mathsf{g} =\mathsf{f}$.
\item $h^*\colon\coh Y\to \coh X$ is an $R$-linear functor.
\end{enumerate}
If $h$ is an isomorphism, the last condition is equivalent to $h_*\colon\coh X\to \coh Y$ being an $R$-linear functor.
\end{prop}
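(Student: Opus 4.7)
The plan is to first establish $(1)\Leftrightarrow(2)$ via the functoriality of the global-sections bijection \eqref{morphism bijection}, then pin down $(2)\Leftrightarrow(3)$ by tracing how $h^*$ transports the $R$-action, and finally reduce the pushforward statement to the pullback case via the natural isomorphism $h_*\cong (h^{-1})^*$.

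For $(1)\Leftrightarrow(2)$, the bijection \eqref{morphism bijection} is natural in $X$, so composition of morphisms on the geometric side corresponds to composition of ring maps on the algebraic side. In particular $g\circ h$ corresponds to $\mathsf{h}\circ\mathsf{g}$, while $f$ corresponds to $\mathsf{f}$, and equality of one pair is equivalent to equality of the other.

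For $(2)\Leftrightarrow(3)$, the key computation is that for any $a\in\Hom_{\coh Y}(\scrF,\scrG)$ and any global $\mu\in\scrO_Y(Y)$, pullback satisfies $h^*(\mu\cdot a)=\mathsf{h}(\mu)\cdot h^*(a)$. This is immediate from the description of $h^*$ as $h^{-1}(-)\otimes_{h^{-1}\scrO_Y}\scrO_X$ together with the fact that the structure map $h^{\sharp}\colon h^{-1}\scrO_Y\to\scrO_X$ restricts on global sections to $\mathsf{h}$. Since the $R$-action on $\coh Y$ sends $\lambda\in R$ to multiplication by $\mathsf{g}(\lambda)$, while the $R$-action on $\coh X$ sends $\lambda$ to multiplication by $\mathsf{f}(\lambda)$, one therefore obtains
\[
h^*(\lambda\cdot a)=\mathsf{h}(\mathsf{g}(\lambda))\cdot h^*(a)\quad\text{and}\quad\lambda\cdot h^*(a)=\mathsf{f}(\lambda)\cdot h^*(a).
\]
These agree for all $\lambda$ and $a$ precisely when $\mathsf{h}\circ\mathsf{g}=\mathsf{f}$, which yields $(2)\Rightarrow(3)$. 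For the converse, specialise to $\scrF=\scrG=\scrO_Y$ and $a=\Id_{\scrO_Y}$: under the identifications $\End_{\coh Y}(\scrO_Y)=\scrO_Y(Y)$ and $\End_{\coh X}(\scrO_X)=\scrO_X(X)$, the identity above becomes $\mathsf{h}(\mathsf{g}(\lambda))=\mathsf{f}(\lambda)$ for every $\lambda\in R$, which is $(2)$.

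Finally, when $h$ is an isomorphism there is a natural isomorphism $h_*\cong(h^{-1})^*$, so $h_*$ is $R$-linear iff $(h^{-1})^*$ is. Applying $(2)\Leftrightarrow(3)$ to the morphism $h^{-1}\colon Y\to X$ (swapping the roles of source and target), $R$-linearity of $(h^{-1})^*$ becomes $\mathsf{h}^{-1}\circ\mathsf{f}=\mathsf{g}$, equivalently $\mathsf{f}=\mathsf{h}\circ\mathsf{g}$, i.e.\ $(2)$ itself. The only mild obstacle is keeping track of which structure morphism transports the $R$-action on which side; beyond that, everything is a direct unwinding of the definitions laid out just before the proposition.
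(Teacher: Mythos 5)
Your proof is correct and follows essentially the same route as the paper's: establish the identity $h^*(y\cdot a)=\mathsf{h}(y)\cdot h^*(a)$, deduce (1)$\Leftrightarrow$(2) from the naturality of the global-sections bijection, get (2)$\Rightarrow$(3) directly, recover (3)$\Rightarrow$(2) by evaluating on $\End(\scrO_Y)$ (the paper phrases this as a commutative diagram, but it is the same specialisation), and reduce the $h_*$ statement to the $h^*$ one. The only cosmetic differences are that you justify the key identity via the $h^{-1}(-)\otimes_{h^{-1}\scrO_Y}\scrO_X$ description rather than the paper's element-by-element computation on $\scrF\otimes_Y\scrO_X$, and for the final claim you invoke $h_*\cong(h^{-1})^*$ whereas the paper simply notes that the inverse of an $R$-linear functor is $R$-linear.
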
 

\begin{proof} 
Note first that for $y\in \scrO_Y(Y)$, $a\in \Hom_Y(\scrF,\scrG)$ and $x\otimes\uplambda\in h^*(\scrF)=\scrF\otimes_Y \scrO_X$, 
\begin{eqnarray*}
h^*(y\cdot a)( x\otimes\uplambda)&=&(y\cdot a\otimes 1_X)( x\otimes\uplambda)\\
&=&(y\cdot a(x))\otimes \uplambda\\
&=&a(x)\otimes \mathsf{h}(y)\uplambda\\
&=&\mathsf{h}(y)\cdot(a(x)\otimes \uplambda)\\
&=& \mathsf{h}(y)\cdot\bigl( (a\otimes 1_X)(x\otimes \uplambda)\bigr).
\end{eqnarray*}
Hence by linearity  $h^*(y\cdot a)=\mathsf{h}(y)\cdot h^*(a)$ for all $y\in\scrO_Y(Y)$ and all $a\in \Hom_Y(\scrF,\scrG)$.\\
\noindent
(1)$\Leftrightarrow$(2) This is an immediate consequence of the bijection \eqref{morphism bijection}.\\
(2)$\Rightarrow$(3) Assuming (2), then for any $r\in R$ and $a\in\Hom_Y(\scrF,\scrG)$, 
\[
h^*(r\cdot a)=h^*(\mathsf{g}(r)\cdot a)=\mathsf{h}(\mathsf{g}(r))\cdot h^*(g)=\mathsf{f}(r)\cdot h^*(g)=r\cdot h^*(g),
\]
and so (3) holds.\\
(3)$\Rightarrow$(2) There is a commutative diagram
\[
\begin{tikzcd}
\scrO_Y(Y)\arrow[rr, "\mathsf{h}"]\arrow[d, "\sim"]&&\scrO_X(X)\arrow[d,"\sim"]\\
\Hom_Y(\scrO_Y,\scrO_Y)\arrow[rr, "h^*"]&&\Hom_X(\scrO_X,\scrO_X)\,,
\end{tikzcd}
\]
where the vertical arrows are $R$-linear isomorphisms. Since $h^*$ is $R$-linear by assumption, it follows that so too is $\mathsf{h}$. But then
\[
\mathsf{h}(\mathsf{g}(r))=\mathsf{h}(r\cdot 1)=r\cdot\mathsf{h}(1)=r\cdot 1=\mathsf{f}(r),
\]
for all $r\in R$, and thus $\mathsf{h}\circ\mathsf{g}=\mathsf{f}$.\\
The last statement holds since the inverse of an $R$-linear functor is $R$-linear.
\end{proof}

\begin{lem}\label{tensor is R linear}
Consider an $R$-scheme $X\to\Spec R$, and a line bundle $\scrL$ on $X$.  Then the functor $-\otimes\scrL\colon\coh X\to \coh X$ is $\scrO_X(X)$-linear, and in particular, is $R$-linear.
\end{lem}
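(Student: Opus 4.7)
The plan is to verify the $\scrO_X(X)$-linearity directly from the sheaf-theoretic definition of the tensor product, and then derive $R$-linearity as an immediate consequence via the structure morphism $\mathsf{f}\colon R\to\scrO_X(X)$.

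First, I would recall that for a morphism $a\colon\scrF\to\scrG$ in $\coh X$, the functor $-\otimes\scrL$ sends $a$ to $a\otimes\mathrm{id}_\scrL\colon\scrF\otimes_{\scrO_X}\scrL\to\scrG\otimes_{\scrO_X}\scrL$. The essential input is the defining property of the $\scrO_X$-module structure on a tensor product of $\scrO_X$-modules: on sections over an open $U$, one has
\[
\uplambda|_U\cdot(y\otimes s)=(\uplambda|_U\cdot y)\otimes s
\]
for $\uplambda\in\scrO_X(X)$, $y\in\scrG(U)$, $s\in\scrL(U)$. Given this, the verification reduces to a one-line sectionwise check on pure tensors.

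Second, I would fix $\uplambda\in\scrO_X(X)$ and $a\in\Hom_{\coh X}(\scrF,\scrG)$, take an open $U$ and a pure tensor $x\otimes s\in(\scrF\otimes\scrL)(U)$, and compute both sides. On one hand,
\[
\bigl((\uplambda\cdot a)\otimes\mathrm{id}_\scrL\bigr)(x\otimes s)=(\uplambda\cdot a)(x)\otimes s=(\uplambda|_U\cdot a(x))\otimes s,
\]
using the defining formula $(\uplambda\cdot a)(x)=\uplambda|_U\cdot a(x)$ recalled just before the lemma. On the other hand,
\[
\bigl(\uplambda\cdot(a\otimes\mathrm{id}_\scrL)\bigr)(x\otimes s)=\uplambda|_U\cdot(a(x)\otimes s)=(\uplambda|_U\cdot a(x))\otimes s,
\]
by the displayed property of the $\scrO_X$-module structure on the tensor product. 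Since pure tensors generate $(\scrF\otimes\scrL)(U)$ as an abelian group, the two morphisms $(\uplambda\cdot a)\otimes\mathrm{id}_\scrL$ and $\uplambda\cdot(a\otimes\mathrm{id}_\scrL)$ agree on every open $U$, hence are equal. This establishes the $\scrO_X(X)$-linearity.

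Finally, for the $R$-linear statement, I would simply observe that the $R$-linear structure on $\coh X$ is, by construction, obtained from the $\scrO_X(X)$-linear structure by restricting scalars along $\mathsf{f}\colon R\to\scrO_X(X)$, i.e.\ $r\cdot a=\mathsf{f}(r)\cdot a$. Any $\scrO_X(X)$-linear functor is therefore automatically $R$-linear. There is no real obstacle here: the content of the lemma is really just unpacking the definitions, and the only thing to be careful about is to distinguish between the two a~priori different $\scrO_X$-actions on the tensor product (via the first or the second factor), both of which give the same answer by the tensor relation.
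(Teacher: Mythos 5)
Your proof is correct and matches the paper's argument: both verify the identity $(\uplambda\cdot a)\otimes\mathrm{id}_\scrL=\uplambda\cdot(a\otimes\mathrm{id}_\scrL)$ by computing on pure tensors sectionwise and appealing to linearity, then obtain $R$-linearity by restriction of scalars along $\mathsf{f}$. Your write-up spells out the two sides separately and notes the tensor relation explicitly, but the content is identical.
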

\begin{proof}
For any $\uplambda\in \scrO_X(X)$, $a\in \Hom_X(\scrF,\scrG)$ and $x\otimes \ell\in \scrF\otimes \scrL$, we have 
\[
\bigl((\uplambda\cdot a)\otimes \scrL\bigr)\bigl(x\otimes \ell\bigr)=\uplambda\cdot a(x)\otimes \ell =(\uplambda\cdot (a\otimes\scrL))( x\otimes \ell )
\] 
and so by linearity the result follows.
\end{proof}

\section{Hyperplane Arrangements via \texorpdfstring{$K$}{K}-theory}\label{hyperplane section}

When the generic hyperplane section of $X$ is not smooth, it will turn out that stability conditions on $\scrC$ and $\scrD$ will not, in general,  be the regular covering of a space constructed using global rules.  The space will instead be constructed using iterated local rules, which we outline here.  This section is largely a summary of \cite{IW9} suitable for our needs, with the exception of some new results in Sections~\ref{New results S3} and \ref{complexified actions subsection}.

\subsection{General Elephants}\label{elephant subsection}
Slicing the flopping contraction $X\to\Spec R$ gives rise to combinatorial data, in the form of a labelled ADE Dynkin diagram $\Updelta$, vertices $\Delt$, and a subset $\scrJ\subseteq \Delt$.  We briefly recall this here.

Pulling back $X\to\Spec R$ along the map $\Spec R/g\to\Spec R$ for a generic element $g\in R$, gives a morphism $S\to\Spec R/g$, say.  By \cite{Pagoda}, $R/g$ is an ADE surface singularity, and $S$ is a partial crepant resolution. As such, $S$ is obtained by blowing down curves in the minimal resolution $\widetilde{S}$, and so by McKay correspondence we can describe $S$ combinatorially via a Dynkin diagram $\Updelta$, together with the subset $\scrJ\subseteq \Delt$ of those vertices that are blown down to obtain $S$. Thus, by convention, $\scrJ$ corresponds to the curves  that have been contracted by $\widetilde{S}\to S$. 

This data can be extended into the affine setting as follows.  Consider the corresponding extended Dynkin diagram $\Updelta_{\aff}$, and denote the extending vertex by $\star$.  Set $\scrJ_{\mathrm{aff}}\colonequals \scrJ$, considered as a subset of the vertices of $\Updelta_{\aff}$.

From this data, consider $\bR^{|\Updelta|}$ and $\bR^{|\Updelta_{\aff}|}$ based by the duals $\upalpha_i^*$, where the $i$ are indexed over the vertices of $\Updelta$ (respectively, $\Updelta_{\aff}$).  Inside these spaces, consider the Weyl chamber $C_+$, where all coordinates are positive, and set
\begin{align*}
\Cone{\Updelta}&=\bigcup_{w\in \WDelt}w(C_+)
\\
\Cone{\Updelta_{\aff}}&=\bigcup_{w\in \WDeltaff}w(C_+),
\end{align*}
where $\WDelt$ is the finite Weyl group, and $\WDeltaff$ the affine Weyl group.

There are subspaces $D_{\scrJ}\subset\bR^{|\Updelta|}$ and $D_{\scrJ_{\aff}}\subset\bR^{|\Updelta_{\aff}|}$ defined as 
\begin{align*}
D_{\scrJ}&\colonequals \{ \upvartheta\in\bR^{|\Updelta|}\mid \upvartheta_i=0\mbox{ if }i\in \scrJ\},\\
D_{\scrJ_{\aff}}&\colonequals \{ \upvartheta\in\bR^{|\Updelta_{\aff}|}\mid \upvartheta_i=0\mbox{ if }i\in \scrJ_{\mathrm{aff}}\}.
\end{align*}
These are based by $\upalpha_i^*$ for $i\in \Updelta- \scrJ$, respectively $i\in \Updelta_{\aff}- \scrJ_{\mathrm{aff}}$. As such, $\dim D_{\scrJ}=n$, the number of curves in the flopping contraction, and  $\dim D_{\scrJ_{\aff}}=n+1$.

\begin{dfn}[{\cite[\S1]{IW9}}]\label{T Cone def}
For $\scrJ\subseteq \Delt$ as above, 
\begin{enumerate}
\item $\Cone{\scrJ}\colonequals \Cone{\Updelta}\cap D_\scrJ$ is called the \emph{$\scrJ$-finite hyperplane arrangement}. 
\item $\Cone{\scrJ_{\aff}}\colonequals \Cone{\Updelta_{\aff}}\cap D_{\scrJ_{\aff}}$
is called the \emph{$\scrJ$-affine arrangement}. 
\end{enumerate}
\end{dfn}

\subsection{Affine Hyperplanes via K-theory}\label{affine notation subsection}
The combinatorics of the previous section can also be constructed via K-theory, which is more useful for stability conditions later.  Recall from \eqref{ass modi} that the flopping contraction $f\colon X\to \Spec R$ associates a modifying $R$-module $N$, with summands $R=N_0,N_1,\hdots,N_n$. Set $\Lambda\colonequals \End_R(N)$ and   $\scrP_i\colonequals \Hom_R(N,N_i)$, so that $\{\scrP_i\}_{0\leq i \leq n}$ is the set of all indecomposable projective $\Lambda$-modules.  It is well-known that
\[
\scrK_N\colonequals K_0(\Perf \Lambda)\cong \bigoplus_{i=0}^n\bZ[\scrP_i]\cong\bZ^{n+1}.
\]
For every $L\in\Mut N$, this process can be repeated: indeed each $\Lambda_L\colonequals\End_R(L)$ has K-theory of the same rank as above, and to avoid confusion write $\scrK_L\colonequals K_0(\Perf\Lambda_L)$.  Since the given flopping contraction $f$, and its associated modification algebra $\Lambda$ is \emph{fixed}, throughout we will refer to the distinguished object 
\[
\scrK\colonequals\scrK_N.
\]
Every mutation functor $\Upphi_i\colon \Db(\fmod\Lambda_L)\to\Db(\fmod\Lambda_{\upnu_iL})$ restricts to an equivalence on perfect complexes, and so write
\[
\upphi_i\colon \scrK_L\simto\scrK_{\upnu_iL}
\]
for the induced isomorphism of $K_0$-groups.  This map can be represented by an invertible $(n+1)\times(n+1)$ matrix over $\mathbb{Z}$, which is described as follows.

\begin{lem}\label{k-correspond proj}
Suppose that $L$ is modifying, and consider the exchange sequence \cite[(6.I)]{IW1} obtained as the dual of \eqref{minimal app}, namely
\begin{equation}
0\to L_i\to \bigoplus_{j\neq i}{L_j}^{\oplus b_{ij}} \to K_i^*.\label{bij}
\end{equation}
Write $\scrP_j=\Hom_R(L,L_j)$ for the projectives in $\Lambda_L$, and $\scrQ_j$ for the correspondingly ordered projectives in $\Lambda_{\upnu_iL}$.  Then $\upphi_i^{-1}\colon\scrK_{\upnu_iL}\to\scrK_{L}$ sends
\begin{equation}
[\scrQ_t]\mapsto
\left\{
\begin{array}{cl}
[\scrP_t]& \mbox{if } t\neq i,\\
-[\scrP_i]+\sum_{j\neq i}b_{ij}[\scrP_j] & \mbox{if } t=i.
\end{array}
\right.
\label{k-correspond}
\end{equation}
\end{lem}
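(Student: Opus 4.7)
The plan is to compute $\Upphi_i^{-1}$ directly on each indecomposable projective $\scrQ_t$ of $\upnu_i\Gamma$. Since $T_i$ is a tilting $(\Gamma,\upnu_i\Gamma)$-bimodule with $\Upphi_i=\RHom_\Gamma(T_i,-)$, standard tilting theory identifies the inverse equivalence as $\Upphi_i^{-1}\cong T_i\otimes^{\bL}_{\upnu_i\Gamma}(-)$.  Writing $\scrQ_t=\upnu_i\Gamma\cdot e_t$ for the idempotent picking out the $t$th indecomposable summand of $\upnu_iL$, projectivity of $\scrQ_t$ kills all higher $\Tor$s and yields
\[
\Upphi_i^{-1}(\scrQ_t)\;\cong\;T_i\cdot e_t\;\cong\;\Hom_R(L,(\upnu_iL)_t).
\]

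For $t\neq i$ the summand $(\upnu_iL)_t=L_t$ agrees with the corresponding summand of $L$, so the right hand side is literally $\Hom_R(L,L_t)=\scrP_t$, which already establishes the first case of \eqref{k-correspond}.  For $t=i$ the mutation convention forces $(\upnu_iL)_i=K_i^*$, hence $\Upphi_i^{-1}(\scrQ_i)\cong\Hom_R(L,K_i^*)$, and what remains is to compute the class of this module in $\scrK_L$.

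For this, I would apply the left exact functor $\Hom_R(L,-)$ to the exchange sequence \eqref{bij}.  The first two terms map respectively to the projective $\Gamma$-modules $\scrP_i$ and $\bigoplus_{j\neq i}\scrP_j^{\oplus b_{ij}}$, producing a length-one projective resolution
\[
0\to \scrP_i\to \bigoplus_{j\neq i}\scrP_j^{\oplus b_{ij}}\to \Hom_R(L,K_i^*)\to 0.
\]
Additivity in $K_0(\Perf\Gamma)=\scrK_L$ then immediately gives $[\Hom_R(L,K_i^*)]=-[\scrP_i]+\sum_{j\neq i}b_{ij}[\scrP_j]$, exactly as claimed.

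The main obstacle is verifying that this $\Hom$-sequence really is short exact.  Two ingredients are required, both standard in the $3$-dimensional Gorenstein modifying setup and quoted from \cite{IW1}: first, that the exchange sequence \eqref{bij} is itself genuinely short exact (the right surjectivity stems from the approximation construction, while injectivity on the left encodes the minimality of $a_i$); second, the Ext-vanishing $\Ext^1_R(L,L_i)=0$, which is forced by $\End_R(L)\in\CM R$ together with depth considerations in dimension three, and which guarantees surjectivity on the right after $\Hom_R(L,-)$ is applied.  Modulo these well-known facts, the entire argument is forced by the tilting description of $\Upphi_i^{-1}$.
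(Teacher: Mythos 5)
Your overall structure mirrors the paper's: you identify $\Upphi_i^{-1}(\scrQ_t)$ as the $t$-th summand of $T_i=\Hom_R(L,\upnu_iL)$ via the tilting bimodule, dispose of $t\neq i$ immediately, and reduce $t=i$ to a short exact sequence of $\End_R(L)$-modules, all of which is fine and matches the paper. The gap is in how you justify the short exactness of the sequence obtained by applying $\Hom_R(L,-)$. You assert that \eqref{bij} is itself short exact, but the paper deliberately omits the trailing $\to 0$: the approximation $a_i\colon U_i\to L_i^*$ need not be surjective, and this genuinely fails once $R$ fails to be a direct summand of $L_i^c$, which can happen for $L\in\Mut(N)\setminus\Mut_0(N)$ since mutation at the vertex $0$ is allowed. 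Moreover your parenthetical reason, that ``the right surjectivity stems from the approximation construction,'' conflates two different maps: $a_i$ has target $L_i^*$, whereas the last map of \eqref{bij} has target $K_i^*$, and there is no direct implication between surjectivity of the one and the other. The second ingredient you invoke, $\Ext^1_R(L,L_i)=0$, is the standard depth consequence of $\Hom_R(L,L_i)\in\CM R$ when $L$ and $L_i$ are themselves maximal Cohen--Macaulay, but again elements of $\Mut(N)$ outside $\Mut_0(N)$ need not be CM, so this is not available at the generality required by the Lemma.

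The paper's proof avoids both issues by citing \cite[(6.Q)]{IW1}, which establishes precisely that applying $\Hom_R(L,-)$ to the (only half-exact) exchange sequence produces a short exact sequence of $\Lambda_L$-modules; the proof there uses the defining right-approximation property of $a_i$ directly, with no need for \eqref{bij} to be short exact nor for the Ext-vanishing you claim. Replacing your two unverified ingredients with that single citation repairs the argument, and the remaining steps of your proposal then coincide with the paper's proof.
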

\begin{proof}
Being induced by the tilting bimodule $T_i$ from \eqref{mutation eqv}, it is clear that $\Upphi_i$ sends $T_i$ to $\Lambda_{\upnu_iL}$.  Since $T_i$ only differs from $\Lambda_L$ at the summand $\scrP_i$, it is obvious that $\Upphi_{i}$ sends $\scrP_j$ to $\scrQ_j$ whenever $j\neq i$; see e.g.\ \cite[4.15(1)]{HomMMP}.  Hence $\Upphi_i^{-1}$ sends $\scrQ_j$ to $\scrP_j$.

 When $j=i$, under $\Upphi_i^{-1}$, the projective $\scrQ_i$ gets mapped to the $i$th summand of $T_i$, which by definition is $\Hom_R(L,K_i)$.  But by \cite[(6.Q)]{IW1}, applying $\Hom_R(L,-)$ to \eqref{bij} gives an exact sequence
\[
0\to \Hom_R(L,L_i)\to \bigoplus_{j\neq i} \Hom_R(L,L_j)^{\oplus b_{ij}}\to \Hom_R(L,K_i)\to 0.
\]
From this, the identification in K-theory clearly follows.
\end{proof}

 For $L\in\Mut(N)$, consider the shortest sequence of mutations 
\[
L\xrightarrow{i_1}\upnu_{i_1}L\to\hdots\xrightarrow{i_n}N,
\]
and define $\Upphi_L$ to be the composition of the corresponding mutation functors
\begin{equation}
\Upphi_L\colon \Db(\fmod \Lambda_{L})
\xrightarrow{\Upphi_{i_n}\circ\hdots\circ\Upphi_{i_1}} 
\Db(\fmod \Lambda).\label{eqn:defn PhiL}
\end{equation}
We write $\upphi_L\colon \scrK_L\to\scrK$ for the induced map on K-theory.  Throughout, whenever $\bZ\subseteq\mathds{k}$, we will abuse notation and also write $\upphi_L\colon 
\scrK_L\otimes\mathds{k}\to\scrK\otimes\mathds{k}$.  The following result is mainly combinatorial, and it mirrors the corresponding Coxeter statement.  We identify the basis element $\upalpha_i^*\in\Cone{\scrJ_{\aff}}$ with $[\scrP_i]\in\scrK$ to allow for the comparison.

\begin{thm}\cite[9.8]{IW9}\label{affine summary}
Suppose that $f\colon X\to \Spec R$ is a $3$-fold flopping contraction, such that $X$ has only terminal singularities. Then there is a decomposition
\[
\Cone{\scrJ_{\aff}} = \bigcup_{L\in\Mut N}\upphi_L(C_+)\subseteq\scrK\otimes\bR.
\]
In particular, the following statements hold.
\begin{enumerate}
\item The open decomposition $\bigcup\upphi_L(C_+)$ gives the chambers of the $\scrJ$-affine arrangement.
\item If $L\ncong M$, then $\upphi_{L}(C_+)$ and  $\upphi_{M}(C_+)$  do not intersect.
\item $\upphi_{L}(C_+)$ and  $\upphi_{M}(C_+)$ share a codimension one wall $\iff$ $L$ and $M$ differ by the mutation of an indecomposable summand.
\end{enumerate}
\end{thm}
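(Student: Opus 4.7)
The plan is to reduce the problem to a statement about the affine Weyl group $\WDeltaff$ acting on $\scrK\otimes\bR$, and then extract parts (1)--(3) from a bijection between iterated mutations of $N$ and chambers of $\Cone{\scrJ_{\aff}}$. Throughout I would invoke the general elephant of \S\ref{elephant subsection}: pulling $X\to\Spec R$ back along a generic hyperplane section gives a partial crepant resolution $S\to\Spec R/g$ of an ADE surface singularity, indexed by the pair $(\Updelta,\scrJ)$.

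The main step is to identify the K-theoretic transition $\upphi_i^{-1}$ of Lemma \ref{k-correspond proj} with the simple reflection $s_i$ of $\WDeltaff$, under the correspondence $[\scrP_i^L]\leftrightarrow\upalpha_i^*$. The matrix of $\upphi_i^{-1}$ already has the right shape: it fixes all basis vectors other than the $i$th, and sends $[\scrQ_i]$ to $-[\scrP_i]+\sum_{j\neq i}b_{ij}[\scrP_j]$. What needs proof is that the $b_{ij}$ coincide with the off-diagonal entries of the Cartan matrix of $\Updelta_{\aff}$, restricted to indices outside $\scrJ$. For this I would pass to the slice $S$: the summands of the restricted module are in bijection with vertices of $\Updelta_{\aff}\setminus\scrJ$, and mutation on $X$ restricts to the reflection-functor theory for NCCRs of ADE surface singularities, where the exchange numbers are precisely the Cartan off-diagonals. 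Compatibility of mutation with restriction to a generic hyperplane section is the technical heart of the matter, and I expect it to be the principal obstacle.

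Given this dictionary, I would build the decomposition $\bigcup_L\upphi_L(C_+)$ inductively along the exchange graph $\EG(N)$. Starting from $\upphi_N(C_+)=C_+$, each edge labelled $i$ from $L$ to $\upnu_i L$ produces the neighbouring chamber $\upphi_{\upnu_i L}(C_+)=\upphi_L\circ\upphi_i^{-1}(C_+)$, which shares the $i$th wall with $\upphi_L(C_+)$. Every chamber of $\Cone{\scrJ_{\aff}}$ is then reached because $\Cone{\scrJ_{\aff}}$ is by definition the trace in $D_{\scrJ_{\aff}}$ of the $\WDeltaff$-orbit of $C_+$, and every reduced word in $\WDeltaff$ whose translate of $C_+$ remains in $D_{\scrJ_{\aff}}$ can be realised by a sequence of mutations at indices $i\notin\scrJ$. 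Consistency of the labelling along $\EG(N)$ is ensured by the involution $\upnu_i\upnu_iL\cong L$ of Proposition \ref{basicmodi}, matching the relation $s_i^2=e$.

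From this skeleton the three consequences follow. Part (1) is the decomposition just described. Part (3) holds by construction: adjacent $L$ and $\upnu_iL$ correspond to a single simple reflection and thus share the codimension-one wall image of $\{\upvartheta_i=0\}$, and conversely two chambers of a simplicial arrangement sharing such a wall must be related by a unique simple reflection, which lifts to a single mutation since the walls of $\upphi_L(C_+)$ are indexed by the indecomposable summands of $L$. The subtlest point is part (2), the disjointness for $L\ncong M$: I would argue that $\upphi_L(C_+)$ records the K-theory classes of the projectives $\Hom_R(L,L_j)$ through its bounding walls, and since modifying modules over an isolated cDV $R$ are recovered from their sets of indecomposable summands, distinct $L$'s yield distinct chambers; local disjointness across adjacent walls, which comes from the reflection interpretation of Step~1, then globalises via the simpliciality of $\Cone{\scrJ_{\aff}}$.
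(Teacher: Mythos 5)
The paper does not prove this theorem; it cites it as a result of \cite{IW9}, so there is no internal proof to compare against. Evaluating your argument on its own merits, it has a fundamental gap at the key step.

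You propose to identify the mutation map $\upphi_i^{-1}$ with the simple reflection $s_i$ of $\WDeltaff$ under $[\scrP_i]\leftrightarrow\upalpha_i^*$, and to show the exchange numbers $b_{ij}$ equal the off-diagonal Cartan entries of $\Updelta_{\aff}$ restricted to indices outside $\scrJ$. This is false whenever $\scrJ\neq\emptyset$. The simple reflection $s_i$ at a vertex $i\notin\scrJ$ does not preserve the subspace $D_{\scrJ_{\aff}}$: in the $\upalpha^*$-basis, $s_i$ fixes $\upalpha_j^*$ for $j\neq i$ and sends $\upalpha_i^*\mapsto -\upalpha_i^* + \sum_{k\neq i}(-a_{ki})\upalpha_k^*$, and the sum runs over all vertices $k$ of $\Updelta_{\aff}$, including $k\in\scrJ$. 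So if $i$ is adjacent to any vertex in $\scrJ$, then $s_i(\upalpha_i^*)\notin D_{\scrJ_{\aff}}$, and $s_i$ does not restrict to an operator on $\scrK\otimes\bR\cong D_{\scrJ_{\aff}}$ at all. Crossing a wall of $\Cone{\scrJ_{\aff}}$ does not correspond to a simple reflection of $\WDeltaff$; it corresponds to a longer Coxeter element built out of parabolic longest elements, and the paper makes a point of the fact that the resulting arrangement is not Coxeter.

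This error propagates through the rest of the argument: the exchange numbers $b_{ij}$ depend on the chamber $L$ and are not constants read off $\Updelta_{\aff}$, so your inductive tracking along $\EG(N)$ is not matching a fixed Coxeter reflection rule. What is actually required, and what \cite{IW9} supplies, is a theory of \emph{intersection arrangements} $\Cone{\scrJ_{\aff}}=\Cone{\Updelta_{\aff}}\cap D_{\scrJ_{\aff}}$ with chamber-dependent local wall-crossing rules; the paper sketches such a rule in the proof of Proposition~\ref{affine hyper calc} (delete the crossed vertex, apply the Dynkin involution to the remainder, reinsert). Your proposal also does not address why the $b_{ij}$ computed from NCCR mutation agree with the combinatorial local rule for the intersection arrangement, which is the actual content of the cited theorem. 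Without repairing the dictionary between $\upphi_i$ and the correct non-simple wall-crossing operator, parts (1)--(3) do not follow.
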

We remark that $\Cone{\scrJ_{\aff}}$ does not fill $\bR^{|\Updelta_{\aff}|}$, as can be seen in Example~\ref{cones and level example} below.  Because of this, all information is contained in a slice.

\begin{dfn}\label{def: level}
The \emph{real level} is defined to be
\[
\Level_L\colonequals \biggr\{ z\in \scrK_L\otimes\bR \Bigm| \sum_{j=0}^n(\rk_RL_j) z_j=1\biggr\}.
\] 
The walls of the open decomposition $\bigcup\upphi_L(C_+)$ partition $\Level=\Level_N$ into open regions 
\[
\Alcove_L\colonequals\upphi_L(C_+)\cap\,\Level_N,
\] 
which by Theorem~\ref{affine summary} are still in bijection with $\Mut(N)$. We call these open regions the $\scrJ$-alcoves, and consider the infinite hyperplane arrangement
\begin{equation}
\scrH^{\aff}\colonequals\Level\backslash\bigcup_{L\in\Mut(N)}\Alcove_L.\label{affine H}
\end{equation}
\end{dfn}

\begin{exa}\label{cones and level example}
Consider $\Updelta=E_6$, and $\scrJ$ the following choice of unshaded vertices:
\[
\begin{tikzpicture}[scale=0.75]
\node (-1) at (-0.75,0) [DB] {};
\node (0) at (0,0) [DB] {};
\node (1) at (0.75,0) [DW] {};
\node (1b) at (0.75,0.75) [DB] {};
\node (2) at (1.5,0) [DB] {};
\node (3) at (2.25,0) [DB] {};
\draw [-] (-1) -- (0);
\draw [-] (0) -- (1);
\draw [-] (1) -- (2);
\draw [-] (2) -- (3);
\draw [-] (1) -- (1b);
\end{tikzpicture}
\]
Then, either by tracking the $C_+$ region directly over the labelling set in \cite[1.12]{IW9}, or by iterating the wall crossing rule in \cite[1.20(1)(2)]{IW9}, or by intersecting the full Tits cone of affine $E_6$ with the subspace $\mathbb{R}^2$ based by the shaded vertex and the extended vertex, it follows that $\Cone{\scrJ_{\aff}}$ is the shaded region in the following picture. Further, $\Level$ is illustrated by the dotted blue line $\upvartheta_0+3\upvartheta_1=1$.
\[
\begin{tikzpicture}[very thin,scale=1.5,>=stealth]
\filldraw[gray!10!white] (-3*14/15,1) -- (3,1) -- (3,-14/15)--(0,0)-- cycle;
\draw (0,0) -- (3,0);
\draw[gray,densely dotted] (-3,0) -- (0,0);
\draw[gray,densely dotted] (0,-1) -- (0,0);
\draw (0,0) -- (0,1);
\draw[->] (0,0) -- (0,1);
\draw[->] (0,0) -- (3,0);
\node at (0.15,1) {$\scriptstyle \upvartheta_1$};
\node at (3.15,0) {$\scriptstyle \upvartheta_0$};
{\foreach \i [evaluate=\i as \j using \i+1] in {1,2,3,4,5,6,7,8,9,10,11,12,13,14}
\draw (0,0) -- (-3*\i/\j,1);
}
{\foreach \i [evaluate=\i as \j using \i+1] in {1,2,3,4,5,6,7,8,9,10,11,12,13,14}
\draw (0,0) -- (3,-\i/\j);
}
\coordinate (A) at (-9/4,1);
\coordinate (B) at (3,-4/3*1/2);
\draw[blue,thin,densely dotted] (A) --(B);
\draw[blue,->] (0,0) -- ($(A)!(0,0)!(B)$); 
\filldraw[fill=white,draw=black]  (intersection cs:
    first line={(A)--(B)},
    second line={(0,0)--(-3*3/4,1)}) circle (0.75pt);
\filldraw[fill=white,draw=black]  (intersection cs:
    first line={(A)--(B)},
    second line={(0,0)--(-3*2/3,1)}) circle (0.75pt);
\filldraw[fill=white,draw=black]   (intersection cs:
    first line={(A)--(B)},
    second line={(0,0)--(-3*1/2,1)}) circle (0.75pt);
\filldraw[fill=white,draw=black]   (intersection cs:
    first line={(A)--(B)},
    second line={(0,0)--(0,1/2*1)}) circle (0.75pt);
\filldraw[fill=white,draw=black]   (intersection cs:
    first line={(A)--(B)},
    second line={(0,0)--(1,0)}) circle (0.75pt);
\filldraw[fill=white,draw=black]   (intersection cs:
    first line={(A)--(B)},
    second line={(0,0)--(1,1/3*-1/2)}) circle (0.75pt);
\filldraw[fill=white,draw=black]   (intersection cs:
    first line={(A)--(B)},
    second line={(0,0)--(1,1/3*-2/3)}) circle (0.75pt);
\end{tikzpicture}
\] 
The circles on the blue line are, reading top left to bottom right, at $\upvartheta_1=1,\frac{2}{3},\frac{1}{2},\frac{1}{3},0,-\frac{1}{3},-\frac{1}{2}$.  Thus basing $\Level$ by $[\scrP_1]$, the level is the infinite hyperplane arrangement
\[
\begin{tikzpicture}[scale=2]
\draw[densely dotted, blue,<-] (-2.5,0) -- (2.5,0);
\node[blue] at (-2.65,0) {$\scriptstyle \upvartheta_1$};
\filldraw[fill=white,draw=black] (0,0) circle (0.75pt);
\node at (0,-0.15) {$\scriptstyle 0$};
\filldraw[fill=white,draw=black] (-2*1/3,0) circle (0.75pt);
\node at (-2*1/3,-0.15) {$\scriptstyle \frac{1}{3}$};
\filldraw[fill=white,draw=black] (-2*1/2,0) circle (0.75pt);
\node at (-2*1/2,-0.15) {$\scriptstyle \frac{1}{2}$};
\filldraw[fill=white,draw=black] (-2*2/3,0) circle (0.75pt);
\node at (-2*2/3,-0.15) {$\scriptstyle \frac{2}{3}$};
\filldraw[fill=white,draw=black] (-2*1,0) circle (0.75pt);
\node at (-2*1,-0.15) {$\scriptstyle 1$};
\filldraw[fill=white,draw=black] (2*1/3,0) circle (0.75pt);
\node at (2*1/3,-0.15) {$\scriptstyle -\frac{1}{3}$};
\filldraw[fill=white,draw=black] (2*1/2,0) circle (0.75pt);
\node at (2*1/2,-0.15) {$\scriptstyle -\frac{1}{2}$};
\filldraw[fill=white,draw=black] (2*2/3,0) circle (0.75pt);
\node at (2*2/3,-0.15) {$\scriptstyle -\frac{2}{3}$};
\filldraw[fill=white,draw=black] (2*1,0) circle (0.75pt);
\node at (2*1,-0.15) {$\scriptstyle -1$};
\end{tikzpicture}
\]
The $\scrJ$-alcoves are the open intervals on the blue line between two adjacent dots, and  $\scrH^{\aff}$ is the infinite collection of dots.  
\end{exa}

\subsection{Finite Hyperplanes by K-theory}\label{finite notation subsection}
For the finite version of the above combinatorics, with notation as in Lemma~\ref{k-correspond proj} consider 
\[
\Uptheta_{L}\colonequals \scrK_L/[\scrP_0]\cong\bZ^n.
\]
Again, since $X\to\Spec R$ and $N$ are fixed from \eqref{ass modi}, there is a distinguished object $\Uptheta\colonequals \Uptheta_{N}$.  If $i\neq 0$, then since  $\upphi_{i}$ sends $\scrP_0$ to $\scrQ_0$ by Lemma~\ref{k-correspond proj}, $\upphi_{i}$ induces an isomorphism 
\[
\upvarphi_i\colon
\Uptheta_{L}
\to
\Uptheta_{\upnu_iL}.
\]
For $L\in\Mut_0(N)$, consider the shortest sequence of mutations
\[
L\xrightarrow{i_1}\upnu_{i_1}L\to\hdots\xrightarrow{i_n}N,
\]
where each step does not mutate the vertex $R$.  As before, write $\Upphi_L$ for the composition of the corresponding mutation functors, but now write $\upvarphi_L\colon \Uptheta_L\to\Uptheta$ for the induced map on K-theory.  Again, whenever $\bZ\subseteq\mathds{k}$, we will abuse notation and also write $\upvarphi_L\colon 
\Uptheta_L\otimes\mathds{k}\to\Uptheta\otimes\mathds{k}$.

The following was established first in \cite{HomMMP} when $X$ is $\bQ$-factorial, using King stability. The $\bQ$-factorial can now be dropped, following \cite{IW9}.

\begin{thm}\cite{HomMMP, IW9}\label{HomMMP finite summary}
Suppose that $f\colon X\to \Spec R$ is a $3$-fold flopping contraction, such that $X$ has only terminal singularities.
Then there is a finite decomposition
\[
\Cone{\scrJ} = \bigcup_{L\in\Mut_0(N)}\upvarphi_L(C_+)\subseteq\Uptheta\otimes\bR
\]
In particular, the following statements hold.
\begin{enumerate}
\item\label{HomMMP finite summary 1} The open decomposition $\bigcup\upvarphi_L(C_+)$ gives the chambers of the $\scrJ$-finite hyperplane arrangement.
\item\label{HomMMP finite summary 2} If $L\ncong M$, then $\upvarphi_{L}(C_+)$ and  $\upvarphi_{M}(C_+)$  do not intersect.
\item\label{HomMMP finite summary 3} $\upvarphi_{L}(C_+)$ and  $\upvarphi_{M}(C_+)$ share a codimension one wall $\iff$ $L$ and $M$ differ by the mutation of an indecomposable summand.
\end{enumerate}
\end{thm}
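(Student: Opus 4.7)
The plan is to deduce the finite statement from its affine counterpart (Theorem~\ref{affine summary}), which is already available from \cite{IW9}. The key structural observation is that restricting from $\Mut(N)$ to $\Mut_0(N)$ corresponds, on the K-theoretic side, to descending through the quotient $\scrK \to \Uptheta = \scrK/[\scrP_0]$, and geometrically to intersecting the affine cone $\Cone{\scrJ_{\aff}}$ with a natural codimension-one slice.

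For each $L\in\Mut_0(N)$, the mutation path $L\to\cdots\to N$ defining $\Upphi_L$ never uses the index $0$; by Lemma~\ref{k-correspond proj}, each intermediate map $\upphi_i$ with $i\neq 0$ fixes the zeroth projective class, so $\upphi_L$ sends $[\scrP_0^L]\mapsto[\scrP_0]$ and therefore descends to a well-defined isomorphism $\upvarphi_L\colon\Uptheta_L\simto\Uptheta$ matching the ad hoc definition in the paper. Next, under the identification $\upalpha_i^*\leftrightarrow[\scrP_i]$, the space $\Uptheta\otimes\bR$ embeds into $\scrK\otimes\bR$ as the hyperplane $\{z_0=0\}$ cut out by the coordinate dual to the extending projective $\scrP_0$. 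I would then verify that
\[
\Cone{\scrJ} = \Cone{\scrJ_{\aff}}\cap\{z_0=0\}
\]
under this embedding, matching a top-dimensional slice of the affine cone with the finite cone. This is a Coxeter-combinatorial identification $D_\scrJ\hookrightarrow D_{\scrJ_{\aff}}$ restricted to the respective Tits cones.

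Combining the above with Theorem~\ref{affine summary}, the affine chambers $\upphi_L(C_+)$ that meet the slice $\{z_0=0\}$ in a top-dimensional piece are precisely those with $L\in\Mut_0(N)$ (all other chambers miss the slice or touch it only in lower dimension, since crossing a wall labelled $s_0$ changes the sign of the $z_0$-coordinate). Descending to the quotient turns $\upphi_L(C_+)$ into $\upvarphi_L(C_+)\subset\Cone{\scrJ}$, producing the claimed decomposition. Assertions (\ref{HomMMP finite summary 1})--(\ref{HomMMP finite summary 3}) then follow by restriction: disjointness (\ref{HomMMP finite summary 2}) descends directly from the affine case; wall-crossing (\ref{HomMMP finite summary 3}) follows since codimension-one adjacency in the slice corresponds to codimension-one adjacency in the affine arrangement, which by Theorem~\ref{affine summary}(3) is single-summand mutation---necessarily at index $i\neq 0$ since both endpoints lie in $\Mut_0(N)$.

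The delicate point is the identification $\Cone{\scrJ}=\Cone{\scrJ_{\aff}}\cap\{z_0=0\}$. In the genuine Coxeter setting this is standard (the finite Weyl group is the stabilizer of the origin inside the affine Weyl group, and the finite Weyl chambers are precisely the faces of the affine fan adjacent to the origin), but since the arrangements here need not be Coxeter, the identification must be verified on the combinatorial K-theoretic level using the decomposition $\DeltAff\setminus\{\star\}=\Delt$ together with the explicit mutation formulas of Lemma~\ref{k-correspond proj}. This is essentially the content imported from \cite{IW9}, and is what allows the $\bQ$-factorial hypothesis to be dropped relative to \cite{HomMMP}.
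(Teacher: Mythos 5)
The key geometric step — identifying $\Cone{\scrJ}$ with the slice $\Cone{\scrJ_{\aff}}\cap\{z_0=0\}$ — is false, and this is already visible in Example~\ref{cones and level example}. There $\Cone{\scrJ_{\aff}}\subset\bR^2$ is a proper subcone (the shaded fan, which by the remark after Theorem~\ref{affine summary} does \emph{not} fill the plane), and its intersection with $\{\upvartheta_0=0\}$ is only the positive $\upvartheta_1$-ray; whereas $\Cone{\scrJ}=\bR\setminus\{0\}$, being the union of the two finite chambers $\upvarphi_N(C_+)=\bR_{>0}$ and $\upvarphi_{\upnu_1N}(C_+)=\bR_{<0}$. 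Your follow-on claim that the chambers $\upphi_L(C_+)$ with $L\in\Mut_0(N)$ are exactly those meeting the slice in top dimension also fails in that example: $\upnu_1N\in\Mut_0(N)$, yet $\upphi_{\upnu_1N}(C_+)$ is a pie slice lying entirely in $\{\upvartheta_0\geq 0\}$, so its closure meets $\{\upvartheta_0=0\}$ only at the origin.

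The underlying confusion is between the quotient $\uppi\colon\scrK\to\Uptheta=\scrK/[\scrP_0]$ and restriction to the subspace $\{z_0=0\}$. Since $\upphi_L$ fixes $[\scrP_0]$ whenever $L\in\Mut_0(N)$, one has the commuting square $\uppi\circ\upphi_L=\upvarphi_L\circ\uppi_L$, and hence $\upvarphi_L(C_+)=\uppi\bigl(\upphi_L(C_+)\bigr)$: the finite chamber is the \emph{image} of the affine chamber under projection, not its intersection with a hyperplane. Unfortunately this projection picture does not let you deduce the finite theorem from the affine one without substantially more work: the projections of distinct affine chambers overlap (for instance, by Theorem~\ref{class action main} and Lemma~\ref{Pic action K} the entire $\Pic X$-orbit of $C_+$ inside the affine arrangement projects onto the single finite chamber $\upvarphi_N(C_+)$), so disjointness of the $\upvarphi_L(C_+)$, the sufficiency of $\Mut_0(N)$, and the wall-crossing assertion (\ref{HomMMP finite summary 3}) each need independent arguments. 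Note also that the paper does not prove this theorem here; it is imported from \cite{HomMMP} (the $\bQ$-factorial case, via King stability) and \cite{IW9} (the general case, via direct mutation and Coxeter combinatorics), and the affine Theorem~\ref{affine summary} is not logically prior to it in those sources.
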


For $L\in \Mut_0(N)$, write $C_L\colonequals\upvarphi_{L}(C_+)$ and  set
\begin{equation}
\scrH\colonequals(\Uptheta\otimes\bR)\backslash\bigcup_{L\in\Mut_0(N)}C_L.\label{finite H}
\end{equation}
By Theorem~\ref{HomMMP finite summary}\eqref{HomMMP finite summary 1}, $\scrH$ is a finite simplicial hyperplane arrangement, which by definition means that $\bigcap_{H\in\scrH}H=\{0\}$ and all chambers in $\mathbb{R}^n\backslash\scrH$ are open simplicial cones.

\subsection{The Tracking Rules of Mutation}\label{New results S3}
Given a modifying $R$-module $L$ and any summand $L_i$,  $\upnu_i\upnu_iL\cong L$  by Proposition~\ref{basicmodi}\eqref{basicmodi 3}.  We will abuse notation and write
\[
\begin{tikzpicture}
\node at (0.35,0) {$\Db(\fmod\Lambda_L))$};
\node at (4.85,0) {$\Db(\fmod\Lambda_{\upnu_iL}).$};
\draw[->] (2,0.1) -- node[above] {$\scriptstyle \Upphi_i$}(3,0.1); 
\draw[<-] (2,-0.1) -- node[below] {$\scriptstyle \Upphi_i$}(3,-0.1);
\end{tikzpicture}
\]
These, and their inverses, induce the following isomorphisms on K-theory
\begin{equation}
\begin{array}{c}
\begin{tikzpicture}
\node at (1.5,0) {$\scrK_L$};
\node at (3.6,0) {$\scrK_{\upnu_iL}$};
\draw[->] (2,0.1) -- node[above] {$\scriptstyle \upphi_i$}(3,0.1); 
\draw[<-] (2,-0.1) -- node[below] {$\scriptstyle \upphi_i$}(3,-0.1);
\end{tikzpicture}
\end{array}
\qquad
\begin{array}{c}
\begin{tikzpicture}
\node at (1.5,0) {$\scrK_L$};
\node at (3.6,0) {$\scrK_{\upnu_iL}$};
\draw[<-] (2,0.1) -- node[above] {$\scriptstyle \upphi_i^{-1}$}(3,0.1); 
\draw[->] (2,-0.1) -- node[below] {$\scriptstyle \upphi_i^{-1}$}(3,-0.1);
\end{tikzpicture}
\end{array}\label{four matrices}
\end{equation}

\begin{lem}\label{invol}
All four isomorphisms in \eqref{four matrices} are given by the same matrix, and this matrix squares to the identity.  If $i\neq 0$, then the same statement holds for $\upvarphi_i,\upvarphi_i^{-1}$ and $\Uptheta_L,\Uptheta_{\upnu_iL}$.
\end{lem}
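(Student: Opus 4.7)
The plan is to compute all four matrices directly from Lemma~\ref{k-correspond proj}, verify each squares to the identity, and then identify them using the compatibility between the forward and reverse mutation functors.

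Identifying $\scrK_L$ and $\scrK_{\upnu_iL}$ with $\bZ^{n+1}$ via the bases $\{[\scrP_j]\}$ and $\{[\scrQ_j]\}$ respectively, Lemma~\ref{k-correspond proj} yields that the matrix $M$ representing $\upphi_i^{-1}\colon\scrK_{\upnu_iL}\to\scrK_L$ has $t$-th column equal to the standard basis vector $e_t$ for $t\neq i$, and $i$-th column $-e_i+\sum_{j\neq i}b_{ij}e_j$, where the $b_{ij}$ are as in \eqref{bij}. A direct calculation on basis vectors gives $M^{2}e_t=e_t$ for $t\neq i$, while
\[
M^{2}e_i \,=\, M\Bigl(\textstyle\sum_{j\neq i}b_{ij}e_j-e_i\Bigr) \,=\, \sum_{j\neq i}b_{ij}e_j - \Bigl(\sum_{j\neq i}b_{ij}e_j-e_i\Bigr) \,=\, e_i.
\]
Hence $M^{2}=I$, so $M=M^{-1}$. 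This identifies the top arrow $\upphi_i$ of the left diagram in \eqref{four matrices} with its inverse $\upphi_i^{-1}$ in the right diagram; both are represented by the matrix $M$.

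Running the same argument for $\upnu_iL$ at its $i$-th summand, and using $\upnu_i\upnu_iL\cong L$ from Proposition~\ref{basicmodi}\eqref{basicmodi 3}, shows that the two bottom arrows of \eqref{four matrices} are represented by an involution $M'$ of the same shape, but with the exchange multiplicities $b'_{ij}$ of $\upnu_iL$ in place of $b_{ij}$. Proving $M=M'$ (equivalently, since both are involutions, $M'M=I$) amounts to showing that the composition $\Upphi_i^{\upnu_iL}\circ\Upphi_i^L$ acts as the identity on $K_0(\Perf\End_R L)$. Via Theorem~\ref{flopmut} this composition corresponds to the double flop $\Flop_i\circ\Flop_i$ on $\Db(\coh X)$; in the isolated cDV setting, the two mutation functors are mutually quasi-inverse (as can be verified by comparing the tilting bimodules $\Hom_R(L,\upnu_iL)$ and $\Hom_R(\upnu_iL,L)$ in the framework of \cite{IW9}), so their composition is isomorphic to the identity functor and in particular acts trivially on $K_0$. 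Hence $M'M=I$, giving $M=M'$.

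The statement for $\upvarphi_i$ when $i\neq 0$ is then immediate: since $0\neq i$, the $0$-th column of $M$ is $e_0$, so $M$ descends to an involution on the quotient $\Uptheta_L=\scrK_L/[\scrP_0]$, and the arguments above apply verbatim to the descended matrices. The main obstacle is the identification $M=M'$: this cannot be deduced from Lemma~\ref{k-correspond proj} alone, since that lemma describes only one direction of mutation and gives no direct relation between the two sets of exchange multiplicities $b_{ij}$ and $b'_{ij}$. The essential input is that the forward and reverse mutation functors are mutually quasi-inverse, which rests on the specific structural features of modifying-module mutation in the isolated cDV setting.
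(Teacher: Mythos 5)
Your direct computation that the matrix $M$ from Lemma~\ref{k-correspond proj} satisfies $M^2=I$ is correct and is the same linear-algebra step the paper uses. You also correctly reduce the remaining content to the identity $M=M'$, equivalently to $b_{ij}=c_{ij}$. The gap is in your justification of that identity.

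The claim that the two mutation functors $\Upphi_i^{L}\colon\Db(\fmod\Lambda_L)\to\Db(\fmod\Lambda_{\upnu_iL})$ and $\Upphi_i^{\upnu_iL}\colon\Db(\fmod\Lambda_{\upnu_iL})\to\Db(\fmod\Lambda_L)$ are mutually quasi-inverse is false, and the tilting bimodules $\Hom_R(L,\upnu_iL)$ and $\Hom_R(\upnu_iL,L)$ are not inverse to each other. Both functors are \emph{left} tilts at the simple $\scrS_i$: each sends $\scrS_i\mapsto\scrS_i[-1]$ (see \cite[4.15(2)]{HomMMP}, cited in Lemma~\ref{simple tilt}), so the composition sends $\scrS_i\mapsto\scrS_i[-2]$ and cannot be the identity functor. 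The genuine inverse of $\Upphi_i^L$ is the \emph{right} tilt $\Upphi_i^{-1}$ as in Lemma~\ref{simple tilt}, which is a different functor from $\Upphi_i^{\upnu_iL}$; geometrically, $\Flop_i\circ\Flop_i$ is the inverse of the noncommutative deformation twist of Donovan--Wemyss, not the identity. Thus your argument for $M'M=I$ via a functorial isomorphism does not go through.

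What does hold, and what the paper invokes, is the purely module-theoretic statement that the exchange-sequence multiplicities for $L$ at $L_i$ agree with those for $\upnu_iL$ at $(\upnu_iL)_i$, i.e.\ $b_{ij}=c_{ij}$. This is nontrivial and relies on $R$ being isolated cDV; it is \cite[5.22]{HomMMP} in the $\bQ$-factorial case and \cite[\S7]{IW9} in general. With $b_{ij}=c_{ij}$ in hand, $M=M'$, and your $M^2=I$ computation (together with the descent to $\Uptheta$ when $i\neq 0$) finishes the proof. So the structure of your argument matches the paper's, but the key input must be supplied as a module-theoretic fact about approximations rather than as a (false) functorial identity.
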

\begin{proof}
By \eqref{k-correspond}, the matrices for the inverses are controlled by numbers appearing in the relevant approximation sequences.  Suppose that the top $\upphi_i^{-1}$ is controlled by numbers $b_{ij}$, and the bottom $\upphi_i^{-1}$ is controlled by numbers $c_{ij}$.  That the two matrices labelled $\upphi_i^{-1}$ are the same is simply the statement that $b_{ij}=c_{ij}$, which is precisely the proof of \cite[5.22]{HomMMP} when $X$ is $\mathds{Q}$-factorial, or \cite[9.28]{IW9} generally.     Given the fact that $b_{ij}=c_{ij}$, the statement that $\upphi_i^{-1}\upphi_i^{-1}={\rm Id}$ can then simply be seen directly.  Applying $\upphi_i$ to each side then gives $\upphi_i^{-1}=\upphi_i$.  All the statements on $\upvarphi_i$ follow.
\end{proof}

\subsection{Complexified Actions}\label{complexified actions subsection}
Via \eqref{affine H} and \eqref{finite H}, associated to $X\to\Spec R$ is an infinite real hyperplane arrangement $\scrH^{\aff}$, and also a finite simplicial real hyperplane arrangement $\scrH$.  Stability conditions will require the complexified versions of these.

By a slight abuse of notation, consider
\begin{align*}
\bH_+&\colonequals \left\{ x+\ii y\in(\Uptheta_L)_{\bC}
\mid x_j+\ii y_j\in\bH \mbox{ for all } 1\leq j \leq n
\right\}
\cong \bH^n\\
\bH_+'&\colonequals \left\{ x+\ii y\in(\scrK_L)_{\bC}
\mid x_j+\ii y_j\in\bH \mbox{ for all } 0\leq j \leq n
\right\}
\cong \bH^{n+1}
\end{align*}
where $\bH=\{ r e^{\ii\uppi\upvartheta}\in\bC \mid r\in\bR_{>0},\, 0<\upvartheta\leq 1\}\subset \bC$ is the semi-closed upper half plane.  The regions $\bH_+$ and $\bH_+'$ technically depend on $L$, since they are subsets of $(\Uptheta_L)_{\bC}$ and $(\scrK_L)_{\bC}$ respectively, but we drop this from the notation.

For $L\in\Mut_0(N)$, recall from \S\ref{finite notation subsection} that after choosing a mutation path $L\to\hdots\to N$ that does not involve mutating $R$, there is a corresponding linear map $\upvarphi_{L}\colon (\Uptheta_L)_{\bC}\to \Uptheta_{\bC}$.  
We require the following result, where as usual $\scrH_{\bC}$ denotes the complexification of the real hyperplane arrangement $\scrH$.  The result is folklore when the arrangement $\scrH$ is Coxeter.  Given our setting here is just mildly more general, and the proof is combinatorial in nature, we give a self-contained proof in Appendix~\ref{comb appendix}.
\begin{prop}\label{complexified tracking}
There is an equality 
\[
(\Uptheta\otimes{\bC})\backslash\scrH_{\bC}=\bigcup_{L\in\Mut_0(N)}\upvarphi_L(\bH_+)
\]
where the union on the right hand side is disjoint.
\end{prop}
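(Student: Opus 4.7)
Plan for Proposition~\ref{complexified tracking}. I will prove three assertions that together give the statement: (i) each $\upvarphi_L(\bH_+)$ lies in $(\Uptheta\otimes\bC)\setminus\scrH_{\bC}$; (ii) distinct such sets are disjoint; (iii) every $z\in(\Uptheta\otimes\bC)\setminus\scrH_\bC$ lies in some $\upvarphi_L(\bH_+)$. Setting $v_j^L\colonequals\upvarphi_L(e_j)$, the vectors $\{v_j^L\}_{j=1}^n$ form a basis of $\Uptheta\otimes\bR$ spanning the simplicial chamber $C_L$, and by Theorem~\ref{HomMMP finite summary} every chamber of $\scrH$ arises in this way.

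The main step is (iii). Fix $z=x+\mathrm{i}y$ in the complement of $\scrH_\bC$; for each $H_\alpha\in\scrH$ the condition $z\notin H_{\alpha,\bC}$ reads $\alpha(x)\neq 0$ or $\alpha(y)\neq 0$, so the linear function $t\mapsto\alpha(y-tx)$ is nonzero on a punctured neighbourhood of $0$. Since $\scrH$ is finite, some $\epsilon>0$ works uniformly, and then $y-\epsilon x$ avoids every hyperplane of $\scrH$, lying in a unique chamber $C_L$. Expanding $z=\sum_j z_jv_j^L$ with $z_j=x_j+\mathrm{i}y_j$, the inequality $y_j-\epsilon x_j>0$ leaves only the possibilities: $y_j>0$ (so $z_j$ has positive imaginary part and $z_j\in\bH$); or $y_j=0$, forcing $x_j<0$ and hence $z_j\in(-\infty,0)\subset\bH$; the case $y_j<0$ is ruled out for small $\epsilon$. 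Therefore $z\in\upvarphi_L(\bH_+)$.

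For (i), let $z=\sum z_jv_j^L\in\upvarphi_L(\bH_+)$ with $z_j\in\bH$, and set $I=\{j:y_j>0\}$, so $y_j\geq 0$ throughout and $y_j=0\Rightarrow x_j<0$. Given $H_\alpha\in\scrH$, if $\alpha(y)\neq 0$ there is nothing to check; if $\alpha(y)=0$ then $\alpha$ annihilates all $v_j^L$ with $j\in I$, and since $C_L$ sits in a single open half-space of $H_\alpha$ the numbers $\alpha(v_j^L)$ share a common weak sign. They cannot all vanish ($\alpha\neq 0$ on the basis $\{v_j^L\}$), so $\alpha(v_j^L)$ for $j\notin I$ all share a common strict sign; combined with $x_j<0$ this yields $\alpha(x)=\sum_{j\notin I}x_j\alpha(v_j^L)\neq 0$, and hence $z\notin H_{\alpha,\bC}$. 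Assertion (ii) is then almost automatic from the uniqueness of the chamber $C_L$ containing $y-\epsilon x$: a common element of $\upvarphi_L(\bH_+)\cap\upvarphi_{L'}(\bH_+)$ gives, via the argument of (iii) applied in both bases, $y-\epsilon x\in C_L\cap C_{L'}$ for all sufficiently small $\epsilon>0$, forcing $L=L'$.

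The hardest point will be the sign argument in (i); this is exactly where one normally invokes Coxeter combinatorics to control hyperplanes through a face, and my plan replaces this by the purely central argument that $C_L$ is contained in a single open half-space of each $H_\alpha$. The remainder is a routine perturbation whose sole purpose is to single out the correct chamber $L$ for a given $z$.
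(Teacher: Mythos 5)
Your proof is correct but takes a genuinely different, cleaner route for the finite arrangement than the paper does. The paper establishes the inclusion $\supseteq$ and the disjointness via a localization--restriction analysis of $\scrH$ (Proposition~\ref{E lemma}) together with the Coxeter/mutation bookkeeping of Lemma~\ref{key comb g-vect}, packaged in Corollaries~\ref{subset 2} and \ref{disjoint}. You instead use the classical perturbation trick: for $z=x+\ii y$ off $\scrH_\bC$, the segment $\{y-tx : 0<t\le\epsilon\}$ misses every hyperplane once $\epsilon$ is small (finiteness of $\scrH$ enters precisely here), hence lies in a single open simplicial chamber $C_L$; reading off the coordinates of $z$ in the basis $\{v_j^L\}$ spanning $C_L$ and letting $t\to 0^+$ forces $y_j\ge 0$ and, when $y_j=0$, $x_j<0$, so every $z_j\in\bH$. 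Disjointness drops out immediately, since a point of $\upvarphi_L(\bH_+)\cap\upvarphi_{L'}(\bH_+)$ puts $y-\epsilon x$ simultaneously in $C_L$ and $C_{L'}$. This is shorter and works for any finite essential simplicial arrangement, with no flop-specific combinatorics. The trade-off is that it does \emph{not} transfer to the affine version (Proposition~\ref{complexified tracking 2}), where $\scrH^{\aff}$ is infinite and no uniform $\epsilon$ need exist; the paper's heavier framework (notably Lemma~\ref{removing gives finite}) is designed to treat both cases by one argument. One small imprecision in your step (i): once $\alpha(v_j^L)=0$ for $j\in I$, the remaining $\alpha(v_j^L)$ need not all have a \emph{strict} common sign --- some may still vanish --- but since they share a weak sign and are not all zero, $\alpha(x)=\sum_{j\notin I}x_j\alpha(v_j^L)\ne 0$ follows regardless.
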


On the other hand, the affine version of Proposition~\ref{complexified tracking} is a little bit more involved. We first pass to the \emph{complexified level}, defined to be
\[
(\Level_{L})_{\bC}\colonequals \biggr\{ z\in (\scrK_L)_{\bC} \Bigm| \sum_{j=0}^n(\rk_RL_j) z_j=\ii\biggr\},
\]
and inside $(\Level_{L})_{\bC}$ consider the region
\[
\bE_+\colonequals\biggr\{ z\in \bH_+' \Bigm| \sum_{j=0}^n(\rk_RL_j) z_j=\ii\biggr\}.
\]
\begin{exa}\label{Ex3.11}
In the case of any one-curve flop, writing $z=x+\ii y$, then  $\mathbb{C}\backslash\scrH_\bC=\mathbb{C}\backslash\{0\}$ decomposes into the disjoint union
\[
\begin{array}{ccccc}
\begin{array}{c}
\begin{tikzpicture}
\filldraw[gray!10!white] (-1.5,1) -- (-1.5,0) -- (1.5,0)--(1.5,1)-- cycle;
\draw[black] (-1.5,0) -- (0,0);
\draw[black,densely dotted] (0,0) -- (1.5,0);\draw[draw=none,blue,densely dotted] (0,1) -- (0,-1);
\filldraw[fill=white,draw=black] (0,0) circle (2pt);
\end{tikzpicture}
\end{array}
&
\bigcup
&
\begin{array}{c}
\begin{tikzpicture}
\filldraw[gray!10!white] (-1.5,-1) -- (-1.5,0) -- (1.5,0)--(1.5,-1)-- cycle;
\draw[black,densely dotted] (-1.5,0) -- (0,0);
\draw[black] (0,0) -- (1.5,0);\draw[draw=none,blue,densely dotted] (0,1) -- (0,-1);
\filldraw[fill=white,draw=black] (0,0) circle (2pt);
\end{tikzpicture}
\end{array}
&&
\begin{array}{c}
\begin{tikzpicture}
\draw[blue,<-] (4,1) -- (4,-1);
\draw[blue,->] (3,0) -- (5,0);
\node[blue] at (4.25,1) {$\scriptstyle y$};
\node[blue] at (5.15,0) {$\scriptstyle x$};
\end{tikzpicture}
\end{array}
\\
\bH_+&&
\upvarphi_{1}(\bH_+)
\end{array}
\]
On the other hand, as in Example~\ref{cones and level example}, for any one-curve flop $\scrH^{\aff}_\bC$ consists of infinitely many points on the real axis.  To exhibit the region $\mathbb{E}_+$, note first that $(z_0,z_1)\in\Level_{\bC}$ if and only if we can write 
\[
(z_0,z_1)=((-\ell x_1,1-\ell y_1),(x_1,y_1)),
\]
where $\ell$ is the length of the curve.
To belong to $\bE_+$ is equivalent to both factors being in $\bH$.  If the second factor is in $\bH$ then $y_1\geq 0$, so the first factor being in $\bH$ implies that $0\leq y_1\leq \frac{1}{\ell}$. In fact, it is elementary to check that $\bE_+$ forms the following region:
\[
\begin{tikzpicture}[>=stealth]
\draw[draw=none,blue,densely dotted] (-2.5,0) -- (1.5,0);
\draw[draw=none,blue,densely dotted] (-1,0) -- (1,0);
\filldraw[gray!10!white] (-1,1) -- (-1,-1) -- (0,-1)--(0,1)-- cycle;
\draw[black] (-1,0) -- (-1,1);
\draw[black,densely dotted] (-1,0) -- (-1,-1);
\draw[black] (0,0) -- (0,-1);
\draw[black,densely dotted] (0,0) -- (0,1);
{\foreach \i in {-1,0,1,2}
\filldraw[fill=white,draw=black] (-1*\i,0) circle (2pt);
}
\node at (0.15,-0.15) {$\scriptstyle 0$};
\node at (-1.15,-0.15) {$\scriptstyle \frac{1}{\ell}$};
\draw[blue,<-] (4,1) -- (4,-1);
\draw[blue,<-] (3,0) -- (5,0);
\node[blue] at (4.25,1) {$\scriptstyle x_1$};
\node[blue] at (2.75,0) {$\scriptstyle y_1$};
\end{tikzpicture}
\]
The non-standard way of drawing the $x$ and $y$ axis is justified by Example~\ref{cones and level example}.  The co-ordinate $y_1$ should be viewed as the original $\Level$ seen  in Example~\ref{cones and level example}, which naturally points to the left, and $x_1$ should be viewed as the `complexified co-ordinate'.  The other regions  $\upvarphi_L(\mathbb{E}_+)$ have the same shape as the above, sandwiched between the two adjacent dots, and so give a disjoint union that covers $(\Level_{L})_{\bC}$.
\end{exa}

Set $\scrW$ to be the set of full hyperplanes in $\scrK\otimes{\bR}$ that separate the open chambers $\upphi_L(C_+)$ of $\Cone{\scrJ_{\aff}}$ (see e.g.\ Example~\ref{affine hypes picture}).  We then consider the complexification of $\scrH^{\aff}$ in $\Level_{\bC}$, defined to be
\begin{equation}\label{cpx of Haff}
\scrH^{\aff}_{\bC}\colonequals \scrW_{\mathbb{C}}\cap \Level_{\bC}= \bigcup_{W\in \scrW}(W_{\bC}\cap \Level_{\bC}),
\end{equation}
where $W_{\bC}\colonequals W\oplus\ii W$.  As in Example~\ref{Ex3.11},  $\scrH^{\aff}_{\bC}$ can be viewed as the complexification of hyperplanes in the real level, provided that we swap the roles of $x$ and $y$. Indeed, if we set $H_{W}\colonequals W\cap\Level$,  then $\scrH^{\aff}=\bigcup_{W\in\scrW}H_{W}$, and the linear bijection $\Level_{\bC}\to \Level\oplus\ii\Level$ defined by $x+\ii y\mapsto (x+y)+\ii y$ maps $\scrH^{\aff}_{\bC}$ to $\bigcup_{W\in\scrW}(H_W\oplus\ii H_W)$.

The following two results are evident, by inspection, for any one-curve flops using Example~\ref{Ex3.11} above.  The more general case requires a more involved combinatorial argument, so again the proofs are postponed until Appendix~\ref{comb appendix}.

\begin{lem}[\ref{path connected app}]\label{path connected}
The subspace $\bE_+\subset (\scrK_L)_{\bC}$ is path connected.
\end{lem}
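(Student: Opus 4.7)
The plan is to show that $\bE_+$ is star-shaped with respect to a judiciously chosen centre, which trivially implies path-connectedness. The key observation is that although $\bH$ itself is non-convex (being the open upper half-plane together with the open negative real axis), any straight-line homotopy whose endpoint has strictly positive imaginary part in each coordinate will immediately push everything into the open upper half-plane for $t>0$.

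First, I would fix the candidate centre. Set $R \colonequals \sum_{j=0}^{n}\rk_R L_j$; since each $L_j$ is a non-zero reflexive $R$-module, every $\rk_R L_j$ is a positive integer and $R>0$. Define
\[
w \colonequals \tfrac{\ii}{R}\,(1,1,\dots,1)\in (\scrK_L)_{\bC}.
\]
Each coordinate $w_j=\ii/R$ satisfies $\Im(w_j)=1/R>0$, so $w_j\in\bH$ and thus $w\in\bH_+'$. Moreover $\sum_{j=0}^{n}(\rk_R L_j)\,w_j=(\sum_j \rk_R L_j)\cdot\tfrac{\ii}{R}=\ii$, so $w\in\bE_+$.

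Next, for arbitrary $z\in\bE_+$, consider the straight-line homotopy
\[
\gamma(t)\colonequals (1-t)z+t\,w,\qquad t\in[0,1].
\]
Because both $z$ and $w$ lie on the complex affine hyperplane cut out by $\sum_j(\rk_R L_j)\,z_j=\ii$, so does $\gamma(t)$ for every $t$. The only thing to check is that $\gamma_j(t)\in\bH$ for each coordinate $j$ and each $t$. Writing $\gamma_j(t)=(1-t)z_j+t\,\tfrac{\ii}{R}$, the imaginary part equals $(1-t)\Im(z_j)+t/R$. Since $z_j\in\bH$ forces $\Im(z_j)\geq 0$, this imaginary part is non-negative, and at $t=0$ we simply have $\gamma_j(0)=z_j\in\bH$. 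For every $t\in(0,1]$ the imaginary part is at least $t/R>0$, so $\gamma_j(t)$ lies in the open upper half-plane, which is contained in $\bH$. Thus $\gamma$ is a continuous path in $\bE_+$ joining $z$ to $w$, and path-connectedness follows.

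The main potential obstacle is the non-convexity of $\bH$: an arbitrary straight line between two points of $\bH$ can easily leave $\bH$ (for instance, joining two points on the negative real axis by a path through the lower half plane makes no sense, but more relevantly, joining a point in the open upper half-plane with positive real part to a point on the negative real axis by a straight line could in principle clip the positive real axis). The choice of $w$ with $\Im(w_j)>0$ for every $j$ precisely removes this obstacle, because as soon as $t>0$ we pick up a strictly positive contribution $t/R$ to every coordinate's imaginary part, forcing the entire path (save the endpoint $z$) into the genuinely convex region $(\bH^{\circ})^{n+1}\cap\{\sum r_j z_j=\ii\}$.
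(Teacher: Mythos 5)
Your proof is correct, and it takes a cleaner route than the paper's. The paper splits the argument into two steps: it first observes that the open region $\bE_+^\circ$ (where every coordinate has strictly positive imaginary part) is convex, hence path-connected, and then, for each boundary point $z\in \bE_+\setminus\bE_+^\circ$, constructs an explicit short path into $\bE_+^\circ$ by raising the vanishing imaginary coordinates to a small value $t$ and compensating with a decrease in the last coordinate, choosing $t\in[0,s]$ small enough to stay inside $\bE_+$. Your argument absorbs both steps at once: by fixing the single centre $w=\tfrac{\ii}{R}(1,\ldots,1)\in\bE_+^\circ$ and using the straight-line homotopy, you show $\bE_+$ is star-shaped with respect to $w$, which is manifestly stronger (it gives contractibility, not merely path-connectedness), requires no case analysis on which coordinates vanish, and avoids having to bound the path parameter. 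The underlying mechanism is the same in both proofs — convex interpolation toward strictly positive imaginary parts keeps each coordinate in $\bH$ because $\bH$ contains the open upper half-plane and every element of $\bH$ has nonnegative imaginary part — but your packaging is tighter and would be a legitimate simplification of the published proof.
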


\begin{prop}\label{complexified tracking 2}There is an equality 
\[
\Level_{\bC}\!\backslash\scrH_{\bC}^{\aff}=\bigcup_{L\in\Mut(N)}\upphi_L(\bE_+),
\]
where the union on the right hand side is disjoint.
\end{prop}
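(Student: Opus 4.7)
The proof follows the structure of the finite analogue Proposition~\ref{complexified tracking}, now with the affine combinatorics of Theorem~\ref{affine summary} in place of Theorem~\ref{HomMMP finite summary}, and the path-connectedness of $\bE_+$ from Lemma~\ref{path connected} in place of the (elementary) connectedness of $\bH_+$.

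First I would transport the statement through the bijection $\upiota\colon \Level_{\bC}\to \Level\oplus\ii\Level$, $x+\ii y\mapsto (x+y)+\ii y$, described immediately before the statement. Under $\upiota$ the set $\scrH^{\aff}_{\bC}$ becomes the classical complexification $\bigcup_{W\in\scrW}(H_W\oplus\ii H_W)$ of the real arrangement $\scrH^{\aff}$ in $\Level$, reducing the claim to a tiling statement for the complement of a real arrangement whose open chambers are, by Theorem~\ref{affine summary}, precisely the alcoves $\Alcove_L$.

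I would then, for each $L\in\Mut(N)$, establish three things. \emph{Containment}: for $z=x+\ii y\in \bE_+$, each coordinate $z_j\in \bH$ forces either $y_j>0$, or $y_j=0$ and $x_j<0$, so $\upphi_L(y)\in\overline{\Alcove_L}$; were $\upphi_L(z)$ to lie on a complexified wall $W_{\bC}$, both $\upphi_L(x)$ and $\upphi_L(y)$ would vanish on the defining functional of $W$, and pulling back to $\scrK_L$ this reads as $y_j=0=x_j$ on some wall of $\bar C_+$, contradicting the sign condition. \emph{Disjointness}: if $\upphi_L(z)=\upphi_M(z')$ with $L\ncong M$, then $\overline{\Alcove_L}\cap\overline{\Alcove_M}\neq\emptyset$, so by Theorem~\ref{affine summary}(2) the intersection lies on common boundary walls, and the sign conditions from $\bH$ on opposite sides of those walls are contradictory. \emph{Surjectivity}: given $w=a+\ii b\in \Level_{\bC}\backslash\scrH^{\aff}_{\bC}$, the imaginary part $b$ lies in some closed alcove $\overline{\Alcove_L}$; when $b$ is on walls of this closure, the hypothesis $w\notin\scrH^{\aff}_{\bC}$ forces $a$ off each such wall, and the sign of $a$ on each such wall selects the unique $L$ for which $\upphi_L^{-1}(w)\in\bE_+$.

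The main obstacle is the boundary bookkeeping in the disjointness and surjectivity steps. The semi-closed definition $\bH=\{re^{\ii\pi\theta}:r>0,\,0<\theta\leq 1\}$ is tailored precisely so that each tile $\upphi_L(\bE_+)$ absorbs those boundary faces where $y_j=0,\,x_j<0$ but not the opposite ones; matching this delicate boundary structure against the walls of the infinite, generally non-Coxeter arrangement $\scrH^{\aff}$ is essentially a combinatorial exercise in the spirit of Appendix~\ref{comb appendix}, and Lemma~\ref{path connected} is what guarantees that each $\upphi_L(\bE_+)$ is a single connected tile rather than a disjoint union of pieces.
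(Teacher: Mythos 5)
Your overall architecture (containment, disjointness, surjectivity, all transported through the mutation maps) matches the paper, and the containment step is essentially right: combining the positivity of the coefficients in Lemma~\ref{hyper form}\eqref{hyper form 2} with the constraint from $\bH$ gives precisely the sign argument you outline. But the other two steps contain genuine gaps that cannot be dismissed as ``boundary bookkeeping in the spirit of Appendix~\ref{comb appendix}'' --- that appendix is exactly where this proposition is proved, so the deferral is circular.

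For disjointness, Theorem~\ref{affine summary}(2) asserts only that the \emph{open} chambers $\upphi_L(C_+)$ and $\upphi_M(C_+)$ are disjoint; the intersection of their closures can be a face of any codimension, and $L$ and $M$ need not be adjacent. Your claim that ``the sign conditions from $\bH$ on opposite sides of those walls are contradictory'' does not settle this: the $\bH$-conditions are coordinatewise in $\scrK_L$, not aligned with an arbitrary separating wall of $\scrW$, and for non-adjacent $L,M$ one has to control $\upphi_\upalpha$ along a whole minimal path. The paper does this via Lemma~\ref{key comb g-vect}, whose proof is a delicate weak-order argument (labelling walls by Coxeter data, using length of minimal coset representatives and Bourbaki V.4.6), followed by the clever real/imaginary rearrangement trick in Corollary~\ref{disjoint 1}. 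None of this is visible in your sketch. For surjectivity, ``the sign of $a$ on each such wall selects the unique $L$'' hides an iterated wall-crossing confined to the vanishing coordinate set $\scrI'$; in the affine setting $\scrH^{\aff}$ is \emph{infinite}, so one must know this iteration terminates. That is Proposition~\ref{E lemma}\eqref{E lemma 2}, which in turn rests on Lemma~\ref{removing gives finite} (the localization $\scrW_\mathds{B}$ is finite, because deleting a vertex from an affine Dynkin diagram yields a finite one), and crucially requires $\scrI'$ to be a \emph{proper} subset --- which is forced by the level normalization $\sum(\rk_R L_j)y_j=1$, a point you do not invoke. Without these two ingredients, your selection procedure is not known to be well-defined.
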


\section{Arrangement Groupoids}

In this subsection we briefly recall the basics of the arrangement (=Deligne) groupoid of a locally finite real hyperplane arrangement, mainly to set notation.  Some first results specific to the flops setting are presented in Subsection~\ref{first results}.

\subsection{Arrangements groupoids}\label{arrangement prelim subsection}

Throughout, let $\cH$ be either the finite real hyperplane arrangement $\scrH$ from \eqref{finite H}, or the infinite version $\scrH^{\aff}$ from \eqref{affine H}. Both are \emph{locally finite} arrangements, i.e.\ every point of $\mathbb{R}^n$ is contained in at most finitely many hyperplanes, and \emph{essential arrangements}, i.e.\ the minimal intersections of hyperplanes are points. 

\begin{dfn} \label{def:Gamma graph}
The graph $\Gamma_{\cH}$ of oriented arrows is defined as follows.  The vertices of $\Gamma_{\cH}$ are the chambers (i.e. the connected components) of $\bR^n\backslash\cH$. There is a unique arrow $a \colon  v_1\to v_2$ from chamber $v_1$ to chamber $v_2$ if the chambers are adjacent, otherwise there is no arrow. For an arrow $a\colon  v_1\to v_2$, we set $s(a)\colonequals  v_1$ and $t(a)\colonequals  v_2$. By definition, if there is an arrow $a \colon  v_1\to v_2$, there is a unique arrow $b\colon  v_2\to v_1$ with the opposite direction of $a$. 
\end{dfn}

A \emph{positive path of length~$n$} in $\Gamma_{\cH}$ is defined to be a formal symbol
\[
p=a_n\circ \hdots\circ a_2\circ a_1,
\]
 whenever there exists a sequence of vertices $v_0,\hdots,v_n$ of $\Gamma_{\cH}$ and exist arrows $a_i\colon v_{i-1}\to v_i$ in $\Gamma_{\cH}$. Set $s(p)\colonequals  v_0$, $t(p)\colonequals  v_n$, and $\ell(p)\colonequals  n$, and write $p\colon s(p)\to t(p)$. The notation $\circ$ should remind us of composition, but we will often drop the $\circ$'s in future.  If 
$q=b_m\circ\hdots\circ b_2 \circ b_1$ is another positive path with $t(p)=s(q)$, we consider the formal symbol
\[
q\circ p\colonequals  b_m\circ\hdots\circ b_2 \circ b_1
\circ
a_n\circ \hdots\circ a_2\circ a_1,
\]
and call it the {\it composition} of $p$ and $q$.

\begin{dfn}
A positive path is called \emph{reduced} if it does not cross any hyperplane twice. 
\end{dfn}
In our setting where $\cH$ is $\scrH$ or $\scrH^{\aff}$, reduced positive paths coincide with shortest positive paths.  In the finite setting this can be found in e.g.\ \cite[4.2]{Paris}, and in the infinite case see e.g.\ \cite[Lemma 2]{Salvetti} or \cite[\S I.5]{IM}.

Following \cite[p7]{Delucchi}, let $\sim$ denote the smallest equivalence relation, compatible with morphism composition, that identifies all morphisms that arise as positive reduced paths with same source and target. Then consider the free category $\mathrm{Free}(\Gamma_\cH)$ on the graph $\Gamma_\cH$, where morphisms are directed paths, and the quotient category 
\[
\cG^{+}_\cH \colonequals \mathrm{Free}(\Gamma_\cH)/\sim, 
\]
called the category of positive paths.

\begin{dfn} \label{def: Deligne groupoid completion}
The \emph{arrangement (=Deligne) groupoid}  $\cG_{\cH}$ is the groupoid defined as the groupoid completion of  $\cG_{\cH}^{+}$, that is, a formal inverse is added for every morphism in $\cG_{\cH}^{+}$. 
\end{dfn} 

\begin{nota}\label{not: 4.4}
When $\cH=\scrH$ from \eqref{finite H}, we denote the arrangement groupoid by $\mathds{G}$, and when $\cH=\scrH^{\aff}$ from \eqref{affine H}, we denote the arrangement groupoid by $\mathds{G}^{\aff}$.
\end{nota}

The following is well-known \cite{Deligne, Paris, Paris3, Salvetti}; in the level of generality here with $\scrH$ locally finite and essential, the statement below is \cite[p9]{Delucchi}.

\begin{thm}\label{ver gp}
For any vertex $v$ in the arrangement groupoid, $\End_{\mathds{G}}(v)\cong\uppi_1(\mathbb{C}^n \backslash \scrH_\mathbb{C})$, and $\End_{\mathds{G}^{\aff}}(v)\cong\uppi_1(\Level_{\bC}\!\backslash \scrH^{\aff}_\mathbb{C})$.
\end{thm}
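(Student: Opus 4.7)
The plan is to reduce the statement in both cases to the standard identification, due to Deligne and its generalisations, between the fundamental group of the complexified complement of a real hyperplane arrangement and the vertex group of its associated arrangement groupoid. The two cases differ only in whether the classical finite theorem or a locally finite extension is needed.

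For the finite arrangement $\scrH$, Theorem~\ref{HomMMP finite summary}\eqref{HomMMP finite summary 1} guarantees that $\scrH$ is simplicial, so Deligne's theorem \cite{Deligne} applies directly: $\bC^n\backslash\scrH_\bC$ is a $K(\uppi,1)$, and its fundamental group admits the presentation with generators indexed by wall-crossings and relations identifying any two positive reduced paths with common source and target. By construction, this is precisely the presentation of $\End_{\mathds{G}}(v)$, so the first isomorphism follows at once.

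For the affine arrangement $\scrH^{\aff}$, the same strategy applies but requires a locally finite version. First I would use the linear isomorphism $\Level_{\bC}\to\Level\oplus\ii\Level$, $x+\ii y\mapsto (x+y)+\ii y$, noted after \eqref{cpx of Haff}, to identify $\Level_\bC\!\backslash\scrH^{\aff}_\bC$ with the complement, inside $\Level\oplus\ii\Level\cong\bC^n$, of the genuine complexification of the real locally finite arrangement $\scrH^{\aff}\subset\Level$. With this in hand, the Salvetti-type results for locally finite arrangements produce a CW-complex homotopy equivalent to the complement whose fundamental groupoid is, by design, $\mathds{G}^{\aff}$.

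The main obstacle is that $\scrH^{\aff}$ need not be Coxeter, so one cannot invoke the classical affine-Weyl-group statements of Brieskorn--van der Lek--Salvetti directly. What is required is only the purely arrangement-theoretic formulation: checking that positive reduced paths in a locally finite essential arrangement determine well-defined loops in the complexified complement, and that the resulting functor from $\mathds{G}^{\aff}$ to the fundamental groupoid of the complement is an isomorphism. Both points are already contained in the references cited at the start of Section~\ref{arrangement prelim subsection}, and the essentiality and local finiteness of $\scrH^{\aff}$ recorded there are exactly what is needed to apply them.
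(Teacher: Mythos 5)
Your proposal is correct and takes essentially the same route as the paper: the paper's own ``proof'' is a citation to \cite{Deligne, Paris, Paris3, Salvetti} for the finite case and to \cite[p9]{Delucchi} for the locally finite affine case, using exactly the observation (made after \eqref{cpx of Haff}) that $\scrH^{\aff}_{\bC}$ is a genuine complexification via the shear $x+\ii y\mapsto (x+y)+\ii y$. The only cosmetic difference is that you invoke simpliciality of $\scrH$ to get Deligne's $K(\uppi,1)$ theorem, whereas the $\uppi_1$-identification alone already holds for arbitrary (locally finite) real arrangements by the Paris--Salvetti results, so simpliciality is not strictly needed for this step (it \emph{is} needed later for Corollary~\ref{contractible}).
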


\subsection{First Results for Flops}\label{first results}
This section proves that in our flops setting, pure braids act as the identity on K-theory. This will be crucial in showing that they act as deck transformations later in Section~\ref{stab on C and D section}.    Throughout this subsection, $\cH$ is either $\scrH$ or $\scrH^{\aff}$, and  $C_L$ denotes the chamber in the complement of $\cH$ corresponding to either $\upvarphi_L(C_+)$ or $\Alcove_L$, when $\cH=\scrH$ or $\cH=\scrH^{\aff}$ respectively.  The following two lemmas are elementary.

\begin{lem}\label{Lemma 1}
Suppose that $\upalpha\colon A\to B$ is a reduced positive path for $\cH$, and that $s_{i}$ is a simple wall crossing separating $B$ and $C$, so by definition there are morphisms $s_i\colon B\to C$ and $s_i\colon C\to B$ in $\Gamma_{\cH}$.  If $s_i\circ\upalpha\colon A\to C$ is not reduced, then there exists some reduced positive path $\upgamma\colon A\to C$ such that $s_{i}\circ\upgamma\colon A\to B$ is reduced.
\end{lem}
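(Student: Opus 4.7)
The plan is to exploit the characterization, noted immediately before the lemma, that a positive path in $\Gamma_{\scrH}$ or $\Gamma_{\scrH^{\aff}}$ is reduced if and only if it has minimal length among positive paths with the same source and target; equivalently, the length of a reduced path $X\to Y$ equals the number $d(X,Y)$ of hyperplanes of $\cH$ that separate the two chambers.

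Let $H\in\cH$ denote the full hyperplane containing the wall $s_i$ between $B$ and $C$. First I would observe that, since $\upalpha$ is reduced but $s_i\circ\upalpha$ is not, the composite $s_i\circ\upalpha$ must cross some hyperplane twice; as $\upalpha$ itself never repeats a hyperplane, the repeated hyperplane can only be $H$. Equivalently, $A$ and $B$ lie on opposite sides of $H$.

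Next I would pin down $d(A,C)$. Since $B$ and $C$ lie on opposite sides of $H$ by construction, combining this with the previous observation forces $A$ and $C$ to lie on the \emph{same} side of $H$. For every other hyperplane $H'\neq H$, the chambers $B$ and $C$ lie on the same side of $H'$, so $H'$ separates $A$ from $B$ if and only if $H'$ separates $A$ from $C$. Consequently
\[
d(A,C)=d(A,B)-1=\ell(\upalpha)-1.
\]

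To finish, I would take $\upgamma\colon A\to C$ to be any reduced positive path; such a $\upgamma$ exists and has length $d(A,C)$. Then $s_i\circ\upgamma$ is a positive path $A\to B$ of length $d(A,C)+1=d(A,B)$, and is therefore reduced as required. The only conceptually delicate point is the first step, namely that ``reduced = shortest'' applies in the possibly infinite arrangement $\scrH^{\aff}$; but this is precisely the content of the references (\cite[Lemma 2]{Salvetti}, \cite[\S I.5]{IM}) cited immediately above the lemma, so no extra work is required, and the real content of the proof is the elementary counting argument above.
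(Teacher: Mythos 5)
Your proof is correct. The paper itself does not give an argument for this lemma but simply refers to \cite[5.1]{HW}; your argument is the natural direct one, built entirely on the ``reduced $=$ shortest $=$ number of separating hyperplanes'' characterization that the paper records just before the lemma. The three steps are all sound: (i) since $\upalpha$ crosses each hyperplane at most once and $s_i$ only adds a crossing of the wall-hyperplane $H$, the only hyperplane $s_i\circ\upalpha$ can repeat is $H$, so $A$ and $B$ lie on opposite sides of $H$; (ii) $B$ and $C$ differ only across $H$, so $d(A,C)=d(A,B)-1$; (iii) appending $s_i$ to a length-$d(A,C)$ positive path $\upgamma\colon A\to C$ gives a positive path $A\to B$ of length $d(A,B)$, which is therefore reduced. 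One point worth making explicit (and which you correctly gesture at by invoking local finiteness) is the existence of a reduced positive path $A\to C$ at all: since $\cH$ and $\scrH^{\aff}$ are locally finite, a generic straight-line segment between interior points of $A$ and $C$ meets only finitely many hyperplanes, so $d(A,C)$ is finite and a minimal positive path exists. With that observation, the argument is complete and applies uniformly to both the finite and affine arrangements.
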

\begin{proof}
This is very similar to \cite[5.1]{HW}.
\end{proof}

\begin{lem}\label{Lemma 2}
Suppose that $\upalpha=s_{i_t}\circ\hdots\circ s_{i_1}$ is a reduced positive path for $\cH$, namely
 \[
\upalpha=(C_{1}\xrightarrow{s_{i_1}}C_{2}\xrightarrow{s_{i_2}}\hdots C_{t-1}\xrightarrow{s_{i_t}}C_t)
\] 
where each $s_{i_j}$ crosses a hyperplane $H_j$, say. Then for all $j=1,\hdots,t-1$, the chambers $C_{1}$ and $C_{j}$ are on the same side of $H_j$.
\end{lem}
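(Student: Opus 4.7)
My plan is that this should follow essentially directly from the definition of reducedness, so the main work is just unpacking the definitions and setting up the right topological argument.

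First I would recall that a positive path being reduced means it crosses no hyperplane twice, which in particular means the hyperplanes $H_1,\hdots,H_t$ traversed by $\upalpha$ are pairwise distinct. Fix $j\in\{1,\hdots,t-1\}$, and consider the subpath
\[
\upalpha_{<j}\colon C_1 \xrightarrow{s_{i_1}} C_2 \to \hdots \to C_{j-1}\xrightarrow{s_{i_{j-1}}}C_j,
\]
which crosses precisely the hyperplanes $H_1,\hdots,H_{j-1}$. By reducedness of $\upalpha$, none of these coincide with $H_j$. So the subpath $\upalpha_{<j}$ never crosses $H_j$.

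Next I would translate this combinatorial statement into the geometric claim. Realising each arrow $s_{i_k}\colon C_k\to C_{k+1}$ by a continuous path in $\mathbb{R}^n$ whose image crosses only the single hyperplane $H_k$ (and no others) in its interior, the concatenation gives a continuous path from a chosen point of $C_1$ to a chosen point of $C_j$ whose image avoids $H_j$ entirely. Since $\mathbb{R}^n\setminus H_j$ has exactly two connected components (the two open half-spaces determined by $H_j$), and each of $C_1$ and $C_j$ lies in one of these components, the existence of such a path forces $C_1$ and $C_j$ to lie in the same component, i.e.\ on the same side of $H_j$.

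I do not anticipate any real obstacle; the only thing that needs a moment's care is ensuring that one can indeed take such a continuous representative for a positive path avoiding all hyperplanes not being crossed, but this is immediate from the local structure of locally finite essential arrangements as reviewed in Section~\ref{arrangement prelim subsection}. The argument is uniform in $\cH\in\{\scrH,\scrH^{\aff}\}$ since both arrangements are locally finite, so no separate case analysis is needed.
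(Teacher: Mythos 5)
Your proof is correct and uses essentially the same idea as the paper: reducedness forces $H_j$ to be distinct from $H_1,\ldots,H_{j-1}$, so the subpath from $C_1$ to $C_j$ never crosses $H_j$, hence the two chambers lie on the same side. The paper phrases this as a contradiction ("if they were on opposite sides, $\upbeta$ would have to cross $H_j$, so $\upalpha$ would cross it twice"), while you state it directly via a topological realisation; these are the same argument.
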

\begin{proof}
There is nothing to prove in the case $j=1$.  If $C_1$ and $C_j$ are on opposite sides of $H_j$ for some $j>1$, then clearly $\upbeta\colonequals s_{i_{j-1}}\circ\hdots\circ s_{i_1}\colon C_1\to C_j$ must at some point cross $H_j$.  But then $s_{i_j}\circ\upbeta$, and hence $\upalpha$, must cross $H_j$ twice, which is a contradiction. 
\end{proof}

With respect to our applications, part (2) of the following proposition is crucial. For any positive $\upalpha\in\Gamma_{\scrH^{\aff}}$, say $\upalpha={s_{i_t}}\circ\hdots\circ {s_{i_1}}$ we associate the functor
\[
\Upphi_\upalpha\colonequals \Upphi_{i_t}\circ\hdots\circ\Upphi_{i_1}.
\]
Now there is an order $\geq$ defined on tilting modules (see e.g.\ \cite[\S3]{HW}).  By Lemma~\ref{Lemma 2}, this order decreases along reduced paths, and so exactly as in \cite[4.6]{HW} (see Remark~\ref{pos min are functorial} below), we see that $\Upphi_\upalpha\cong\Upphi_\upbeta$ for any two reduced positive paths with the same start and end points.  Hence the association $\upalpha\mapsto\Upphi_\upalpha$ descends to a functor 
from $(\mathds{G}^{\aff})^+$.  As $\Upphi_\upalpha$ is already invertible, this in turn formally descends to a functor from $\mathds{G}^{\aff}$.    

The same analysis holds for the finite situation $\mathds{G}$.  In both cases, for any $\upalpha$ in the arrangement groupoid, we thus have an associated functor $\Upphi_{\upalpha}$, and its image $\upphi_\upalpha$ on K-theory $\scrK$, respectively $\upvarphi_\upalpha$ on $\Uptheta$.

\begin{prop}\label{theta trivial} 
Choose a reduced positive path $\upbeta\colon C_L\to C_M$ for $\scrH^{\aff}$.\begin{enumerate}
\item\label{theta trivial 1} If $\upalpha\colon C_L\to C_M$ is any positive path in $\Gamma_{\scrH^{\aff}}$, then $\upphi_{\upalpha}=\upphi_{\upbeta}$. 
 \item\label{theta trivial 2} If $p\in\End_{\dsG^{\aff}}(C_L)$, then $\upphi_{p}=\Id_{\scrK_L}$.
 \item\label{theta trivial 3} If $p,q\in\Hom_{\dsG^{\aff}}(C_L,C_M)$, then $\upphi_{p}=\upphi_{q}$.
 \end{enumerate}
 The same statements hold for $\scrH$, replacing $\mathds{G}^{\aff}$ by $\mathds{G}$, and $\upphi$ by $\upvarphi$.
\begin{proof}
We will establish all statements over $\bZ$, as then all the statements over $\mathds{k}$ follow.\\
(1) By the discussion above the Proposition, we know that if $\upalpha$ is furthermore reduced, then $\Upphi_\upalpha\cong\Upphi_\upbeta$, and so in particular $\upphi_\upalpha=\upphi_\upbeta$ holds.  Hence we can assume that $\upalpha$ is not reduced.

Consider the first time that $\upalpha={s_{i_t}}\circ\hdots\circ {s_{i_1}}$ crosses a hyperplane twice.  So, say $\upbeta\colonequals s_{i_{m-1}}\circ\hdots\circ s_{i_1}$ is reduced, but $s_{i_m}\circ\hdots\circ s_{i_1}$ is not. Pictorially
\[
\begin{tikzpicture}
\node (1) at (-1,-1) {$C_L=C_1$};
\node (m-1) at (2,0.5) {$C_{m-1}$};
\node (m) at (2.5,-0.5) {$C_{m}$};
\draw[densely dotted] (-2,0) -- (4,0);
\node at (4.25,0) {$H$};
\draw[color=black,rounded corners=15pt,->] 
 (1)  --($(1)+(0.5,2)$) -- ($(1)+(1.5,1)$) --  node[left]{$\scriptstyle\upbeta$}($(1)+(2,2.5)$)   -- (m-1);
\draw[->] (m-1) to node[right] {$\scriptstyle s_{i_m}$} (m);
\end{tikzpicture}
\]
By Lemma~\ref{Lemma 1} we can find a positive reduced path $\upgamma\colon C_1\to C_m$ such that the composition $C_1\xrightarrow{\upgamma} C_m\xrightarrow{s_{i_m}}C_{m-1}$ is reduced.  As $\upbeta$ and $s_{i_m}\circ\upgamma$ are reduced paths with the same start and end points, functorially they are the same, so
\begin{align*}
\Upphi_\upalpha
&= \Upphi_{i_t}\circ\hdots\circ\Upphi_{i_m}\circ(\Upphi_{i_{m-1}}\circ\hdots\circ\Upphi_{i_1})\\
&\cong \Upphi_{i_t}\circ\hdots\circ\Upphi_{i_m}\circ(\Upphi_{i_{m}}\circ\Upphi_\upgamma)
\end{align*}
Passing to K-theory, using the fact that $\upphi_{i_{m}}\upphi_{i_{m}}=\Id$ by Lemma~\ref{invol}, we see
\[
\upphi_\upalpha=\upphi_{i_t}\circ\hdots\circ\upphi_{i_{m+1}}\circ\upphi_\upgamma.
\]
Consider next the first time that $s_{i_t}\circ\hdots\circ s_{i_{m+1}}\circ\upgamma$ crosses a hyperplane twice.   Since $\upgamma$ is reduced, we move further to the left.  Applying the above argument repeatedly, by induction we end up in the case of a reduced path, and hence  $\upphi_\upalpha=\upphi_\upbeta$.

\noindent
(2) Say $\upphi_p=\upphi_{i_n}^{\pm 1}\circ \hdots \circ \upphi_{i_1}^{\pm 1}$ for some choice of superscripts $\pm 1$.  By Lemma~\ref{invol}, $\upphi_p=\upphi_{q}$, where $q\colonequals s_{i_n}\circ \hdots \circ s_{i_1}$. This is a positive path, with start and end $C_L$, so by part (1) it follows that $\upphi_p=\upphi_q=\Id$.

\noindent
(3)  This follows by applying (2) to  $q^{-1}p\in\End_{\dsG_{\cH}}(C_L)$.
\end{proof}
\end{prop}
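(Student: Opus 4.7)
\bigskip
\noindent\textbf{Proof proposal.}
The plan is to do the three parts sequentially, with the substantive work concentrated in part (1); parts (2) and (3) will then drop out formally. Throughout I will argue everything over $\bZ$, since that implies the result over any $\mathds{k}$.

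For part (1), the natural strategy is induction on the number of walls that $\upalpha$ crosses more than once. The base case is when $\upalpha$ is already reduced, in which case the conclusion is the functorial statement $\Upphi_\upalpha \cong \Upphi_\upbeta$ recalled in the paragraph preceding the proposition. For the inductive step, write $\upalpha = s_{i_t}\circ\cdots\circ s_{i_1}$ and locate the smallest $m$ for which $s_{i_m}\circ\cdots\circ s_{i_1}$ fails to be reduced. Then the prefix $\upalpha'\colonequals s_{i_{m-1}}\circ\cdots\circ s_{i_1}$ is reduced, and $s_{i_m}$ is the step that duplicates a wall. Lemma~\ref{Lemma 1} produces a reduced positive path $\upgamma$ from $s(\upalpha)$ to $t(s_{i_m}\circ\upalpha')$ such that $s_{i_m}\circ\upgamma$ is reduced. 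Since $\upalpha'$ and $s_{i_m}\circ\upgamma$ are reduced with matching endpoints, the base case shows their K-theory maps coincide, so inside the expression for $\upphi_\upalpha$ I can swap $\upalpha'$ for $s_{i_m}\circ\upgamma$ and then cancel the resulting two consecutive copies of $\upphi_{i_m}$ using $\upphi_i^2 = \Id$ from Lemma~\ref{invol}. The surviving positive path has strictly fewer doubly-crossed walls, since $\upgamma$ is reduced and the redundancy sat precisely at position $m$, so the induction terminates.

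For part (2), Lemma~\ref{invol} lets me rewrite each inverse generator appearing in $p$ as the corresponding positive generator, producing $\upphi_p = \upphi_q$ for an honestly positive path $q\colon C_L\to C_L$. A reduced positive endomorphism must have length zero, because any wall crossed by a positive path would have to be crossed again in order to return to the starting chamber, contradicting reducedness. Applying part (1) with $\upbeta$ the trivial path then gives $\upphi_p = \Id_{\scrK_L}$. Part (3) is now immediate: $q^{-1}p \in \End_{\dsG^{\aff}}(C_L)$ is sent to the identity by part (2), forcing $\upphi_p = \upphi_q$. The finite statements are handled verbatim with $\mathds{G}$ and $\upvarphi$ in place of $\mathds{G}^{\aff}$ and $\upphi$.

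The hard part will be the inductive bookkeeping in part (1): I must argue that choosing the \emph{earliest} redundantly-crossed index $m$, together with $\upgamma$ being reduced, genuinely forces the replacement to strictly decrease the count of doubly-crossed walls rather than merely relocate the defect further along. Once that is secured, the rest is entirely formal manipulation governed by the involutivity $\upphi_i^2 = \Id$ supplied by Lemma~\ref{invol}.
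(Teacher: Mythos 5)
Your approach is essentially the paper's: locate the first prefix of $\upalpha$ that fails to be reduced, invoke Lemma~\ref{Lemma 1} to produce a reduced $\upgamma$ with $s_{i_m}\circ\upgamma$ reduced and sharing endpoints with the offending prefix, swap the two at the level of functors, and cancel the resulting $\upphi_{i_m}\circ\upphi_{i_m}$ using Lemma~\ref{invol}. Parts (2) and (3) match the paper verbatim.

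The one place you are right to worry is the induction variable, and the worry is warranted: the count of walls crossed more than once does \emph{not} necessarily drop. The replacement removes exactly two crossings of the repeated wall $H$ from the multiset of crossings, so if $H$ was crossed $\geq 4$ times in $\upalpha$, it remains crossed $\geq 2$ times afterwards and your proposed measure is unchanged. The fix is cheap: induct on the \emph{length} of $\upalpha$ instead. The prefix $s_{i_{m-1}}\circ\cdots\circ s_{i_1}$ of length $m-1$ is replaced by $\upgamma$, which is reduced from $C_1$ to $C_m$ and hence has length $m-2$; after the cancellation the new positive path $s_{i_t}\circ\cdots\circ s_{i_{m+1}}\circ\upgamma$ therefore has length $t-2$, strictly smaller, while representing the same map on K-theory. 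Since length is a nonnegative integer, the process terminates in a reduced path (equivalently, note that the position of the first repeated crossing, relative to the end of the path, strictly decreases, which is what the paper's phrase ``move further to the left'' is gesturing at). With this substitution your argument is complete and coincides with the paper's.
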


\begin{rem}
Proposition~\ref{theta trivial} also implies that Theorems~\ref{affine summary} and \ref{HomMMP finite summary}, and also Propositions~\ref{complexified tracking} and \ref{complexified tracking 2}, can be indexed over reduced positive paths terminating at $C_+$.
\end{rem}

\begin{rem}\label{pos min are functorial}
Implicit in the above analysis is the fact, proved in \cite[4.6]{HW} in the finite case and \cite[9.34]{IW9} in the infinite case, that if $\upalpha\colon L\to M$ is a positive minimal path, then $\Upphi_\upalpha\cong\RHom_{\Lambda_L}(\Hom_R(L,M),-)$.  Hence any two positive minimal paths give rise to isomorphic functors.
\end{rem}

\section{Stability Conditions, Tilting and t-structure Transfer}

\subsection{Generalities on stability conditions}\label{sec: stab generalities}
We will not give a full summary of stability conditions here; see for example \cite{B07} or the survey \cite{Bayer}.  For our purposes, the following suffices.
Throughout this subsection, $\scrT$ denotes a triangulated category for which the Grothendieck group $K_0(\scrT)$ is a finitely generated free $\bZ$-module.

\begin{prop}[{\cite[5.3]{B07}}]\label{stab prop}
To specify a stability condition on $\scrT$ is equivalent to specifying a bounded t-structure  on $\scrT$ with heart $\scrA$, together with a stability function $Z$ on $\scrA$ that satisfies  the Harder-Narasimhan property.
\end{prop}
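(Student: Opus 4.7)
The plan is to establish both directions of the equivalence, following Bridgeland's original argument in \cite{B07}. Recall that a stability condition on $\scrT$ is by definition a pair $(Z,\cP)$, where $Z\colon K_0(\scrT)\to\bC$ is a group homomorphism (the central charge) and $\cP$ is a slicing: a family of full additive subcategories $\cP(\phi)\subseteq\scrT$, indexed by $\phi\in\bR$, satisfying $\cP(\phi+1)=\cP(\phi)[1]$, phase-compatibility with $Z$, the Hom-vanishing axiom whenever $\phi_1>\phi_2$, and existence of Harder--Narasimhan filtrations for every nonzero object of $\scrT$.

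For the forward direction, given $(Z,\cP)$ I would set $\scrA\colonequals\cP((0,1])$, the extension closure of $\bigcup_{\phi\in(0,1]}\cP(\phi)$. The slicing axioms imply directly that $\scrA$ is the heart of a bounded t-structure on $\scrT$, with truncation functors constructed from the HN filtrations taken relative to the threshold phase $0$. The restriction $Z|_{K_0(\scrA)}$ sends nonzero objects into the semi-closed upper half plane $\bH$ by the phase axiom, hence is a stability function on $\scrA$, and the HN property transfers directly from the slicing.

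For the reverse direction, starting from a bounded t-structure with heart $\scrA$ and a stability function $Z$ with the HN property, I would construct a slicing $\cP$ as follows. For $\phi\in(0,1]$, take $\cP(\phi)$ to consist of zero together with all $Z$-semistable objects of $\scrA$ of phase $\phi$, and extend to all $\phi\in\bR$ by the rule $\cP(\phi+1)\colonequals\cP(\phi)[1]$. Since the t-structure is bounded, every nonzero $E\in\scrT$ has only finitely many nonzero cohomology objects $\mathrm{H}^i(E)\in\scrA$, and each such cohomology object admits an HN filtration in $\scrA$ by hypothesis on $Z$.

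The main obstacle is the assembly step: concatenating the HN filtrations of the shifted cohomologies $\mathrm{H}^i(E)[-i]$, whose semistable factors lie in the shifted subcategories $\cP(\phi+i)$, into a single HN filtration of $E$ in $\scrT$ with strictly decreasing phases. Concretely, one takes the largest $i$ with $\mathrm{H}^i(E)\neq 0$, grafts the HN filtration of $\mathrm{H}^i(E)[-i]$ onto the truncation $\uptau_{<i}E$ via the standard triangle $\uptau_{<i}E\to E\to \mathrm{H}^i(E)[-i]\to$, and iterates downwards. One then verifies that the resulting filtration has strictly decreasing phases across the extended range of $\phi$ and is unique, with compatibility between $Z$ and the slicing on all of $K_0(\scrT)$ following by additivity along triangles. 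This verification, reconciling the phase ordering across different cohomological degrees, is the genuinely technical part; once it is carried out, Proposition~\ref{stab prop} is established.
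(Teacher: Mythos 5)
The paper does not supply a proof of this proposition at all: it is stated as a citation to \cite[5.3]{B07}, and the paper relies on Bridgeland's result as a black box. Your sketch is a correct and faithful reproduction of Bridgeland's own argument, so there is no conflict with the paper's treatment. For completeness, two small points you gloss over: (i) when concatenating the HN filtrations of the $\mathrm{H}^i(E)[-i]$, one should observe explicitly that the factors of $\mathrm{H}^{i-1}(E)[-(i-1)]$ have phases in $(1-i,2-i]$ while those of $\mathrm{H}^{i}(E)[-i]$ have phases in $(-i,1-i]$, and these half-open intervals are disjoint and correctly ordered, so the grafting automatically produces strictly decreasing phases with no reconciliation needed; (ii) you should explicitly verify the Hom-vanishing axiom $\Hom(\cP(\phi_1),\cP(\phi_2))=0$ for $\phi_1>\phi_2$ for the constructed slicing, which follows from the Hom-vanishing between semistable objects of distinct phases within the heart (the usual sub/quotient argument) together with $\Hom(\scrA,\scrA[k])=0$ for $k<0$; this axiom is also what underlies the uniqueness you assert. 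Neither point is a gap in the approach, just places where the sketch elides the verifications.
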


As usual,  in fact we will only study \emph{locally finite} stability conditions, and we let $\Stab\scrT$ denote the set of locally finite stability conditions on $\scrT$.  There is a topology on $\Stab\scrT$, induced by a natural metric.  

\begin{thm}[{\cite[1.2]{B07}}]\label{stab thm}
The space $\Stab\scrT$ has the structure of a complex manifold, and the forgetful map
\[
\uppi\colon \Stab\scrT\to\Hom_{\bZ}(K_0(\scrT),\bC)
\]
is a local isomorphism onto an open subspace of $\Hom_{\bZ}(K_0(\scrT),\bC)$.
\end{thm}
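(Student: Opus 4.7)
The plan is to adapt Bridgeland's deformation argument. The central technical claim is a lifting theorem: given $\sigma=(Z,\cP) \in \Stab\scrT$, every sufficiently small perturbation $W$ of the central charge $Z$ lifts uniquely to a nearby stability condition $\tau=(W,\cQ)$. Both the topology and the complex-analytic structure on $\Stab\scrT$ are then transported from $\Hom_\bZ(K_0(\scrT),\bC)$ through these local lifts.

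First I would equip $\Stab\scrT$ with a natural generalised metric, comparing two stability conditions by taking the supremum, over all objects semistable in either, of the differences of upper and lower HN phases and of the log-ratios of masses. Continuity of the forgetful map $\uppi$ with respect to this metric, and the operator-norm topology on the target, is then routine.

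The crucial step is the lifting theorem itself. Fix $\sigma=(Z,\cP)$ and $\epsilon\in(0,\tfrac{1}{4})$. The precise statement is: there exists $k=k(\sigma,\epsilon)>0$ such that any group homomorphism $W\colon K_0(\scrT)\to\bC$ with
\[
|W(E)-Z(E)|<k\cdot|Z(E)|
\]
for every $\sigma$-semistable $E$ underlies a unique stability condition $\tau=(W,\cQ)$ of distance less than $\epsilon$ from $\sigma$. The construction subdivides $\bR$ into half-open intervals of length less than $\epsilon$; on each quasi-abelian slice $\cP((a,b])$ the restriction of $W$ defines a stability function, and local finiteness of $\sigma$ forces each slice to be of finite length. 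Consequently Harder--Narasimhan filtrations exist on each slice automatically, and patching across intervals produces the new slicing $\cQ$ together with its HN property. Uniqueness is forced because any competing $\tau'$ close enough to $\sigma$ must share these HN decompositions within each small slice.

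Given the lifting theorem, $\uppi$ is a local homeomorphism onto an open subset of $\Hom_\bZ(K_0(\scrT),\bC)$. Since the target is a finite-dimensional $\bC$-vector space, its complex structure pulls back through local inverses of $\uppi$ to give $\Stab\scrT$ the structure of a complex manifold; compatibility of overlapping charts is automatic because $\uppi$ itself serves as the chart on every patch. The main obstacle is verifying the Harder--Narasimhan property for the perturbed slicing $\cQ$: a naive perturbation of $Z$ could in principle admit an infinite descending chain that ruins HN, and local finiteness of $\sigma$ (encoded in the finite length of each $\cP((a,b])$ for short intervals) is precisely the hypothesis that prevents this. Establishing this clean quasi-abelian finite-length behaviour, and then showing that the subdivided HN data glue consistently across interval boundaries, is the delicate part of the argument.
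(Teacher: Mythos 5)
The paper does not prove this result; it is quoted verbatim from Bridgeland \cite[Theorem 1.2]{B07}. Your sketch correctly reproduces Bridgeland's original deformation argument (the generalised metric on $\Stab\scrT$, the key lifting theorem for small perturbations of the central charge via finite length of the quasi-abelian slices $\cP((a,b])$, and transporting the complex structure through the local homeomorphism $\uppi$), so it matches the cited proof in approach and content.
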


\begin{rem}
In the components $\cStab{}\scrC$ and $\cStab{}\scrD$ we study in the flops setting below, all stability conditions will automatically be \emph{full}, in the sense that they are always modelled on the whole of $\Hom_{\bZ}(K_0(\scrT),\bC)$.  In particular, our stability conditions will automatically satisfy the \emph{support property}, see e.g.\ \cite[Appendix B]{BMacri}.  We will freely use this throughout. 
\end{rem}

An exact equivalence of triangulated categories $\Phi\colon \scrT\to \scrT'$ induces a natural map 
\[
\Phi_*\colon \Stab\scrT\to\Stab\scrT'
\] 
defined by $\Phi_*(Z,\cA)\colonequals (Z\circ \upphi^{-1},\Phi(\cA))$, where as before $\upphi^{-1}$ denotes the isomorphism on K-theory $K_0(\scrT')\simto K_0(\scrT)$ induced by the functor $\Phi^{-1}$, and $\Phi(\cA)$ denotes its essential image. As usual, if two exact equivalences $\Phi\colon \scrT\to\scrT'$ and $\Psi\colon \scrT\to\scrT'$ are naturally isomorphic, then $\Phi_*(Z,\cA)=\Psi_*(Z,\cA)$ for any $(Z,\cA)\in\Stab\scrT$, and thus the group $\Auteq(\scrT)$ of isomorphism classes of autoequivalences of $\scrT$  acts on $\Stab\scrT$.

\subsection{Stability, Normalisation and Mutations}\label{sec:stab normal and mut}
We return to the setting where $f\colon X\to\Spec R$ is the flopping contraction as in the introduction, with distinguished $R$-module $N$ from \eqref{ass modi}, $\Lambda\colonequals\End_R(N)$, and K-theory $\scrK$ and $\Uptheta$ from Sections \ref{affine notation subsection} and \ref{finite notation subsection} respectively.

For any $L=\bigoplus_{i=0}^nL_i\in \Mut(N)$, with the ordering on summands induced from $N$ as explained under Definition~\ref{exchange def}, let $\scrS_i$ be the simple $\Lambda_L$-module corresponding to the projective $\scrP_i=\Hom(L,L_i)$. Write $\scrB_L$ for the subcategory of $\fmod\Lambda_L$ consisting of finite-length modules.  If $L\in \Mut_0(N)$, then we further write $\scrA_L$ for the full subcategory of $\scrB_L$ of those finite-length modules whose simple factors are not isomorphic to $\scrS_0$.

 Consider the triangulated subcategories
 \begin{align*}
\scrC_L&\colonequals \{a\in \Db(\fmod\Lambda_L) \mid \mathrm{H}^i(a)\in \scrA_L \mbox{ for all $i$} \},\\
\scrD_L&\colonequals \{b\in \Db(\fmod\Lambda_L) \mid \mathrm{H}^i(b)\in \scrB_L \mbox{ for all $i$} \}.
\end{align*}
Since $\scrA_L$ and $\scrB_L$ are extension closed abelian subcategories of $\fmod\Lambda_L$, the standard t-structure on $\Db(\fmod\Lambda_L)$ restricts to  a bounded t-structure on $\scrD_L$ with heart $\scrB_L$, and a bounded t-structure on $\scrC_L$ with heart $\scrA_L$.

The categories $\scrA_L$ and $\scrB_L$ have finitely many simple objects, and so
\[
K_0(\scrC_L)\cong  \bigoplus_{i=1}^n\bZ[\scrS_i]
\qquad
K_0(\scrD_L)\cong  \bigoplus_{i=0}^n\bZ[\scrS_i].
\]
There are canonical isomorphisms
\[
\Hom_{\bZ}(K_0(\scrC_L),\bC)\simto (\Uptheta_L)_\bC
\qquad
\Hom_{\bZ}(K_0(\scrD_L),\bC)\simto (\scrK_L)_\bC
\]
given by $\upgamma\mapsto\sum \upgamma( [\scrS_i])[\scrP_i]$.  Composing these with the local homeomorphism  in Theorem~\ref{stab thm} defines local homeomorphisms
\[
\cZ_L\colon \Stab\scrC_L\to (\Uptheta_L)_{\bC}
\qquad
\cZ_L\colon \Stab\scrD_L\to (\scrK_L)_{\bC}.
\]
Write $\Stab\scrA_L$ for the stability functions on $\scrA_L$ which satisfy the Harder--Narasimhan property, then by Proposition \ref{stab prop} $\Stab\scrA_L$ can be regarded as a subspace of  $\Stab\scrC_L$.  Similarly for $\Stab\scrB_L$, which is a subspace of $\Stab\scrD_L$.  It follows from \cite[5.2]{B07} that the above local homeomorphisms restrict to  isomorphisms
\begin{equation}
\cZ_L\colon \Stab\scrA_L\simto\bH_+
\qquad
\cZ_L\colon \Stab\scrB_L\simto\bH'_+.\label{local homeo A}
\end{equation}

Applying all of the above to $N\in\Mut(N)$, it will be convenient to suppress $N$ from the notation, so write $\scrD\colonequals \scrD_N$,  $\scrB\colonequals \scrB_N$, $\cZ\colonequals \cZ_N$, etc.

There is a $\mathbb{C}$-action on $\Stab\scrD$, which later we will avoid.  As such, following \cite{B3}, for any $L=\bigoplus_{i=0}^nL_i\in \Mut(N)$, consider $\nStab{n}\scrD_L$ to be those stability conditions in $\Stab\scrD_L$ for which the central charge $Z$ satisfies
\[
\sum_{j=0}^{n}(\rk_RL_j)\,Z[\scrS_{j}]
=
\ii.
\]
We call such stability conditions \emph{normalised}.
In particular
\[
\cZ_L\colon \nStab{n}\scrD_L\to (\Level_{L})_{\bC},
\]
where the complexified level is defined in \S\ref{complexified actions subsection}.  Set $\nStab{n}\scrB_L\colonequals \Stab\scrB_L\cap \,\nStab{n}\scrD_L$, then the latter isomorphism in \eqref{local homeo A} restricts to an isomorphism
\[
\cZ_L\colon\nStab{n}\scrB_L\simto \bE_+.
\]
\begin{prop} \label{stability track comm diagram}
Let $L\in\Mut_0(N)$, respectively $L\in\Mut(N)$.  Then the following diagrams commute.
\[
\begin{array}{ccc}
\begin{array}{c}
\begin{tikzpicture}
\node (A1) at (0,0) {$\Stab\scrA_L$};
\node (A2) at (3,0) {$\Stab\scrC$};
\node (B1) at (0,-1.5) {$\bH_+$};
\node (B2) at (3,-1.5) {$\Uptheta_{\bC}$};
\draw[->] (A1) -- node[above] {\scriptsize $(\Upphi_{\kern -1pt L})_*$} (A2);
\draw[->] (B1) -- node[above] {\scriptsize $\upvarphi_L$} (B2);
\draw[->] (A1) -- node[left] {\scriptsize $\cZ_{L}$}node[right] {\scriptsize $\sim$} (B1);
\draw[->] (A2) -- node[right] {\scriptsize $\cZ$} (B2);
\end{tikzpicture}
\end{array}
&&
\begin{array}{c}
\begin{tikzpicture}
\node (A1) at (0,0) {$\Stab\scrB_L$};
\node (A2) at (3,0) {$\Stab\scrD$};
\node (B1) at (0,-1.5) {$\bH'_+$};
\node (B2) at (3,-1.5) {$\scrK_{\bC}$};
\draw[->] (A1) -- node[above] {\scriptsize $(\Upphi_{\kern -1pt L})_*$} (A2);
\draw[->] (B1) -- node[above] {\scriptsize $\upphi_L$} (B2);
\draw[->] (A1) -- node[left] {\scriptsize $\cZ_{L}$}node[right] {\scriptsize $\sim$} (B1);
\draw[->] (A2) -- node[right] {\scriptsize $\cZ$} (B2);
\end{tikzpicture}
\end{array}
\end{array}
\]
The latter diagram restricts to a commutative diagram
\[
\begin{array}{c}
\begin{tikzpicture}
\node (A1) at (0,0) {$\nStab{n}\scrB_L$};
\node (A2) at (3,0) {$\nStab{n}\scrD$};
\node (B1) at (0,-1.5) {$\bE_+$};
\node (B2) at (3,-1.5) {$\Level_{\kern 1pt \bC}$};
\draw[->] (A1) -- node[above] {\scriptsize $(\Upphi_{\kern -1pt L})_*$} (A2);
\draw[->] (B1) -- node[above] {\scriptsize $\upphi_L$} (B2);
\draw[->] (A1) -- node[left] {\scriptsize $\cZ_{L}$}node[right] {\scriptsize $\sim$} (B1);
\draw[->] (A2) -- node[right] {\scriptsize $\cZ$} (B2);
\end{tikzpicture}
\end{array}
\]
\end{prop}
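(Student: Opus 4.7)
The plan is to unpack the definition of the pushforward and trace everything through the Euler pairing. Writing $f_{\scrC}\colon K_0(\scrC_L)\to K_0(\scrC)$ for the K-theory map induced by the restriction of $\Upphi_L$ to $\scrC_L$, the pushforward on stability conditions sends $\sigma=(Z,\cA)$ to $(Z\circ f_{\scrC}^{-1},\Upphi_L(\cA))$. On the other hand, the identifications $\Hom_\bZ(K_0(\scrC_L),\bC)\simto (\Uptheta_L)_\bC$ given by $\upgamma\mapsto\sum\upgamma([\scrS_i^L])[\scrP_i^L]$ are, up to a transpose, nothing other than the isomorphism induced by the perfect Euler pairing $\chi\colon\Uptheta_L\times K_0(\scrC_L)\to\bZ$ (duals of simples are projectives, excluding the distinguished vertex in the $\scrC$-case). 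Thus everything in sight can be encoded in terms of $\chi$, and the diagrams will collapse into the standard statement that triangulated equivalences preserve Euler pairings.

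\textbf{Step 1: the restrictions exist.} Before the top arrows even make sense, I would verify that $\Upphi_L$ restricts to an equivalence $\scrD_L\simto\scrD$ (in general) and $\scrC_L\simto\scrC$ (when $L\in\Mut_0(N)$). Since $\Upphi_L$ is a composition of mutation functors $\Upphi_i$, each induced by a tilting bimodule that differs from the source algebra only at the $i$-th summand, it preserves finite-length cohomology, giving $\scrD_L\simto\scrD$. For $L\in\Mut_0(N)$ no mutation is performed at the vertex $0$, so each $\Upphi_i$ fixes $\scrS_0$ in $K$-theory and preserves the subcategory cut out by the absence of $\scrS_0$, giving $\scrC_L\simto\scrC$. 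This ensures $(\Upphi_L)_*\sigma$ lies in $\Stab\scrC$ (resp.\ $\Stab\scrD$) whenever $\sigma\in\Stab\scrA_L$ (resp.\ $\Stab\scrB_L$).

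\textbf{Step 2: commutativity of the first two diagrams.} By invariance of the Euler pairing under the equivalence $\Upphi_L$, for all $p\in\Uptheta_L$ and $s\in K_0(\scrC_L)$ one has
\[
\chi_N\bigl(\upvarphi_L(p),\,f_{\scrC}(s)\bigr)=\chi_L(p,s).
\]
If $Z$ corresponds to $z^L\in(\Uptheta_L)_\bC$, i.e.\ $Z(s)=\chi_L(z^L,s)$, then substituting $s=f_{\scrC}^{-1}(s')$ gives
\[
\bigl(Z\circ f_{\scrC}^{-1}\bigr)(s')=\chi_L\bigl(z^L,f_{\scrC}^{-1}(s')\bigr)=\chi_N\bigl(\upvarphi_L(z^L),s'\bigr),
\]
so $Z\circ f_{\scrC}^{-1}$ corresponds to $\upvarphi_L(z^L)\in\Uptheta_\bC$. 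This is exactly the commutativity of the $\scrC$-diagram, and the $\scrD$-diagram follows identically with $\Uptheta$ replaced by $\scrK$ and the pairing extended to include $\scrS_0,\scrP_0$.

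\textbf{Step 3: the normalisation restriction.} It remains to check that $\upphi_L$ sends $(\Level_L)_\bC$ to $\Level_\bC$, i.e.\ that it preserves the rank linear form $[\scrP_j^L]\mapsto \rk_R(L_j)$. Conceptually this is because each mutation functor is induced by an $R$-bilinear tilting bimodule, so the restriction-of-scalars functor $\Perf\Lambda_L\to\Perf R$ intertwines $\Upphi_L$ with $\Id_{\Perf R}$, and the $R$-rank factors accordingly. For readers preferring a direct computation, one reduces to a single mutation $\Upphi_i$ and uses \eqref{k-correspond} together with the rank identity $\rk_R(K_i^*)=\sum_{j\ne i}b_{ij}\rk_R(L_j)-\rk_R(L_i)$ extracted from the exchange sequence \eqref{bij}. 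The main obstacle in the whole argument is really this last bookkeeping: everything else follows from the formal invariance of the Euler pairing under an equivalence.
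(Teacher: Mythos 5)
Your proposal is correct and follows essentially the same route as the paper: both proofs identify $\cZ$ with the Euler pairing and reduce commutativity to the invariance of that pairing under the equivalence $\Upphi_L$ (the paper makes this explicit via the adjunction between $\Upphi_L$ and $\Upphi_L^{-1}$ and the matrix $a_{ij}=\upchi(\Upphi_L(\scrQ_i),\scrS_j)$, while you state it as the single identity $\upchi_N(\upvarphi_L(p),f_\scrC(s))=\upchi_L(p,s)$). Your Step~3 fleshes out the rank-preservation claim that the paper disposes of in a single sentence; the direct check via \eqref{k-correspond} and the exchange sequence is exactly what makes that sentence true.
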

\begin{proof}
To ease notation, set $\scrQ_i\colonequals \Hom_R(L,L_i)$ and write $\scrS_i'$ for the simple $\Lambda_L$-module corresponding to the projective $\Lambda_L$-module $\scrQ_i$.   Similarly, write $\scrP_i=\Hom_R(N,N_i)$, and $\scrS_i$ for the corresponding simple $\Lambda$-module.

Consider the perfect pairing 
\[
\upchi(-,-)\colon \Uptheta\times K_0(\scrC)\to\bZ
\]
given by $\upchi(a,b)\colonequals \sum_{i\in\bZ}(-1)^i\dim_{\bC}\Hom(a,b[i])$.
Since the pairing is perfect, setting  
\[
a_{ij}\colonequals \upchi(\Upphi_{L}(\scrQ_i),\scrS_j)\in \bZ
\]
implies that $[\Upphi_{L}(\scrQ_i)]=\sum_{j=1}^na_{ij}[\scrP_j]$ in $\Uptheta$.
Furthermore, since $\Upphi_{L}^{-1}$ is right adjoint to $\Upphi_{L}$, 
\[
a_{ij}= \upchi(\scrQ_i,\Upphi_{L}^{-1}(\scrS_j)),
\]
which in turn implies that $[\Upphi_{L}^{-1}(\scrS_j)]=\sum_{i=1}^na_{ij}[\scrS_i']$ in $K_0(\scrC)$. 
Therefore, for any point  $\upsigma=(Z,\cP)\in\Stab\scrA_L$,  necessarily
\[
\upvarphi_L(\cZ_{L}(\upsigma))
=\sum_{i=1}^{n}Z(\scrS_i')[\Upphi_{L}(\scrQ_i)]
=\sum_{j=1}^{n}\Big(\sum_{i=1}^na_{ij}Z(\scrS'_i)\Big)[\scrP_j]=\cZ({\Upphi_{L}}_*(\upsigma)).
\]
The last diagram follows immediately, as mutation functors in $K$-theory take $\sum (\rk_RL_i)[\scrS_i]$ to $\sum (\rk_RN_i)[\scrS_i]$ and thus preserve the normalisation.
\end{proof}

\subsection{Tilting at Simples via Mutation}

By the above, $\scrA_L\subset \scrC_L$  and $\scrB_L\subset \scrD_L$ are the hearts of bounded t-structures, with finitely many simples.  Each of these simple objects induces two torsion theories, $(\l \scrS_i \r, \scrF_i)$  and $(\scrT_i,\l \scrS_i \r)$, where $\l \scrS_i \r$ is the full subcategory of objects whose simple factors are isomorphic to $\scrS_i$.  In the case of $\scrA_L$, the subcategories $\scrF_i$ and $\scrT_i$ are defined by 
\begin{align*}
\scrF_i&\colonequals \{a\in\scrA_L\mid\Hom_{\scrA_L}(\scrS_i,a)=0\}\\
\scrT_i&\colonequals \{a\in\scrA_L\mid\Hom_{\scrA_L}(a,\scrS_i)=0\},
\end{align*}
and the corresponding tilted hearts are defined by
\begin{align*}
{\rm L}_{i}(\scrA_L)&\colonequals 
\{ c\in\scrC_L\mid 
{\rm H}^k(c)=0 \mbox{ for } k\notin\{0,1\},\, {\rm H}^0(c)\in\scrF_i, \, {\rm H}^1(c)\in\l \scrS_i\r \}\\
{\rm R}_{i}(\scrA_L)&\colonequals 
\{ c\in\scrC_L \mid {\rm H}^k(c)=0 \mbox{ for } k\notin\{-1,0\}, \, {\rm H}^{-1}(c)\in\l \scrS_i\r, \,{\rm H}^0(c)\in\scrT_i \},
\end{align*}
where ${\rm H}^i(-)$ is the cohomological functor associated to the standard t-structure on $\scrC_L$ defining $\scrA_L$.  A similar picture applies in the case of $\scrB_L$.

\begin{lem}\label{simple tilt}
We have ${\rm L}_{i}(\scrA_{\upnu_iN})=\Upphi_{i}(\scrA)$ and ${\rm R}_{i}(\scrA)=\Upphi_{i}^{-1}(\scrA_{\upnu_iN})$ for all $i=1,\hdots,n$.  The same statements hold replacing $\scrA$ by $\scrB$, for all $i=0,1,\hdots,n$
\begin{proof}
We will only show that ${\rm L}_{i}(\scrA_{\upnu_iN})=\Upphi_{i}(\scrA)$, since the other proof is similar.  Since both categories are hearts of bounded t-structures, it suffices to show that $\Upphi_{i}(\scrA)\subseteq{\rm L}_{i}(\scrA_{\upnu_iN})$. For this, since $\scrA$ is finite length, it is enough to show that $\Upphi_{i}(\scrS_j)\in{\rm L}_{i}(\scrA_{\upnu_iN})$ for all $1\leq j\leq n$.

If $j=i$, since  $\Upphi_{i}(\scrS_i)=\scrS_i[-1]$ by \cite[4.15(2)]{HomMMP}, it follows that $\Upphi_{i}(\scrS_i)\in{\rm L}_{i}(\scrA_{\upnu_iN})$. Hence we can assume that $j\neq i$.  Then $\Upphi_{i}(\scrS_j)\in\scrA_{\upnu_iN}$ by \cite[4.4]{HW}. Hence $\Upphi_{i}(\scrS_j)\cong{\rm H}^0(\Upphi_{i}(\scrS_j))$, and so $\Upphi_{i}(\scrS_j)$ is only in degree zero, and further
\begin{align*}
\Hom_{\Db({\upnu_i\Lambda})}(\scrS_i,{\rm H}^0(\Upphi_{i}(\scrS_j)))&\cong\Hom_{\Db(\Lambda)}(\scrS_i,\Upphi_{i}(\scrS_j))\\
&\cong\Hom_{\Db(\Lambda)}(\Upphi_{i}^{-1}(\scrS_i),\scrS_j)\\
&\cong\Hom_{\Db(\Lambda)}(\Tor_1^{\upnu_i\Lambda}(\scrS_i,T_i)[1],\scrS_j)\tag{by \cite[(4.B)]{HW}}\\
&=\Ext^{-1}_{\Lambda}(\Tor_1^{\upnu_i\Lambda}(\scrS_i,T_i),\scrS_j)=0.
\end{align*}
Combining, it follows that  $\Upphi_{i}(\scrS_j)\in{\rm L}_{i}(\scrA_{\upnu_iN})$.
\end{proof}
\end{lem}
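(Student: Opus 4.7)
The plan is to reduce the statement on simples via the finite-length hypothesis, then to handle the diagonal case $j=i$ and the off-diagonal case $j\neq i$ separately using known identities for the action of mutation functors on simple $\Lambda$-modules. The two equalities for $\scrA$ are symmetric under swapping $N$ with $\upnu_iN$ (via Proposition~\ref{basicmodi}\eqref{basicmodi 3}), so I would only prove $\mathrm{L}_{i}(\scrA_{\upnu_iN}) = \Upphi_{i}(\scrA)$, and the $\scrB$-version is then completely parallel, the only difference being that the index $i$ is now allowed to equal $0$.

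Step one: since both $\mathrm{L}_{i}(\scrA_{\upnu_iN})$ and $\Upphi_{i}(\scrA)$ are hearts of bounded t-structures on the same triangulated category $\scrC$, it suffices to establish a single inclusion, say $\Upphi_i(\scrA) \subseteq \mathrm{L}_i(\scrA_{\upnu_iN})$. Step two: because $\scrA$ has finite length with simple objects $\scrS_1,\ldots,\scrS_n$, it is enough to check that $\Upphi_i(\scrS_j) \in \mathrm{L}_i(\scrA_{\upnu_iN})$ for each $1 \leq j \leq n$.

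For $j=i$ the case is immediate from the identity $\Upphi_i(\scrS_i) \cong \scrS_i[-1]$ recorded in \cite{HomMMP}: the only non-vanishing cohomology sits in degree $1$ and belongs to $\l \scrS_i\r$, so by definition $\Upphi_i(\scrS_i) \in \mathrm{L}_i(\scrA_{\upnu_iN})$. For $j \neq i$, the plan is to invoke \cite{HW} to get that $\Upphi_i(\scrS_j)$ already lies in $\scrA_{\upnu_iN}$, and so is concentrated in degree $0$. To then land in $\mathrm{L}_i(\scrA_{\upnu_iN})$, the only remaining condition is that the degree-zero cohomology lies in the torsion-free part, namely $\Hom_{\scrA_{\upnu_iN}}(\scrS_i, \Upphi_i(\scrS_j)) = 0$. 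I would verify this by adjunction, rewriting the $\Hom$ as $\Hom_{\Db(\Lambda)}(\Upphi_i^{-1}(\scrS_i), \scrS_j)$, and then using the Tor-shift description of $\Upphi_i^{-1}(\scrS_i)$ from \cite{HW}, which places it in a non-zero cohomological degree and forces the $\Hom$ into an $\Ext$ of negative degree between objects of the heart, hence zero.

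The main obstacle will be controlling the off-diagonal case, since it depends on two non-trivial external inputs about how $\Upphi_i^{\pm 1}$ acts on simples. Given these, everything else is a bookkeeping check: one inclusion of hearts, reduction to simples, and a short adjunction-plus-degree argument. The $\scrB$-case requires no new ideas, only the observation that all the cited identities for mutation functors are valid at every vertex, including $i=0$, once we work in $\fmod\Lambda_L$ without excluding $\scrS_0$.
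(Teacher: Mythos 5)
Your proposal follows the paper's proof essentially step for step: reduce to a single inclusion of hearts, reduce to simples, handle $j=i$ via $\Upphi_i(\scrS_i)\cong\scrS_i[-1]$, and handle $j\neq i$ via \cite[4.4]{HW} followed by the adjunction and Tor-shift argument from \cite[(4.B)]{HW}. The approach and all key citations match, so there is nothing to compare beyond noting the agreement.
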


\subsection{t-structure transfer}
In moving to the mutation functors, which reveals many hidden t-structures, we lose control over Fourier--Mukai techniques.  The following theorem is one of our main results, and is a crucial ingredient in the proof of Theorem~\ref{regular covering} later.

\begin{thm}\label{action free}
Let $L\in\Mut(N)$, $\upalpha\in\Hom_{\dsG^{\aff}}(C_L,C_L)$, and consider
\[
\Upphi_\upalpha\colon \Db(\fmod\Lambda_L)\to\Db(\fmod\Lambda_L).
\]
Then the following conditions are equivalent.
\begin{enumerate}
\item $\Upphi_\upalpha$ maps the simples $\scrS_0,\hdots,\scrS_n$ to simples.
\item $\Upphi_\upalpha\colon\scrD_L\to\scrD_L$ restricts to an equivalence $\scrB_L\to\scrB_L$.
\item $\Upphi_\upalpha\colon \Db(\fmod\Lambda_L)\to\Db(\fmod\Lambda_L)$ restricts to an equivalence $\fmod\Lambda_L\to \fmod\Lambda_L$.
\item There is a functorial isomorphism $\Upphi_\upalpha\cong\Id$.
\end{enumerate}
If further $L\in\Mut_0(N)$ and $\upalpha\in \Hom_{\dsG}(C_L,C_L)$, the above are equivalent to
\begin{enumerate}[resume]
\item $\Upphi_\upalpha$ maps the simples $\scrS_1,\hdots,\scrS_n$ to simples.
\item $\Upphi_\upalpha$ restricts to an equivalence $\scrA_L\to\scrA_L$.
\end{enumerate}
\end{thm}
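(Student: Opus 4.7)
My plan is to establish the cycle $(4) \Rightarrow (3) \Rightarrow (2) \Rightarrow (1) \Rightarrow (4)$ for the main four conditions, and then argue that the finite-arrangement extras $(5) \Leftrightarrow (6)$ reduce to this cycle. The direction $(4) \Rightarrow (3)$ is immediate, as the identity restricts to the identity.

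For the horizontal equivalences $(1) \Leftrightarrow (2) \Leftrightarrow (3)$, I would exploit finite-length structure. Both $\scrB_L$ and $\fmod\Lambda_L$ are hearts of bounded t-structures whose objects are iterated extensions of simples; the simples of $\scrB_L$ are precisely $\scrS_0,\dots,\scrS_n$, whereas $\fmod\Lambda_L$ contains additionally one simple for each closed point of $\Spec R$ away from $\m$. A triangle equivalence sending each $\scrS_i$ to an object of the heart must preserve the whole heart by taking triangles to triangles, and conversely any autoequivalence of a finite-length heart must permute its finitely many simples; this gives $(1) \Leftrightarrow (2)$. For $(2) \Leftrightarrow (3)$, each $\Upphi_i = \RHom(T_i,-)$ is $R$-linear (its bimodule $T_i$ being an $R$-algebra bimodule), so $\Upphi_\upalpha$ is $R$-linear and preserves support; the restriction to modules supported at $\m$ is exactly $\scrB_L$, and conversely the localisation at any prime $\p \neq \m$ must act as the identity on $\fmod R_\p$ (which has a unique simple), so an equivalence on $\scrB_L$ extends uniquely to an equivalence on $\fmod\Lambda_L$.

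The crucial input for $(1) \Rightarrow (4)$ comes from Proposition~\ref{theta trivial}\eqref{theta trivial 2}, giving $\upphi_\upalpha = \Id$ on $\scrK_L$. Under $(1)$, write $\Upphi_\upalpha(\scrS_i) \cong \scrS_{\sigma(i)}$ for some permutation $\sigma$ of $\{0,\dots,n\}$. Comparing classes in $\scrK_L$ and using that $\{[\scrS_0],\dots,[\scrS_n]\}$ is the $\bZ$-basis dual to $\{[\scrP_0],\dots,[\scrP_n]\}$, I conclude $\sigma = \Id$, so $\Upphi_\upalpha(\scrS_i) \cong \scrS_i$ for every $i$.

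The main obstacle is promoting ``fixes every simple'' to a functorial isomorphism $\Upphi_\upalpha \cong \Id$. Since $\Upphi_\upalpha$ is a composition of mutation functors, each Fourier--Mukai along the tilting bimodule $T_i = \Hom_R(L,\upnu_iL)$, the composition has the form $\Upphi_\upalpha \cong \RHom_{\Lambda_L}(T_\upalpha,-)$ for some tilting $(\Lambda_L,\Lambda_L)$-bimodule complex $T_\upalpha$. Hypothesis $(3)$ forces $T_\upalpha$ to be concentrated in degree zero and to be a Morita bimodule, so $\Upphi_\upalpha \cong -\otimes_{\Lambda_L}\Lambda_L^\sigma$ for some $R$-algebra automorphism $\sigma$ of $\Lambda_L$ which preserves primitive idempotents up to conjugacy. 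Here the isolated cDV hypothesis enters decisively: via the structure theory of modifying modules in \cite{IW1, IW9}, such $R$-linear automorphisms of $\Lambda_L$ are inner, so $\sigma^* \cong \Id$. This rigidity statement is the principal technical point I expect to require the bulk of the work. Finally, the extras $(5) \Leftrightarrow (6)$ follow in direct parallel to $(1) \Leftrightarrow (2)$, and the implication $(5) \Rightarrow (4)$ reduces to the affine case by observing that any $\Upphi_\upalpha$ with $\upalpha \in \End_{\dsG}(C_L)$ is a composition of $\Upphi_i$ with $i \ne 0$, each of which sends $\scrS_0$ to $\scrS_0$, so that $(5)$ upgrades to $(1)$.
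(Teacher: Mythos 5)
Your logical architecture for the cycle $(4)\Rightarrow(3)\Rightarrow(2)\Rightarrow(1)\Rightarrow(4)$, together with the K-theoretic observation that Proposition~\ref{theta trivial}\eqref{theta trivial 2} forces the permutation of simples to be trivial, matches the easy parts of the paper's argument. The gap lies entirely in the final step: you reduce to an $R$-algebra automorphism $\sigma$ of $\Lambda_L$ fixing each primitive idempotent, and then assert that ``via the structure theory of modifying modules in \cite{IW1,IW9}, such $R$-linear automorphisms of $\Lambda_L$ are inner.'' No such rigidity statement exists in those references, and it is not a consequence of the general theory of modification algebras; modification algebras can and do carry outer $R$-linear automorphisms, e.g.\ coming from Dynkin symmetries. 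What you have written down is essentially a restatement of the conclusion, not a proof.

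The actual content of the paper's proof of $(3)\Rightarrow(4)$ is the two results you do not use. First, Proposition~\ref{t-stucture transfer} shows that the property ``$\Upphi_\upalpha$ restricts to $\fmod$'' is stable under conjugation by a single mutation functor $\Upphi_i$. Its proof is a delicate homological argument: writing $G=\Upphi_i^{-1}\circ\Upphi_\upalpha\circ\Upphi_i$ and splicing the exchange sequence for the summand $L_i$, one bounds the cohomological amplitude of $G(\scrP_i)$, then uses depth and Auslander--Buchsbaum (hence the isolated cDV hypothesis) to conclude $G(\scrP_i)$ is projective. Iterating this along a positive path to $C_+$ transports the entire problem from $\Lambda_L$ to $\Lambda=\Lambda_N$. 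Second, Proposition~\ref{G is the identity} then supplies the rigidity at $\Lambda$: it uses Karmazyn's moduli-space interpretation \cite[5.2.4]{Karmazyn} to see that $\Uppsi^{-1}\circ G\circ\Uppsi$ sends skyscrapers to skyscrapers, hence by \cite[\S3.3]{BM} is of the form $\varphi_*\circ(-\otimes\scrL)$; then $\scrP_0\mapsto\scrP_0$ forces $\scrL\cong\scrO_X$, and $R$-linearity (Proposition~\ref{iso is R linear}) forces $\varphi=\Id_X$. Your proposal bypasses this transfer step, which is not optional: the geometric moduli and Fourier--Mukai machinery is only available at $\Lambda_N$, not at an arbitrary $\Lambda_L$ with $L\in\Mut(N)$, so working directly with $\Lambda_L$ as you propose cannot close the argument.

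Two smaller remarks. For $(2)\Leftrightarrow(3)$ you invoke $R$-linearity and localisation away from $\m$; the paper instead routes through $(1)\Rightarrow(3)$ using the vanishing criterion for $\Db(\fmod\Gamma)^{\leq 0}$ in Lemma~\ref{LemmaA}\eqref{LemmaA 1} and a perpendicularity argument (Corollary~\ref{1 implies 3}). Your localisation sketch is believable but unverified, and the paper's argument is cleaner. For $(5)\Rightarrow(4)$, your claim that each $\Upphi_i$ with $i\neq 0$ sends $\scrS_0$ to $\scrS_0$ is not correct as stated (Lemma~\ref{simple tilt} only guarantees $\Upphi_i(\scrS_0)$ lies in the tilted heart, not that it is $\scrS_0$); the paper instead cites \cite[5.5]{HW} for $(5)\Rightarrow(2)$ and feeds that into the affine cycle.
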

When the last additional conditions are satisfied, it is already known that (5)$\Rightarrow$(2) by \cite[5.5]{HW}.  Furthermore, it is clear that (1)$\Leftrightarrow$(2), (5)$\Leftrightarrow$(6) and (4)$\Rightarrow$(1)(5).  Hence to prove Theorem~\ref{action free} it suffices to show that (1)$\Rightarrow$(3) and (3)$\Rightarrow$(4).

\begin{lem}\label{LemmaA}
Let $\Gamma$ be a noetherian ring, and $x\in\Db(\fmod\Gamma)$. Then the following hold.
\begin{enumerate}
\item\label{LemmaA 1} $x\in\Db(\fmod\Gamma)^{\leq 0}\iff\Ext^i_\Gamma(x,S)=0$ for all $i<0$ and all simple $\Gamma$-modules $S$.
\item\label{LemmaA 2}  If a triangulated equivalence $F\colon\Db(\fmod\Gamma)\to \Db(\fmod\Gamma)$ satisfies $F(\Gamma)\cong\Gamma$, then $F$ restricts to an equivalence $F\colon\fmod\Gamma\to\fmod\Gamma$. 
\end{enumerate}
\end{lem}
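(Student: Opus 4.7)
The forward direction is immediate from the $t$-structure axioms: if $x\in\Db(\fmod\Gamma)^{\leq 0}$ and $S$ is simple, then for $i<0$ the object $S[i]$ lies in $\Db^{\geq -i}\subseteq\Db^{\geq 1}$, and the orthogonality $\Hom(\Db^{\leq 0},\Db^{\geq 1})=0$ forces $\Ext^i(x,S)=\Hom(x,S[i])=0$. For the converse I would argue by contradiction. Assume the Ext-vanishing, and suppose $n>0$ is the largest index with $H^n(x)\neq 0$. Since $\Gamma$ is noetherian, the finitely generated module $H^n(x)$ admits a simple quotient $S$, and the plan is to apply $\Hom(-,S[-n])$ to the truncation triangle
\[
\uptau^{\leq n-1}x\to x\to H^n(x)[-n]\to \uptau^{\leq n-1}x[1].
\]
The resulting long exact sequence has its two outer terms $\Hom(\uptau^{\leq n-1}x[1],S[-n])$ and $\Hom(\uptau^{\leq n-1}x,S[-n])$ both vanishing by the $t$-structure, since $\uptau^{\leq n-1}x[1]\in\Db^{\leq n-2}$, $\uptau^{\leq n-1}x\in\Db^{\leq n-1}$, and $S[-n]\in\Db^{\geq n}$. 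The middle maps therefore give $\Hom(H^n(x),S)\cong\Hom(x,S[-n])=\Ext^{-n}(x,S)$, which is zero by hypothesis since $-n<0$. This contradicts the nonvanishing of $\Hom(H^n(x),S)$, so no such $n$ can exist.

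\textbf{Part (2).} For any $y\in\Db(\fmod\Gamma)$ and any $i\in\bZ$, the full faithfulness of $F$ together with the hypothesised isomorphism $F(\Gamma)\cong\Gamma$ will produce
\[
H^i(y)=\Hom(\Gamma,y[i])\cong\Hom(F\Gamma,Fy[i])\cong\Hom(\Gamma,Fy[i])=H^i(Fy)
\]
as abelian groups. In particular $H^i(y)=0$ if and only if $H^i(Fy)=0$, so $F$ preserves both halves of the standard $t$-structure on $\Db(\fmod\Gamma)$, and hence restricts to an exact equivalence $\fmod\Gamma\to\fmod\Gamma$ of abelian categories. Classical Morita theory then identifies any such equivalence of module categories as Morita, finishing the proof.

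\textbf{Expected obstacle.} Neither part is deep. In (1) the only technical care lies in correctly placing the shifts of the truncation with respect to the standard $t$-structure so as to justify the two vanishings in the long exact sequence. In (2) the subtle point is recognising that the comparison $H^i(y)\cong H^i(Fy)$ holds only as abelian groups: the $\Gamma$-module structure gets twisted by the algebra automorphism of $\End(\Gamma)\cong\Gamma$ induced by the chosen isomorphism $F(\Gamma)\cong\Gamma$. Fortunately, the vanishing criterion $H^i(-)=0$ is insensitive to this twist, which is all that is required.
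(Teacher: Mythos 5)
Your proof is correct and takes essentially the same approach as the paper: in (1) you pick the maximal nonzero cohomology, map it to a simple (using noetherianness), and deduce a nonzero negative Ext, contradicting the hypothesis; in (2) you use $H^i(y)\cong\Hom(\Gamma,y[i])$ to see $F$ preserves the heart. The only cosmetic difference is that in the converse direction of (1) you use the truncation triangle directly where the paper invokes the hypercohomology spectral sequence $E_2^{p,q}=\Ext^p(H^{-q}(x),S)\Rightarrow\Ext^{p+q}(x,S)$; these are the same calculation, and your version is arguably a touch more elementary. Your remark at the end about the $\Gamma$-module structure on $H^i(Fy)$ being twisted by the automorphism coming from $F(\Gamma)\cong\Gamma$, and that this does not affect the vanishing criterion, is a genuinely useful clarification that the paper's terse proof leaves implicit.
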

\begin{proof}
(1) The direction ($\Rightarrow$) is clear, by replacing $x$ by its projective resolution $P$, and observing that
$\Ext_\Gamma^i(x,S)=\Hom_{\Km(\fmod\Gamma)}(P,S[i])=0$ for all $i<0$, since there are no chain maps between the complexes $P$ and $S[i]$.

For ($\Leftarrow$), since $x$ is bounded, let $t$ be  maximum such that ${\rm H}^{t}(x)\neq 0$.  Since $\Gamma$ is noetherian, every finitely generated module has a map to a simple, so there exists some simple $S$ such that $\Hom_\Gamma({\rm H}^{t}(x),S)\neq 0$.  But via the spectral sequence (see e.g.\ \cite[(2.8)]{HuybrechtsFM})
\[
E_2^{p,q}=\Ext^p_\Gamma({\rm H}^{-q}(x),S)\Rightarrow\Ext_\Gamma^{p+q}(x,S)
\]
the nonzero $E_2^{0,-t}$ term survives to give a non-zero element of $\Ext^{-t}_\Gamma(x,S)$.  Hence $t\leq 0$.\\
(2) Via the isomorphism ${\rm H}^{n}(x)\cong\Hom_{\Db(\fmod\Gamma)}(\Gamma,x[n])$, it follows that $F$ and its inverse take modules to modules, and so they restrict to a Morita equivalence.
\end{proof}

The following establishes (1)$\Rightarrow$(3). In fact, we prove a slightly more general version, as we will need this later.

\begin{cor}\label{1 implies 3}
Suppose that $G\colon \Db(\fmod\Lambda_L)\to\Db(\fmod\Lambda_L)$ is any equivalence that maps the simples $\scrS_0,\hdots,\scrS_n$ to simples.  Then $G$ restricts to an equivalence $\fmod\Lambda_L\to \fmod\Lambda_L$.
\end{cor}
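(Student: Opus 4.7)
The plan is to invoke Lemma~\ref{LemmaA}(1) to show that $G$ preserves the aisle of the standard t-structure on $\Db(\fmod\Lambda_L)$, from which it follows that $G$ preserves the heart $\fmod\Lambda_L$.

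First I would verify that $G^{-1}$ also sends simples to simples. Since $\Lambda_L$ is module-finite over the complete local ring $R$, there are exactly $n+1$ isomorphism classes of simple $\Lambda_L$-modules, namely $\scrS_0,\hdots,\scrS_n$. As $G$ is an equivalence, $\Hom(\scrS_i,\scrS_j)=0$ for $i\neq j$ forces $G(\scrS_i)\not\cong G(\scrS_j)$, so $G$ induces an injection of the finite set $\{\scrS_0,\hdots,\scrS_n\}$ into itself up to isomorphism, which by cardinality is a bijection. Hence $G^{-1}$ permutes the simples as well.

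Next I would use Lemma~\ref{LemmaA}(1) to prove that $G(\Db(\fmod\Lambda_L)^{\leq 0})=\Db(\fmod\Lambda_L)^{\leq 0}$. For any $x\in\Db(\fmod\Lambda_L)^{\leq 0}$ and any simple $S'$, the equivalence yields a natural isomorphism
\[
\Ext^i_{\Lambda_L}(G(x),S')\cong\Ext^i_{\Lambda_L}(x,G^{-1}(S')).
\]
Because $G^{-1}(S')$ is itself simple by the previous step, Lemma~\ref{LemmaA}(1) applied to $x$ forces the right-hand side to vanish for all $i<0$; the converse direction of the same lemma then places $G(x)\in\Db(\fmod\Lambda_L)^{\leq 0}$. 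Running the identical argument for $G^{-1}$ yields the reverse inclusion. Since a bounded t-structure on a triangulated category is determined by its aisle (via $\Db^{\geq 0}=(\Db^{\leq -1})^\perp$) and $G$ is a triangulated equivalence, $G$ preserves the whole standard t-structure, and hence its heart $\fmod\Lambda_L$, which is precisely what is required.

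The only delicate point is establishing that $G^{-1}$ too maps simples to simples; this hinges on $R$ being complete local, so that $\Lambda_L$ has only finitely many isomorphism classes of simples and the permutation argument applies. Without this finiteness, Lemma~\ref{LemmaA}(1) would yield only one of the two required inclusions, and the argument for preservation of the t-structure would collapse.
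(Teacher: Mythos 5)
Your proof is correct and follows essentially the same route as the paper: use Lemma~\ref{LemmaA}\eqref{LemmaA 1} to show that both $G$ and $G^{-1}$ preserve $\Db(\fmod\Lambda_L)^{\leq 0}$, then invoke the characterisation of the coaisle as a perpendicular to deduce that the heart is preserved. The only difference is that you spell out the finiteness/permutation argument for why $G^{-1}$ also sends simples to simples, which the paper leaves implicit.
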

\begin{proof}
To ease notation, set $\scrE=\Db(\fmod\Lambda_L)$. By Lemma~\ref{LemmaA}\eqref{LemmaA 1} it follows that both $G$ and its inverse restrict to an equivalence $\scrE^{\leq 0}\to\scrE^{\leq 0}$. 
Since $\scrE^{\geq 1}$ can be characterised as the perpendicular to $\scrE^{\leq 0}$, it follows that both $G$ and its inverse restrict to an equivalence $\scrE^{\geq 1}\to\scrE^{\geq 1}$.  Since  $\fmod\Lambda_L=\scrE^{\leq 0}\cap\scrE^{\geq 1}[1]$, the result follows.
\end{proof}

The implication (3)$\Rightarrow$(4) is by far the most subtle.  It requires the following two technical results, both of which rely heavily on the fact that $R$ is isolated cDV.

\begin{prop}\label{t-stucture transfer}
In the setting of Theorem~\ref{action free}, if $\Upphi_\upalpha$ restricts to $\fmod\Lambda_L\xrightarrow{\sim} \fmod\Lambda_L$, then $\Upphi_i^{-1}\circ\Upphi_\upalpha\circ\Upphi_i$ restricts to $\fmod\Lambda_{\upnu_iL}\xrightarrow{\sim} \fmod\Lambda_{\upnu_iL}$. 
\end{prop}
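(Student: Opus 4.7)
The plan is to show that $\Upphi_\upalpha$ preserves the simple-tilted heart ${\rm L}_i(\fmod\Lambda_L)$ inside $\Db(\fmod\Lambda_L)$, and then to invoke an analogue of Lemma~\ref{simple tilt} at the level of $\fmod\Lambda$, namely $\Upphi_i(\fmod\Lambda_{\upnu_iL}) = {\rm L}_i(\fmod\Lambda_L)$, to conclude that the conjugate $\Upphi_i^{-1}\circ\Upphi_\upalpha\circ\Upphi_i$ preserves $\fmod\Lambda_{\upnu_iL}$.

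The key first step is that $\Upphi_\upalpha$ fixes every simple $\scrS_k$ of $\Lambda_L$ up to isomorphism.  This combines two inputs: since $\upalpha\in\End_{\dsG^{\aff}}(C_L)$, Proposition~\ref{theta trivial}(2) gives $\upphi_\upalpha=\Id$ on $\scrK_L$; and since $\Upphi_\upalpha$ restricts to an equivalence of $\fmod\Lambda_L$, it must permute the finitely many simples, as any equivalence of a module category sends simple objects to simples bijectively.  As the classes $[\scrS_k]$ form a $\bZ$-basis of $\scrK_L$ and are fixed by $\upphi_\upalpha$, the induced permutation must be trivial, yielding $\Upphi_\upalpha(\scrS_k)\cong\scrS_k$ for all $k$.

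With this in hand, in particular $\Upphi_\upalpha(\scrS_i)\cong\scrS_i$, so the Serre subcategory $\l\scrS_i\r\subset\fmod\Lambda_L$ is preserved by $\Upphi_\upalpha|_{\fmod\Lambda_L}$, and so too is its perpendicular $\scrF_i = \{\, a\in\fmod\Lambda_L \mid \Hom(\scrS_i,a)=0\,\}$.  The torsion pair $(\l\scrS_i\r,\scrF_i)$ is therefore $\Upphi_\upalpha$-stable, as is the tilted heart ${\rm L}_i(\fmod\Lambda_L)$ it defines.  Conjugating by $\Upphi_i$ and invoking the identification $\Upphi_i(\fmod\Lambda_{\upnu_iL}) = {\rm L}_i(\fmod\Lambda_L)$ then completes the proof.

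I expect the main subtlety to be that $\Upphi_\upalpha|_{\fmod\Lambda_L}$ need \emph{not} be naturally isomorphic to the identity at this stage of the argument --- that is precisely the content of the implication $(3)\Rightarrow(4)$ of Theorem~\ref{action free}, which this proposition is intended to help prove --- so one really must argue indirectly, via preservation of the tilted heart, rather than by computing $\Upphi_\upalpha$ on specific objects such as $\Upphi_i(\scrS_j')$ for $j\neq i$.  The extension of Lemma~\ref{simple tilt} from $\scrA_L$ to $\fmod\Lambda_L$ is by contrast routine, since the inputs $\Upphi_i(\scrS_i')\cong\scrS_i[-1]$ and $\Upphi_i(\scrS_j')\in\fmod\Lambda_L$ for $j\neq i$ are already what the proof of Lemma~\ref{simple tilt} establishes.
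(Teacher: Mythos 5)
Your approach is genuinely different from the paper's, and in outline it is a sensible one, but there is a real gap in the step you call routine.

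The paper's proof does not go via tilted hearts at all. It sets $G=\Upphi_i^{-1}\circ\Upphi_\upalpha\circ\Upphi_i$ and proves directly that $G(\Lambda_{\upnu_iL})\cong\Lambda_{\upnu_iL}$: after checking $G(\scrP_j)\cong\scrP_j$ for $j\neq i$, $G(\scrS_i)\cong\scrS_i$, and $G(\scrT_i)\cong\scrT_i$, it splices the exchange sequence into triangles to pin down the cohomological amplitude of $G(\scrP_i)$, then uses the depth lemma and Auslander--Buchsbaum (this is where the isolated cDV hypothesis enters, via the fact that $\Lambda_{\upnu_iL}/(1-e_i)$ has finite length) to show $G(\scrP_i)$ is projective, hence $G(\scrP_i)\cong\scrP_i$ since $G$ is trivial on K-theory, and concludes by Lemma~\ref{LemmaA}\eqref{LemmaA 2}.

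Your plan instead runs through the identification $\Upphi_i(\fmod\Lambda_{\upnu_iL})={\rm L}_i(\fmod\Lambda_L)$, and here is the gap. You assert this extension of Lemma~\ref{simple tilt} is routine because the computations on simples in that proof already apply. But the reduction to simples in the proof of Lemma~\ref{simple tilt} is explicitly conditional on finite length: ``since $\scrA$ is finite length, it is enough to show that $\Upphi_i(\scrS_j)\in{\rm L}_i(\scrA_{\upnu_iN})$ for all $j$.'' That step is simply not available for $\fmod\Lambda_L$, which contains modules such as $\Lambda_L$ itself that are not iterated extensions of the finitely many simples. Knowing where $\Upphi_i$ sends simples says nothing about where it sends a general $M\in\fmod\Lambda_{\upnu_iL}$, so the containment $\Upphi_i(\fmod\Lambda_{\upnu_iL})\subseteq{\rm L}_i(\fmod\Lambda_L)$ genuinely needs a new argument. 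The identity is true, but proving it requires the tilting-module machinery: $\scrT=\Hom_R(\upnu_iL,L)$ is a tilting $\Lambda_{\upnu_iL}$-module of projective dimension $\le 1$, so $\RHom_{\Lambda_{\upnu_iL}}(\scrT,-)$ carries $\fmod\Lambda_{\upnu_iL}$ onto an HRS tilt of $\fmod\Lambda_L$ at the Brenner--Butler torsion pair, and one must then identify that torsion pair with $(\l\scrS_i\r,\scrF_i)$ --- which holds because for $j\neq i$ the summands $\scrP_j$ of $\scrT$ are projective, so $\Ext^1_{\Lambda_{\upnu_iL}}(\scrT,M)=\Ext^1(\scrT_i,M)$ is killed by $e_j$ for $j\neq i$ and hence lies in $\l\scrS_i\r$, while $\Hom_{\Lambda_L}(\scrS_i,\Hom(\scrT,M))\cong\Hom_{\Db(\Lambda_{\upnu_iL})}(\scrS_i[1],M)=0$. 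Once this is supplied your argument closes, but as written the crucial step is not justified.
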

\begin{proof}
Consider the functor $G=\Upphi_i^{-1}\circ\Upphi_\upalpha\circ\Upphi_i$.  For each $j\neq i$, as in Lemma~\ref{k-correspond proj} we have $\Upphi_i(\scrP_j)\cong\scrP_j$.  Since $\Upphi_\upalpha$ restricts to an equivalence on $\fmod\Lambda_L$, and is necessarily the identity on $\scrK_{L}$ by Proposition~\ref{theta trivial}, furthermore $\Upphi_\upalpha(\scrP_j)\cong\scrP_j$.  In conclusion, whenever $j\neq i$ we have $G(\scrP_j)\cong\scrP_j$.  In a similar vein, by \cite[4.15(2)]{HomMMP}, $\Upphi_i(\scrS_i)\cong\scrS_i[-1]$.  The functor $\Upphi_\upalpha$ must send simples to simples, and since it is the identity on $\scrK_{L}$, by the pairing between projectives and simples it follows that $\Upphi_\upalpha(\scrS_i)\cong\scrS_i$.    Thus $G(\scrS_i)\cong\scrS_i$.

 By the assumptions, since $\Lambda_L$ is basic, necessarily $\Upphi_\upalpha(\Lambda_L)\cong\Lambda_L$.  Now consider $\scrT=\Hom_R(\upnu_iL,L)$ defining the functor $\Upphi_i\colon \Db(\fmod\Lambda_{\upnu_iL})\to \Db(\fmod\Lambda_{L})$.  By construction $G(\scrT)\cong\Upphi_i^{-1}\circ\Upphi_\upalpha(\Lambda_L)\cong\Upphi_i^{-1}(\Lambda_L)\cong \scrT$.  Since $\scrT=\scrT_i\oplus\bigoplus_{j\neq i}\scrP_j$, by the above paragraph necessarily $G(\scrT_i)\cong \scrT_i$.

Now by e.g.\ \cite[A.2(2)]{HomMMP} the exchange sequences give rise to an exact sequence of $\Lambda_{\upnu_iL}$-modules
\[
0\to\scrP_i\xrightarrow{b}\bigoplus_{j\neq i}\scrP_j^{\oplus a_{ij}}
\to\bigoplus_{j\neq i}\scrP_j^{\oplus a_{ij}}
\to \scrP_i
\to \frac{\Lambda_{\upnu_iL}}{(1-e_i)}
\to 0
\]
with $\Cok b\cong \scrT_i$, where $e_i$ is the idempotent corresponding to the $i$th summand of $\Lambda_{\upnu_iL}$, and $(1-e_i)$ is the two-sided ideal generated by $1-e_i$.  Since $R$ is isolated, necessarily $\Lambda_{\upnu_iL}/(1-e_i)$ has finite length, and is filtered only by the simple $\scrS_i$.  Splicing gives triangles
\begin{align*}
\scrP_i\to\bigoplus_{j\neq i}\scrP_j^{\oplus a_{ij}}\to \scrT_i\to\\
\scrT_i\to\bigoplus_{j\neq i}\scrP_j^{\oplus a_{ij}}\to K_i\to\\
K_i\to \scrP_i
\to \frac{\Lambda_{\upnu_iL}}{(1-e_i)}\to
\end{align*}
Applying $G$ to each, the first triangle shows that $\mathrm{H}^t(G(\scrP_i))=0$ unless $t=0,1$.  On the other hand, the second triangle shows that $\mathrm{H}^t(G(K_i))=0$ unless $t=-1,0$.  But $G$ must take $\Lambda_{\upnu_iL}/(1-e_i)$ to degree zero, since $G(\scrS_i)\cong \scrS_i$ and $\Lambda_{\upnu_iL}/(1-e_i)$ is filtered by $\scrS_i$.  Hence the last triangle implies that $\mathrm{H}^t(G(\scrP_i))=0$ unless $t=-1,0$.  

Combining, we see that $\mathrm{H}^t(G(\scrP_i))=0$ unless $t=0$,  thus $G(\scrP_i)$ is a module.  Applying $G$ to the first triangle give a triangle
\[
G(\scrP_i)\to\bigoplus_{j\neq i}\scrP_j^{\oplus a_{ij}}\to \scrT_i\to
\]  
in which all terms are modules, so this is necessarily induced by a short exact sequence.  Hence by the depth lemma, $G(\scrP_i)$ has depth $3$.  On the other hand, since $\scrP_i$ is perfect as a complex, so is $G(\scrP_i)$, thus $G(\scrP_i)$ has finite projective dimension as a $\Lambda_{\upnu_iL}$-module.  By Auslander--Buchsbaum \cite[2.16]{IW1}, it follows that $G(\scrP_i)$ is projective.  Since $G$ is the identity on $\scrK_{\upnu_iL}$, necessarily $G(\scrP_i)\cong\scrP_i$ and  hence $G(\Lambda_{\upnu_iL})\cong\Lambda_{\upnu_iL}$.  By Lemma~\ref{LemmaA}\eqref{LemmaA 2}, $G=\Upphi_i^{-1}\circ\Upphi_\upalpha\circ\Upphi_i$ restricts to an equivalence $\fmod\Lambda_{\upnu_iL}\to\fmod\Lambda_{\upnu_iL}$.
\end{proof}

\begin{prop}\label{G is the identity}
Suppose that $G\colon\Db(\fmod\Lambda)\to\Db(\fmod\Lambda)$ is an $R$-linear equivalence such that $G(\scrS_0)\cong \scrS_0$, and suppose that there exists a permutation $\upiota\in S_n$ such that $G(\scrS_i)\cong \scrS_{\upiota(i)}$ and $\rk_R N_i\cong\rk_RN_{\upiota(i)}$ for all $i=1,\hdots,n$.  Then functorially $G\cong\Id$.
\end{prop}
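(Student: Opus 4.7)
The plan is to realise $G$ as a Morita self-equivalence induced by an $R$-algebra automorphism $\upsigma$ of $\Lambda$, then show that $\upsigma$ is inner.

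First, since $G$ sends every simple $\scrS_i$ to a simple, Corollary~\ref{1 implies 3} applies and shows that $G$ restricts to an equivalence $\fmod\Lambda \simto \fmod\Lambda$. Because $N = R \oplus N_1 \oplus \hdots \oplus N_n$ has pairwise non-isomorphic indecomposable summands, $\Lambda = \End_R(N)$ is a basic (and semiperfect) $R$-algebra, so every Morita self-equivalence is represented by an invertible bimodule of the form ${}_1\Lambda_\upsigma$ for some $\bC$-algebra automorphism $\upsigma\colon \Lambda \simto \Lambda$, determined up to inner automorphism. The $R$-linearity of $G$ forces $\upsigma|_R = \Id_R$, so $\upsigma$ is an $R$-algebra automorphism.

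Next, I would show that $\upiota = \Id$. Using semiperfectness of $\Lambda$, any two complete sets of primitive orthogonal idempotents are conjugate by a unit, so by adjusting $\upsigma$ within its inner class we may assume $\upsigma(e_i) = e_{\upiota(i)}$ on the nose; the hypothesis $G(\scrS_0) \cong \scrS_0$ gives $\upiota(0) = 0$. The restriction of $\upsigma$ to $e_0 \Lambda e_i = \Hom_R(N_i, R) = N_i^*$ then supplies $R$-linear isomorphisms $N_i^* \simto N_{\upiota(i)}^*$, and dualising (each $N_i$ is reflexive) gives $N_i \cong N_{\upiota(i)}$ as $R$-modules. As the summands of $N$ are pairwise non-isomorphic, this forces $\upiota = \Id$. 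The rank hypothesis is then a built-in consistency condition for this comparison.

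With $\upiota = \Id$, $\upsigma$ preserves each $e_i$ and so restricts to $R$-linear automorphisms of every $e_i \Lambda e_j = \Hom_R(N_j, N_i)$ respecting composition. In particular, using the distinguished pieces $e_i \Lambda e_0 = \Hom_R(R, N_i) = N_i$, the map $\upsigma|_{e_i \Lambda e_0}$ is an $R$-module automorphism $\uptau_i\colon N_i \simto N_i$, with $\uptau_0 = \Id_R$ since $\upsigma|_R = \Id_R$. Setting $u \colonequals \bigoplus_i \uptau_i \in \End_R(N)^\times = \Lambda^\times$, the multiplicative compatibility of $\upsigma$ with the compositions
\[
\Hom_R(N_j, N_i) \times \Hom_R(R, N_j) \to \Hom_R(R, N_i)
\]
forces $\upsigma(\upvarphi) = \uptau_i \circ \upvarphi \circ \uptau_j^{-1} = u \upvarphi u^{-1}$ on each $\Hom_R(N_j, N_i)$. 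Hence $\upsigma = \operatorname{ad}(u)$ is inner, and consequently ${}_1\Lambda_\upsigma \cong \Lambda$ as bimodules via $x \mapsto x u^{-1}$, yielding $G \cong \Id$ functorially.

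The main obstacle is the final step: assembling the individual $R$-automorphisms $\uptau_i$ of the summands $N_i$ into a single conjugating unit that recovers $\upsigma$ on all Hom-spaces. This patching relies crucially on the multiplicative structure of $\Lambda$ and the normalisation $\upsigma|_R = \Id_R$ from $R$-linearity; without these, the $\uptau_i$ would not coherently yield an inner conjugator. A secondary care-point is verifying that the initial adjustment making $\upsigma(e_i) = e_{\upiota(i)}$ on the nose does not destroy the preceding identification of the induced permutation with the hypothesised $\upiota$.
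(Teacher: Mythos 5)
Your proof is correct, and it takes a genuinely different, purely algebraic route from the paper's. The paper conjugates $G$ across the tilting equivalence $\Uppsi$ to a geometric autoequivalence $F$ of $\Db(\coh X)$, uses Karmazyn's moduli description of $\star$-generated $\Lambda$-modules to show $F$ sends skyscrapers to skyscrapers, invokes the Bridgeland--Maciocia classification to get $F\cong\varphi_*\circ(-\otimes\scrL)$, and then kills $\scrL$ and $\varphi$ using $\scrP_0\mapsto\scrP_0$ and $R$-linearity. You instead stay on the algebra side: basic plus semiperfect gives the Morita self-equivalence as a twist by a ring automorphism $\upsigma$ with $\upsigma|_R=\Id$, the Peirce corner $e_0\Lambda e_i=N_i^*$ plus reflexivity pins down $\upiota=\Id$, and the corners $e_i\Lambda e_0=N_i$ then supply a diagonal unit $u$ making $\upsigma=\operatorname{ad}(u)$ inner. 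One real bonus of your route is worth flagging explicitly: your derivation of $\upiota=\Id$ uses only the $R$-linearity, $\scrS_0\mapsto\scrS_0$, and the pairwise non-isomorphism of the $N_i$, so the rank hypothesis $\rk_R N_i=\rk_R N_{\upiota(i)}$ drops out as a consequence rather than being needed as input --- the paper's proof uses it essentially to check the Karmazyn dimension vector, so your argument is strictly leaner. Two small care-points: first, the bimodule isomorphism ${}_1\Lambda_\upsigma\simto\Lambda$ for $\upsigma=\operatorname{ad}(u)$ is $x\mapsto xu$ rather than $x\mapsto xu^{-1}$ (or the other way around depending on whether your convention is $\upsigma(b)=ubu^{-1}$ or $u^{-1}bu$); second, and this is shared with the paper, the passage from the abelian-level conclusion $G|_{\fmod\Lambda}\cong\Id$ to the derived-level $G\cong\Id$ implicitly assumes $G$ is a standard (bimodule-induced) equivalence, which is indeed the case in every application in the paper but is not literally in the hypothesis as stated.
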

\begin{proof}
By Corollary~\ref{1 implies 3}, $G$ maps projectives to projectives.  Since $G(\scrS_0)\cong\scrS_0$, by the pairing between simples and projectives, necessarily $G(\scrP_0)\cong\scrP_0$. Consider the $R$-linear composition $F$ given by
\[
\begin{tikzpicture}
\node (A1) at (0,0) {$\Db(\coh X)$};
\node (A2) at (3.5,0) {$\Db(\coh X)$};
\node (B1) at (0,-1.5) {$\Db(\fmod \Lambda)$};
\node (B2) at (3.5,-1.5) {$\Db(\fmod \Lambda)$};
\draw[densely dotted,->] (A1) -- node[above] {\scriptsize $F$} (A2);
\draw[->] (B1) -- node[above] {\scriptsize $G$} (B2);
\draw[->] (A1) -- node[left] {\scriptsize $\RHom_X(\scrV_X,-)$} (B1);
\draw[<-] (A2) -- node[right] {\scriptsize $-\otimes^{\bf L}_\Lambda\scrV_{X}$} (B2);
\end{tikzpicture}
\]
By \cite[5.2.4]{Karmazyn} the functor $\RHom_X(\scrV_X,-)$ maps skyscrapers of closed points to modules of dimension vector $\rk_N=(\rk_R N_i)_{i=0}^n$ which satisfy the $\star$-generated condition, which by definition consists of those $\Lambda$-modules $A$ of dimension vector $\rk_N$ such that $\Hom_\Lambda(A,\scrS_i)=0$ for all $i=1,\hdots, n$ \cite[6.11]{SY}. For any such $A$, by the assumptions on where $G$ takes each simple, $GA$ has dimension vector $(\rk_R N_{\upiota(i)})_{i=0}^n$, which by the last assumption is precisely $\rk_N$.  Furthermore, for any such $A$, since $G$ fixes $\scrS_0$ and permutes the other simples, $GA$ is also $\star$-generated.  Hence again appealing to \cite[5.2.4]{Karmazyn}  the functor $-\otimes^{\bf L}_\Lambda\scrV_{X}$ takes the module $GA$ to a skyscraper.  Combining, we see that skyscrapers of closed points get sent to skyscrapers of closed points, under the above $R$-linear composition $F$.

It follows from general Fourier--Mukai theory \cite[\S3.3]{BM} that $F\cong \varphi_*\circ (-\otimes\scrL)$ where $\varphi\colon X\to X$ is an automorphism  and $\scrL$ is some line bundle.  Since $\RHom_X(\scrV_X,-)$ sends $\scrO_X$ to $\scrP_0$, and $G$ sends $\scrP_0$ to $\scrP_0$, it follows that $F$ sends $\scrO_X$ to $\scrO_X$, which in turn implies that $\scrL$ is trivial.  Lastly, since $F\cong\varphi_*$ is $R$-linear,  by Proposition~\ref{iso is R linear}  $\varphi$ commutes with the map to the base. In particular, the restriction of $\varphi$ to the dense open subset $U=X\backslash C$ is the identity.  Hence $\varphi=\Id_X$, and as a result, $F\cong\Id$.  From this, it follows that $G\cong \Id$.
\end{proof}

Finally, we prove (3)$\Rightarrow$(4), completing the proof of Theorem~\ref{action free}.  The key is that Proposition~\ref{t-stucture transfer} allows us to pull everything back to $\Db(\fmod\Lambda)$, where we can use the geometric Fourier--Mukai techniques of Proposition~\ref{G is the identity}.

\begin{cor}\label{key functorial identity}
In the setting of Theorem~\ref{action free}, if $\Upphi_\upalpha$ restricts to $\fmod\Lambda_L\xrightarrow{\sim} \fmod\Lambda_L$, then there is a functorial isomorphism $\Upphi_\upalpha\cong\Id$.
\end{cor}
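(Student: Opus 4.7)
The plan is to reduce to the case of $\Lambda = \End_R(N)$ itself, where the geometric Fourier--Mukai machinery of Proposition~\ref{G is the identity} becomes available. Since $L\in\Mut(N)$, fix any sequence of single mutations
\[
L = L^{(0)} \to L^{(1)} \to \cdots \to L^{(k)} = N,
\]
and let $H\colon \Db(\fmod\Lambda_L) \simto \Db(\fmod\Lambda)$ be the composition of the corresponding mutation functors. Consider
\[
G \colonequals H \circ \Upphi_\upalpha \circ H^{-1} \colon \Db(\fmod\Lambda)\to\Db(\fmod\Lambda).
\]

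First I would iterate Proposition~\ref{t-stucture transfer} along this path. Setting $\Upphi_\upalpha^{(0)} \colonequals \Upphi_\upalpha$, define $\Upphi_\upalpha^{(j+1)}$ inductively as the conjugate of $\Upphi_\upalpha^{(j)}$ by the mutation functor linking $L^{(j)}$ and $L^{(j+1)}$. By hypothesis $\Upphi_\upalpha^{(0)}$ restricts to $\fmod\Lambda_L$, and each inductive step is exactly an instance of Proposition~\ref{t-stucture transfer}, so after $k$ steps $G = \Upphi_\upalpha^{(k)}$ restricts to an equivalence $\fmod\Lambda \simto \fmod\Lambda$. Moreover $G$ is $R$-linear, since each mutation functor $\Upphi_i$ is induced by the $R$-bimodule $\Hom_R(L^{(j)},L^{(j+1)})$, and $R$-linearity is preserved under composition and inversion.

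Next, $\Upphi_\upalpha$ acts as the identity on K-theory by Proposition~\ref{theta trivial}\eqref{theta trivial 2}, so its conjugate $G$ acts as the identity on $K_0$ as well. Since $G$ restricts to an autoequivalence of $\fmod\Lambda$, it permutes the finitely many simples $\scrS_0, \ldots, \scrS_n$; but the classes $[\scrS_0],\ldots,[\scrS_n]$ are linearly independent and fixed by $G$, forcing the permutation to be trivial and giving $G(\scrS_i)\cong \scrS_i$ for every $i$. Applying Proposition~\ref{G is the identity} with $\upiota = \mathrm{id}$ (the rank condition is then vacuous), we conclude $G \cong \Id$, and hence $\Upphi_\upalpha \cong H^{-1}\circ G\circ H \cong \Id$, as required.

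The main technical obstacle is the iteration itself: before Proposition~\ref{t-stucture transfer} can be invoked at step $j+1$, one must know that the intermediate functor $\Upphi_\upalpha^{(j)}$ genuinely restricts to the module category at $L^{(j)}$, and this persistent preservation of the $\fmod$-restriction property along the entire mutation path is what ultimately makes the reduction to $\Lambda$ possible. Once that reduction is achieved, the isolated cDV hypothesis reappears through the geometric Fourier--Mukai and King-stability arguments already packaged inside Proposition~\ref{G is the identity}, and the identification $\Upphi_\upalpha\cong\Id$ follows.
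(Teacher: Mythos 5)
Your argument follows the paper's proof essentially verbatim: conjugate $\Upphi_\upalpha$ along a mutation path from $L$ to $N$ (your $H$ is the paper's $\Upphi_\upgamma^{-1}$), iterate Proposition~\ref{t-stucture transfer} to transfer the $\fmod$-restriction property to $\Lambda$, use Proposition~\ref{theta trivial} to see the resulting $G$ is trivial on K-theory and hence fixes each simple, and finish with Proposition~\ref{G is the identity}. The only cosmetic difference is that the paper passes through projectives (via Morita + pairing) to show $G$ fixes each simple, whereas you work directly with the classes $[\scrS_i]$; both amount to the same K-theoretic observation.
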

\begin{proof}
Choose a positive path $\upgamma\colon C_+\to C_L$, and consider the composition
\[
G=\Upphi_{\upgamma}^{-1}\circ\Upphi_{\upalpha}\circ\Upphi_{\upgamma}\colon\Db(\fmod\Lambda)\to\Db(\fmod\Lambda)
\]
Since $\upgamma$ is a composition $s_{i_t}\circ\hdots\circ s_{i_1}$, we may rewrite the above  as
\[
G=\Upphi_{i_1}^{-1}\circ\hdots\circ\Upphi_{i_{t-1}}^{-1}\circ(\Upphi_{i_t}^{-1}\circ\Upphi_{\upalpha}\circ\Upphi_{i_t})\circ\Upphi_{i_{t-1}}\circ\hdots\circ\Upphi_{i_1}
\]
By induction, using Proposition~\ref{t-stucture transfer} repeatedly, we see that $G$ restricts to an equivalence on $\fmod\Lambda$.  Since Morita equivalences preserve projectives, and $G$ is the identity on K-theory $\scrK=\scrK_N$ by Proposition~\ref{theta trivial}, $G$ maps each projective to itself. Since Morita equivalences also preserve simples, by the pairing between projectives and simples, $G$ maps each simple to itself.  By Proposition~\ref{G is the identity} $G\cong\Id$ and hence 
$\Upphi_{\upalpha}\cong\Id$.
\end{proof}

\section{Stability Conditions on \texorpdfstring{$\scrC$}{C} and \texorpdfstring{$\scrD$}{D}}\label{stab on C and D section}
Consider $\cStab{}\scrC$, the connected component of $\Stab\scrC$ containing $\Stab\scrA$, and similarly  $\cStab{n}\scrD$, the connected component of $\nStab{n}\scrD$ containing $\nStab{n}\scrB$.  In this section we describe both $\cStab{}\scrC$ and $\cStab{n}\scrD$ as regular covers of the hyperplane arrangements in Section~\ref{hyperplane section}.

\subsection{Chamber Decomposition}\label{sec: chamber decomp}
For the fixed $R$-module $N$ from \eqref{ass modi}, consider the set of morphisms in $\dsG$  which terminate at $C_+$, namely
\begin{align*}
\Targ_0(C_+)&\colonequals \bigcup_{L\in\Mut_0(N)}\Hom_{\mathds{G}}(C_L,C_+).
\end{align*}
The set $\Targ(C_+)$ is defined similarly, taking the union instead over $L\in\Mut(N)$  and replacing $\dsG$ by $\dsG^{\aff}$.
\begin{nota}\label{chamber notation}
For $L\in\Mut_0(N)$, respectively $L\in\Mut(N)$, consider the open subsets
\begin{align*}
\mathrm{U}_L&\colonequals \{ (Z,\scrA_L)\in \Stab\scrA_L \mid   \Image(Z[\scrS_i])>0 \mbox{ \rm for all } i=1,\hdots,n\} \\
\bU_L&\colonequals \{ (Z,\scrB_L)\in \Stab\scrB_L \mid   \Image(Z[\scrS_i])>0 \mbox{ \rm for all } i=0,1,\hdots,n\}\\
\dsN_L&\colonequals \bU_L\cap \nStab{n}\scrB_L =\bU_L\cap \nStab{n}\scrD_L
\end{align*}
of $\cStab{}\scrC_L$, $\cStab{}\scrD_L$ and $\cStab{n}\scrD_L$ respectively.   For $\upalpha\in\Targ_0(C_+)$, $\upbeta\in\Targ(C_+)$, set 
\[
\begin{array}{r@{\hspace{2pt}}c@{\hspace{2pt}}l@{\hspace{25pt}}r@{\hspace{2pt}}c@{\hspace{2pt}}l@{\hspace{2pt}}l@{\hspace{2pt}}}
\Stab\scrA_{\upalpha}
&
\colonequals
&
(\Upphi_{\upalpha})_*(\Stab\scrA_{s(\upalpha)})
&\Stab\scrB_{\upbeta}
&\colonequals
&(\Upphi_{\upbeta})_*(\Stab\scrB_{s(\upbeta)})\\
\mathrm{U}_{\upalpha}
&\colonequals
&
(\Upphi_{\upalpha})_*(\mathrm{U}_{s(\upalpha)})
&\bU_{\upbeta}
&\colonequals
&(\Upphi_{\upbeta})_*(\bU_{s(\upbeta)}),
\end{array}
\]
where $s(\upalpha)$ and $s(\upbeta)$ denote the modules corresponding to the chambers which are the sources of $\upalpha$ and $\upbeta$ respectively. Similarly,
\begin{align*}
\nStab{n}\scrB_{\upbeta}&\colonequals (\Upphi_{\upbeta})_*(\nStab{n}\scrB_{s(\upbeta)})\\
\dsN_\upbeta&\colonequals (\Upphi_{\upbeta})_*(\dsN_{s(\upbeta)}).
\end{align*}
\end{nota}
As usual, write $\mathrm{U}=\mathrm{U}_{N}$, $\bU=\bU_{N}$ and $\dsN=\dsN_N$.

\begin{lem}\label{no overlap lemma}
Given $\upalpha, \upbeta\in\Targ_0(C_+)$, respectively $\upalpha, \upbeta\in\Targ(C_+)$, write $M_{s(\upalpha)}$ and $M_{s(\upbeta)}$ for the modules corresponding to the chambers which are the sources of $\upalpha$ and $\upbeta$ respectively. Then
\begin{enumerate}
\item $\mathrm{U}_\upalpha\cap \mathrm{U}_\upbeta\neq\emptyset$ $\iff$
 $M_{s(\upalpha)}\cong M_{s(\upbeta)}$ and   $\Upphi_{\upalpha}\cong\Upphi_{\upbeta}$.
\item $\bU_\upalpha\cap \bU_\upbeta\neq\emptyset$ $\iff$
 $M_{s(\upalpha)}\cong M_{s(\upbeta)}$ and   $\Upphi_{\upalpha}\cong\Upphi_{\upbeta}$ $\iff$ $\dsN_\upalpha\cap \dsN_\upbeta\neq\emptyset$.
\end{enumerate}
\end{lem}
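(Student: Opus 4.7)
The reverse direction is formal: if $M_{s(\upalpha)}\cong M_{s(\upbeta)}$ then the source vertices coincide, so $\Stab\scrB_{s(\upalpha)}=\Stab\scrB_{s(\upbeta)}$, and if moreover $\Upphi_\upalpha\cong\Upphi_\upbeta$ then the induced pushforward maps on stability spaces are equal, giving $\bU_\upalpha=\bU_\upbeta\neq\emptyset$. The identical sentence handles $\mathrm{U}$ and $\dsN$.

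For the forward direction I plan to split the argument in two. \emph{Step 1: the K-theoretic half.} Write $L\colonequals M_{s(\upalpha)}$ and $L'\colonequals M_{s(\upbeta)}$. Applying $\cZ$ and using the commutative diagram of Proposition~\ref{stability track comm diagram}, together with $\upphi_\upalpha=\upphi_L$ from Proposition~\ref{theta trivial}(3), I obtain
\[
\cZ(\bU_\upalpha)=\upphi_L(\cZ_L(\bU_L))=\upphi_L\bigl((\scrK_L)_{\bR}+\ii\cdot C_+\bigr)=\scrK_{\bR}+\ii\cdot\upphi_L(C_+),
\]
since the defining condition $\Image Z[\scrS_j]>0$ cuts out precisely the open positive chamber $C_+$ in the basis $\{[\scrP_j]\}$, and $\upphi_L$ is $\bR$-linear. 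The analogous equality holds for $\upbeta$. If $\sigma\in\bU_\upalpha\cap\bU_\upbeta$, then $\Image\cZ(\sigma)$ lies in $\upphi_L(C_+)\cap\upphi_{L'}(C_+)$, and Theorem~\ref{affine summary}(2) forces $L\cong L'$. The finite case is identical using Theorem~\ref{HomMMP finite summary}(2).

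\emph{Step 2: the functorial half.} Having identified $s\colonequals s(\upalpha)=s(\upbeta)$, the heart of any $\sigma\in\bU_\upalpha\cap\bU_\upbeta$ is uniquely determined by the slicing and equals both $\Upphi_\upalpha(\scrB_s)$ and $\Upphi_\upbeta(\scrB_s)$. Hence $G\colonequals\Upphi_\upbeta^{-1}\circ\Upphi_\upalpha$, which is $\Upphi_\upgamma$ for $\upgamma\colonequals\upbeta^{-1}\upalpha\in\End_{\dsG^{\aff}}(C_s)$, restricts to a self-equivalence of $\scrB_s$. Theorem~\ref{action free}, implication $(2)\Rightarrow(4)$, then gives $G\cong\Id$ and therefore $\Upphi_\upalpha\cong\Upphi_\upbeta$. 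In the $\Targ_0$ case, one replaces $(2)$ by implication $(6)\Rightarrow(4)$. The remaining equivalence $\bU_\upalpha\cap\bU_\upbeta\neq\emptyset\iff\dsN_\upalpha\cap\dsN_\upbeta\neq\emptyset$ is now formal: $(\Leftarrow)$ is immediate from $\dsN_{\bullet}\subseteq\bU_{\bullet}$, and $(\Rightarrow)$ follows because Steps~1--2 yield $\bU_\upalpha=\bU_\upbeta$, whence $\dsN_\upalpha=\dsN_\upbeta$, which is always non-empty.

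The decisive input is Step~2: translating the equality of hearts into a natural isomorphism of the derived equivalences $\Upphi_\upalpha$ and $\Upphi_\upbeta$. Step~1 is a soft corollary of the chamber decomposition theorems of Section~\ref{hyperplane section}, keyed on the observation that the imaginary coordinate of $\bU_L$ lives in the open $C_+$. Step~2, by contrast, rests on the rigidity statement of Theorem~\ref{action free}, and ultimately on the $R$-linear Fourier--Mukai comparison of Proposition~\ref{G is the identity}, without which one could not promote the mere preservation of $\scrB_s$ by $G$ to a functorial identification.
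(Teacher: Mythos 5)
Your proposal is correct and follows essentially the same route as the paper: you project to the imaginary part to invoke the chamber disjointness of Theorems~\ref{affine summary}(2) and \ref{HomMMP finite summary}(2), then translate equality of hearts into preservation of $\scrB_{s}$ (resp.\ $\scrA_{s}$) by $\Upphi_{\upbeta}^{-1}\circ\Upphi_{\upalpha}$ and invoke Theorem~\ref{action free} to get a functorial identity. The only cosmetic difference is that the paper first reduces to the case $\upbeta=\Id$ via $(\Upphi_{\upbeta})_*^{-1}$ before running the argument, whereas you carry $\upalpha$ and $\upbeta$ along simultaneously; the ingredients and their roles are identical.
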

\begin{proof}
(1) It suffices to show that, for $\upgamma\in\Targ_0(C_+)$, $\mathrm{U}\cap \mathrm{U}_{\upgamma}\neq\emptyset$ if and only if $M_{s(\upgamma)}\cong N$ and $\Upphi_{\upgamma}\cong{\rm id}_{\scrC}$. The implication $(\Leftarrow)$ is obvious, since the isomorphisms $M_{s(\upgamma)}\cong N$ and $\Upphi_{\upgamma}\cong{\rm id}_{\scrC}$ implies that $\mathrm{U}=\mathrm{U}_{\upgamma}$.
 
Conversely, suppose that $\mathrm{U}\cap \mathrm{U}_{\upgamma}\neq\emptyset$, and write  $\cY\colon\Stab\scrC\to \Uptheta_{\bR}$ for the composition
\[
\Stab\scrC\xrightarrow{\cZ}\Uptheta_{\bC}\xrightarrow{\Image} \Uptheta_{\bR}
\]
where the last map is the projection defined by taking the imaginary parts.  By definition $\cY(\mathrm{U})=C_+$ and $\cY(\mathrm{U}_{\upgamma})=\upvarphi_{M_{s(\upgamma)}}(C_+)$.
Since $\mathrm{U}\cap \mathrm{U}_{\upgamma}\neq\emptyset$, necessarily $\cY(\mathrm{U})\cap\cY(\mathrm{U}_{\upgamma})\neq\emptyset$, thus $M_{s(\upgamma)}\cong N$ by Theorem~\ref{HomMMP finite summary}, and $\Upphi_{\upgamma}$ is an autoequivalence of $\scrC$.
It is clear that $\mathrm{U}\cap \mathrm{U}_{\upgamma}\neq\emptyset$ implies that  $\Upphi_{\upgamma}\colon \scrC\to\scrC$ maps $\scrA$ to $\scrA$. By Theorem~\ref{action free}, this implies that  
 $\Upphi_{\upgamma}\cong\Id$.\\
 (2) The proof of the first $\iff$ is identical, appealing to Theorem~\ref{affine summary} instead of Theorem~\ref{HomMMP finite summary} to deduce that $\Upphi_{\upgamma}$ maps $\scrB$ to $\scrB$. Theorem~\ref{action free} again implies that   
 $\Upphi_{\upgamma}\cong \Id$. The second $\iff$ follows immediately from the first.
\end{proof}

\begin{lem}\label{adjacent}
For $\upalpha, \upbeta\in\Targ_0(C_{+})$ with $l(\upalpha)>l(\upbeta)$, the chambers $\Stab\scrA_{\upalpha}$ and $\Stab\scrA_{\upbeta}$ share a codimension one boundary if and only if there exists a length one path $\upgamma\in{\rm Mor}(\mathds{G}^+)$ such that $\upalpha=\upbeta\circ\upgamma$ or $\upalpha=\upbeta\circ\upgamma^{-1}$ in ${\rm Mor}(\mathds{G})$.  A similar statement holds replacing $\scrA$ by $\scrB$, $\Targ_0(C_{+})$ by $\Targ(C_{+})$ and $\mathds{G}$ by $\mathds{G}^{\aff}$ respectively.
\end{lem}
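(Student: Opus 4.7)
The plan is to prove both implications separately, using tilting theory on one side and the commutative diagram of Proposition~\ref{stability track comm diagram} on the other.

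For the forward direction $(\Leftarrow)$, I would first use the homeomorphism $(\Upphi_\upbeta)_*$, which preserves codimension one adjacency, to reduce to the case $\upbeta = \Id_{C_+}$ and $\upalpha = \upgamma^{\pm 1}$ of length one, corresponding to mutation at some summand $i \neq 0$. By Lemma~\ref{simple tilt}, $\Upphi_i(\scrA_{\upnu_iN}) = \mathrm{L}_i(\scrA)$ and $\Upphi_i^{-1}(\scrA_{\upnu_iN}) = \mathrm{R}_i(\scrA)$ are precisely the simple tilts of $\scrA$ at $\scrS_i$. Standard tilting theory for finite length hearts then shows that $\Stab\scrA_{\upgamma^{\pm 1}}$ shares a codimension one boundary with $\mathrm{U}$ along the wall where $Z(\scrS_i) \in \bR_{<0}$ (respectively $\bR_{>0}$), while the remaining $Z(\scrS_j)$ stay in $\bH$.

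For $(\Rightarrow)$, suppose $\Stab\scrA_\upalpha$ and $\Stab\scrA_\upbeta$ share a codimension one wall. By Proposition~\ref{stability track comm diagram}, $\cZ$ sends these chambers to $\upvarphi_{M_{s(\upalpha)}}(\bH_+)$ and $\upvarphi_{M_{s(\upbeta)}}(\bH_+)$ in $\Uptheta_\bC$. Since $\cZ$ is a local isomorphism, the two complex chambers share a codimension one wall in $\Uptheta_\bC$; projecting onto imaginary parts, the real chambers $\upvarphi_{M_{s(\upalpha)}}(C_+)$ and $\upvarphi_{M_{s(\upbeta)}}(C_+)$ of $\scrH$ must share a codimension one wall. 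By Theorem~\ref{HomMMP finite summary}\eqref{HomMMP finite summary 3}, the modules $M_{s(\upalpha)}$ and $M_{s(\upbeta)}$ therefore differ by mutation at a single indecomposable summand, yielding a length one arrow in $\Gamma_\scrH$, and hence (choosing the orientation so that source and target match) a length one positive path $\upgamma \in \mathrm{Mor}(\mathds{G}^+)$.

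Applying $(\Leftarrow)$ to the pair $\upbeta$ and $\upbeta\circ\upgamma^{\pm 1}$, the chamber $\Stab\scrA_{\upbeta\circ\upgamma^{\pm 1}}$ shares the very same codimension one wall with $\Stab\scrA_\upbeta$. Since $\cZ$ maps a small neighborhood of the shared wall homeomorphically onto its image, the two chambers $\Stab\scrA_\upalpha$ and $\Stab\scrA_{\upbeta\circ\upgamma^{\pm 1}}$ must coincide inside $\cStab{}\scrC$. By Lemma~\ref{no overlap lemma} this yields $\Upphi_\upalpha \cong \Upphi_{\upbeta\circ\upgamma^{\pm 1}}$, and the final equality $\upalpha = \upbeta\circ\upgamma^{\pm 1}$ in $\mathrm{Mor}(\mathds{G})$ then follows from the faithfulness of the $\mathds{G}$-action on $\scrC$ established in \cite{HW}. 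The argument for $\scrB$ is verbatim the same with $\scrH^{\aff}$, $\Mut(N)$, and $\mathds{G}^{\aff}$ replacing their finite counterparts, now allowing mutation at $N_0=R$.

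The main obstacle will be the very last step: upgrading the equality of chambers in $\cStab{}\scrC$ to an equality of morphisms in $\mathrm{Mor}(\mathds{G})$. Theorem~\ref{action free} only detects triviality at the level of functors; passing from $\Upphi_\upalpha \cong \Upphi_{\upbeta\circ\upgamma^{\pm 1}}$ to equality in $\mathds{G}$ requires genuine faithfulness of the action, imported from \cite{HW}. The verification that codimension one sharing in $\Stab\scrC$ descends via the local homeomorphism $\cZ$ to codimension one sharing in $\Uptheta_\bC$ is also slightly delicate, but becomes routine once one works in a sufficiently small neighborhood of the shared wall.
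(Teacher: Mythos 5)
Your $(\Leftarrow)$ direction coincides with the paper's proof: reduce to $\upbeta=\Id$ by applying $(\Upphi_\upbeta)_*$, then combine Lemma~\ref{simple tilt} with the tilting theory of finite-length hearts — the paper simply cites \cite[5.5]{B09} for that last step. Fine.

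Your $(\Rightarrow)$ direction diverges substantially, and this is where the trouble lies. The paper gets both implications simultaneously from \cite[5.5]{B09}: the codimension one boundary pieces of $\Stab\scrA$ are exactly the walls where some $Z(\scrS_i)$ becomes real, the hearts on the other side are exactly the simple left/right tilts at $\scrS_i$, and Lemma~\ref{simple tilt} identifies those tilts with $\Upphi_i^{\pm 1}(\scrA_{\upnu_iN})$, i.e.\ with length one paths. There is no need to pass through $\cZ$, project to imaginary parts, or invoke Theorem~\ref{HomMMP finite summary}\eqref{HomMMP finite summary 3}; that detour re-derives the classification of adjacent hearts from K-theoretic information, which is both longer and more delicate than reading it off directly at the level of t-structures.

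More importantly, your final step is a genuine gap. You correctly flag that you need faithfulness to pass from $\Upphi_\upalpha\cong\Upphi_{\upbeta\circ\upgamma^{\pm 1}}$ to equality in $\mathrm{Mor}(\mathds{G})$, and you import this from \cite{HW}. That is fine for $\scrC$, where \cite{HW} establishes injectivity of $\uppi_1(\Uptheta_\bC\backslash\scrH_\bC)\twoheadrightarrow\Br\scrC$. But you then assert that ``the argument for $\scrB$ is verbatim the same.'' It is not: faithfulness of the $\mathds{G}^{\aff}$-action on $\scrD$ is \emph{not} established anywhere in this paper or in \cite{HW} — this is precisely why Theorem~\ref{regular covering} claims universality only for the covering in part~\eqref{regular covering 1} and not for part~\eqref{regular covering 2}. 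An argument for Lemma~\ref{adjacent} that requires affine faithfulness would therefore be resting on an unproven statement. The paper's route via \cite[5.5]{B09} avoids this dependency entirely, which is exactly why it works uniformly for $\scrA$ and $\scrB$; you should rework $(\Rightarrow)$ along those lines rather than routing through the covering space picture.
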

\begin{proof}
It is enough to prove that, for $\upgamma\in\Targ_0(C_+)$, $\mathrm{U}$ and $\mathrm{U}_{\upgamma}$ share a codimension one boundary if and only if there is a length one path $\updelta$ such that $\upgamma=\updelta$ or $\upgamma=\updelta^{-1}$. This follows from \cite[5.5]{B09} and Lemma \ref{simple tilt}.
\end{proof}

 The following is the analogue of \cite[4.11]{T08}.

\begin{thm}\label{chambers in cStab C} 
With notation as above, the following statements hold.
\begin{enumerate}
\item\label{chambers in cStab C 1}  There is a disjoint union of open chambers 
\[
\scrM\colonequals \kern -12pt\bigcup_{\upalpha\in\Targ_0(C_+)}\kern -12pt\mathrm{U}_{\upalpha}\quad\subset \cStab{}\scrC.
\]
Furthermore,  $\overline{\scrM}=\bigcup\overline{\mathrm{U}}_{\upalpha}=\cStab{}\scrC$, where $\overline{\mathrm{U}}_\upalpha$ is the closure of $\mathrm{U}_\upalpha$ in $\cStab{}\scrC$. 
\item\label{chambers in cStab C 2}  There is a disjoint union of open chambers 
\[
\scrN\colonequals \kern -10pt \bigcup_{\upbeta\in\Targ(C_+)} \kern -10pt{\bU}_{\upbeta}\quad\subset \cStab{}\scrD.
\]
Furthermore, $
\overline{\scrN}=\bigcup\overline{\bU}_{\upbeta}=\cStab{}\scrD$.
\end{enumerate}
In particular, as $
 \cStab{}\scrD\cap \nStab{n}\scrD=\cStab{n}\scrD,
 $ there is a disjoint union of open chambers 
\[
\scrN_n\colonequals \kern -10pt \bigcup_{\upbeta\in\Targ(C_+)} \kern -10pt{\dsN}_{\upbeta}\quad\subset \cStab{n}\scrD
\]
such that $\overline{\scrN_n}= \bigcup\overline{{\dsN}}_{\upbeta}=\cStab{n}\scrD$.
\end{thm}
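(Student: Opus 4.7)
The plan is to prove part (1) in detail, to observe that part (2) follows by the same argument with obvious replacements, and then to deduce the final statement on $\cStab{n}\scrD$ by intersecting (2) with the closed locus of normalised stability conditions.

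First I would dispose of three routine points: disjointness, openness, and containment in $\cStab{}\scrC$. Disjointness of the $\mathrm{U}_\upalpha$ (meaning any two are either equal or disjoint as subsets of $\Stab\scrC$) is the direct content of Lemma~\ref{no overlap lemma}(1), since if $\mathrm{U}_\upalpha \cap \mathrm{U}_\upbeta \neq \emptyset$ then $M_{s(\upalpha)} \cong M_{s(\upbeta)}$ and $\Upphi_\upalpha \cong \Upphi_\upbeta$, forcing $\mathrm{U}_\upalpha = \mathrm{U}_\upbeta$. Openness of each $\mathrm{U}_\upalpha$ is clear, since $\mathrm{U}_L$ is cut out by strict imaginary-part inequalities on a chart $\cZ_L$ and $(\Upphi_\upalpha)_*$ is a homeomorphism. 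Containment $\mathrm{U}_\upalpha \subseteq \cStab{}\scrC$ I would prove by induction on the length of $\upalpha$: the base case $\mathrm{U} \subseteq \Stab\scrA \subseteq \cStab{}\scrC$ is tautological, and Lemma~\ref{adjacent} provides the inductive step, since each length-one extension produces a chamber sharing a codimension-one face with its predecessor and therefore lying in the same connected component.

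The core claim is $\overline{\scrM} = \cStab{}\scrC$. By connectedness of $\cStab{}\scrC$ and non-emptiness of $\overline{\scrM}$, it suffices to show $\overline{\scrM}$ is open. I would argue this locally at an arbitrary point $\upsigma \in \overline{\scrM} \setminus \scrM$. By standard local-finiteness and wall-crossing theory for stability conditions on finite-length hearts (cf.~\cite{B07}), a sufficiently small ball around $\upsigma$ meets only finitely many of the chambers $\mathrm{U}_\upalpha$, and each codimension-one wall incident to $\upsigma$ arises from a simple object of some heart $(\Upphi_\upalpha)_*\scrA_{s(\upalpha)}$ becoming massless. Lemma~\ref{simple tilt} identifies the two tilts at a simple $\scrS_i$ with the action of $\Upphi_i^{\pm 1}$, so crossing such a wall replaces $\upalpha$ by $\upalpha \circ s_i^{\pm 1}$ and lands in some other $\mathrm{U}_\upbeta \subseteq \scrM$. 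A small enough neighborhood of $\upsigma$ is therefore contained in the closures of finitely many such chambers, hence in $\overline{\scrM}$. The same local finiteness also yields the equality $\overline{\scrM} = \bigcup_\upalpha \overline{\mathrm{U}}_\upalpha$, since a neighborhood of any point in $\overline{\scrM}$ meets only finitely many $\mathrm{U}_\upalpha$.

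The argument for $\scrN \subseteq \cStab{}\scrD$ is verbatim, now using Lemma~\ref{no overlap lemma}(2), the $\scrB$-variant of Lemma~\ref{simple tilt} (which also covers the additional summand indexed by $R$), and Lemma~\ref{adjacent} for $\mathds{G}^{\aff}$. The final statement on $\cStab{n}\scrD$ then drops out by intersecting the decomposition of $\cStab{}\scrD$ with the closed subspace $\nStab{n}\scrD$, using Proposition~\ref{stability track comm diagram} to confirm that mutation functors preserve the normalisation, whence $\dsN_\upbeta = \bU_\upbeta \cap \nStab{n}\scrD$. The main obstacle is the step identifying local walls with mutation combinatorics, namely verifying that every codimension-one wall of $\scrM$ at $\upsigma$ arising from Bridgeland wall-crossing matches the edge structure of $\mathds{G}$ (respectively $\mathds{G}^{\aff}$). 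Proposition~\ref{theta trivial} is the crucial input here, since it ensures that pure-braid endomorphisms act trivially on K-theory, and thus the chamber label on each side of a local wall is well-defined independently of the positive path chosen to reach it.
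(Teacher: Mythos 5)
Your treatment of parts (1) and (2) is sound and amounts to the same underlying content as the paper, just reorganized: instead of picking an explicit path from $\mathrm{U}$ to an arbitrary point of $\cStab{}\scrC$, deforming it to cross walls transversely, and following it chamber by chamber (the paper's route), you observe that $\overline{\scrM}$ is closed, non-empty, and open, hence all of the connected component. Both reductions hinge on exactly Lemmas~\ref{no overlap lemma}, \ref{adjacent} and \ref{simple tilt}, and both silently invoke local finiteness of the wall structure, so I regard these as the same argument in different clothing.

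The genuine gap is in the final ``in particular'' clause. You write that the normalised statement ``drops out by intersecting the decomposition of $\cStab{}\scrD$ with the closed subspace $\nStab{n}\scrD$.'' Intersecting does give $\cStab{}\scrD\cap\nStab{n}\scrD=\bigcup\overline{\dsN}_\upbeta$, but that is a decomposition of $\cStab{}\scrD\cap\nStab{n}\scrD$, not of $\cStab{n}\scrD$. The identity $\cStab{}\scrD\cap\nStab{n}\scrD=\cStab{n}\scrD$ is part of the assertion and is not automatic: $\cStab{n}\scrD$ is a connected component of $\nStab{n}\scrD$, while $\cStab{}\scrD$ is a connected component of $\Stab\scrD$, and their intersection could a priori be disconnected and therefore strictly larger than the component containing $\nStab{n}\scrB$. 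The inclusion $\cStab{n}\scrD\subseteq\cStab{}\scrD\cap\nStab{n}\scrD$ is quick, but the reverse inclusion requires showing $\cStab{}\scrD\cap\nStab{n}\scrD$ is connected. The paper does this by proving each $\overline{\dsN}_\upbeta$ is path connected (this is precisely where Lemma~\ref{path connected} and its companion $\bE_+$-picture are needed) and then exhibiting, for each length-one path $\upgamma$, an explicit normalised central charge lying in $\overline{\dsN}\cap\overline{\dsN}_{\upgamma}$, so that adjacent pieces actually meet inside $\nStab{n}\scrD$. Neither the appeal to Lemma~\ref{path connected} nor the construction of that boundary point appears in your sketch, and without them the final statement does not follow.
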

\begin{proof}
(1) By Lemma \ref{no overlap lemma}, $\scrM$ is a disjoint union, and  by Lemma \ref{adjacent} $\overline{\scrM}$ is connected. Since $\overline{\scrM}$ contains $\overline{\mathrm{U}}$ and thus $\Stab\scrA$,
 there is an inclusion $\overline{\scrM}\subseteq \cStab{}\scrC$.

Let $\upsigma\in\cStab{}\scrC$ be a point, and choose a point $\upsigma_0\in \mathrm{U}$ and a path
\[
p\colon [0,1]\to\cStab{}\scrC
\]
such that $p(0)=\upsigma_0$ and $p(1)=\upsigma$.   Since $\cZ\colon\cStab{}\scrC\to\Uptheta_{\bC}$ is a local homeomorphism, by deforming $p$ if necessary, we may assume that the path $\cZ\circ p\colon [0,1]\to\Uptheta_{\bC}$ passes through only finitely many codimension one boundaries of chambers $\upvarphi_L(\bH_+)$. Thus there exists a sequence $0<t_1<t_2<\hdots<t_{\ell-1}< t_{\ell}\colonequals1$ of real numbers such that:
\begin{enumerate}
\item[(a)] for all $i\neq \ell$, every $\cZ(p(t_i))$  is in a codimension one boundary of some chamber,
\item[(b)] for all $i$, each open interval $\cZ(p(t_i,t_{i+1}))$ is contained in the interior of some chamber.
\end{enumerate} 
Since $p((0,t_1))\subset \mathrm{U}$,  by Lemma \ref{adjacent} there is a length one path $\upgamma\in\Targ_0(C_+)$ such that $p(t_1,t_2)$ is in $\mathrm{U}_{\upgamma}$. By iterating this argument,  we see that $p((t_{l-1},1))$ is in  some open chamber $\mathrm{U}_{\upalpha}$, and  hence its end point, $\upsigma$, belongs to $ \overline{\mathrm{U}}_{\upalpha}$. \\
(2) This follows using an identical argument to (1). 

For the last statements, we first prove that $\cStab{}\scrD\cap \nStab{n}\scrD=\cStab{n}\scrD$. Let $\upsigma\in \cStab{n}\scrD$. Since $\cStab{n}\scrD$ is a connected and locally Euclidian space, it is path connected. Hence there is a path from $\upsigma$ to a point $\upsigma_0\in \nStab{n}\scrB\subset \cStab{}\scrD$. Thus $\upsigma$ also lies in $\cStab{}\scrD$, proving  $\cStab{n}\scrD\subseteq\cStab{}\scrD\cap \nStab{n}\scrD$. 

For the opposite inclusion,  it is enough to show that $\cStab{}\scrD\cap \,\nStab{n}\scrD$ is connected, since $\cStab{}\scrD\cap \nStab{n}\scrD$ contains $\nStab{n}\scrB$. But by \eqref{chambers in cStab C 2},
\[
\cStab{}\scrD\cap \nStab{n}\scrD = (\bigcup\overline{{\bU}}_{\upbeta})\cap  \nStab{n}\scrD=\bigcup\overline{{\dsN}}_{\upbeta}.
\] 
  Since $\overline{{\dsN}}_{\upbeta}$ is the closure of ${\dsN}_{\upbeta}$ in $\nStab{n}\scrD$, we have $\overline{{\dsN}}_{\upbeta}=\overline{{\bU}}_{\upbeta}\cap\,\nStab{n}\scrD$, and by Lemma~\ref{path connected} and Proposition~\ref{stability track comm diagram}, all $\overline{{\dsN}}_{\upbeta}$ are path connected.
Thus again by Proposition~\ref{stability track comm diagram}, it suffices to show that $\overline{{\dsN}}\cap\,\overline{\dsN}_{\upgamma}=\overline{{\bU}}\cap\,\overline{{\bU}}_{\upgamma}\cap\,\nStab{n}\scrD\neq \emptyset$ for any length one path $\upgamma$.   If $\upgamma=s_i$, then consider the point  $\upsigma=(Z,\scrB)\in \Stab\scrB$ defined by $Z[\scrS_i]=-1/\uplambda_i$, and $Z[\scrS_j]=(1+\ii)/n\uplambda_j$ for all $ j\neq i$, where $\uplambda_k\colonequals \rk_RN_k$. Then $\upsigma$ lies in $\nStab{n}\scrB\subset \overline{\bU}\,\cap \,\nStab{n}\scrD$ and in the codimension one boundary of $\Stab{\rm L}_{i}(\scrB)$ by \cite[Lemma 5.5]{B09}. But since  $\Stab{\rm L}_{i}(\scrB)=(\Upphi_{i})_*(\Stab\scrB_{\upnu_iN})=\Stab\scrB_{\upgamma}$  by Lemma \ref{simple tilt},  $\upsigma\in\overline{\Stab\scrB_{\upgamma}}=\overline{\bU}_{\upgamma}$. This implies that  $\overline{{\bU}}\cap\,\overline{{\bU}}_{\upgamma}\cap\,\nStab{n}\scrD\neq \emptyset$. Similarly, we see that $\overline{{\bU}}\cap\,\overline{{\bU}}_{\upgamma}\cap\,\nStab{n}\scrD\neq \emptyset$ when $\upgamma=s_i^{-1}$.
 Hence $\cStab{}\scrD\cap \,\nStab{n}\scrD$ is connected, and thus $\cStab{}\scrD\cap \nStab{n}\scrD=\cStab{n}\scrD$ follows.
The remaining statements are then immediate from \eqref{chambers in cStab C 2}.
\end{proof}

\subsection{Regular Covering Structure}

\begin{lem}\label{coord ax} 
If $L\in  \Mut_0(N)$, respectively $L\in\Mut(N)$, then the following statements hold.
\begin{enumerate}
\item\label{coord ax 1}   If a point $\upsigma=(Z,\cA)\in \cStab{}\scrC_L$ is in $\overline{\mathrm{U}}_{\upalpha}$ for some $\upalpha\in \End_{\dsG}(C_+)$, then $\cZ_L(\upsigma)$ is not on any complexified coordinate axis in $(\Uptheta_{L})_{\bC}$. 
\item\label{coord ax 2}  If a point $\upsigma=(Z,\cB)\in \cStab{}\scrD_{L}$ is in $\overline{\bU}_{\upbeta}$ for some $\upbeta\in\End_{\dsG^{\aff}}(C_+)$, then $\cZ_{L}(\upsigma)$ is not on any complexified coordinate axis in $(\scrK_{L})_{\bC}$.  
\end{enumerate}
\begin{proof}
(1) First, for any $\uprho\in \mathrm{U}_1=\mathrm{U}$, every simple module $\scrS_i\in \scrA_L$ is $\uprho$-semistable. Hence, by \cite[7.6]{BS},   $\scrS_i$ is $\uprho$-semistable for all $\uprho\in \overline{\mathrm{U}}_1$, and in particular $Z[\scrS_i]\neq 0$ for all $(Z,\cA)\in \overline{\mathrm{U}}_1$. 

Now, for $\upsigma\in\overline{\mathrm{U}}_{\upalpha}$, by Proposition~\ref{stability track comm diagram} and Proposition~\ref{theta trivial}\eqref{theta trivial 2}, $\cZ_L(\upsigma)=\cZ_L((\Upphi_{\upalpha})_*^{-1}(\upsigma))$, and by definition $(\Upphi_{\upalpha})_*^{-1}(\upsigma)\in \overline{\mathrm{U}}_1$.  Hence, by the first paragraph, $\cZ_L(\upsigma)$ is not on any complexified coordinate axis. \\
(2) is  identical to (1).
\end{proof}
\end{lem}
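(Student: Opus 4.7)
The plan is to reduce both parts to the base chamber $\mathrm{U}$ (respectively $\bU$), and then to deduce that case from the openness/closedness of semistability on the stability manifold.

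First I would transport the problem back to the base chamber. Because $\upalpha\in \End_{\dsG}(C_+)$, the functor $\Upphi_\upalpha$ is an autoequivalence of $\scrC$, so $(\Upphi_\upalpha)_*$ is a homeomorphism of $\Stab\scrC$ carrying $\overline{\mathrm{U}}$ onto $\overline{\mathrm{U}}_\upalpha$. Any $\upsigma \in \overline{\mathrm{U}}_\upalpha$ therefore has the form $(\Upphi_\upalpha)_*\upsigma'$ for a unique $\upsigma' \in \overline{\mathrm{U}}$. The tracking square of Proposition~\ref{stability track comm diagram} gives $\cZ_L(\upsigma) = \upvarphi_\upalpha(\cZ_L(\upsigma'))$, and Proposition~\ref{theta trivial}\eqref{theta trivial 2} identifies $\upvarphi_\upalpha$ with the identity on $\Uptheta_{\bC}$, since $\upalpha$ is an endomorphism at $C_+$. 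Hence $\cZ_L(\upsigma) = \cZ_L(\upsigma')$, reducing to the case $\upalpha = \Id$.

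For the base case I would argue that each simple $\scrS_i \in \scrA$ is $\upsigma'$-semistable for every $\upsigma' \in \overline{\mathrm{U}}$, from which $Z'[\scrS_i]\neq 0$ and hence $\cZ_L(\upsigma')$ avoids every coordinate hyperplane. On $\mathrm{U}$ itself the simples are automatically stable by the very definition in Notation~\ref{chamber notation}, because $\Image Z'[\scrS_i] > 0$ forces each $\scrS_i$ (being a simple in the heart) into the strict upper half plane. Then the closedness of the locus of semistable objects as the stability condition varies, for example in the form of \cite[7.6]{BS}, propagates semistability of each $\scrS_i$ from $\mathrm{U}$ to its closure $\overline{\mathrm{U}}$. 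Part (2) is entirely parallel, with $\mathrm{U}$ replaced by $\bU$, $\scrA$ by $\scrB$, and the identification $\upvarphi_\upalpha = \Id$ on $\Uptheta_{\bC}$ replaced by $\upphi_\upalpha = \Id$ on $\scrK_{\bC}$.

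The main potential obstacle is the input that semistability is closed as a function of the stability condition; this genuinely requires the support property, which is precisely the reason we restrict to the connected components $\cStab{}\scrC$ and $\cStab{n}\scrD$ rather than work in all of $\Stab\scrC$ or $\Stab\scrD$. Without it, one could in principle lose semistability of some $\scrS_i$ in the limit and a central charge could then vanish on the boundary.
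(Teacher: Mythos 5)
Your proof is correct and follows essentially the same route as the paper's: both reduce the problem via Proposition~\ref{stability track comm diagram} and Proposition~\ref{theta trivial}\eqref{theta trivial 2} to the closure of the base chamber $\overline{\mathrm{U}}$, where one argues that the simples $\scrS_i$ are semistable throughout the closure by \cite[7.6]{BS} and hence their central charges never vanish. The only cosmetic difference is the order of the two steps (you reduce first, the paper handles the base case first), and your closing remark about the support property being the real input behind closedness of semistability is a sound observation that the paper makes globally rather than locally.
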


\begin{lem}\label{surj to image}
With notation as above, the following statements hold.
\begin{enumerate}
\item\label{surj to image 1} The map $\cZ\colon \cStab{}\scrC\to\Uptheta_{\bC}$ restricts to a surjective map 
\[
\cZ\colon \cStab{}\scrC\to \Uptheta_{\bC}\backslash \scrH_{\bC}.
\]
\item\label{surj to image 2} The map $\cZ\colon\cStab{n}\scrD\to \Level_{\bC}$ restricts to a surjective map
\[
\cZ\colon\cStab{n}\scrD\to \Level_{\bC}\backslash\scrH^{\aff}_{\bC}.
\]
\end{enumerate}
\end{lem}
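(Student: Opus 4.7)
The plan is to prove each of the two parts in two steps---well-definedness (image in complement) and surjectivity---and part~\eqref{surj to image 2} then follows from~\eqref{surj to image 1} by parallel substitutions: Proposition~\ref{complexified tracking 2} replaces Proposition~\ref{complexified tracking}, $\bE_+$ replaces $\bH_+$, $\upphi_L$ replaces $\upvarphi_L$, $\Targ(C_+)$ replaces $\Targ_0(C_+)$, and Lemma~\ref{coord ax}\eqref{coord ax 2} replaces Lemma~\ref{coord ax}\eqref{coord ax 1}, with the normalisation $\nStab{n}\scrB_L\simto\bE_+$ inherited automatically via the restricted commutative square of Proposition~\ref{stability track comm diagram}. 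Accordingly I focus on part~\eqref{surj to image 1}.

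For surjectivity onto $\Uptheta_\bC\backslash\scrH_\bC$, given $z$ in this set Proposition~\ref{complexified tracking} produces a unique $L\in\Mut_0(N)$ with $z\in\upvarphi_L(\bH_+)$, and the isomorphism~\eqref{local homeo A} yields $\uptau\in\Stab\scrA_L$ with $\cZ_L(\uptau)=\upvarphi_L^{-1}(z)$. Choose any $\upalpha\in\Targ_0(C_+)$ with $s(\upalpha)=C_L$, which exists since $L\in\Mut_0(N)$, and set $\upsigma\colonequals(\Upphi_\upalpha)_*(\uptau)$. Proposition~\ref{stability track comm diagram}, combined with Proposition~\ref{theta trivial}\eqref{theta trivial 1} to replace $\Upphi_L$ by $\Upphi_\upalpha$ on K-theory, gives $\cZ(\upsigma)=\upvarphi_L(\cZ_L(\uptau))=z$. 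Moreover $\Stab\scrA_\upalpha=(\Upphi_\upalpha)_*(\Stab\scrA_L)$ is connected (being homeomorphic to $\bH_+$) and meets $\cStab{}\scrC$ through the nonempty chamber $\mathrm{U}_\upalpha\subseteq\cStab{}\scrC$ of Theorem~\ref{chambers in cStab C}\eqref{chambers in cStab C 1}, so $\upsigma\in\cStab{}\scrC$.

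For the inclusion $\cZ(\cStab{}\scrC)\subseteq\Uptheta_\bC\backslash\scrH_\bC$ I would argue by contradiction: suppose $\cZ(\upsigma)\in H_\bC$ for some $H\in\scrH$. Since $\cZ$ is a local homeomorphism (Theorem~\ref{stab thm}), pick neighbourhoods $\cU\subseteq\cStab{}\scrC$ of $\upsigma$ and $\cV\subseteq\Uptheta_\bC$ of $\cZ(\upsigma)$ with $\cZ|_\cU\colon\cU\simto\cV$. The subset $\cV\setminus\scrH_\bC$ is open dense in $\cV$ and decomposes into tiles by Proposition~\ref{complexified tracking}; since $H$ is a wall of some real chamber of $\scrH$, one may select $M\in\Mut_0(N)$ so that $H$ is a wall of $\upvarphi_M(C_+)$ and so that the tile $\upvarphi_M(\bH_+^\circ)$ accumulates at $\cZ(\upsigma)$, supplying $z'\in\cV\cap\upvarphi_M(\bH_+^\circ)$ approaching $\cZ(\upsigma)$. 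Lifting $\upsigma'\colonequals\cZ|_\cU^{-1}(z')$ and appealing to the surjectivity step together with Lemma~\ref{no overlap lemma} places $\upsigma'$ in a single chamber $\mathrm{U}_\upbeta$ with $s(\upbeta)=C_M$, and the limit $z'\to\cZ(\upsigma)$ yields $\upsigma\in\overline{\mathrm{U}}_\upbeta$. Transferring to $\cStab{}\scrC_M$ via $(\Upphi_\upbeta)_*^{-1}$ and applying Lemma~\ref{coord ax}\eqref{coord ax 1} then shows that $\cZ(\upsigma)$ avoids every complexified wall of $\upvarphi_M(C_+)$, contradicting $\cZ(\upsigma)\in H_\bC$.

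The hard part is precisely this inclusion, not surjectivity. Lemma~\ref{coord ax} only rules out the $n$ complexified coordinate walls of a single reference chamber, whereas $\scrH$ in general contains many further hyperplanes, any of which could in principle meet $\cZ(\upsigma)$. The key device is to exploit the local homeomorphism of $\cZ$ to transport the tile decomposition of $\Uptheta_\bC\backslash\scrH_\bC$ back into $\cStab{}\scrC$, thereby producing a second chamber $\mathrm{U}_\upbeta$ whose closure contains $\upsigma$ and for which the offending hyperplane $H$ is genuinely a wall of $\upvarphi_{s(\upbeta)}(C_+)$. Technical care is needed at higher-codimension strata of $\scrH_\bC$, where several hyperplanes meet simultaneously, but density of the open tiles in $\cV$ always furnishes the required perturbation.
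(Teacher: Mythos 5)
Your surjectivity argument is essentially the paper's: use Proposition~\ref{complexified tracking}, lift through \eqref{local homeo A}, push forward via $(\Upphi_\upalpha)_*$ along any $\upalpha$ terminating at $C_+$, and invoke Theorem~\ref{chambers in cStab C} to place the result in $\cStab{}\scrC$. No comments there.

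The inclusion argument, however, has a genuine gap at the pivotal step where you assert that ``one may select $M\in\Mut_0(N)$ so that $H$ is a wall of $\upvarphi_M(C_+)$ and so that the tile $\upvarphi_M(\bH_+^\circ)$ accumulates at $\cZ(\upsigma)$.'' This is not a topological fact you can extract by density and perturbation; it is a nontrivial combinatorial statement about the arrangement, and it is exactly what Lemma~\ref{E lemma} in the paper provides. Concretely, the tiles $\upvarphi_M(\bH_+^\circ)$ that have $\cZ(\upsigma)=x+\ii y$ in their closure are precisely those with $y\in\overline{\upvarphi_M(C_+)}$, hence $M$ is constrained to the localization $\scrH_\mathrm{B}$ at the face $\mathrm{B}=\{\upvartheta_j=0:y_j=0\}$; you then need to know both that every chamber of $\scrH_\mathrm{B}$ is realised as $\upvarphi_\upalpha(\mathrm{D}_-)$ for some $\upalpha\in\MutTo_\scrI(N)$ (Lemma~\ref{E lemma}\eqref{E lemma 2}), and that among these there is one with $H$ as a wall, so $H=\upvarphi_\upalpha(H_k)$ for a coordinate axis $H_k$. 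Only then does passing to $\cStab{}\scrC_M$ convert $H$ into a coordinate wall so that Lemma~\ref{coord ax} applies. Without citing or reproving this, the selection of $M$ is an unsubstantiated leap --- and you correctly flag that Lemma~\ref{coord ax} only controls coordinate axes, so the whole argument hinges on it.

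A second, smaller issue: the paper first reduces, via Theorem~\ref{chambers in cStab C} and Proposition~\ref{stability track comm diagram}, to proving the inclusion only for $\upsigma\in\overline{\mathrm{U}}$, which forces the imaginary part $y$ into $\overline{C}_+$ and makes Lemma~\ref{hyper form} immediately pin down which indices of $H$ can be nonzero (namely those in $\scrI$). Your argument works with an arbitrary $\upsigma\in\cStab{}\scrC$, so $y$ is only constrained to lie in $\overline{\upvarphi_{s(\upalpha)}(C_+)}$ for some unknown $\upalpha$, and the limit argument selecting a single $\upbeta$ then needs extra care (the point $\cZ(\upsigma)$ may sit on a high-codimension stratum where several tiles accumulate, and the tile corresponding to $\upbeta$ must be chosen consistently along the approaching sequence). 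The reduction to $\overline{\mathrm{U}}$ is cheap and collapses both of these difficulties at once; I would strongly recommend performing it first, and then invoking Lemma~\ref{hyper form} and Lemma~\ref{E lemma} explicitly in place of the geometric ``one may select $M$'' step. In short, your outline has the right shape --- translate to a frame where $H$ becomes a coordinate axis, then apply Lemma~\ref{coord ax} --- but the translation requires the combinatorics of Lemma~\ref{E lemma}, not density of tiles.
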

\begin{proof}
(1) First, we show that ${\rm Im}(\cZ)\subseteq\Uptheta_{\bC}\backslash \scrH_{\bC}$. By Theorem \ref{chambers in cStab C} and Proposition \ref{stability track comm diagram}, it is enough to show that  $\cZ(\upsigma)=x+\ii y\in\Uptheta_{\bC}\backslash \scrH_{\bC}$ for each  point $\upsigma=(Z,\cA)\in \overline{\mathrm{U}}$ in the closure  of $\mathrm{U}$. Assume that $\cZ(\upsigma)\in H_{\bC}$ for some $H\in \scrH$. 

Let $\scrI\colonequals \{1\leq i\leq n\mid y_i=0\}$ and ${\rm B}\colonequals\{\upvartheta\in \Uptheta_{\bR}\mid \upvartheta_j=0 \mbox{ for all } j\in \scrI\}$. If $\scrI=\emptyset$, then the point $\upsigma$  necessarily lies in $\mathrm{U}$, and so $\cZ(\upsigma)\in \bH_+\subset \Uptheta\backslash \scrH_{\bC}$. Hence we may assume $\scrI\neq \emptyset$. Since by Lemma~\ref{hyper form}\eqref{hyper form 1} the hyperplane $H$ has the form $\uplambda_1\upvartheta_{i_1}+\hdots+\uplambda_{s}\upvartheta_{i_s}=0$ for some $\uplambda_{1},\hdots,\uplambda_s>0$, the fact that $y_j>0$ if $j\notin\scrI$ implies $i_1,\hdots,i_s\in \scrI$. Since  ${\rm B}\subseteq H$, by Lemma \ref{E lemma} there exist  $k\in \scrI$ and a minimal mutation sequence 
\[
\upalpha\colon L\to\hdots\to N\in \MutTo_{\scrI}(N)
\] 
such that $H=\upvarphi_{\upalpha}(H_k)$, where $H_k=\{\upvartheta_k=0\}\subset (\Uptheta_L)_{\bR}$ is the $k$th coordinate axis in $(\Uptheta_L)_{\bR}$.
If we set  $\upsigma'\colonequals(\Upphi_{\upalpha}^{-1})_*(\upsigma)\in \cStab{}\scrC_L$, then $\cZ_L(\upsigma')\in (H_k)_{\bC}$. Since $y\in \overline{C}_+$ and $\upalpha\in \MutTo_{\scrI}(N)$, we see that $\upvarphi_{\upalpha}^{-1}(y)=y$ by the rule \eqref{k-correspond}, and so ${\rm Im}(\cZ_L(\upsigma'))\in \overline{C}_+$. But this implies that $\upsigma'\in \overline{\mathrm{U}}_{\upbeta}\subset \cStab{}\scrC_L$ for some $\upbeta\in \End_{\dsG}(C_+)$, which is a contradiction by Lemma \ref{coord ax}\eqref{coord ax 1}. Hence ${\rm Im}(\cZ)\subseteq\Uptheta_{\bC}\backslash \scrH_{\bC}$.

Next, we show the map $\cZ\colon \cStab{}\scrC\to \Uptheta_{\bC}\backslash \scrH_{\bC}$ is surjective. Pick $z\in \Uptheta_{\bC}\backslash \scrH_{\bC}$, then by Proposition~\ref{complexified tracking} there exists some $L\in\Mut_0(N)$ such that $\upvarphi_L(h)=z$ for some $h\in\bH_+$.    The left hand side of the commutative diagram in Proposition~\ref{stability track comm diagram} shows that we can find $\upsigma\in\Stab\scrA_L$ such that $\cZ_L(\upsigma)=h$.  The commutativity then shows that $\upsigma'\colonequals (\Upphi_L)_*(\upsigma)\in\Stab\scrC$ maps, via $\cZ$, to $z$.  Since $\upsigma'\in\cStab{}\scrC$ by Theorem~\ref{chambers in cStab C}, it follows that $\cZ$ is surjective.\\
\noindent
(2) By Theorem~\ref{chambers in cStab C} and Proposition~\ref{stability track comm diagram}, for ${\rm Im}(\cZ)\subseteq\Level_{\bC}\backslash \scrH_{\bC}^{\aff}$, it suffices to prove that $\cZ(\upsigma)=x+\ii y\in \Level_{\bC}\backslash \scrH_{\bC}^{\aff}$ for any $\upsigma=(Z',\cB)\in \overline{\dsN}=\overline{\bU}\cap\,\nStab{n}\scrD$.  To see this, let $\scrI'\colonequals \{0\leq i\leq n\mid y_i=0\}$, and note that $\scrI'\subsetneq \{0,\hdots,n\}$ since $x+y\ii\in\Level_{\mathbb{C}}$.  If $\cZ(\upsigma)\in \scrH_{\bC}^{\aff}$, then by a similar argument to (1), now using Lemma~\ref{hyper form}\eqref{hyper form 2} and Lemma~\ref{E lemma} (with $\scrI'$), there exists a minimal mutation sequence $\upalpha\colon L\to N\in \MutTo_{\scrJ'}(N)$ such that $\upsigma'\colonequals (\Upphi_{\upalpha}^{-1})_*(\upsigma)$ lies in $\bU_{\upbeta}$ for some $\upbeta\in\End_{\dsG^{\aff}}(C_+)$ and $\cZ_L(\upsigma')\in (H_k)_{\bC}\cap (\Level_L)_{\bC}$ for  some  coordinate axis $H_k$ in $(\scrK_L)_{\bR}$. This contradicts Lemma \ref{coord ax}\eqref{coord ax 2}, and thus ${\rm Im}(\cZ)\subseteq\Level_{\bC}\backslash \scrH_{\bC}^{\aff}$.  The surjectivity of the map follows by a similar argument to \eqref{surj to image 1}, using Proposition \ref{complexified tracking 2},  Proposition \ref{stability track comm diagram} and Theorem \ref{chambers in cStab C}.
\end{proof}

\begin{nota}\label{not: pure braid images}
Consider the subgroups of $\Auteq\scrC$ and $\Auteq\scrD$ defined by
\begin{align*}
\Br\scrC&\colonequals \{\Upphi_{\upalpha}|_{\scrC}\mid \upalpha\in\End_{\mathds G}(C_+)\},\\
\Br\scrD&\colonequals \{\Upphi_{\upbeta}|_{\scrD}\mid \upbeta\in\End_{\dsG^{\aff}}(C_+)\}.
\end{align*}
\end{nota}

\begin{thm}\label{quot homeo}
With notation as above, the following statements hold.
\begin{enumerate}
\item\label{quot homeo 1} The surjective map $\cZ\colon \cStab{}\scrC\to \Uptheta_{\bC}\backslash \scrH_{\bC}$ induces a homeomorphism
\[
\cStab{}\scrC/\Br\scrC\xrightarrow{\sim} \Uptheta_{\bC}\backslash \scrH_{\bC}.
\]
\item\label{quot homeo 2}  The surjective map $\cZ\colon \cStab{n}\scrD\to \Level_{\bC}\backslash \scrH^{\aff}_{\bC}$ induces a homeomorphism
\[
\cStab{n}\scrD/\Br\scrD\xrightarrow{\sim} \Level_{\bC}\backslash \scrH^{\aff}_{\bC}.
\]
\end{enumerate}
\end{thm}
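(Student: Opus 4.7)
The plan is to exhibit $\overline\cZ\colon\cStab{}\scrC/\Br\scrC\to\Uptheta_\bC\backslash\scrH_\bC$ as a continuous open bijection, hence a homeomorphism; the affine statement (2) will then follow by a formally identical argument, substituting $\scrD$ for $\scrC$, $\scrH^{\aff}$ for $\scrH$, $\Br\scrD$ for $\Br\scrC$, $\dsN_\upbeta$ for $\mathrm{U}_\upbeta$, Proposition~\ref{complexified tracking 2} for Proposition~\ref{complexified tracking}, and Lemma~\ref{surj to image}\eqref{surj to image 2} for Lemma~\ref{surj to image}\eqref{surj to image 1}.

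\emph{Step 1: descent and surjectivity.} First I would verify that $\Br\scrC$ acts on $\cStab{}\scrC$: for $\upgamma\in\End_\dsG(C_+)$ and $\upbeta\in\Targ_0(C_+)$, Notation~\ref{chamber notation} gives $(\Upphi_\upgamma)_*(\mathrm{U}_\upbeta)=\mathrm{U}_{\upgamma\circ\upbeta}$, so the action preserves the chamber decomposition of Theorem~\ref{chambers in cStab C}\eqref{chambers in cStab C 1}. The commutative diagram of Proposition~\ref{stability track comm diagram} combined with Proposition~\ref{theta trivial}\eqref{theta trivial 2} (which states $\upvarphi_\upgamma=\Id_\Uptheta$ for endomorphisms) yields $\cZ\circ(\Upphi_\upgamma)_*=\cZ$, so $\cZ$ descends to a continuous map $\overline\cZ$, which is surjective by Lemma~\ref{surj to image}\eqref{surj to image 1}.

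\emph{Step 2: injectivity.} Given $\upsigma_1,\upsigma_2\in\cStab{}\scrC$ with $\cZ(\upsigma_1)=\cZ(\upsigma_2)=z_0$, the goal is to produce $g\in\Br\scrC$ with $g\upsigma_1=\upsigma_2$. In the generic case $\upsigma_i\in\mathrm{U}_{\upbeta_i}$, the image $\cZ(\upsigma_i)$ lies in the open subset $\upvarphi_{M_{s(\upbeta_i)}}(\bH_+)$, so the disjointness of Proposition~\ref{complexified tracking} forces $M_{s(\upbeta_1)}\cong M_{s(\upbeta_2)}=:L$; since chambers in $\dsG$ are in bijection with $\Mut_0(N)$, necessarily $s(\upbeta_1)=s(\upbeta_2)=C_L$, and $\upbeta_2\circ\upbeta_1^{-1}\in\End_\dsG(C_+)$ defines $g\colonequals\Upphi_{\upbeta_2\circ\upbeta_1^{-1}}\in\Br\scrC$. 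Proposition~\ref{theta trivial}\eqref{theta trivial 3} gives $\upvarphi_{\upbeta_1}=\upvarphi_{\upbeta_2}$, so by Proposition~\ref{stability track comm diagram} the points $(\Upphi_{\upbeta_i})_*^{-1}\upsigma_i\in\mathrm{U}_L$ have the same image under the homeomorphism $\cZ_L\colon\Stab\scrA_L\simto\bH_+$, hence coincide, giving $g\upsigma_1=\upsigma_2$. For the boundary case, I would leverage the local homeomorphism property of $\cZ$ from Theorem~\ref{stab thm}: after shrinking to neighbourhoods $W_i\ni\upsigma_i$ with $\cZ|_{W_i}\simto V\ni z_0$, I would pick a nearby interior point $z\in V$ lying in some open chamber of Proposition~\ref{complexified tracking}, apply the generic argument to its lifts $\tilde\upsigma_i\in W_i$ to obtain $g\in\Br\scrC$ with $g\tilde\upsigma_1=\tilde\upsigma_2$, and transport back. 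Since $\cZ\circ g=\cZ$ from Step~1, after further shrinking $W_1$ so that $g(W_1)\subseteq W_2$, the identity $g|_{W_1}=(\cZ|_{W_2})^{-1}\circ\cZ|_{W_1}$ then forces $g\upsigma_1=\upsigma_2$.

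\emph{Step 3: homeomorphism, and main obstacle.} Since $\cZ$ is a local homeomorphism and the quotient map $\cStab{}\scrC\to\cStab{}\scrC/\Br\scrC$ is open, the continuous bijection $\overline\cZ$ will be open, hence a homeomorphism. The hard part will be the boundary case of Step~2: the chamber structure on $\cStab{}\scrC$ from Theorem~\ref{chambers in cStab C} and the disjoint chamber structure on $\Uptheta_\bC\backslash\scrH_\bC$ from Proposition~\ref{complexified tracking} do not match perfectly on walls, so the transport from an interior lift back to the original stability condition requires care. Theorem~\ref{action free} enters here implicitly, via the uniqueness (up to functorial isomorphism) of the element $g$ produced in the interior case, guaranteeing that the transport across different local neighbourhoods is consistent.
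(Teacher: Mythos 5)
Your Step 1 is correct and matches the paper. Your Step 2 generic case is also correct and matches the paper's choice of $\upgamma\colonequals\upbeta'\circ\upbeta^{-1}$. The overall route — descent, surjectivity, injectivity via the chamber structure and Proposition~\ref{complexified tracking}, then local-homeomorphism yields openness — is exactly the paper's. However, there is a circularity in your boundary case that you should be aware of. You write ``after further shrinking $W_1$ so that $g(W_1)\subseteq W_2$'', but this shrinking is only possible around $\upsigma_1$ if you already know $g(\upsigma_1)\in W_2$, which is essentially the conclusion you are trying to reach ($g(\upsigma_1)$ and $\upsigma_2$ have the same $\cZ$-image, but a priori $g(\upsigma_1)$ could be some other preimage of $z_0$ far from $W_2$). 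Shrinking around $\tilde\upsigma_1$ instead would lose $\upsigma_1$ from the domain.

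The paper sidesteps this by replacing the single interior lift with a \emph{sequence}: it picks $\upsigma_i\in\dsN_\upbeta\cap\scrU$ converging to $\upsigma$, all within one chamber, so that the element $\upgamma=\upbeta'\circ\upbeta^{-1}$ (and hence $\Upphi_\upgamma$) is fixed once and for all by the chamber data of Theorem~\ref{chambers in cStab C}. It then sets $\upsigma_i'=(\Upphi_\upgamma)_*(\upsigma_i)$, observes $\cZ(\upsigma_i)=\cZ(\upsigma_i')$, and passes to the limit: continuity of $(\Upphi_\upgamma)_*$ gives $\upsigma_i'\to(\Upphi_\upgamma)_*(\upsigma)$, while the local homeomorphism near $\upsigma'$ identifies this limit with $\upsigma'$. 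The upshot is that the element of $\Br\scrC$ (or $\Br\scrD$) must be pinned down by the chamber $\overline{\mathrm{U}}_\upbeta$ that $\upsigma$ lies in, not rediscovered for each nearby interior point; once that is done, the transport is a straightforward limit rather than a shrinking argument. Finally, Theorem~\ref{action free} does not actually enter the proof of this theorem at all — it feeds into Lemma~\ref{no overlap lemma} (chamber disjointness) and into Theorem~\ref{regular covering} (freeness of the action), not into the bijectivity argument here, so your remark about it ``entering implicitly'' is a red herring.
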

\begin{proof}
We only prove \eqref{quot homeo 2}, since the proof of \eqref{quot homeo 1} is identical.  Let $\upsigma\in\cStab{n}\scrD$ and $\Phi\in \Br\scrD$. Then $\cZ(\Phi_*(\upsigma))=\cZ(\upsigma)$ by Proposition \ref{theta trivial}\eqref{theta trivial 2} and  Proposition~\ref{stability track comm diagram}, and so $\cZ$ induces a map $\cStab{n}\scrD/\Br\scrD\to \Level_{\bC}\backslash \scrH_{\bC}^{\aff}$ which is surjective by Lemma~\ref{surj to image}.

We show that this induced map is  injective.   
Let $\upsigma,\upsigma'\in\cStab{n}\scrD$ be two points such that  $x\colonequals \cZ(\upsigma)=\cZ(\upsigma')$.   By Theorem \ref{chambers in cStab C}, $\upsigma\in \overline{\dsN}_{\upbeta}$ and $\upsigma'\in\overline{\dsN}_{\upbeta'}$ for some paths $\upbeta,\upbeta'\in \Targ(C_+)$.   But by Proposition \ref{complexified tracking 2}, there is a unique $L\in \Mut(N)$ such that $x\in\upphi_L(\bE_+)$, and so by Proposition \ref{stability track comm diagram} we see that $\upbeta,\upbeta' \in \Hom_{\dsG^{\aff}}(C_L,C_+)$. Set $\upgamma\colonequals \upbeta'\circ\upbeta^{-1}\in\End_{\dsG^{\aff}}(C_+)$, then by definition $(\Upphi_{\upgamma})_*(\dsN_{\upbeta})=\dsN_{\upbeta'}$. 

Since the surjective map $\cZ$ is a local homeomorphism, there exists an open neighbourhood $\scrU$ of $\upsigma$ such that the restrictions $\cZ|_{\scrU}$ and $\cZ|_{(\Upphi_{\upgamma})_*(\scrU)}$ are homeomorphisms. Choose a sequence $\{\upsigma_i\}_{i=1}^{\infty}\subset \dsN_{\upbeta}\cap \,\scrU$ that converges to $\upsigma$, and set
  $\upsigma'_i\colonequals (\Upphi_{\upgamma})_*(\upsigma_i)\in \dsN_{\upbeta'}\cap(\Upphi_{\upgamma})_*(\scrU)$. Then again by Proposition \ref{theta trivial}\eqref{theta trivial 2} and Proposition~\ref{stability track comm diagram},  we have $x_i\colonequals \cZ(\upsigma_i)=\cZ(\upsigma_i')$.  The sequence $\{x_i\}_{i=1}^{\infty}$ converges to $x$ since $\cZ|_{\scrU}$ is a homeomorphism. Moreover, since $\cZ|_{(\Upphi_{\upbeta})_*(\scrU)}$ is also a homeomorphism, the sequence $\{\upsigma_i'\}_{i=1}^{\infty}$ converges to $\upsigma'$. Hence 
\[
  \upsigma'=\lim_{i\to\infty}\upsigma'_i
  =\lim_{i\to\infty}(\Upphi_{\upgamma})_*(\upsigma_i)
  =(\Upphi_{\upgamma})_*(\,\lim_{i\to\infty}\upsigma_i\,)=(\Upphi_{\upgamma})_*(\upsigma).
  \]
This implies that $\upsigma=\upsigma'$ in $\cStab{n}\scrD/\Br\scrD$. 
\end{proof}

Given a group $G$ acting on a topological space $T$,  consider the following condition.
\begin{itemize}
\item[$(\ast)$] For each $x\in T$, there is an open neighbourhood $\scrU$ of $x$ such that $\scrU\cap g\scrU=\emptyset$ for all $1\neq g\in G$.
\end{itemize}

\begin{thm}\label{regular covering}
With notation as above, the following statements hold.
\begin{enumerate}
\item\label{regular covering 1} $\cZ\colon \cStab{}\scrC\to \Uptheta_{\bC}\backslash \cH_{\bC}$ is a regular covering map, with Galois group $\Br\scrC$.  
\item\label{regular covering 2} $\cZ\colon \cStab{n}\scrD\to \Level_{\bC}\backslash \scrH^{\aff}_{\bC}$ is a regular covering map, with Galois group $\Br\scrD$.
\end{enumerate}
Moreover, the covering map in \eqref{regular covering 1} is universal.
\end{thm}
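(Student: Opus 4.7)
The plan is to upgrade the quotient homeomorphism of Theorem~\ref{quot homeo} to a regular covering statement, and then combine Deligne's theorem with the faithfulness of \cite{HW} to deduce universality of (1).  By Theorem~\ref{stab thm}, the forgetful map $\cZ$ is a local homeomorphism; it is $\Br\scrC$ (respectively $\Br\scrD$) invariant by Proposition~\ref{theta trivial}\eqref{theta trivial 2} together with Proposition~\ref{stability track comm diagram}, and by Theorem~\ref{quot homeo} induces a homeomorphism on the quotient by $\Br\scrC$ (respectively $\Br\scrD$) onto the base.  Given this, condition $(\ast)$ for each action will follow at once from freeness: for any $\upsigma$, take a neighborhood $\scrU \ni \upsigma$ on which $\cZ$ is injective; if $\scrU \cap g\scrU \ne \emptyset$ for some $g$, the equality $\cZ \circ g = \cZ$ combined with injectivity of $\cZ|_\scrU$ forces $g$ to fix a point of $\scrU$, and then freeness yields $g = \Id$.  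The task therefore reduces to establishing freeness of the two actions.

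Freeness is the main technical step, and is where Theorem~\ref{action free} enters essentially.  Suppose $g \in \Br\scrD$ fixes some point of $\cStab{n}\scrD$.  The fixed-point set of $g$ is open: at any fixed point $\upsigma$, choose a neighborhood $\scrV$ with $\cZ|_\scrV$ injective; on the open set $\scrV \cap g^{-1}\scrV \ni \upsigma$ the identity $\cZ \circ g = \cZ$ and injectivity of $\cZ|_\scrV$ force $g$ to act as the identity.  The fixed-point set is also closed since $g$ is continuous, so by connectedness of $\cStab{n}\scrD$ (which is a connected component by definition), $g$ acts as the identity on the whole component.  In particular $g$ fixes every point of the nonempty open chamber $\dsN$, whose heart is $\scrB$, so $g(\scrB) = \scrB$ and $g$ restricts to an equivalence $\scrB \to \scrB$.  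Theorem~\ref{action free}((2)$\Rightarrow$(4)) then forces $g \cong \Id$.  The argument for $\Br\scrC$ acting on $\cStab{}\scrC$ is identical, using the last clauses of Theorem~\ref{action free}.

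For universality of (1), Theorem~\ref{HomMMP finite summary}\eqref{HomMMP finite summary 1} guarantees that $\scrH$ is a simplicial hyperplane arrangement, so Deligne's $K(\uppi, 1)$ theorem \cite{Deligne} gives that $\Uptheta_\bC \backslash \scrH_\bC$ is aspherical, with fundamental group identified by Theorem~\ref{ver gp} as $\End_\dsG(C_+)$.  The covering space short exact sequence for the regular cover of (1) then reads
\[
1 \to \cZ_*\uppi_1(\cStab{}\scrC) \to \End_\dsG(C_+) \to \Br\scrC \to 1,
\]
with the right-hand map sending $\upalpha \mapsto \Upphi_\upalpha$.  Surjectivity is the definition of $\Br\scrC$, and injectivity is precisely the faithfulness statement of \cite{HW}.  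Hence $\uppi_1(\cStab{}\scrC) = 1$, and the map in (1) is the universal cover.  The main obstacle throughout is the freeness step, which hinges on Theorem~\ref{action free} and, ultimately, on the $R$-linear Fourier--Mukai rigidity available in the isolated cDV setting (Proposition~\ref{G is the identity}).
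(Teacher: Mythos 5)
Your proposal is correct, and it reaches the same key technical step (Theorem~\ref{action free}) as the paper, but it navigates there by a genuinely different route.  The paper proves condition $(\ast)$ by working with the Bridgeland metric ball $\scrU = \{\upsigma' \mid d(\upsigma,\upsigma') < 1/4\}$ and invoking \cite[6.4]{B07} (two stability conditions with equal central charge at distance $<1$ coincide) to show $(\Upphi_\upbeta)_*$ fixes \emph{all} of $\scrU$; it then appeals to the chamber density statement of Theorem~\ref{chambers in cStab C} to locate some $\dsN_\upgamma$ meeting $\scrU$, and applies Theorem~\ref{action free} to the \emph{conjugate} $\upgamma^{-1}\upbeta\upgamma$.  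You instead establish freeness of the action directly by an open--closed--connected argument using only the local-homeomorphism property of $\cZ$ (Theorem~\ref{stab thm}) and $K$-theory triviality (Proposition~\ref{theta trivial}), which propagates a single fixed point to all of $\cStab{n}\scrD$, in particular to $\dsN$, so that Theorem~\ref{action free} can be applied to $\upbeta$ directly without conjugation.  Your route avoids both the Bridgeland metric lemma \cite[6.4]{B07} and the chamber-density input, so it rests on fewer black boxes; the paper's approach by contrast produces an explicit uniform radius for the $(\ast)$-neighbourhood.  The universality arguments are the same in substance --- both reduce to injectivity of $\uppi_1(\Uptheta_\bC\backslash\scrH_\bC)\to\Br\scrC$, which is \cite{HW} --- though your invocation of Deligne's asphericity theorem is not actually needed for this step (the paper uses it only for Corollary~\ref{contractible}); the covering-space exact sequence plus injectivity already forces $\uppi_1(\cStab{}\scrC)=1$.
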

\begin{proof}
(2)  We first show that the action of $\Br\scrD$ on $\cStab{n}\scrD$ satisfies the condition $(\ast)$. For this, take a point $\upsigma\in\cStab{n}\scrD$ and consider the open neighbourhood  $\upsigma\in \scrU$ defined by 
\[
\scrU\colonequals\{\upsigma'\in\Stab\scrD \mid d(\upsigma,\upsigma')<1/4\}\cap\,\cStab{n}\scrD,
\] 
where $d(-,-)$ is the metric introduced in \cite[\S6]{B07}. 

Assume that $\scrU\cap (\Upphi_{\upbeta})_*(\scrU)\neq\emptyset$ for some $\Upphi_{\upbeta}\in\Br\scrD$.  Then, every point $\upsigma'\in  \scrU$ must satisfy  $d\bigl(\upsigma',(\Upphi_{\upbeta})_*(\upsigma')\bigr)<1$.  Furthermore, the central charges of $\upsigma'$ and $(\Upphi_{\upbeta})_*(\upsigma')$ are equal by Proposition \ref{theta trivial}\eqref{theta trivial 2} and Proposition~\ref{stability track comm diagram}. 
Therefore, it follows that  $\upsigma'=(\Upphi_{\upbeta})_*(\upsigma')$ by \cite[6.4]{B07}, for every $\upsigma'\in\scrU$.

By Theorem~\ref{chambers in cStab C}, there is some $\dsN_{\upgamma}$ such that $\dsN_{\upgamma}\cap\, \scrU\neq\emptyset$, so choose $\uptau\in\dsN_{\upgamma}\cap \scrU$.  Then since $\uptau\in\dsN_{\upgamma}$, the heart of $\uptau$ is $(\Upphi_{\upgamma})_*(\scrB_{s(\upgamma)})$.  But on the other hand, since $\uptau\in\scrU$, by the previous paragraph $\uptau=(\Upphi_{\upbeta})_*(\uptau)$.   Thus the composition 
\[
\Upphi_{\upgamma^{-1}\upbeta\upgamma}=
\Upphi_\upgamma^{-1}\circ\Upphi_\upbeta\circ\Upphi_\upgamma\colon\Db(\fmod\Lambda_{s(\upbeta)})\to \Db(\fmod\Lambda_{s(\upbeta)})
\] 
restricts to an equivalence on $\scrB_{s(\upgamma)}$.  This implies   $\Upphi_\upgamma^{-1}\circ\Upphi_\upbeta\circ\Upphi_\upgamma\cong \Id$ by Theorem~\ref{action free} applied to $\upgamma^{-1}\upbeta\upgamma$. Thus $\Upphi_\upbeta\cong\Id$, and so  the action satisfies the condition $(\ast)$.

Since $\cStab{n}\scrD$ is path connected,  as is standard \cite[1.40(a)(b)]{Hat} it follows that
 $\Br\scrD$ is the group of deck transformations for the regular cover 
 \[
 \cStab{n}\scrD\to\cStab{n}\scrD/\Br\scrD.
 \] 
 Hence by Theorem \ref{quot homeo}, the map $\cZ\colon \cStab{n}\scrD\to \Level_{\bC}\backslash \scrH_{\bC}^{\aff}$ is a regular covering map, with Galois group $\Br\scrD$.  This completes the proof of (2).\\ 
(1) This follows using an identical argument to the above.

For the final statement, since $\cStab{}\scrC$ is a manifold, it is locally path connected.  Hence as is standard (see e.g.\ \cite[1.40(c)]{Hat}) the cover is universal if and only if the natural map
\[
\uppi_1(\Uptheta_{\bC}\backslash \scrH_{\bC})\twoheadrightarrow\Br\scrC
\] 
is injective.  But this is \cite{HW}, which works word-for-word in the more general terminal singularities setting here, as explained in \cite[\S10.3]{IW9}.  
\end{proof}

\begin{cor}\label{contractible}
$\cStab{}\scrC$ is contractible.
\end{cor}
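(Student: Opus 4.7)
The plan is to combine the universal covering statement of Theorem~\ref{regular covering}\eqref{regular covering 1} with Deligne's $K(\uppi,1)$ theorem for simplicial hyperplane arrangements, as already hinted at in the introduction.

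First, I would recall from Section~\ref{finite notation subsection} (specifically, the discussion following \eqref{finite H}) that the finite real hyperplane arrangement $\scrH\subset\Uptheta\otimes\bR$ is simplicial, since all chambers $C_L=\upvarphi_L(C_+)$ in the decomposition of $\Cone{\scrJ}$ are open simplicial cones. This is the crucial input for Deligne's theorem, and importantly does not require $\scrH$ to be Coxeter.

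Next, by Deligne's theorem \cite{Deligne}, the complexified complement $\Uptheta_{\bC}\backslash\scrH_{\bC}$ of a simplicial real hyperplane arrangement is a $K(\uppi,1)$ space, i.e.\ its higher homotopy groups vanish. Now, by Theorem~\ref{regular covering}\eqref{regular covering 1}, the map
\[
\cZ\colon\cStab{}\scrC\to \Uptheta_{\bC}\backslash\scrH_{\bC}
\]
is the universal cover, and so in particular $\cStab{}\scrC$ is simply connected.

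Since a covering map is a fibration, it induces isomorphisms on all higher homotopy groups, so $\uppi_i(\cStab{}\scrC)\cong\uppi_i(\Uptheta_{\bC}\backslash\scrH_{\bC})=0$ for all $i\geq 2$, and combined with simple connectedness this gives $\uppi_i(\cStab{}\scrC)=0$ for all $i\geq 1$. Since $\cStab{}\scrC$ is a complex manifold (hence in particular has the homotopy type of a CW complex), Whitehead's theorem then implies that $\cStab{}\scrC$ is contractible. There is no real obstacle here beyond ensuring the simplicial hypothesis holds; everything else is a direct application of Deligne's theorem together with the universal covering result already proved.
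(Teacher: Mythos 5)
Your proposal is correct and takes essentially the same route as the paper, which simply notes that Deligne's $K(\uppi,1)$ theorem for simplicial arrangements gives contractibility of the universal cover and invokes Theorem~\ref{regular covering}\eqref{regular covering 1}. Your unpacking via vanishing of higher homotopy groups and Whitehead's theorem is just re-deriving the standard equivalence between ``$K(\uppi,1)$'' and ``universal cover is contractible'' for spaces of CW type, so there is no substantive difference.
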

\begin{proof}
The universal cover of the complexified complement simplicial hyperplane arrangement is contractible, due to Deligne's work on the $K(\uppi,1)$ conjecture \cite{Deligne}.
\end{proof}

\section{Autoequivalence and SKMS Corollaries}

The above description of stability conditions has consequences for autoequivalences, which in turn allows us to compute the SKMS.

\subsection{Autoequivalences of \texorpdfstring{$\scrC$}{C}}\label{sec: auto of C}
Consider the subgroup $\cAut{}\scrC$ of $\Auteq\scrC$, consisting of those $\Upphi|_\scrC$ where $\Upphi$ is a  Fourier--Mukai equivalence $\Db(\coh X)\to\Db(\coh X)$ that commutes with $\mathbf{R}f_*$ and preserves $\cStab{}\scrC$.  Since $\Upphi$ commutes with $\mathbf{R}f_*$, automatically $\Upphi|_\scrC\colon\scrC\to\scrC$.

\begin{thm}\label{7.1}
Suppose that $X\to\Spec R$ is a 3-fold flop, where $X$ has at worst terminal singularities.  Then $\cAut{}\scrC=\Br\scrC$.
\end{thm}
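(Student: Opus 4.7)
The plan is a two-step inclusion: the easy direction $\Br\scrC \subseteq \cAut{}\scrC$ is verified directly, while the reverse inclusion exploits the chamber decomposition of Theorem~\ref{chambers in cStab C} together with the rigidity provided by Theorem~\ref{action free} and Proposition~\ref{G is the identity} to collapse every $\Upphi \in \cAut{}\scrC$ into an element of $\Br\scrC$ up to composition with a mutation functor.

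For $\Br\scrC \subseteq \cAut{}\scrC$: by the HomMMP compatibility of Theorem~\ref{flopmut}, each mutation functor $\Upphi_\upalpha$ with $\upalpha \in \End_{\dsG}(C_+)$ transports under the tilting equivalence \eqref{tilt equiv} to a composition of Bridgeland--Chen flop functors which begins and ends at $\Db(\coh X)$. These flop functors are Fourier--Mukai by \cite{B02,Chen}, and because each intermediate $X_i^+$ is an $R$-scheme fitting into the commutative flop diagram, they are $R$-linear, hence commute with $\mathbf{R}f_*$ by Proposition~\ref{iso is R linear}. Preservation of $\cStab{}\scrC$ is immediate from the chamber decomposition in Theorem~\ref{chambers in cStab C}\eqref{chambers in cStab C 1}, since $(\Upphi_\upalpha)_*\mathrm{U}_\upbeta = \mathrm{U}_{\upalpha\upbeta}$.

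For the reverse inclusion, I would take $\Upphi \in \cAut{}\scrC$ and pick $\upsigma \in \mathrm{U}$ whose central charge $\cZ(\upsigma)$ is in general position in $\bH_+$, so that no spurious wall is hit. Because $\Upphi_*\upsigma \in \cStab{}\scrC$, Theorem~\ref{chambers in cStab C}\eqref{chambers in cStab C 1} produces a unique $\upalpha \in \Targ_0(C_+)$ with $\Upphi_*\upsigma \in \mathrm{U}_\upalpha$. The crucial step is to show $\upalpha \in \End_{\dsG}(C_+)$, equivalently $M_{s(\upalpha)} \cong N$. The chamber $\mathrm{U}$ is singled out among all the $\mathrm{U}_\upbeta$ by the fact that its heart is the one associated, via \eqref{tilt equiv}, to the fixed $R$-scheme $X \to \Spec R$; the commutativity of $\Upphi$ with $\mathbf{R}f_*$ (equivalently, $R$-linearity via Proposition~\ref{iso is R linear}) forces $\Upphi_*\upsigma$ to lie in a chamber whose heart is again of this geometric type, and then Lemma~\ref{no overlap lemma} identifies $M_{s(\upalpha)} \cong N$.

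With $\upalpha \in \End_{\dsG}(C_+)$ in hand, set $\Psi \colonequals \Upphi_\upalpha^{-1} \circ \Upphi \in \cAut{}\scrC$; by construction $\Psi_*\upsigma \in \mathrm{U}$, so Lemma~\ref{no overlap lemma} gives $\Psi_*(\mathrm{U}) = \mathrm{U}$, hence $\Psi$ preserves $\scrA$ and permutes its simples $\scrS_1,\hdots,\scrS_n$. Extending $\Psi$ through the tilt to an $R$-linear autoequivalence of $\Db(\fmod\Lambda)$, and noting that compatibility with $\mathbf{R}f_*$ pins down $\Psi(\scrS_0) \cong \scrS_0$, the hypotheses of Proposition~\ref{G is the identity} are met and we conclude $\Psi \cong \Id$, hence $\Upphi \cong \Upphi_\upalpha \in \Br\scrC$. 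The main obstacle is the rigidity assertion $M_{s(\upalpha)} \cong N$: the standard Bondal--Orlov theorem does not apply verbatim since $X$ may be singular, so one must argue either via the support on $X \times_R X$ of the Fourier--Mukai kernel of $\Upphi$, or by combining the tilt with the intrinsic geometric arguments used in the proof of Proposition~\ref{G is the identity}.
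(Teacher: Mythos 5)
The overall architecture of your argument---collapse $\Upphi$ to some $\Upphi_\upalpha$ via the chamber decomposition, then show the residual functor is the identity---matches the paper, and your easy inclusion $\Br\scrC\subseteq\cAut{}\scrC$ is fine. But the two hardest steps are either flagged as gaps or asserted without justification. You acknowledge that $M_{s(\upalpha)}\cong N$ is the main obstacle, and suggest arguing via the support of the Fourier--Mukai kernel of $\Upphi$, or by combining the tilt with Proposition~\ref{G is the identity}. Neither suggestion works as stated: the kernel-support route is obstructed by the singularities of $X$ (which is why you raise the problem in the first place), and Proposition~\ref{G is the identity} requires $\scrS_0\mapsto\scrS_0$ as a \emph{hypothesis}, so it cannot be used to establish that fact. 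The paper's actual route is substantially different: transport $g$ through the tilt to $G$ on $\Db(\fmod\Lambda)$; use commutativity with $\mathbf{R}f_*$ to deduce via \cite[2.14]{HomMMP} that $G$ commutes with the exact idempotent functor $e(-)$; observe after composing with $\Upphi_\upalpha^{-1}$ that the resulting functor sends $\scrS_1,\hdots,\scrS_n$ to simples; invoke \cite[7.17]{DW1} to deduce preservation of the null category, then of $\scrC^{>0}$ and $\scrC^{<0}$, then of $\Per X$, hence $\scrS_0\mapsto\scrS_0$; and finally apply Morita theory together with the $e(-)$-commutativity to conclude $N\cong B$. This chain is the real content of the theorem, and none of it appears in your sketch.

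Even granting $\upalpha\in\End_{\dsG}(C_+)$, your claim that ``compatibility with $\mathbf{R}f_*$ pins down $\Psi(\scrS_0)\cong\scrS_0$'' is precisely the step the paper establishes through \cite[7.17]{DW1}, using \emph{both} commutativity with $\mathbf{R}f_*$ and the fact that $\Psi$ permutes the $\scrO_{\Curve_i}(-1)$; it is not an immediate consequence of $R$-linearity alone. Moreover, Proposition~\ref{G is the identity} also requires the rank condition $\rk_R N_i=\rk_R N_{\upiota(i)}$, which you do not verify; it would have to be extracted from commutativity with $e(-)$ applied to the Morita isomorphism $G(\scrP_i)\cong\scrP_{\upiota(i)}$. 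The paper avoids this issue entirely by running the standard Toda argument of \cite[7.18]{DW1} at the endgame rather than invoking Proposition~\ref{G is the identity}, since at that point it already knows $G$ preserves $\Per X$ and fixes $\scrO_X$. In short: your plan has the right shape, but the load-bearing steps---commutativity with $e(-)$, the appeal to \cite[7.17]{DW1}, and the Morita/$e(-)$ identification of $N\cong B$---are missing, and the alternatives you propose either beg the question or do not apply in the singular setting.
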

\begin{proof}
The inclusion $\Br\scrC\subset\cAut{}\scrC$ follows since the Bridgeland--Chen flop functors are Fourier--Mukai equivalences that commute with $\mathbf{R} f_*$, and $\Br\scrC$ acts on $\cStab{}\scrC$ by Theorem~\ref{regular covering}.
For the reverse inclusion, consider $g\in \cAut{}\scrC$.  Since $g\colon \Db(\coh X)\to\Db(\coh X)$ commutes with $\mathbf{R}f_*$, passing through $\Uppsi\colonequals\RHom_X(\scrV,-)$ to obtain $\Uppsi\circ g\circ\Uppsi^{-1}\colon\Db(\fmod\Lambda)\to\Db(\fmod\Lambda)$, necessarily by \cite[2.14]{HomMMP} $\Uppsi\circ g\circ\Uppsi^{-1}$ commutes with the exact functor $e(-)$, where $e$ is the idempotent of $\Lambda$ corresponding to $R$.

Since $g$ preserves $\cStab{}\scrC$, $\mathrm{V}\colonequals\Uppsi\circ g\circ\Uppsi^{-1}(\mathrm{U_1})$ is open and by Theorem~\ref{chambers in cStab C}  $\scrM$ is dense in $\cStab{}\scrC$, necessarily $V\cap\scrM\neq \emptyset$.  Thus we must have $ \Uppsi\circ g\circ\Uppsi^{-1}(\scrA)=\scrA_\upalpha$ for some $\upalpha\in\Hom_{\dsG}(C_B,C_+)$.  Consider the composition
\[
G=\Upphi_\upalpha^{-1}\circ \Uppsi\circ g\circ\Uppsi^{-1}\colon  
\Db(\fmod\Lambda)\to\Db(\fmod\Lambda_B),
\]
which takes $\scrA$ to the standard heart on $\scrC_B$.  Now $\Upphi_{\upalpha}^{-1}=\Upphi_{\upalpha^{-1}}$ is a composition of mutation functors and their inverses, where we do not mutate the vertex $R$. By \cite[4.2]{HomMMP} these are functorially isomorphic to flop functors and their inverses, which commute with $\mathbf{R}f_*$. Again by \cite[2.14]{HomMMP}, this translates into $\Upphi_{\upalpha^{-1}}$ commuting with $e(-)$.  Consequently the composition  $G$ commutes with $e(-)$.

Since $G$ takes $\scrA$ to a standard algebraic heart, necessarily $G$  takes the simples $\scrS_1,\hdots,\scrS_n$ to simples, a priori with a permutation.  Hence (1) $\Uppsi^{-1}\,\circ G\circ\,\Uppsi$ commutes with $\mathbf{R}f_*$, and (2) it sends $\scrO_{\Curve_1}(-1),\hdots,\scrO_{\Curve_n}(-1)$ to themselves, a priori up to permutation.  But exactly as in \cite[7.17]{DW1}, property (1) implies that $\Uppsi^{-1}\,\circ G\circ\,\Uppsi$ preserves $\scrC$, and property (2) implies that $\Uppsi^{-1}\,\circ G\circ\,\Uppsi$ preserves the null category $\{ a\in\coh X\mid \mathbf{R}f_*a=0\}$. This implies that it necessarily preserves  $\scrC^{>0}$ and $\scrC^{<0}$, and hence preserves zero perverse sheaves $\Per X$.  In particular, $G(\scrS_0)\cong\scrS_0$, as the other simples are permuted.

But since  $\Uppsi^{-1}\,\circ G\circ\,\Uppsi$ preserves $\Per X$,  $G$ restricts to a Morita equivalence $\fmod\Lambda\to\fmod\Lambda_B$.  In particular projectives map to projectives, so since $G(\scrS_0)\cong\scrS_0$, under the pairing we have $G(\scrP_0)\cong\scrP_0$. Furthermore, since $\Lambda$ and  $\Lambda_B$ are basic, the Morita equivalence sends $\Lambda\mapsto\Lambda_B$.  Since $G$ commutes with $e(-)$, it follows that $N\cong B$ in $\Db(\fmod R)$, and so $\Upphi_\upalpha\in\Br\scrC$.

But now $\Uppsi^{-1}\,\circ G\circ\,\Uppsi$ sends $\scrO_X\mapsto \scrO_X$, since $G(\scrP_0)\cong\scrP_0$, and it preserves $\Per X$.  By the standard Toda argument (see e.g.\ \cite[7.18]{DW1}), $\Uppsi^{-1}\,\circ G\circ\,\Uppsi\cong \varphi_*\circ(-\otimes\scrL)$ for some isomorphism $\varphi\colon X\to X$ and some line bundle $\scrL$.  The line bundle $\scrL$ is trivial since $\scrO_X\mapsto \scrO_X$.  The isomorphism $\varphi$ commutes with $\mathbf{R}f_*$ since $\Uppsi^{-1}\,\circ G\circ\,\Uppsi$ does, and hence $\varphi$ is the identity, given it must be the identity on the dense open set obtained by removing the flopping curve.  It follows that $G\cong \Id$, and so $g= \Uppsi^{-1}\circ\Upphi_\upalpha\circ\Uppsi\in\Br\scrC$, as required.
\end{proof}

\subsection{Identifying Line Bundle Twists}\label{section line bundles}
To describe $\cAut{}\scrD$ requires us to first realise twists by line bundles as compositions of mutation functors. Set $\mathsf{L}_i\colonequals f_*\scrL_i$ and consider the subgroup $\mathbb{Z}^n\cong\langle \mathsf{L}_1,\hdots,\mathsf{L}_n\rangle\leq \Cl(R)$. For any $\mathsf{L}$ in this subgroup and any $M\in \Mut(N)$, $\mathsf{L}\cdot M\colonequals (\mathsf{L}\otimes M)^{**}$ belongs to the mutation class of $N$, in the following predictable way.

\begin{lem}\label{class action comb}\cite[9.10(2)(3)]{IW9}
The arrangement $\scrH^{\aff}$ has a $\mathbb{Z}^n$-action, where its generators take the chamber corresponding to $N$ to the next chamber along the primitive vectors of the $\mathbb{Z}^n$-lattice given by the chamber $N$.
\end{lem}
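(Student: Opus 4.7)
My plan is three-fold: first construct a $\bZ^n$-action on $\Mut(N)$, second transport it to $\scrH^{\aff}$ via the chamber parametrisation of Theorem~\ref{affine summary}, and finally identify where each generator sends the reference chamber $\Alcove_N$.

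For the first step, I would define $\mathsf{L}\cdot M\colonequals(\mathsf{L}\otimes_R M)^{**}$ for $\mathsf{L}\in\langle\mathsf{L}_1,\hdots,\mathsf{L}_n\rangle$ and $M\in\Mut(N)$, and show that this gives a free $\bZ^n$-action which preserves the mutation class. The key is that each $\mathsf{L}_i$ is the pushforward of a line bundle $\scrL_i$ on $X$, and $-\otimes\scrL_i$ is an $R$-linear autoequivalence of $\coh X$ by Lemma~\ref{tensor is R linear}. Applied to the tilting bundle whose pushforward is $M$, it produces an isomorphism $\End_R(\mathsf{L}\cdot M)\cong\End_R(M)$, so $\mathsf{L}\cdot M$ remains modifying. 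The action commutes with mutation, $\upnu_i(\mathsf{L}\cdot M)\cong\mathsf{L}\cdot\upnu_iM$, since $\scrL\otimes-$ is exact on $\coh X$ and so intertwines the minimal $\add(L_i^c)^*$-approximations underlying the definition of $\upnu_i$, once these are pushed forward and reflexively closed.

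For the second step, the action then preserves the edge-labelled exchange graph $\EG(N)$ of Definition~\ref{exchange def}, so by Theorem~\ref{affine summary} it permutes the chambers of $\scrH^{\aff}$ in an adjacency-respecting manner. To upgrade this combinatorial permutation to a linear action on $\scrK\otimes\bR$ preserving $\Level$, I would observe that $\scrL\otimes-$ on $\Db(\coh X)$ transfers through the tilting equivalences of \S\ref{affine notation subsection} to a derived equivalence $\Db(\fmod\End_R(M))\xrightarrow{\sim}\Db(\fmod\End_R(\mathsf{L}\cdot M))$, and the resulting K-theory isomorphism $\scrK_M\simto \scrK_{\mathsf{L}\cdot M}$ intertwines with the chamber-labelling maps $\upphi_M$. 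For the final step, a direct K-theory calculation using the intersection numbers $\scrL_i\cdot C_k=\updelta_{ik}$ identifies the translation class of $\mathsf{L}_i\cdot N$ relative to $N$ as the primitive vector in the $i$th coordinate direction of the translation lattice on $\Level$; by definition of primitive this is the ``next'' orbit translate of $\Alcove_N$ in the $i$th direction.

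The main obstacle is the compatibility of the action with mutation in the first step. This is not purely formal, and requires showing that tensoring by $\mathsf{L}$ followed by reflexive closure genuinely intertwines the minimal right approximation sequences used to define $\upnu_i$, which in turn uses the exactness of $\scrL\otimes-$ on $\coh X$ together with careful control over pushforward applied to the resulting exact sequences; this is where the technical weight in \cite[\S7]{IW9} sits. Once it is in hand, the remainder of the argument is a bookkeeping exercise through the K-theoretic dictionary of Section~\ref{hyperplane section}.
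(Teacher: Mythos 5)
There is a genuine gap. Your argument establishes (a) that $\mathsf{L}\cdot M$ is modifying, via $\End_R(\mathsf{L}\cdot M)\cong\End_R(M)$, and (b) that the action commutes with mutation, $\upnu_i(\mathsf{L}\cdot M)\cong\mathsf{L}\cdot\upnu_iM$. But together these only show that $\mathsf{L}_i\cdot(-)$ carries mutation-connected components bijectively to mutation-connected components, i.e.\ $\mathsf{L}_i\cdot\Mut(N)=\Mut(\mathsf{L}_i\cdot N)$. They do \emph{not} establish that $\Mut(\mathsf{L}_i\cdot N)=\Mut(N)$ — that is, that $\mathsf{L}_i\cdot N$ and $N$ are connected by some finite chain of mutations — and this is the load-bearing assertion. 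Without it, your second step breaks down: ``the action preserves the exchange graph $\EG(N)$'' already presupposes that it sends vertices of $\EG(N)$ (elements of $\Mut(N)$) to vertices, which is precisely the unproven claim. The commutation argument is necessary but nowhere near sufficient.

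That claim is the technical heart of the lemma. Closing the gap requires an independent input: either an explicit mutation chain from $N$ to $\mathsf{L}_i\cdot N$ (which is what the cited [IW9, \S7] produces via the Dynkin/Tits-cone combinatorics underlying Section~\ref{hyperplane section}), or a direct proof that the K-theory translation computed from $-\otimes\scrL_i$ preserves the full hyperplane collection $\scrW$ — not merely relocates $\Alcove_N$ — so that the image of $\Alcove_N$ is forced to be a chamber $\upphi_L(C_+)$ for some $L\in\Mut(N)$, whence $L\cong\mathsf{L}_i\cdot N$. Neither route is a bookkeeping exercise, and the proposal supplies neither. (Note also that the paper gives no proof of this lemma; it is imported wholesale from [IW9], so there is no in-paper argument to compare against.)
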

To illustrate this, consider the two-curve example in Figure~\ref{fig: Figure Z2}. Then the $\mathbb{Z}^2$-lattice is the black dots, and the primitive vectors of the $\mathbb{Z}^2$-lattice given by the chamber $N$ are the red arrows.
\begin{figure}[ht]
\[
\begin{tikzpicture}[,>=stealth]
\clip (-3.75,-2.5) rectangle (3.75,3.5);
\node at (0,0)
{$\begin{tikzpicture}
[xscale=\EeightFourScalex,yscale=\EeightFourScaley]
\foreach \y in {-1,0,1,2}\foreach \x in {0,1,2,3,4}
{
\node at (4*\x,4*\y) {$\EeightFour$};
}
\end{tikzpicture}$};
\filldraw[gray] (-0.9,-1.9) -- (-0.9,-0.91) -- (-0.41,-0.91) -- cycle;
\filldraw[gray] ($(-0.9,-1.9)+(0,2)$) -- ($(-0.9,-0.91)+(0,2)$) -- ($(-0.41,-0.91)+(0,2)$) -- cycle;
\filldraw[gray] ($(-0.9,-1.9)+(2,4)$) -- ($(-0.9,-0.91)+(2,4)$) -- ($(-0.41,-0.91)+(2,4)$) -- cycle;
\draw[red,thick,->] (-0.9,-1.9) -- (-0.9,0.1);
\draw[red,thick,->] (-0.9,-1.9) -- (1.1,2.1);
\node[rotate=60] at ($(-0.9,-1.9)+(0.17,0.7)$) {$\scriptstyle N$};
\node[rotate=60] at ($(-0.9,0.1)+(0.2,0.7)$) {$\scriptstyle \mathsf{L}_{\kern -0.5pt 1}\kern -1pt N$};
\node[rotate=60] at ($(1.1,2.1)+(0.2,0.7)$) {$\scriptstyle \mathsf{L}_{\kern -0.5pt 2}\kern -1pt N$};
\end{tikzpicture}
\]
\caption{Example of $\mathbb{Z}^2$ action on $\scrH^{\aff}$}\label{fig: Figure Z2}
\end{figure}
For any such $\mathsf{L}$ and $M\in \Mut(N)$, consider the isomorphism $\upvarepsilon\colon\Lambda_M\to\Lambda_{\mathsf{L}\cdot M}$ defined to be 
\[
\Lambda_M=\End_R(M)\xrightarrow{(-\otimes \mathsf{L})^{**}}\End_R((M\otimes \mathsf{L})^{**}).
\]
Being an isomorphism of algebras, $\upvarepsilon$ induces an isomorphism of categories $\Db(\fmod\Lambda_M)\to\Db(\fmod\Lambda_{\mathsf{L}\cdot M})$, which we will also denote by $\upvarepsilon$. 

For $\mathsf{L}\in \langle \mathsf{L}_1,\hdots,\mathsf{L}_n\rangle$ and a path $\upalpha\in\Hom_{\dsG^{\aff}}(C_{M_1},C_{M_2})$, the translation by $\mathsf{L}$ defines a path from $C_{\mathsf{L}\cdot M_1}$ to $C_{\mathsf{L}\cdot M_2}$, which we denote by $\mathsf{L}\cdot\upalpha$. 

 \begin{lem}\label{ep L commute}
Let $\mathsf{g},\mathsf{h}\in \langle \mathsf{L}_1,\hdots,\mathsf{L}_n\rangle$, $M\in\Mut(N)$, and $\upalpha\colon C_{M}\to C_{\mathsf{g}\cdot M}$ be a positive minimal path. Then the following diagram commutes.
\[
\begin{tikzpicture}
\node (A1) at (-0.1,0) {$\Db(\fmod\Lambda_M)$};
\node (A2) at (3.6,0) {$\Db(\fmod\Lambda_{\mathsf{g}\cdot M})$};
\node (B1) at (-0.1,-1.4) {$\Db(\fmod \Lambda_{\mathsf{h}\cdot M})$};
\node (B2) at (3.6,-1.4) {$\Db(\fmod \Lambda_{\mathsf{h}\cdot\mathsf{g}\cdot M})$};
\draw[->] (A1) -- node[above] {\scriptsize $\Upphi_{\upalpha}$} (A2);
\draw[->] (B1) -- node[above] {\scriptsize $\Upphi_{\mathsf{h}\cdot\upalpha}$} (B2);
\draw[->] (A1) -- node[left] {\scriptsize $\upvarepsilon$} (B1);
\draw[->] (A2) -- node[right] {\scriptsize $\upvarepsilon$} (B2);
\end{tikzpicture}
\]
\end{lem}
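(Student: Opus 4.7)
The plan is to exploit the explicit tilting description of $\Upphi_\upalpha$ for positive minimal paths from Remark~\ref{pos min are functorial}, which gives $\Upphi_\upalpha\cong\RHom_{\Lambda_M}(T,-)$ with $T=\Hom_R(M,\mathsf{g}\cdot M)$ viewed as a $(\Lambda_{\mathsf{g}\cdot M},\Lambda_M)$-bimodule, and similarly $\Upphi_{\mathsf{h}\cdot\upalpha}\cong\RHom_{\Lambda_{\mathsf{h}\cdot M}}(T',-)$ with $T'=\Hom_R(\mathsf{h}\cdot M,\mathsf{h}\cdot\mathsf{g}\cdot M)$ as a $(\Lambda_{\mathsf{h}\cdot\mathsf{g}\cdot M},\Lambda_{\mathsf{h}\cdot M})$-bimodule. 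With this in hand, the commutativity of the square reduces to a bimodule identification relating $T'$ to $T$ through $\upvarepsilon$.

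Since $\mathsf{h}$ belongs to $\langle\mathsf{L}_1,\ldots,\mathsf{L}_n\rangle\leq\Cl(R)$, the operation $(-\otimes_R\mathsf{h})^{**}$ is an auto-equivalence of $\refl R$. Applying this to morphisms, and specialising to the pair $(M,\mathsf{g}\cdot M)$, produces a natural $R$-linear bijection
\[
\eta\colon T\simto T',\qquad f\longmapsto (f\otimes\mathrm{id}_\mathsf{h})^{**}.
\]
The key step is to verify that $\eta$ intertwines the bimodule actions via $\upvarepsilon$, namely $\eta(a\cdot t\cdot b)=\upvarepsilon(a)\cdot\eta(t)\cdot\upvarepsilon(b)$ for $a\in\Lambda_{\mathsf{g}\cdot M}$, $t\in T$ and $b\in\Lambda_M$. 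This is immediate from the definition $\upvarepsilon(f)=(f\otimes\mathrm{id}_\mathsf{h})^{**}$ together with the functoriality of tensor-then-reflexify applied to a composable triple of morphisms. Equivalently, $\eta$ exhibits $T'$ as the bimodule obtained from $T$ by restriction of scalars along $\upvarepsilon^{-1}$ on both sides.

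Given this bimodule identification, for any $X\in\Db(\fmod\Lambda_M)$ we obtain a chain of natural isomorphisms
\[
\Upphi_{\mathsf{h}\cdot\upalpha}(\upvarepsilon(X))=\RHom_{\Lambda_{\mathsf{h}\cdot M}}(T',\upvarepsilon(X))\cong\upvarepsilon\bigl(\RHom_{\Lambda_M}(T,X)\bigr)=\upvarepsilon(\Upphi_\upalpha(X)),
\]
where the middle step uses the bimodule identification together with the fact that $\upvarepsilon$ on modules is restriction of scalars. Naturality in $X$ then yields the required functorial isomorphism $\Upphi_{\mathsf{h}\cdot\upalpha}\circ\upvarepsilon\cong\upvarepsilon\circ\Upphi_\upalpha$. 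The main obstacle is precisely the bimodule-compatibility of $\eta$ with $\upvarepsilon$ on both sides, which is a concrete but essentially formal check; once it is in place, no derived subtleties arise, since $T$ is tilting and $\upvarepsilon$ is an equivalence of abelian categories.
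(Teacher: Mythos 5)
Your proof is correct and takes essentially the same approach as the paper's: both invoke Remark~\ref{pos min are functorial} to realise the positive minimal paths as $\RHom$ functors against the tilting bimodules $\Hom_R(M,\mathsf{g}\cdot M)$ and $\Hom_R(\mathsf{h}\cdot M,\mathsf{h}\cdot\mathsf{g}\cdot M)$, and then identify these bimodules (with the appropriate $\upvarepsilon$-twisted actions) via the map $f\mapsto (f\otimes\mathrm{id}_{\mathsf{h}})^{**}$, i.e.\ ``applying $\mathsf{h}\cdot$''.
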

\begin{proof}
By Remark~\ref{pos min are functorial} the top functor composed with the right hand functor is given by the tilting bimodule $\Hom_R(M,\mathsf{g}\cdot M)$, with the natural action of $\Lambda_M$, and the action of  $\Lambda_{\mathsf{h}\cdot\mathsf{g}\cdot M}$ twisted by the isomorphism $\upvarepsilon$.   Applying $\mathsf{h}\cdot$, this is clearly isomorphic, as bimodules, to $\Hom_R(\mathsf{h}\cdot M,\mathsf{h}\cdot \mathsf{g}\cdot M)$, with the action of $\Lambda_M$ twisted by $\upvarepsilon$, and the natural action of $\Lambda_{\mathsf{h}\cdot\mathsf{g}\cdot M}$.  But this is the bimodule that realises the left hand functor composed with the bottom functor, and so the diagram commutes.
\end{proof}

\begin{thm}\label{class action main}
Suppose that $X\to\Spec R$ a 3-fold flop, where $X$ has at worst terminal singularities. Writing $\upkappa$ for a minimal chain of mutation functors from one algebra to the other, for all $i=1,\hdots, n$ the following diagram commutes
\[
\begin{tikzpicture}[xscale=1.3]
\node (A0) at (0,1.5) {$\Db(\coh X)$};
\node (A5) at (5,1.5) {$\Db(\coh X)$};
\node (B0) at (0,0) {$\Db(\fmod\Lambda)$};
\node (B4) at (2.75,0) {$\Db(\fmod\Lambda_{\mathsf{L}_i\cdot N})$};
\node (B5) at (5,0) {$\Db(\fmod\Lambda)$};
\draw[->] (A0) -- node[above] {$\scriptstyle -\otimes\scrL_i^*$}(A5);
\draw[->] (B0) -- node[above] {$\scriptstyle \upkappa$}(B4);
\draw[->] (B4) -- node[above] {$\scriptstyle \upvarepsilon^{-1}$}(B5);
\draw[<-] (B0) -- node[left] {$\scriptstyle \Uppsi$}(A0);
\draw[<-] (B5) -- node[left] {$\scriptstyle \Uppsi$}(A5);
\end{tikzpicture}
\]
\end{thm}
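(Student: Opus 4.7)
The plan is to reduce the commutativity of the square to the identity $G\cong\Id$, where
\[
G\colonequals \upvarepsilon^{-1}\circ \upkappa\circ \Uppsi\circ(-\otimes\scrL_i)\circ \Uppsi^{-1}\colon \Db(\fmod\Lambda)\to\Db(\fmod\Lambda),
\]
which is equivalent to the stated commutativity since $\scrL_i^*\otimes\scrL_i\cong\scrO_X$.  I then verify the hypotheses of Proposition~\ref{G is the identity}.

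The $R$-linearity of $G$ is immediate: $\Uppsi^{\pm 1}$ are $R$-linear because they are induced by the $R$-linear tilting bundle $\scrV_X$; $-\otimes\scrL_i$ is $R$-linear by Lemma~\ref{tensor is R linear}; each mutation functor constituting $\upkappa$ is $R$-linear because its representing tilting bimodule $\Hom_R(L,\upnu_jL)$ is $R$-bilinear; and $\upvarepsilon^{-1}$ is $R$-linear since the algebra isomorphism $\upvarepsilon$ arises from the $R$-linear operation $(-\otimes\mathsf{L}_i)^{**}$ on $\End_R(N)$.

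For the remaining hypotheses, my preferred route is a direct bimodule identification.  By Remark~\ref{pos min are functorial}, the minimal-length mutation chain $\upkappa$ is represented by the tilting bimodule $\Hom_R(N,\mathsf{L}_i\cdot N)$, and the tilting duality $\RHom_\Lambda(\Hom_R(N,M),\Lambda)\cong \Hom_R(M,N)$ shows that the same equivalence admits a tensor representation with bimodule $\Hom_R(\mathsf{L}_i\cdot N, N)$; post-composing with $\upvarepsilon^{-1}$ transfers the outgoing right $\Lambda_{\mathsf{L}_i\cdot N}$-action to a right $\Lambda$-action via $\upvarepsilon$.  On the geometric side, $\Uppsi\circ(-\otimes\scrL_i)\circ \Uppsi^{-1}$ is represented by $\Hom_X(\scrV_X,\scrV_X\otimes\scrL_i)$; the projection formula and the tilting property of $\scrV_X$ identify this with $\Hom_R(N,\mathsf{L}_i\cdot N)$ as $R$-modules, and the left $\Lambda$-action factors through the natural algebra map $\End_X(\scrV_X)\to \End_X(\scrV_X\otimes\scrL_i)\cong \Lambda_{\mathsf{L}_i\cdot N}$, which coincides with $\upvarepsilon$.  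The tilting composition
\[
\Hom_R(N,\mathsf{L}_i\cdot N)\otimes_\Lambda \Hom_R(\mathsf{L}_i\cdot N, N)\;\cong\;\End_R(N)\;=\;\Lambda,
\]
with the two $\upvarepsilon$-twists cancelling under the composition map, then shows that the bimodule for $G$ is the standard $(\Lambda,\Lambda)$-bimodule $\Lambda$, giving $G\cong\Id$.

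The main obstacle I expect is precisely the matching of these two $\upvarepsilon$-twists: verifying that the left-action twist on the geometric bimodule and the right-action twist on the mutation bimodule intertwine correctly under the tilting-duality composition, so as to yield $\Lambda$ with its standard bimodule structure rather than a twisted one.  Should this direct approach prove intricate, the fallback is to verify the simpler simple-tracking hypotheses of Proposition~\ref{G is the identity} instead: $G(\scrS_0)\cong\scrS_0$ can be established by following $\Uppsi^{-1}(\scrP_0)=\scrO_X$ through the composition and using the pairing between projectives and simples, while the rank-preserving permutation of $\scrS_1,\hdots,\scrS_n$ follows from the $\mathbb{Z}^n$-translation action of Lemma~\ref{class action comb}, which preserves the global Coxeter-style labelling of the walls established in Definition~\ref{exchange def}.
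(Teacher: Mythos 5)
Your ``preferred route'' is conceptually the same as the paper's: identify both $\upkappa$ (via Remark~\ref{pos min are functorial}) and the conjugated line--bundle twist as tensor functors with explicit tilting bimodules, and then verify that the composite bimodule is the regular $(\Lambda,\Lambda)$-bimodule.  There are, however, two places where the argument as written does not close up, and where the paper makes a deliberate technical choice that you should be aware of.

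First, the displayed composition is not correct as stated.  The module $\Hom_R(N,\mathsf{L}_i\cdot N)$ is a $(\Lambda_{\mathsf{L}_i\cdot N},\Lambda)$-bimodule and $\Hom_R(\mathsf{L}_i\cdot N,N)$ is a $(\Lambda,\Lambda_{\mathsf{L}_i\cdot N})$-bimodule, so $\Hom_R(N,\mathsf{L}_i\cdot N)\otimes_\Lambda\Hom_R(\mathsf{L}_i\cdot N,N)$ is a $(\Lambda_{\mathsf{L}_i\cdot N},\Lambda_{\mathsf{L}_i\cdot N})$-bimodule and the composition pairing lands in $\End_R(\mathsf{L}_i\cdot N)=\Lambda_{\mathsf{L}_i\cdot N}$, not in $\Lambda$.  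Even after fixing the order or the ring one tensors over, one still needs to know the (underived) tensor computes the derived tensor, i.e.\ a concentration-in-degree-zero statement.  The paper avoids the $T\otimes T^\vee$ computation entirely: after passing to quasi-inverses it composes the candidate bimodule with $\scrV$ and checks that ${}_{\upvarepsilon}\Hom_X(\scrV,\scrV\otimes\scrL_i)_{\Id}\otimes^{\bf L}_\Lambda\scrV\cong\scrV\otimes\scrL_i$ as $(\Lambda,\scrO_X)$-bimodules, so only one concentration argument and one counit identification are needed.

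Second, the concentration argument is the actual mathematical content, and it is hidden in your phrase ``the tilting property of $\scrV_X$''.  What is needed is $\Ext^1_X(\scrV,\scrV\otimes\scrL_i)=0$, and the paper proves this by using that $\scrL_i$ is globally generated (giving a surjection $\scrV^N\twoheadrightarrow\scrV\otimes\scrL_i$) together with $\Ext^{\geq 1}_X(\scrV,\scrV)=0$ from tilting and the fact that flopping contractions have one-dimensional fibres.  This needs to be made explicit whichever version of the bimodule composition you do; it is the step that lets you replace $\RHom$ by $\Hom$ and derived tensor by underived.

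Your fallback via Proposition~\ref{G is the identity} is a genuinely different (and workable) route, but the justification offered is not quite the right one: invoking the $\mathbb{Z}^n$-translation of Lemma~\ref{class action comb} does not by itself show that $G$ sends simples to simples, because neither $\upvarepsilon^{-1}\circ\upkappa$ nor the geometric twist preserves the standard heart individually (already for $\ell=1$ a single mutation shifts one simple by $[-1]$).  What actually works is to track projectives: the projection formula gives $\Uppsi(\scrV_j^*\otimes\scrL_i)\cong\Hom_R(N,\mathsf{L}_i\cdot N_j)$, a summand of the tilting bimodule, $\upkappa$ sends this to the $j$-th projective of $\Lambda_{\mathsf{L}_i\cdot N}$, and $\upvarepsilon^{-1}$ sends that to $\scrP_j$.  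Hence $G(\Lambda)\cong\Lambda$, so $G$ is a Morita equivalence by Lemma~\ref{LemmaA}(2), after which Proposition~\ref{G is the identity} applies.  Note this requires exactly the same $\Ext^1$ vanishing as above to know that $\Uppsi(\scrV_j^*\otimes\scrL_i)$ is concentrated in degree zero, so the fallback is not cheaper.
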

\begin{proof}
By Remark~\ref{pos min are functorial} the minimal chain of mutations $\upkappa$ is functorially isomorphic to the $\RHom$ functor given by the tilting bimodule $\Hom_R(N,\mathsf{L}_i\cdot N)$.  So, setting $\scrV=\scrV_X$ and taking inverses, it suffices to prove that the diagram
\begin{equation}
\begin{array}{c}
\begin{tikzpicture}[xscale=1.15]
\node (A0) at (-0.5,1.5) {$\Db(\coh X)$};
\node (A5) at (7,1.5) {$\Db(\coh X)$};
\node (B0) at (-0.5,0) {$\Db(\Lambda)$};
\node (B4) at (3.9,0) {$\Db(\End_X(\scrV\otimes\scrL_i))$};
\node (B5) at (7,0) {$\Db(\End_X(\scrV))$};
\draw[<-] (A0) -- node[above] {$\scriptstyle -\otimes\scrL_i$}(A5);
\draw[<-] (B0) -- node[above] {$\scriptstyle -\otimes^{\bf L}\Hom_X(\scrV,\scrV\otimes\scrL_i)$}(B4);
\draw[<-] (B4) -- node[above] {$\scriptstyle \upvarepsilon$}(B5);
\draw[->] (B0) -- node[left] {$\scriptstyle -\otimes^{\bf L}\scrV$}(A0);
\draw[->] (B5) -- node[right] {$\scriptstyle -\otimes^{\bf L}\scrV$}(A5);
\end{tikzpicture}
\end{array}
\label{diagram to comm}
\end{equation}
commutes, where derived tensors are over the relevant endomorphism ring.

The bottom composition is isomorphic to the functor
\[
-\otimes^{\bf L}_{\Lambda} {}_{\upvarepsilon}\Hom_X(\scrV,\scrV\otimes\scrL_i)_{\Id}
\]
by which we mean the $\Lambda\cong\End_Y(\scrV)$-action on $\Hom_X(\scrV,\scrV\otimes\scrL_i)$ on the left is the composition action via $\upvarepsilon$, and the $\Lambda$-action on the right is the standard composition action.    Hence the composition from bottom right to top left, along first the bottom and then the left, is functorially isomorphic to 
\begin{equation}
-\otimes^{\bf L}_{\Lambda} ({}_{\upvarepsilon}\Hom_X(\scrV,\scrV\otimes\scrL_i)_{\Id}\otimes^{\bf L}_{\Lambda}\scrV)\label{bimod comp 1}
\end{equation}
We first claim that this bimodule has cohomology only in degree zero, and for this we can ignore the left action of $\Lambda$ completely, and consider  $\Hom_X(\scrV,\scrV\otimes\scrL_i)_{\Id}\otimes^{\bf L}_{\Lambda}\scrV$.  Since $\scrL_i$ is generated by global sections, there exists a surjection $\scrO^{N}\twoheadrightarrow \scrL_i$, for some $N$, and so tensoring by $\scrV$ gives a surjection $\scrV^{N}\twoheadrightarrow \scrV\otimes\scrL_i$. Applying $\Hom_X(\scrV,-)$ gives an exact sequence
\[
\Ext^1_X(\scrV,\scrV)^N\to\Ext^1_X(\scrV,\scrV\otimes\scrL_i)\to 0
\]
where $\Ext^2$ is zero since flopping contractions have fibre dimension one.  Since $\scrV$ is tilting, $\Ext^1_X(\scrV,\scrV)=0$, thus $\Ext^1_X(\scrV,\scrV\otimes\scrL_i)=0$.  It follows that 
\begin{equation}
\Hom_X(\scrV,\scrV\otimes\scrL_i)_{\Id}\otimes^{\bf L}_{\Lambda}\scrV
\cong
\RHom_X(\scrV,\scrV\otimes\scrL_i)_{\Id}\otimes^{\bf L}_{\Lambda}\scrV
\cong \scrV\otimes\scrL_i,\label{RHom isHom}
\end{equation}
which has degree zero, as claimed.

By the claim, after truncating in the category of bimodules,  \eqref{bimod comp 1} is functorially isomorphic to $-\otimes^{\bf L}_{\Lambda} ({}_{\upvarepsilon}\Hom_X(\scrV,\scrV\otimes\scrL_i)_{\Id}\otimes_{\Lambda}\scrV)$.  Thus \eqref{diagram to comm} commutes provided that
\begin{equation}
{}_{\upvarepsilon}\Hom_X(\scrV,\scrV\otimes\scrL_i)_{\Id}\otimes_{\Lambda}\scrV\cong \scrV\otimes\scrL_i\label{want bimod}
\end{equation}
as $\Lambda$-$\scrO_X$-bimodules.  But the derived counit map is an isomorphism, so by \eqref{RHom isHom} it follows that the non-derived counit map
\[
\Hom_X(\scrV,\scrV\otimes\scrL_i)_{\Id}\otimes_{\Lambda}\scrV\to \scrV\otimes\scrL_i
\]
is also an isomorphism.  But this map is the obvious one.  By inspection it respects the bimodule structure in \eqref{want bimod}, and thus it gives the required bimodule isomorphism.
\end{proof}

\begin{rem}\label{comp of line conventions}
The position of $\upvarepsilon$ at the end of the chain of mutations $\upkappa$ in Theorem~\ref{class action main} is irrelevant.  Indeed, given any minimal chain from $N$ and $\mathsf{L}_i\cdot N$, consider any chamber $A$ through which this minimal path passes.  Abusing notation and writing $\upvarepsilon$ for any isomorphism induced by the class group action, and $\upkappa$ for a minimal composition of mutation functors, the following top diagram also commutes
\[ 
\begin{tikzpicture}[xscale=1.3]
\node (A0) at (0,1.5) {$\Db(\coh X)$};
\node (A5) at (7.5,1.5) {$\Db(\coh X)$};
\node (B0) at (0,0) {$\Db(\fmod\Lambda)$};
\node (B3) at (2.5,0) {$\Db(\fmod\Lambda_{A})$};
\node (B4) at (5,0) {$\Db(\fmod\Lambda_{\mathsf{L}_i^{-1}\cdot A})$};
\node (B5) at (7.5,0) {$\Db(\fmod\Lambda)$};
\node (C5) at (7.5,-1.5) {$\Db(\fmod\Lambda_{\mathsf{L}_i^{-1}\cdot N})$};
\draw[->] (A0) -- node[above] {$\scriptstyle -\otimes\scrL_i^*$}(A5);
\draw[->] (B0) -- node[above] {$\scriptstyle \upkappa$}(B3);
\draw[->] (B3) -- node[above] {$\scriptstyle \upvarepsilon^{-1}$}(B4);
\draw[->] (C5) -- node[right] {$\scriptstyle \upvarepsilon^{-1}$}(B5);
\draw[->] (B4) -- node[above] {$\scriptstyle \upkappa$}(B5);
\draw[->] (B3) -- node[below] {$\scriptstyle \upkappa$}(C5);
\draw[<-] (B0) -- node[left] {$\scriptstyle \Uppsi$}(A0);
\draw[<-] (B5) -- node[right] {$\scriptstyle \Uppsi$}(A5);
\end{tikzpicture}
\]
since the outer diagram commutes by Theorem~\ref{class action main}, and the commutativity of the bottom diagram is Lemma~\ref{ep L commute}.  Hence, when composing line bundle twists, we can move all the $\upvarepsilon^{-1}$s to the right, or to the left, whichever is the most convenient.
\end{rem}

\subsection{\texorpdfstring{$\Pic X$}{Pic X} action}\label{Pic action}

Recall that perfect complexes on $X$ are equivalent to $\Kb(\proj\Lambda)$, and $\Pic X\cong \bZ^{n}$ with basis $\scrL_1,\hdots,\scrL_n$, where $\scrL_i$ is the line bundle satisfying $\deg(\scrL_i|_{\Curve_j})=\delta_{ij}$.  For $\scrL\in \Pic X$, by abuse of notation, set
\[
-\otimes\scrL\colonequals\Uppsi\circ(-\otimes\scrL)\circ\Uppsi^{-1}\colon \Db(\fmod\Lambda)\simto \Db(\fmod\Lambda).
\]

\begin{lem}\label{Pic action K}
Write $z=(z_j)_{j=0}^n\in\Level_{\mathbb{C}}$ with $z_j=x_j+\ii y_j$.  Then for $j>0$, and any $1\leq i\leq n$, the autoequivalence $-\otimes\scrL_i$ acts on $\Level_{\mathbb{C}}$ by
\[
x_j+\ii y_j
\mapsto
\begin{cases}
x_j+\ii(y_j+1) &j=i\\
x_j+\ii y_j & j\neq i.
\end{cases}
\]
\end{lem}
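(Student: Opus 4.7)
The plan is to identify the action of $-\otimes\scrL_i$ on K-theory and transfer it to $\Level_\bC$ via Proposition~\ref{stability track comm diagram}.

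By Theorem~\ref{class action main}, the autoequivalence $-\otimes\scrL_i$ corresponds, via $\Uppsi$, to the composition $\upkappa^{-1}\circ\upvarepsilon\colon\Db(\fmod\Lambda)\to\Db(\fmod\Lambda)$, where $\upvarepsilon\colon\Lambda\simto\Lambda_{\mathsf{L}_i\cdot N}$ is the algebra isomorphism induced by $(-\otimes\mathsf{L}_i)^{**}$, and $\upkappa$ is any minimal chain of mutation functors from $\fmod\Lambda$ to $\fmod\Lambda_{\mathsf{L}_i\cdot N}$. With the natural labelling in which the $j$-th summand of $\mathsf{L}_i\cdot N$ is $(\mathsf{L}_i\otimes N_j)^{**}$, the map $\upvarepsilon_*$ is the identity matrix on projective bases, while $(\upkappa^{-1})_*$ equals the tracking map $\upphi_{\mathsf{L}_i\cdot N}\colon\scrK_{\mathsf{L}_i\cdot N}\to\scrK$ of Section~\ref{affine notation subsection}, by Proposition~\ref{theta trivial}. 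Hence the K-theoretic action of $-\otimes\scrL_i$ on $\scrK$ is $\upphi_{\mathsf{L}_i\cdot N}\circ\upvarepsilon_*$.

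To identify the induced action on $\Level_\bC$, I would use the elementary K-theoretic fact that line bundle twists fix the skyscraper class $[\scrO_p]$ and satisfy $f[E]-[E]\in\bC\cdot[\scrO_p]$ for every $[E]\in K_0(\scrD)$, since $\scrL_i\vert_{\Curve_j}$ is a line bundle on $\Curve_j\cong\bP^1$ of degree $\delta_{ij}$. Dualising through the pairing $\upchi\colon\scrK\times K_0(\scrD)\to\bZ$, this means the action on $\scrK_\bC$ is the identity on the subspace $V^\ast=\{\upgamma:\upgamma[\scrO_p]=0\}$ parallel to $\Level_\bC$, so the induced action on $\Level_\bC$ is a pure translation. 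By Lemma~\ref{class action comb}, this translation sends $\Alcove_N$ to $\Alcove_{\mathsf{L}_i\cdot N}$, one primitive lattice step in the $i$-th direction, which in the normalised coordinates on $\Level_\bC$ fixed in Definition~\ref{chamber notation} is precisely the shift $z_i\mapsto z_i+\ii$ and $z_j\mapsto z_j$ for $j>0$, $j\neq i$.

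The main obstacle is the final identification of the translation vector as exactly $\ii e_i$: this requires the coefficient of $[\scrO_p]$ in $[\scrS_i\otimes\scrL_i]-[\scrS_i]$ to be $+1$ after writing the simples as shifted sheaves supported on the $\Curve_j$ with the correct perverse conventions, and it requires checking that the two labellings of the summands of $\mathsf{L}_i\cdot N$ (from the $\bZ^n$-action $(-\otimes\mathsf{L}_i)^{**}$ and from the exchange-graph wall crossings of Definition~\ref{exchange def}) coincide. The intertwining of line bundles with mutation functors in Lemma~\ref{ep L commute} provides the formal mechanism for this bookkeeping.
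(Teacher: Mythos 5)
You take a genuinely different and more conceptual route than the paper. The paper proves Lemma~\ref{Pic action K} by brute force: writing down the full $(n{+}1)\times(n{+}1)$ matrix of $-\otimes\scrL_i$ on $K_0(\scrD)$ via the explicit identities (6.C)--(6.E) for $[\scrO_x]$, $[\scrO_{\Curve_i}]$ and $[\upomega_\Curve[1]\otimes\scrL_1]$, transposing, and then substituting the level constraint. Your observation that line bundle twists perturb every class by a multiple of the (twist-invariant) class $[\scrO_p]$, so the dual action on $\scrK_\bC$ is the identity on the hyperplane $\{\upgamma\mid\upgamma[\scrO_p]=0\}$ parallel to $\Level_\bC$ and hence a pure translation, is clean and correct; it avoids the rank-1 matrix entirely and immediately explains why the result is a lattice translation.

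However, the proof as written has a real gap at exactly the point you flag. Reducing to a pure translation is only half the lemma; the whole content is the precise translation vector. Lemma~\ref{class action comb} is purely combinatorial and tells you only that the $\bZ^n$-action moves the chamber of $N$ to the chamber of $\mathsf{L}_i\cdot N$; it does not by itself express the corresponding translation in the $(z_1,\hdots,z_n)$-coordinates of $\Level_\bC$, since a priori the $\bZ^n$-lattice need not line up with the coordinate axes, and the bridge from the module-theoretic $\mathsf{L}_i$-action to the K-theoretic $\Phi_*$-action passes through Theorem~\ref{class action main} together with the inverse built into $\Phi_*(Z,\cA)=(Z\circ\upphi^{-1},\Phi\cA)$. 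Your argument reduces the computation to the single coefficient $c_j$ in $[\scrS_j\otimes\scrL_i]-[\scrS_j]=c_j[\scrO_p]$ (and you note $c_i=1$); but you still have to push $c_j$ through the identification of $\Hom_{\bZ}(K_0(\scrD),\bC)$ with $\scrK_\bC$, through $\upphi^{-1}$, and through the normalisation $Z[\scrO_p]=\ii$, with care about orientation and sign; none of this is carried out. The paper's explicit matrix + level substitution is precisely what nails this final identification, so while your structural framing is a nice simplification of the rank-1 structure, you would still need to do essentially that last paragraph of computation to close the argument, and without it the sketch is incomplete.
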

\begin{proof}
To ease notation, consider only the action of $\scrL_1$, since all other cases are identical.  We first consider the action on the dual vector space $K_0(\scrD)$ of $\scrK$, with basis $[\scrS_0],\hdots,[\scrS_n]$.  Recall that $\Uppsi(\upomega_{\Curve}[1])\cong \scrS_0$ and  $\Uppsi(\scrO_{\Curve_i}(-1))\cong \scrS_i$ for $1\leq i\leq n$.
Since  $\deg(\scrL_j|_{\Curve_i})=\delta_{ij}$, 
\[
\scrO_{\Curve_i}(-1)\otimes \scrL_1\cong 
\begin{cases}
\scrO_{\Curve_1} & i=1\\
\scrO_{\Curve_i}(-1) & i\neq 1.
\end{cases}
\]
  Let $\cO_x$ be the skyscraper sheaf associated to a closed point $x\in \Curve$. Then $\RHom(\scrV_i^*,\scrO_x)\cong \Hom(\scrV^*_i,\scrO_x)=\mathbb{C}^{\rk\kern -1pt N_i}$, so setting $\ell_i\colonequals \rk_R N_i$ gives
\begin{equation}\label{k sky}
[\scrO_x]=-[\upomega_{\Curve}]+\sum_{i=1}^n\ell_i[\scrO_{\Curve_i}(-1)].
\end{equation}
 If $x\in \Curve_i$, then the exact sequence $0\to\scrO_{\Curve_i}(-1)\to\scrO_{\Curve_i}\to\scrO_x\to0$ implies that
$[\scrO_{\Curve_i}]=[\scrO_{\Curve_i}(-1)]+[\scrO_x]$. Combining this with \eqref{k sky}, it follow that 
\begin{equation}\label{k str}
[\scrO_{\Curve_i}]=-[\upomega_{\Curve}]+(\ell_i+1)[\scrO_{\Curve_i}(-1)]+\sum_{k\neq i}\ell_k[\scrO_{\Curve_k}(-1)],
\end{equation}
and so
\begin{equation}
 [\scrO_{\Curve_i}(-1)\otimes \scrL_1]=
\begin{cases}
[\upomega_{\Curve}[1]]+(\ell_1+1)[\scrO_{\Curve_1}(-1)]+\sum_{k\neq 1}\ell_k[\scrO_{\Curve_k}(-1)] & i=1\\
[\scrO_{\Curve_i}(-1)] & i\neq j.
\end{cases}\label{Action 1}
\end{equation}
Next we determine the action of $\scrL_1$ on $[\upomega_{\Curve}[1]]=-[\upomega_{\Curve}]$. Since $\scrO_x\otimes \scrL_1\cong \scrO_x$, we have 
\[
[\scrO_x]=-[\upomega_{\Curve}\otimes \scrL_1]+\sum_{i\neq 1}\ell_i[\scrO_{\Curve_i}(-1)]+\ell_1[\scrO_{\Curve_1}].
\]
Substituting \eqref{k sky} to the left hand side and \eqref{k str} to the third term of the right hand side,
\begin{equation}
[\upomega_{\Curve}[1]\otimes \scrL_1]=(1-\ell_1)[\upomega_{\Curve}[1]]-\sum_{i=1}^n\ell_i\ell_1[\scrO_{\Curve_i}(-1)].\label{Action 2}
\end{equation}
Combining \eqref{Action 1} and \eqref{Action 2}, the action of $\scrL_1$ on $K_0(\scrD)$ is given by the $(n+1)\times (n+1)$ matrix whose first row is $(1-\ell_1,1,0,\hdots,0)$, whose second row is $(-\ell_1^2,\ell_1+1,0,\hdots,0)$, and $(-\ell_1\ell_j,\ell_j,0,\hdots,1,\hdots,0)$ is the $j$th row for any $j> 1$, where $1$ is in the $(j+1)$st entry.

The transpose of this matrix gives the required action on the dual $\scrK$.  But for $j>0$, the  transpose has $(j+1)$st row $(0,\hdots,1,\hdots,0)$ where the only non-zero entry is in the $(j+1)$st position, and the first row of the transpose matrix is $(1,\ell_1+1,\ell_2,\hdots,\ell_n)$.  Since $z\in\Level_{\mathbb{C}}$, by definition  $x_0=-\sum_{i=1}^n\ell_ix_i$ and $y_0=1-\sum_{i=1}^n\ell_iy_i$.  Using this, together with the rows identified, it is easy to check that the action is as stated.
\end{proof}

It will be convenient to visualise this in the case of an irreducible flopping contraction.  Recall from Example~\ref{Ex3.11} that $(z_0,z_1)\in\Level_{\bC}$ is determined by $(x_1,y_1)$, and that by Lemma~\ref{Pic action K}, the autoequivalence $-\otimes\scrO(1)$ acts by sending $(x_1,y_1)\mapsto (x_1,y_1+1)$.  As in Example~\ref{Ex3.11}, we draw the $y_1$ axis horizontally pointing to the left, and the $x_1$ axis vertically.  Thus, with this convention, tensoring by $\scrO(1)$ translates to the left.    The case $\ell=3$ is illustrated below.
\begin{equation}
\begin{array}{c}
\begin{tikzpicture}
\filldraw[gray!10!white] (-1,1) -- (-1,-1) -- (0,-1)--(0,1)-- cycle;
\filldraw[gray!10!white] (4,1) -- (4,-1) -- (3,-1)--(3,1)-- cycle;
\draw[black] (-1,0) -- (-1,1);
\draw[black,densely dotted] (-1,0) -- (-1,-1);
\draw[black] (0,0) -- (0,-1);
\draw[black,densely dotted] (0,0) -- (0,1);
\draw[black] (3,0) -- (3,1);
\draw[black,densely dotted] (3,0) -- (3,-1);
\draw[black] (4,0) -- (4,-1);
\draw[black,densely dotted] (4,0) -- (4,1);
{\foreach \i in {-5,-4,-3,-2,-1,0,1,2}
\filldraw[fill=white,draw=black] (-1*\i,0) circle (2pt);
}
\draw[<-] (0.5,0.5) -- node[above] {$\scriptstyle -\otimes\scrO(1)$}(2.5,0.5) ;
\node at (0.15,-0.15) {$\scriptstyle 1$};
\node at (1.15,-0.15) {$\scriptstyle \frac{2}{3}$};
\node at (2.15,-0.15) {$\scriptstyle \frac{1}{2}$};
\node at (3.15,-0.15) {$\scriptstyle \frac{1}{3}$};
\node at (4.15,-0.15) {$\scriptstyle 0$};
\draw[blue,<-] (7,1) -- (7,-1);
\draw[blue,<-] (6,0) -- (8,0);
\node[blue] at (7.25,1) {$\scriptstyle x_1$};
\node[blue] at (5.75,0) {$\scriptstyle y_1$};
\end{tikzpicture}
\end{array}
\label{Pic for ell 3}
\end{equation}

\subsection{Autoequivalences of \texorpdfstring{$\scrD$}{D}}\label{sec: auto of D}
As in the introduction, consider $\cAut{}\scrD$, defined to be the subgroup of $\Auteq\scrD$ consisting of those $\Upphi|_\scrD\colon\scrD\to\scrD$ where $\Upphi$ is an $R$-linear Fourier--Mukai equivalence $\Upphi\colon\Db(\coh X)\to\Db(\coh X)$ that preserves $\cStab{n}\scrD$.  

 In order to control elements in $\cAut{}\scrD$, we first control isomorphisms of algebras that preserves the space of normalised stability conditions. Suppose that $\Lambda_L$ and $\Lambda_M$ arise from  $L=\bigoplus_{i=0}^nL_i$ and $M=\bigoplus_{i=0}^nM_i$ in $\Mut(N)$, and 
\[
\uprho\colon\Lambda_L\to\Lambda_M
\]
is an isomorphism of rings.  Let $\scrS_i$  be the simple $\Lambda_L$-module corresponding to the projective module $\Hom_R(L,L_i)$. Being an isomorphism, simples get sent to simples, and so there is a bijection $\upiota\colon \{0,\hdots,n\}\to \{0,\hdots,n\}$ such that  $\uprho_*\scrS_i\cong \scrS'_{\upiota(i)}$, where $\scrS'_j$ is the simple $\Lambda_M$-module corresponding to the summand $M_j$.

\begin{lem}\label{iso lemma}
$\uprho_*$ preserves $\cStab{n}\scrD$ if and only if $\rk_RL_i=\rk_R M_{\upiota(i)}$ for all $0\leq i\leq n$.
\end{lem}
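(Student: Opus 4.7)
The plan is to translate the defining normalisation condition through $\uprho_*$ and track what it imposes on ranks. For any $\upsigma=(Z,\cP)\in \nStab{n}\scrD_L$, the pushforward has central charge $Z'\colonequals Z\circ \uprho_K^{-1}$, where $\uprho_K$ denotes the induced $K_0$-isomorphism. Dualising the relation $\uprho_*\scrS_i\cong\scrS'_{\upiota(i)}$ gives $Z'[\scrS'_{\upiota(i)}]=Z[\scrS_i]$ for all $i$, and so the target normalisation $\sum_{j=0}^n(\rk_R M_j)\,Z'[\scrS'_j]=\ii$ rewrites, after the substitution $j=\upiota(i)$, as
\[
\sum_{i=0}^n(\rk_R M_{\upiota(i)})\,Z[\scrS_i]=\ii.
\]

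For the implication $(\Leftarrow)$, if the rank equalities hold then this constraint coincides with the source normalisation $\sum_i(\rk_R L_i)Z[\scrS_i]=\ii$, so $\uprho_*$ maps $\nStab{n}\scrD_L$ into $\nStab{n}\scrD_M$. To upgrade from $\nStab{n}$ to $\cStab{n}$, observe that the ring isomorphism $\uprho$ sends the standard algebraic heart $\scrB_L$ to $\scrB_M$, so $\uprho_*$ sends $\nStab{n}\scrB_L$ into $\nStab{n}\scrB_M\subset\cStab{n}\scrD_M$. Since $\uprho_*$ is a homeomorphism on stability spaces it carries connected components to connected components, and therefore restricts to an isomorphism between the distinguished components $\cStab{n}\scrD_L$ and $\cStab{n}\scrD_M$.

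For the converse $(\Rightarrow)$, if $\uprho_*$ preserves $\cStab{n}\scrD$ then in particular it preserves $\nStab{n}\scrD$, so both linear constraints $\sum_i(\rk_R L_i)z_i=\ii$ and $\sum_i(\rk_R M_{\upiota(i)})z_i=\ii$ hold simultaneously on the image of $\cZ_L\colon \nStab{n}\scrB_L\simto \bE_+$ under \eqref{local homeo A}. Since $\bE_+$ is open (and path connected by Lemma \ref{path connected}) inside the affine hyperplane cut out by $\sum(\rk_R L_i)z_i=\ii$, it affinely spans this hyperplane, and so any second affine functional taking value $\ii$ on $\bE_+$ must coincide with the first. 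Comparing coefficients forces $\rk_R M_{\upiota(i)}=\rk_R L_i$ for every $i$.

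The main obstacle is the final affine-span argument in the converse: one has to be sure $\bE_+$ is genuinely large enough inside its affine hyperplane in $(\scrK_L)_\bC$ for equality of two normalisation conditions on $\bE_+$ to force equality of coefficients. This is where the explicit parametrisation of $\bE_+$ (with $\bE_+$ open and path connected) does the real work; the rest is bookkeeping with central charges and the standard observation that ring isomorphisms preserve algebraic hearts.
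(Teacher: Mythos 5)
Your proof is correct, and the $(\Leftarrow)$ direction coincides with the paper's argument. In the converse, however, you take a genuinely different route. The paper constructs $n+1$ explicit test central charges $Z_i\colon K_0(\scrD)\to\bC$, one for each index $i$, with $Z_i[\scrS_j]=-1$ for $j\neq i$ and $Z_i[\scrS_i]=\tfrac{a-a_i}{a_i}+\tfrac{\ii}{a_i}$, evaluates the normalisation constraint on $\uprho_*(Z_i,\scrB_\Upgamma)$, and reads off $a_i=b_{\upiota(i)}$ from the imaginary part. You instead observe that both linear constraints $\sum(\rk_R L_i)z_i=\ii$ and $\sum(\rk_R M_{\upiota(i)})z_i=\ii$ hold on $\bE_+$, that $\bE_+$ affinely spans the complexified level hyperplane, and hence the two functionals agree. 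Your argument is shorter and arguably cleaner since it avoids cooking up special points, but the paper's is more elementary and makes the constraint on the ranks visibly concrete.

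One small imprecision to flag: $\bE_+$ is \emph{not} open in $(\Level_L)_\bC$, because $\bH$ is a semi-closed half plane and so $\bH'_+$ contains boundary points. What you actually need, and what is true, is that $\bE_+$ has nonempty interior in $(\Level_L)_\bC$, namely the open subset $\bE_+^\circ$ appearing in the proof of Lemma~\ref{path connected app}; that suffices for the affine-span argument. The path-connectedness of $\bE_+$ is a red herring here and is not needed for this step.
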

\begin{proof}
($\Leftarrow$) It suffices to show that  $\uprho_*(Z,\scrB_{\Lambda_L})=(Z\circ \uprho^{-1},\scrB_{\Lambda_M})\in\cStab{n}\scrD_{\Lambda_M}$ for all $(Z,\scrB_{\Lambda_L})\in\cStab{n}\scrD_{\Lambda_L}$.  It is clear that $\uprho_*$ preserves the stability component, since  $Z\circ\uprho^{-1}[\scrS'_{\upiota(i)}]=Z[\scrS_i]\in\bH$ for each $0\leq i\leq n$, 
and, furthermore, $\uprho_*$ preserves the normalisation since
\[
\sum_{i=0}^n(\rk_R M_{\upiota(i)})\cdot Z\circ \uprho^{-1}[\scrS'_{\upiota(i)}]=\sum_{i=0}^n(\rk_RL_i)\cdot Z[\scrS_i]=\ii.
\]
($\Rightarrow$) 
To ease notation, set $a_i\colonequals \rk_RL_i$, $b_j=\rk_R M_j$.  Furthermore, for each $i\in\{0,\hdots,n\}$ consider $Z_i\colon K_0(\scrD)\to \bC$ defined by 
\[
Z_i([\scrS_j])\colonequals
\begin{cases}
-1 & j\neq i\\
\frac{a-a_i}{a_i}+\ii\frac{1}{a_i}& j=i
\end{cases}
\]
where $a\colonequals \sum_{i=0}^n a_i$. Now each $Z_i([\scrS_j])$ belongs to $\bH$, and $\sum_{i=0}^na_jZ_i([\scrS_j])
=\ii $, hence $(Z_i,\scrB_{\Lambda_L})\in\cStab{n}\scrD_{\Lambda_L}$.  Since $\uprho_*$ preserves normalised stability conditions by assumption, 
\[
\sum_{j=0}^nb_{\upiota(j)}Z_i\circ \uprho^{-1}([\scrS'_{\upiota(j)}])=\sum_{j=0}^nb_{\upiota(j)}Z_i([\scrS_{j}])=\ii 
\]
Considering the imaginary parts of  both sides, we see that
$
a_i=b_{\upiota(i)}.
$
\end{proof}

Our next main result, Theorem~\ref{semidirect main}, requires two technical lemmas.
\begin{lem}\label{decomposePic}
For any $\mathsf{g}\in \langle \mathsf{L}_1,\hdots,\mathsf{L}_n\rangle$, consider the isomorphism $\upvarepsilon\colon \Lambda\to\Lambda_{\mathsf{g}\cdot N}$.  Then there is an isomorphism $\upvarepsilon\cong\Upphi_{\upgamma} \circ(- \otimes\scrL)$ for some $\scrL\in\Pic(X)$ and some $\upgamma\in\Hom_{\mathds{G}^{\aff}}(N,\mathsf{g}\cdot N)$.
\end{lem}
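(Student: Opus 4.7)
The plan is to decompose $\mathsf{g}$ as a word in the generators $\mathsf{L}_i^{\pm 1}$ and proceed by induction on the word length, using Theorem~\ref{class action main} at the distinguished vertex $N$ for the base case and Lemma~\ref{ep L commute} to propagate the decomposition through compositions. Set $\scrL\colonequals \scrL_1^{a_1}\otimes\cdots\otimes\scrL_n^{a_n}\in\Pic X$, where $\mathsf{g}=\mathsf{L}_1^{a_1}\otimes\cdots\otimes \mathsf{L}_n^{a_n}$ with $a_i\in\bZ$.

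For the base case $\mathsf{g}=\mathsf{L}_i$, Theorem~\ref{class action main} directly yields $\upvarepsilon_{\mathsf{L}_i,N}\cong \Upphi_{\upgamma_i}\circ(-\otimes\scrL_i)$, where $\upgamma_i\colon N\to\mathsf{L}_i\cdot N$ is the minimal mutation path, by rearranging the commutative diagram therein. For $\mathsf{g}=\mathsf{L}_i^{-1}$, apply Lemma~\ref{ep L commute} with $\mathsf{h}=\mathsf{L}_i^{-1}$ and $\upalpha=\upgamma_i$: the resulting commutation $\Upphi_{\mathsf{L}_i^{-1}\cdot\upgamma_i}\circ\upvarepsilon_{\mathsf{L}_i^{-1},N}\cong \upvarepsilon_{\mathsf{L}_i^{-1},\mathsf{L}_i\cdot N}\circ \Upphi_{\upgamma_i}$, combined with the identity $\upvarepsilon_{\mathsf{L}_i^{-1},\mathsf{L}_i\cdot N}=\upvarepsilon_{\mathsf{L}_i,N}^{-1}$ and the positive-case decomposition, collapses the right-hand side to $(-\otimes\scrL_i^{-1})$. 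Rearranging then yields $\upvarepsilon_{\mathsf{L}_i^{-1},N}\cong \Upphi_{(\mathsf{L}_i^{-1}\cdot\upgamma_i)^{-1}}\circ(-\otimes\scrL_i^{-1})$, with the inverse path running from $N$ to $\mathsf{L}_i^{-1}\cdot N$.

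For the inductive step, write $\mathsf{g}=\mathsf{L}_i^{\pm 1}\cdot\mathsf{g}_0$ with $\mathsf{g}_0$ of shorter word length, so that $\upvarepsilon_{\mathsf{g},N}=\upvarepsilon_{\mathsf{L}_i^{\pm 1},\mathsf{g}_0\cdot N}\circ\upvarepsilon_{\mathsf{g}_0,N}$. By induction, $\upvarepsilon_{\mathsf{g}_0,N}\cong \Upphi_{\upgamma_0}\circ(-\otimes\scrL_0)$ for some path $\upgamma_0\colon N\to\mathsf{g}_0\cdot N$ and line bundle $\scrL_0$, and by the base case $\upvarepsilon_{\mathsf{L}_i^{\pm 1},N}\cong \Upphi_{\upbeta}\circ(-\otimes\scrL_i^{\pm 1})$. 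Applying Lemma~\ref{ep L commute} with $\mathsf{h}=\mathsf{L}_i^{\pm 1}$ and $\upalpha=\upgamma_0$ produces $\upvarepsilon_{\mathsf{L}_i^{\pm 1},\mathsf{g}_0\cdot N}\circ \Upphi_{\upgamma_0}\cong \Upphi_{\mathsf{L}_i^{\pm 1}\cdot\upgamma_0}\circ \upvarepsilon_{\mathsf{L}_i^{\pm 1},N}$, and substituting the base-case decomposition collects everything into $\upvarepsilon_{\mathsf{g},N}\cong \Upphi_{(\mathsf{L}_i^{\pm 1}\cdot\upgamma_0)\circ\upbeta}\circ(-\otimes\scrL)$, matching the required form. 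The main obstacle is the combinatorial bookkeeping of tracking how translated paths $\mathsf{L}_i^{\pm 1}\cdot\upgamma_0$ concatenate with $\upbeta$ in the arrangement groupoid, together with verifying that Lemma~\ref{ep L commute} extends from its stated positive minimal hypothesis to the general positive paths produced by iterated concatenation; this extension follows by decomposing into single mutations, and Remark~\ref{comp of line conventions} confirms the resulting decomposition is well-defined up to functorial isomorphism regardless of how the word decomposition of $\mathsf{g}$ is chosen.
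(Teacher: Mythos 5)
Your proposal is correct and follows essentially the same route as the paper: induction on the word length of $\mathsf{g}$, with Theorem~\ref{class action main} supplying the base case and Lemma~\ref{ep L commute} driving the inductive step, the only cosmetic difference being that you peel off the first letter of the word where the paper peels off the last (immaterial, as the group is abelian). You rightly flag that $\upgamma_0$ from the inductive hypothesis need not be positive minimal, so Lemma~\ref{ep L commute} must first be extended from positive minimal paths to arbitrary morphisms of $\dsG^{\aff}$ by writing them as compositions of length-one paths and inverses; the paper uses this extension implicitly without comment, so your extra sentence of justification is a small point in your favour.
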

\begin{proof}
By assumption we may write $\mathsf{g}=\mathsf{L}_{i_1}^{\sigma_1}\cdot\mathsf{L}_{i_2}^{\sigma_2}\cdot\hdots \cdot\mathsf{L}_{i_k}^{\sigma_k}$ for some $k\geq0$, $1\leq i_j\leq n$ and $\sigma_i\in \{\pm1\}$.   If $k=0$, we have nothing to prove, and so we assume $k>0$. 

We prove the result by induction on $k$, so first assume that $k=1$.  There are two cases, depending on the parity of $\sigma_1$.  In the case $\sigma_1=1$, we are considering the isomorphism $\upvarepsilon\colon\Lambda\to\Lambda_{\mathsf{L}_i\cdot N}$.  By Theorem \ref{class action main}, the functor 
\begin{equation}
(-\otimes \scrL_i^{-1})\colon\Db(\fmod\Lambda)\to \Db(\fmod\Lambda)\label{tensor for induct}
\end{equation}
is isomorphic to the composition 
\[
\Db(\fmod\Lambda)\xrightarrow{\Upphi_{\upalpha}}\Db(\fmod\Lambda_{\mathsf{L}_i\cdot N})\xrightarrow{\upvarepsilon^{-1}}\Db(\fmod\Lambda)
\]
from which it follows that $ \upvarepsilon\cong\Upphi_\upalpha\circ (-\otimes \scrL_i) $.  On the other hand, in the case $\sigma_1=-1$, we are considering the isomorphism $\upvarepsilon\colon\Lambda\to\Lambda_{\mathsf{L}_i^{-1}\cdot N}$.  However, by Theorem~\ref{class action main} and Lemma~\ref{ep L commute} applied to $\mathsf{g}=\mathsf{L}_i$ and $\mathsf{h}=\mathsf{g}^{-1}$, the functor \eqref{tensor for induct} is isomorphic to 
\[
\Db(\fmod\Lambda)\xrightarrow{\upvarepsilon}\Db(\fmod\Lambda_{\mathsf{h}\cdot N})\xrightarrow{\Upphi_{\mathsf{h}\cdot\upalpha}}\Db(\fmod\Lambda),
\]
from which it follows that $\upvarepsilon\cong \Upphi_{(\mathsf{h}\cdot\upalpha)^{-1}}\circ (-\otimes\scrL_i^{-1})$.  Either way, the result holds for $k=1$.

For the induction step, assume that $k>1$ and set $\mathsf{h}\colonequals\mathsf{g}\cdot\mathsf{L}_{i_k}^{-\sigma_k}$ and  $\mathsf{k}\colonequals\mathsf{L}_{i_k}^{\sigma_k}$, so  $\mathsf{g}= \mathsf{h}\cdot \mathsf{k}$. By the inductive hypothesis, there exist $\scrM\in\Pic X$ and $\upbeta\in \Hom_{\dsG^{\aff}}(N,\mathsf{h}\cdot N)$ such that the isomorphism $\upvarepsilon\colon \Lambda\to\Lambda_{\mathsf{h}\cdot N}$ is isomorphic to $\Upphi_{\upbeta}\circ (-\otimes \scrM)$.  Combining this with Lemma~\ref{ep L commute}, the following diagram commutes 
\[
\begin{tikzpicture}
\node (U2) at (3.6,1.4) {$\Db(\fmod\Lambda)$};
\node (A1) at (-0.1,0) {$\Db(\fmod\Lambda)$};
\node (A2) at (3.6,0) {$\Db(\fmod\Lambda_{\mathsf{h}\cdot N})$};
\node (B1) at (-0.1,-1.4) {$\Db(\fmod \Lambda_{\mathsf{k}\cdot N})$};
\node (B2) at (3.6,-1.4) {$\Db(\fmod \Lambda_{\mathsf{g}\cdot N})$};
\draw[->] (U2) -- node[above] {\scriptsize $-\otimes \scrM\,\,\,\,\,\,\,\,$} (A1);
\draw[->] (U2) -- node[right] {\scriptsize $\upvarepsilon$} (A2);
\draw[->] (A1) -- node[above] {\scriptsize $\Upphi_{\upbeta}$} (A2);
\draw[->] (B1) -- node[above] {\scriptsize $\Upphi_{\mathsf{k}\cdot\upbeta}$} (B2);
\draw[->] (A1) -- node[left] {\scriptsize $\upvarepsilon$} (B1);
\draw[->] (A2) -- node[right] {\scriptsize $\upvarepsilon$} (B2);
\draw[transform canvas={xshift=5ex},bend left,->] (U2) to node[right]{\scriptsize $\upvarepsilon$}  (B2); 
\end{tikzpicture}
\]
where the right hand composition is our desired isomorphism $\upvarepsilon\colon\Lambda\to\Lambda_{\mathsf{g}\cdot N}$.   Now by length one case proved above applied to $\mathsf{k}$, the bottom left $\upvarepsilon$ is isomorphic to $\Upphi_{\upgamma_k}\circ (- \otimes \scrL_{i_k}^{\sigma_k})$ for some $\upgamma_k\in \Hom_{\dsG^{\aff}}(N,\mathsf{k}\cdot N)$. 
Thus if we set $\upgamma\colonequals (\mathsf{k}\cdot\upbeta)\circ \upgamma_k\in \Hom_{\dsG^{\aff}}(N,\mathsf{g}\cdot N)$ and $\scrL\colonequals \scrM\otimes \scrL_{i_k}^{\sigma_k}$, then $\upvarepsilon\colon \Lambda\to \Lambda_{\mathsf{g}\cdot N}$ is isomorphic to the composition $\Upphi_{\upgamma}\circ (-\otimes \scrL)$.
\end{proof}

The following is an easy extension of results in \cite[\S9]{IW9}.
\begin{lem}\label{rank one} 
Suppose that $M\in\Mut(N)$, and that there is an $R$-linear isomorphism $\Lambda\to\Lambda_{M}$ such that $\scrS_0\mapsto \scrS_i$, where $\rk_R M_i=1$.  Then  $M\cong  \mathsf{L}\cdot N$ for some  $\mathsf{L}\in \langle \mathsf{L}_1,\hdots,\mathsf{L}_n\rangle$. 
\end{lem}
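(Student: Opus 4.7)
The plan is to reduce to the case where the rank-one summands matched under $\uprho$ are both equal to $R$ itself, and then use a direct idempotent comparison to obtain an $R$-module isomorphism $N\cong M'$, where $M'$ is an explicit twist of $M$.

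First, since $M_i$ is rank-one reflexive, it determines a class $[M_i]\in\Cl(R)$, and I would set $M'\colonequals (M_i^*\otimes_R M)^{**}$, which is again modifying and has $(M_i^*\otimes M_i)^{**}\cong R$ appearing as the $i$-th summand. The class-group twist provides a canonical $R$-algebra isomorphism $\upvarepsilon\colon\Lambda_M\to\Lambda_{M'}$ (as in Section~\ref{section line bundles}); composing with $\uprho$ yields an $R$-linear isomorphism $\uprho'\colon\Lambda\to\Lambda_{M'}$ which sends $\scrS_0$ to the simple of $\Lambda_{M'}$ corresponding to the $R$-summand of $M'$. Writing $e_0\in\Lambda$ and $e'\in\Lambda_{M'}$ for the primitive idempotents projecting onto the respective $R$-summands, I would then have $\uprho'(e_0)=e'$ up to inner automorphism of $\Lambda_{M'}$, which suffices for what follows.

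Next, restricting $\uprho'$ to the corner gives an $R$-algebra map $e_0\Lambda e_0=\End_R(R)=R\to e'\Lambda_{M'}e'=R$, which by $R$-linearity is the identity. The indecomposable projective left $\Lambda$-module $\Lambda e_0$ is $\Hom_R(R,N)=N$ as an $R$-module via the evaluation-at-$1$ map, and similarly $\Lambda_{M'}e'\cong M'$. The $R$-linear isomorphism $\uprho'$ then restricts to an isomorphism $\Lambda e_0\simto\Lambda_{M'}e'$ which is simultaneously an $R$-module isomorphism $N\simto M'$. Hence $M\cong M_i\cdot N$ in $\Mut(N)$.

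The final step, which I expect to be the main obstacle and the genuine content of the ``extension'' alluded to in the statement, is verifying that the class $[M_i]$ lies in the specific subgroup $\langle\mathsf{L}_1,\hdots,\mathsf{L}_n\rangle\subseteq\Cl(R)$. Since $M=M_i\cdot N$ is in $\Mut(N)$, the element $[M_i]$ translates the chamber of $N$ in $\scrH^{\aff}$ to another chamber corresponding to a vertex of $\EG(N)$. By the explicit description of the $\mathbb{Z}^n$-action in Lemma~\ref{class action comb}, combined with the combinatorial framework of \cite[\S7]{IW9} identifying the class-group elements that preserve the mutation class of $N$, any such translation must be realised by an element of $\langle\mathsf{L}_1,\hdots,\mathsf{L}_n\rangle$, completing the proof.
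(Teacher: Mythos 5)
Your proof is correct, but it takes a noticeably different route from the paper's in the middle section. The paper argues directly at the level of projectives: the $R$-linear isomorphism $\uprho\colon\Lambda\to\Lambda_M$ with $\scrS_0\mapsto\scrS_i$ gives, via the pairing between simples and projectives, an $R$-module isomorphism $\scrP_0=\Hom_R(N,R)\cong\Hom_R(M,M_i)=\scrP_i$; identifying the right-hand side with $\Hom_R(M\cdot M_i^{-1},R)$ and then applying $\Hom_R(-,R)$ gives $N\cong M\cdot M_i^{-1}$ in one step. You instead first twist $M$ by $M_i^{-1}$ to produce $M'$ with $R$ as a summand, pass through the class-group twist isomorphism $\upvarepsilon$, and then run an idempotent/corner argument to identify $\Lambda e_0\cong N$ with $\Lambda_{M'}e'\cong M'$. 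Both approaches land on the same intermediate statement $N\cong M\cdot M_i^{-1}$, and both close with the same appeal to the combinatorics of \cite[\S7]{IW9}; your identification of that final step as the genuine content of the lemma (and the point of the phrase ``easy extension'') is correct. Two small observations on your route: (i) you invoke $\upvarepsilon$ from Section~\ref{section line bundles} for $\mathsf{L}=M_i^*$, which a priori need not lie in $\langle\mathsf{L}_1,\hdots,\mathsf{L}_n\rangle$; the formula $(-\otimes\mathsf{L})^{**}$ of course works for an arbitrary rank-one reflexive, but you should say so explicitly since otherwise the argument risks looking circular against the conclusion you are aiming for; (ii) the idempotent step is more work than needed, since the pairing $\scrS_0\mapsto\scrS_i\Rightarrow\scrP_0\mapsto\scrP_i$ already hands you the $R$-module isomorphism without any twisting --- the paper's one-line dualization is strictly shorter. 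That said, your argument is sound, and the idempotent picture of why $R$-linearity pins down the corner map is a useful sanity check.
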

\begin{proof}
Since $\scrS_0\mapsto \scrS_i$, by the pairing between simples and projectives, it follows that there is a chain of $R$-module isomorphisms
\[
\Hom_R(N,R)=\scrP_0\cong\scrP_i=\Hom_R(M,M_i).
\]
Since $\rk_R M_i=1$, the right hand side is isomorphic to $\Hom_R(M\cdot M_i^{-1},R)$.  Hence applying $\Hom_R(-,R)$ shows that $M\cong M_i\cdot N$ as $R$-modules.   Since $M_i$ is a summand of $M$ of rank one, and $M\in\Mut(N)$, by the bijections in \ref{affine summary} $M_i$ must appear as a $\mathbb{Z}^n$-lattice point in the hyperplane arrangement (see \cite[9.10(2)(3)]{IW9}).
\end{proof}

The following is one of our key results.

\begin{thm}\label{semidirect main}
$\cAut{}\scrD\cong\Br\scrD\rtimes\Pic X$.
\end{thm}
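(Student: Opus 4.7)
The strategy is to verify the four ingredients needed for the identification $\cAut{}\scrD\cong\Br\scrD\rtimes\Pic X$: (a) both subgroups sit inside $\cAut{}\scrD$, (b) their intersection is trivial, (c) $\Pic X$ normalises $\Br\scrD$, and (d) the product is surjective.

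\emph{Subgroups and trivial intersection.} For $\Br\scrD\subseteq\cAut{}\scrD$, the plan is to use Theorem~\ref{flopmut} to realise each mutation functor as a (quasi-inverse of a) flop functor, which is an $R$-linear Fourier--Mukai equivalence; preservation of $\cStab{n}\scrD$ is built into Theorem~\ref{regular covering}. For $\Pic X\subseteq\cAut{}\scrD$, tensoring with $\scrL\in\Pic X$ is $R$-linear by Lemma~\ref{tensor is R linear}, is Fourier--Mukai, preserves the support condition defining $\scrD$, and the explicit action computed in Lemma~\ref{Pic action K} shows it preserves $\cStab{n}\scrD$ (it sends the chamber $\dsN$ to a translate $\dsN_{\mathsf{L}\cdot\upalpha}$). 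The intersection is then controlled by the K-theoretic action: elements of $\Br\scrD$ act trivially on $\scrK$ by Proposition~\ref{theta trivial}\eqref{theta trivial 2}, whereas by Lemma~\ref{Pic action K} a non-trivial line bundle twist shifts imaginary coordinates non-trivially, so $\Br\scrD\cap\Pic X=\{1\}$.

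\emph{Normality.} Normality of $\Br\scrD$ under conjugation by $\Pic X$ is essentially an immediate corollary of Theorem~\ref{class action main} together with Lemma~\ref{ep L commute}. Indeed, given $\Upphi_\upalpha\in\Br\scrD$ with $\upalpha\in\End_{\dsG^{\aff}}(C_+)$, Theorem~\ref{class action main} writes $-\otimes\scrL_i^*$ as $\upvarepsilon^{-1}\circ\Upphi_\upkappa$ for a minimal chain $\upkappa\colon N\to\mathsf{L}_i\cdot N$, so conjugation of $\Upphi_\upalpha$ by $-\otimes\scrL_i$ becomes $\Upphi_{\upkappa^{-1}}\circ\upvarepsilon\circ\Upphi_\upalpha\circ\upvarepsilon^{-1}\circ\Upphi_{\upkappa}$, and Lemma~\ref{ep L commute} identifies the middle three functors with the translated mutation endomorphism $\Upphi_{\mathsf{L}_i\cdot\upalpha}$ of $\Db(\fmod\Lambda_{\mathsf{L}_i\cdot N})$, giving an element of $\Br\scrD$ after absorbing the outer composition. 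Iterating shows the $\Pic X$-action on $\Br\scrD$ by conjugation is well-defined.

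\emph{Surjectivity.} Given $g\in\cAut{}\scrD$, the plan is to mirror the proof of Theorem~\ref{7.1}. Transport $g$ through $\Uppsi$ to obtain an $R$-linear equivalence $G\colon\Db(\fmod\Lambda)\to\Db(\fmod\Lambda)$ preserving $\cStab{n}\scrD$. Since $\scrN_n$ is dense in $\cStab{n}\scrD$ by Theorem~\ref{chambers in cStab C}, $G(\dsN)$ meets some $\dsN_\upbeta$, forcing $G(\scrB)=\scrB_\upbeta$ for some $\upbeta\in\Targ(C_+)$ with source $M\in\Mut(N)$. Then $H\colonequals\Upphi_\upbeta^{-1}\circ G\colon\Db(\fmod\Lambda)\to\Db(\fmod\Lambda_M)$ takes $\scrB$ to $\scrB_M$, so by Corollary~\ref{1 implies 3} it restricts to a Morita equivalence $\fmod\Lambda\to\fmod\Lambda_M$, induced (as both algebras are basic) by an $R$-linear ring isomorphism $\uprho\colon\Lambda\to\Lambda_M$. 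Since $H$ preserves normalised stability, Lemma~\ref{iso lemma} yields a permutation $\upiota$ of simples with $\rk_RN_i=\rk_RM_{\upiota(i)}$; in particular $\rk_R M_{\upiota(0)}=1$, and Lemma~\ref{rank one} gives $M\cong\mathsf{L}\cdot N$ for some $\mathsf{L}\in\langle\mathsf{L}_1,\hdots,\mathsf{L}_n\rangle$. By Lemma~\ref{decomposePic} the canonical isomorphism $\upvarepsilon\colon\Lambda\to\Lambda_{\mathsf{L}\cdot N}$ decomposes as $\Upphi_\upgamma\circ(-\otimes\scrL')$, so after replacing $g$ by $g\circ(-\otimes\scrL')^{-1}$ (which lies in $\cAut{}\scrD$) the residual autoequivalence factors through $\upvarepsilon^{-1}\circ\uprho\colon\Lambda\to\Lambda$. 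One then argues as in the closing step of Theorem~\ref{7.1}, using Proposition~\ref{G is the identity} and the $R$-linear Fourier--Mukai description, that this residue is an element of $\Br\scrD$.

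\emph{Main obstacle.} The most delicate step will be the last one: after producing the isomorphism $\uprho\colon\Lambda\to\Lambda_M$ with $M=\mathsf{L}\cdot N$, one must show that, up to a further element of $\Br\scrD$, $\uprho$ agrees with the canonical class-group isomorphism $\upvarepsilon$, and that no hidden automorphism of $\Lambda$ escapes the combined Picard--braid action. This requires carefully tracking $R$-linearity, the ordering of indecomposable summands dictated by the combinatorics of Section~\ref{hyperplane section}, and reducing the residual Fourier--Mukai kernel to an automorphism of $X$ that must act trivially since it commutes with $f$ and fixes the dense open complement of the exceptional curves.
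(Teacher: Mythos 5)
Your overall plan is sound and close to the paper's Step~1 for surjectivity, but your normality argument takes a genuinely different route. The paper characterizes $\Br\scrD$ intrinsically as the subgroup $\Tr\scrD$ of elements acting trivially on $\scrK$, which is visibly conjugation-closed; crucially, that characterization is only proved \emph{after} the set-theoretic decomposition $\cAut{}\scrD=\Br\scrD\cdot\Pic X$, so in the paper normality is downstream of surjectivity. You instead prove normality directly, pulling a conjugate $(-\otimes\scrL_i)\circ\Upphi_\upalpha\circ(-\otimes\scrL_i)^{-1}$ back into $\Br\scrD$ via Theorem~\ref{class action main} and Lemma~\ref{ep L commute}. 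This is a cleaner logical structure (normality does not depend on surjectivity), and it works, but note one small gap: Lemma~\ref{ep L commute} as stated only handles a single positive minimal path, whereas $\upalpha\in\End_{\dsG^{\aff}}(C_+)$ is a general loop. You should factor $\upalpha$ into length-one arrows, apply the lemma stepwise, and compose; this is routine but must be said, otherwise the identification $\upvarepsilon\circ\Upphi_\upalpha\circ\upvarepsilon^{-1}\cong\Upphi_{\mathsf{L}_i\cdot\upalpha}$ is unjustified.

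The real issue is surjectivity, and you correctly flag it as the main obstacle, but you leave precisely the hard part unproved. After producing the $R$-linear isomorphism $\uprho\colon\Lambda\to\Lambda_M$ with $M\cong\mathsf{L}\cdot N$, the paper does not immediately know that $\uprho$ ``agrees with $\upvarepsilon$ up to braid''; it must rule out a potentially nontrivial automorphism of $\Lambda$. The paper's argument: form $\Upupsilon=\upvarepsilon^{-1}\circ\Upphi_\upbeta^{-1}\circ g$, an autoequivalence of $\Db(\fmod\Lambda)$ induced by an $R$-linear isomorphism $\uppsi\colon\Lambda\to\Lambda$. Then $\uppsi_*\scrS_0\cong\scrS_i$ for some $i$ with $\rk_RN_i=1$ (Lemma~\ref{iso lemma}), so $\Hom_R(N,R)\cong\Hom_R(N\cdot N_i^{-1},R)$, hence $N\cong N\cdot N_i^{-1}$; then Lemma~\ref{rank one} forces $N_i\in\langle\mathsf{L}_1,\hdots,\mathsf{L}_n\rangle$, and Lemma~\ref{class action comb} forces $N_i\cong R$ because nontrivial class-group elements translate the arrangement nontrivially. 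Only then does $\Upupsilon(\scrS_0)\cong\scrS_0$, and a second application of Lemma~\ref{iso lemma} plus Proposition~\ref{G is the identity} gives $\Upupsilon\cong\Id$. Your sketch ``argues as in the closing step of Theorem~\ref{7.1}'' but that theorem's closing step does not cover the $N_i\cong R$ reduction, which is specific to the affine/Picard setting and is where the isolated-cDV combinatorics enter. Without this, you have not excluded the possibility of an extra automorphism of $\Lambda$ that permutes the non-zero simples nontrivially while being neither Picard nor braid, so the ``residue'' could escape $\Br\scrD$. So: correct outline, genuinely different normality route, but the surjectivity step needs the two-stage argument with $\Upupsilon$ carried out explicitly, and in particular the appeal to Lemma~\ref{class action comb} cannot be elided.
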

\begin{proof}
Set $G=\cAut{}\scrD$.  Then  $K=\Br\scrD$ is a clearly a subgroup of $G$, as is $H=\Pic X$ since elements of $\Pic X$ are $R$-linear by Lemma~\ref{tensor is R linear},  and preserve $\cStab{n}\scrD$ by Theorem~\ref{class action main}.

 The remainder of the proof splits into five steps.

\step{1} We first claim that, as sets, $G=KH$.  Let $g\in G$.  Since $g$ preserves $\cStab{n}\scrD$, arguing as in Theorem~\ref{7.1}, there exists $\upbeta\in\Hom_{\dsG^{\aff}}(C_{M},C_+)$ such that $g(\scrB)=\scrB_\upbeta$.  The composition
\[
\begin{tikzpicture}
\node (A1) at (0,0) {$\Db(\fmod\Lambda)$};
\node (A2) at (3,0) {$\Db(\fmod\Lambda)$};
\node (A3) at (6,0) {$\Db(\fmod\Lambda_\upbeta)$};
\draw[->] (A1) -- node[above]{$\scriptstyle g$} (A2);
\draw[->] (A2) -- node[above]{$\scriptstyle \Upphi_{\upbeta}^{-1}$} (A3);
\end{tikzpicture}
\]
is $R$-linear, and restricts to an equivalence between finite length $\Lambda$-modules and finite length $\Lambda_\upbeta$-modules.  In particular, by Lemma~\ref{LemmaA}\eqref{LemmaA 1} and  Corollary~\ref{1 implies 3} the composition restricts to an  $R$-linear Morita equivalence
\[
\fmod\Lambda\xrightarrow{\Upphi_{\upbeta}^{-1}\circ g}\fmod\Lambda_\upbeta.
\]
Since both algebras are basic, necessarily this is induced by an $R$-algebra isomorphism $\upvarphi\colon \Lambda\to\Lambda_\upbeta$.  But by Lemma~\ref{iso lemma}, it follows that $M$ has rank one summand. By Lemma \ref{rank one}, there is $\mathsf{L}\in \langle \mathsf{L}_1,\hdots,\mathsf{L}_n\rangle$ such that $M\cong \mathsf{L}\cdot N$. 

Consider the composition autoequivalence 
\[
\begin{array}{c}
\begin{tikzpicture}
\node at (-2,0) {$\Upupsilon\colon$};
\node (A1) at (0,0) {$\Db(\fmod\Lambda)$};
\node (A2) at (3,0) {$\Db(\fmod\Lambda)$};
\node (A3) at (6,0) {$\Db(\fmod\Lambda_\upbeta)$};
\node (A4) at (9,0) {$\Db(\fmod\Lambda).$};
\draw[->] (A1) -- node[above]{$\scriptstyle g$} (A2);
\draw[->] (A2) -- node[above]{$\scriptstyle \Upphi_{\upbeta}^{-1}$} (A3);
\draw[->] (A3) -- node[above]{$\scriptstyle \upvarepsilon^{-1}$} (A4);
\end{tikzpicture}
\end{array}
\]
where $\upvarepsilon\colon\Lambda\simto \Lambda_{\mathsf{L}\cdot N}=\Lambda_{\upbeta}$. By a similar argument as above, $\Upupsilon$ is induced by an $R$-linear isomorphism 
$\uppsi\colon \Lambda\simto \Lambda$. Then $\uppsi_*\scrS_0\cong \scrS_i$ for some $0\leq i\leq n$, and by the pairing between projectives and simples, $\uppsi$ restricts to an isomorphism $\scrP_0\cong \scrP_i$ of $R$-modules. By Lemma \ref{iso lemma} we see that  $\rk_RN_i=1$.  It follows that  
\[
\Hom_R(N,R)=\scrP_0\cong \scrP_i\cong \Hom_R(N\cdot N_i^{-1},R)
\]
as $R$-modules, and so dualizing gives $N\cong N\cdot N_i^{-1}$. Then by Lemma  \ref{rank one}, $N_i$ necessarily lies in $\langle \mathsf{L}_1,\hdots,\mathsf{L}_n\rangle$. By Lemma~\ref{class action comb} we thus have $N_i\cong R$, since non-trivial elements of $\langle \mathsf{L}_1,\hdots,\mathsf{L}_n\rangle$ non-trivially translate the hyperplane arrangement.  Thus $\Upupsilon(\scrS_0)\cong\scrS_0$.  As an Morita equivalence, it follows that there exists a permutation $\upiota\in S_n$ such that $\Upupsilon(\scrS_i)\cong\scrS_{\upiota(i)}$ for all $1\leq i \leq n$.  Since $\Upupsilon$ preserves normalised stability conditions, by Lemma \ref{iso lemma} we see $\rk_RN_i=\rk_RN_{\upiota(i)}$. Hence by Proposition \ref{G is the identity}, there is a functorial isomorphism $\Upupsilon\cong \Id$, and so $g\cong \Upphi_{\upbeta}\circ \upvarepsilon$. Using Lemma~\ref{decomposePic} it follows that $g$ is functorially isomorphic to the composition
\[
\begin{array}{c}
\begin{tikzpicture}
\node (A1) at (0,0) {$\Db(\fmod\Lambda)$};
\node (A2) at (3,0) {$\Db(\fmod\Lambda_N)$};
\node (A3) at (6,0) {$\Db(\fmod\Lambda_{\mathsf{L}\cdot N})$};
\node (A4) at (9,0) {$\Db(\fmod\Lambda_N).$};
\draw[->] (A1) -- node[above]{$\scriptstyle F$} (A2);
\draw[->] (A2) -- node[above]{$\scriptstyle \Upphi_{\upgamma}$} (A3);
\draw[->] (A3) -- node[above]{$\scriptstyle \Upphi_{\upbeta}$} (A4);
\end{tikzpicture}
\end{array}
\]
for some $F\in\Pic(X)$. Hence $g\cong \Upphi_{\upbeta\circ\upgamma}\circ F\in KH$.

\step{2} Consider the subgroup $\Tr\scrD$ of $G$ consisting of those elements that are the identity on K-theory $\scrK$.  We know that $\Br\scrD\subseteq \Tr\scrD$ by Proposition~\ref{theta trivial}.  We claim that $\Br\scrD\supseteq \Tr\scrD$, so equality holds.  To see this, consider $t\in\Tr\scrD$.  By Step 1, since $t\in G$ we can write $t=kh$ for some $k\in K$ and some $h\in H$.  Thus $h=k^{-1}t$, and so $h$ is trivial on K-theory.  But by Lemma~\ref{Pic action K} the only line bundle twist that satisfies this is the identity.  Hence $h=1$, so $k=t$ and thus $t\in K=\Br\scrD$.

\step{3}  $K\unlhd\, G$.  This follows immediately from Step 2, since being the identity on K-theory is clearly closed under conjugation.

\step{4} $K\cap H=\{1_G\}$.  Again, this holds by Lemma~\ref{Pic action K}, since the only line bundle twist that is the identity on K-theory is the identity.
\smallskip

Combining Steps 1, 3 and 4 we see that $G\cong K\rtimes H$, as required.
\end{proof}

In particular, as is standard for semidirect products, there is an induced exact sequence
\begin{equation}
1\to\Br\scrD\to\cAut{}\scrD\to\Pic X\to 1.\label{split ses groups}
\end{equation}
This sequence is the generalisation of \cite[5.4(ii)]{T08} to higher length flops.

\subsection{Application: Stringy K\"ahler Moduli Spaces}
In this subsection we compute the stringy K\"ahler moduli space $\cStab{n}\scrD/\cAut{}\scrD$ for all smooth single curve flops.  In this case, Theorem~\ref{affine summary}  reduces to the statement that the mutation class containing $N$ is in bijection with the chambers of an infinite hyperplane arrangement in $\mathbb{R}^1$,  which we draw as
\[
\begin{tikzpicture}[xscale=0.85]
\draw[densely dotted] (-3,0)--(9.5,0);
\node (A) at (-2.2,0) [cvertex] {};
\node (B) at (0.8,0) [cvertex] {};
\node (C) at (3.5,0) [cvertex] {};
\node (D) at (6,0) [cvertex] {};
\node (E) at (8.7,0) [cvertex] {};
\end{tikzpicture}\qquad
\]
extended in both directions to infinity.  The walls are labelled by the indecomposable $R$-modules that are summands of elements in the mutation class of $N$, and the chambers are labelled by their direct sums. Wall crossing corresponds to mutation.  Since $N=R\oplus N_1$ from \eqref{ass modi} must appear in its mutation class, the centre of the hyperplane arrangement has the following form
\begin{equation}
\begin{tikzpicture}
\draw[densely dotted] (-2.5,0)--(9,0);
\node (A) at (-2.2,0) [cvertex] {};
\node (B) at (0.8,0) [cvertex] {};
\node (C) at (3.5,0) [cvertex] {};
\node (D) at (6,0) [cvertex] {};
\node (E) at (8.7,0) [cvertex] {};
\node at (-2.2,0.3) {$\scriptstyle $};
\node at (0.8,0.3) {$\scriptstyle N_{1}$};
\node at (3.5,0.3) {$\scriptstyle R$};
\node at (6,0.3) {$\scriptstyle N_{1}^*$};
\node at (8.7,0.3) {$\scriptstyle $};
\draw[draw=none] (B) -- node[gap] {$R\oplus N_1$}(C);
\draw[draw=none] (C) -- node[gap] {$R\oplus N_1^*$}(D);
\end{tikzpicture}
\quad
\label{arrangement labelled one curve}
\end{equation}
Under this convention, with $R\oplus N_1$ on the left and $R\oplus N_1^*$ on the right, $\mathsf{L}=f_*\scrL=f_*\scrO(1)$ generates a subgroup of  $\Cl(R)$ which acts on \eqref{arrangement labelled one curve},  taking the wall labelled $R$ to the next wall to the right for which the $R$-module labelling it has rank one.

The following computes the numerics of the hyperplane arrangements that can arise.
\begin{prop}\label{affine hyper calc}
Suppose that $X\to\Spec R$ is an irreducible length $\ell$ flop, where $X$ is smooth.  Then the corresponding affine hyperplane arrangement, together with the ranks of the modules labelling each wall, are, for $\ell=1,\hdots 6$ respectively:
\def\HypScalex{8}
\[
\begin{array}{c}
\begin{tikzpicture}[scale=0.9]
\draw[densely dotted] (-1,0) -- ($(\HypScalex,0)+(1,0)$);
{\foreach \i in {0,2}
\filldraw[fill=white,draw=black] (\HypScalex*1/2*\i,0) circle (2pt);
}
\node at (0,-0.25) {$\scriptstyle 1$};
\node at (\HypScalex,-0.25) {$\scriptstyle 1$};
\end{tikzpicture}\\
\begin{tikzpicture}[scale=0.9]
\draw[densely dotted] (-1,0) -- ($(\HypScalex,0)+(1,0)$);
{\foreach \i in {0,1,2}
\filldraw[fill=white,draw=black] (\HypScalex*1/2*\i,0) circle (2pt);
}
\node at (0,-0.25) {$\scriptstyle 1$};
\node at (\HypScalex,-0.25) {$\scriptstyle 1$};
\node at (1/2*\HypScalex,-0.25) {$\scriptstyle 2$};

\end{tikzpicture}\\
\begin{tikzpicture}[scale=0.9]
\draw[densely dotted] (-1,0) -- ($(\HypScalex,0)+(1,0)$);
{\foreach \i in {0,1,2}
\filldraw[fill=white,draw=black] (\HypScalex*1/2*\i,0) circle (2pt);
}
{\foreach \i in {1,2}
\filldraw[fill=white,draw=black] (\HypScalex*1/3*\i,0) circle (2pt);
}
\node at (0,-0.25) {$\scriptstyle 1$};
\node at (\HypScalex,-0.25) {$\scriptstyle 1$};
\node at (1/2*\HypScalex,-0.25) {$\scriptstyle 2$};
\node at (1/3*\HypScalex,-0.25) {$\scriptstyle 3$};
\node at (2/3*\HypScalex,-0.25) {$\scriptstyle 3$};
\end{tikzpicture}\\
\begin{tikzpicture}[scale=0.9]
\draw[densely dotted] (-1,0) -- ($(\HypScalex,0)+(1,0)$);
{\foreach \i in {0,1,2}
\filldraw[fill=white,draw=black] (\HypScalex*1/2*\i,0) circle (2pt);
}
{\foreach \i in {1,2}
\filldraw[fill=white,draw=black] (\HypScalex*1/3*\i,0) circle (2pt);
}
{\foreach \i in {1,2,3}
\filldraw[fill=white,draw=black] (\HypScalex*1/4*\i,0) circle (2pt);
}
\node at (0,-0.25) {$\scriptstyle 1$};
\node at (\HypScalex,-0.25) {$\scriptstyle 1$};
\node at (1/2*\HypScalex,-0.25) {$\scriptstyle 2$};
\node at (1/3*\HypScalex,-0.25) {$\scriptstyle 3$};
\node at (2/3*\HypScalex,-0.25) {$\scriptstyle 3$};
\node at (1/4*\HypScalex,-0.25) {$\scriptstyle 4$};
\node at (3/4*\HypScalex,-0.25) {$\scriptstyle 4$};
\end{tikzpicture}\\
\begin{tikzpicture}[scale=0.9]
\draw[densely dotted] (-1,0) -- ($(\HypScalex,0)+(1,0)$);
{\foreach \i in {0,1,2}
\filldraw[fill=white,draw=black] (\HypScalex*1/2*\i,0) circle (2pt);
}
{\foreach \i in {1,2}
\filldraw[fill=white,draw=black] (\HypScalex*1/3*\i,0) circle (2pt);
}
{\foreach \i in {1,2,3}
\filldraw[fill=white,draw=black] (\HypScalex*1/4*\i,0) circle (2pt);
}
{\foreach \i in {1,2,3,4}
\filldraw[fill=white,draw=black] (\HypScalex*1/5*\i,0) circle (2pt);
}
\node at (0,-0.25) {$\scriptstyle 1$};
\node at (\HypScalex,-0.25) {$\scriptstyle 1$};
\node at (1/2*\HypScalex,-0.25) {$\scriptstyle 2$};
\node at (1/3*\HypScalex,-0.25) {$\scriptstyle 3$};
\node at (2/3*\HypScalex,-0.25) {$\scriptstyle 3$};
\node at (1/4*\HypScalex,-0.25) {$\scriptstyle 4$};
\node at (3/4*\HypScalex,-0.25) {$\scriptstyle 4$};
\node at (1/5*\HypScalex,-0.25) {$\scriptstyle 5$};
\node at (2/5*\HypScalex,-0.25) {$\scriptstyle 5$};
\node at (3/5*\HypScalex,-0.25) {$\scriptstyle 5$};
\node at (4/5*\HypScalex,-0.25) {$\scriptstyle 5$};
\end{tikzpicture}\\
\begin{tikzpicture}[scale=0.9]
\draw[densely dotted] (-1,0) -- ($(\HypScalex,0)+(1,0)$);
{\foreach \i in {0,1,2}
\filldraw[fill=white,draw=black] (\HypScalex*1/2*\i,0) circle (2pt);
}
{\foreach \i in {1,2}
\filldraw[fill=white,draw=black] (\HypScalex*1/3*\i,0) circle (2pt);
}
{\foreach \i in {1,2,3}
\filldraw[fill=white,draw=black] (\HypScalex*1/4*\i,0) circle (2pt);
}
{\foreach \i in {1,2,3,4}
\filldraw[fill=white,draw=black] (\HypScalex*1/5*\i,0) circle (2pt);
}
{\foreach \i in {1,2,3,4,5}
\filldraw[fill=white,draw=black] (\HypScalex*1/6*\i,0) circle (2pt);
}
\node at (0,-0.25) {$\scriptstyle 1$};
\node at (\HypScalex,-0.25) {$\scriptstyle 1$};
\node at (1/2*\HypScalex,-0.25) {$\scriptstyle 2$};
\node at (1/3*\HypScalex,-0.25) {$\scriptstyle 3$};
\node at (2/3*\HypScalex,-0.25) {$\scriptstyle 3$};
\node at (1/4*\HypScalex,-0.25) {$\scriptstyle 4$};
\node at (3/4*\HypScalex,-0.25) {$\scriptstyle 4$};
\node at (1/5*\HypScalex,-0.25) {$\scriptstyle 5$};
\node at (2/5*\HypScalex,-0.25) {$\scriptstyle 5$};
\node at (3/5*\HypScalex,-0.25) {$\scriptstyle 5$};
\node at (4/5*\HypScalex,-0.25) {$\scriptstyle 5$};
\node at (1/6*\HypScalex,-0.25) {$\scriptstyle 6$};
\node at (5/6*\HypScalex,-0.25) {$\scriptstyle 6$};
\end{tikzpicture}
\end{array}
\]
In each case the hyperplane arrangement is infinite, and the labels repeat.
\end{prop}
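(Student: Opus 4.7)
The strategy is to reduce to an explicit case-by-case computation via the classification of smooth irreducible 3-fold flops. The Katz--Morrison classification, combined with the general elephant construction of Section~\ref{elephant subsection}, determines the Dynkin data $(\Updelta,\scrJ)$ from the length: $\Updelta$ has type $A_1, D_4, E_6, E_7, E_8, E_8$ for $\ell=1,\ldots,6$ respectively, with $\Updelta\setminus\scrJ=\{v_\ell\}$ consisting of a single vertex whose coefficient in the highest root equals $\ell$.

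Given this, Theorem~\ref{affine summary} identifies $\Mut(N)$ with the chambers of $\Cone{\scrJ_{\aff}} \subset D_{\scrJ_{\aff}}$. Since $|\Updelta\setminus\scrJ|=1$, the ambient space $D_{\scrJ_{\aff}}$ is two-dimensional, and the level $\Level$ is the one-dimensional slice cut out by $z_0 + \ell\, z_1 = 1$. The plan is then to execute, for each $\ell$, the natural analogue of Example~\ref{cones and level example} (which handles precisely the $E_6$ case): each wall of $\Cone{\scrJ_{\aff}}$ is the trace on $D_{\scrJ_{\aff}}$ of a Weyl-group translate of a coordinate axis in $\bR^{|\Updelta_{\aff}|}$, and its intersection with $\Level$ is a rational point of the form $k/d$, where $d$ ranges over positive Dynkin labels of vertices that are reachable under the restricted Weyl action. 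Matching these data against the six choices of $\Updelta_{\aff}$ recovers the stated positions.

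For the rank labels, the summands of $N$ have ranks equal to the Dynkin labels of the corresponding vertices of $\Updelta$ by \cite[3.5.5]{VdB}. Combining this with the mutation combinatorics of Lemma~\ref{k-correspond proj} and the chamber-to-module bijection of Theorem~\ref{affine summary} then shows that the indecomposable summand labelling a wall at position $k/d$ has rank exactly $d$. Periodicity of the infinite arrangement is immediate from the translation $\bZ$-action of $\Pic X$ described in Lemma~\ref{class action comb}, which takes one period to the next.

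The main technical obstacle is the 2D-to-1D geometric computation for $E_7$ and the two $E_8$ cases, where the affine Weyl group is large and the walls are numerous. This is managed by exploiting two symmetries of the problem: the palindromic symmetry of the resulting labels about the centre (reflecting a Dynkin involution fixing $v_\ell$ in every case) and the $\bZ$-translation symmetry from $\Pic X$. Together these reduce the verification to computing a single half-period by hand in each case, after which the full arrangement is determined.
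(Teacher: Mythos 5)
Your approach is a genuinely different route from the paper's, but it has a concrete gap. The paper's proof invokes the local wall-crossing rule of \cite{IW9} (sketched in \cite[1.1]{Kinosaki}), whose key combinatorial step is: to cross the wall labelled by vertex $v$, temporarily delete $v$ from the marked affine Dynkin diagram, apply the Dynkin involution to the \emph{remaining} diagram, and reinsert $v$. This iteration directly produces both the sequence of wall labels (the ranks shown) and, via \eqref{k-correspond} and the level constraint $z_0 + \ell z_1 = 1$, the positions. The whole content of the proposition is the output of that iteration, and the paper's sketch for $D_4$ demonstrates exactly how the algorithm runs.

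Your proposal replaces this with a global description: walls are traces on $D_{\scrJ_{\aff}}$ of Weyl translates of coordinate hyperplanes, whose intersections with $\Level$ are rational points $k/d$ with $d$ a Dynkin label, and the rank at position $k/d$ equals $d$. Both of these assertions are \emph{consistent} with the answer, but neither is derived: you do not show which linear forms appear as restrictions of $w(H_j)$ to the two-dimensional slice, why their zero sets meet $\Level$ precisely at denominators equal to Dynkin labels, or why the indecomposable summand of the module labelling that wall has rank $d$ (the appeal to Lemma~\ref{k-correspond proj} and Theorem~\ref{affine summary} is a gesture, not an argument). Your reduction by palindromic and translational symmetry is reasonable, but it only reduces the amount of computation needed; it does not supply the computation. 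Without the delete/involute/reinsert rule, or an equivalent explicit mechanism, the ``compute a single half-period by hand'' step is not actually performable for $E_7$ and the two $E_8$ cases. To repair the argument, either import the local wall-crossing algorithm from \cite{IW9, Kinosaki}, or give an independent proof of the claimed $k/d$-position and rank-equals-denominator facts.
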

\begin{proof}
By Katz--Morrison \cite{KM,Kawa} it is known that for a smooth single-curve flop, the $\scrJ\subset\Delt$ is one of the following cases:
\[
\begin{array}{cccccc}
\begin{array}{c}
\begin{tikzpicture}[scale=0.25]
\node at (1,0) [DW] {};
\end{tikzpicture}
\end{array}
&
\begin{array}{c}
\begin{tikzpicture}[scale=0.25]
\node at (0,0) [DB] {};
\node at (1,0) [DW] {};
\node at (2,0) [DB] {};
\node at (1,1) [DB] {};
\end{tikzpicture}
\end{array}
&
\begin{array}{c}
\begin{tikzpicture}[scale=0.21]
\node at (0,0) [DB] {};
\node at (1,0) [DB] {};
\node at (2,0) [DW] {};
\node at (2,1) [DB] {};
\node at (3,0) [DB] {};
\node at (4,0) [DB] {};
\end{tikzpicture}
\end{array}
&
\begin{array}{c}
\begin{tikzpicture}[scale=0.21]
\node at (0,0) [DB] {};
\node at (1,0) [DB] {};
\node at (2,0) [DW] {};
\node at (2,1) [DB] {};
\node at (5,0) [DB] {};
\node at (3,0) [DB] {};
\node at (4,0) [DB] {};
\end{tikzpicture}
\end{array}
&
\begin{array}{c}
\begin{tikzpicture}[scale=0.21]
\node at (0,0) [DB] {};
\node at (1,0) [DB] {};
\node at (2,0) [DB] {};
\node at (2,1) [DB] {};
\node at (3,0) [DW] {};
\node at (4,0) [DB] {};
\node at (5,0) [DB] {};
\node at (6,0) [DB] {};
\end{tikzpicture}
\end{array}
&
\begin{array}{c}
\begin{tikzpicture}[scale=0.21]
\node at (0,0) [DB] {};
\node at (1,0) [DB] {};
\node at (2,0) [DW] {};
\node at (2,1) [DB] {};
\node at (3,0) [DB] {};
\node at (4,0) [DB] {};
\node at (5,0) [DB] {};
\node at (6,0) [DB] {};
\end{tikzpicture}
\end{array}\\
A_1&D_4&E_6&E_7&E_8(5)&E_8(6)
\end{array}
\]
We analyse each individually.  In each case, by \cite[\S1]{IW9} the $\scrJ$-affine arrangement $\Cone{\scrJ_{\aff}}$ can be calculated by using local wall crossing rules.  Combinatorially, this is very elementary, and is explained in detail in \cite[1.1]{Kinosaki}.  
We sketch the  $D_4$ case here.

As in \eqref{arrangement labelled one curve}, consider the chamber 
\[
\begin{tikzpicture}
\draw[densely dotted] (-5,0) -- (5,0);
{\foreach \i in {-2,-1,0,1,2}
\filldraw[fill=white,draw=black] (-2*\i,0) circle (2pt);
}
\node at (-4,-0.25) {$\scriptstyle R$};
\node at (-2,-0.25) {$\scriptstyle N_1^*$};
\end{tikzpicture}
\]
We first replace the modules by their ranks, and we label the chamber via McKay correspondence.  The fact we will repeatedly use below is that the rank of a summand equals the number $\updelta_i$, where $i$ is the vertex in the Dynkin diagram that the summand corresponds to, and $\updelta=(\updelta_i)_{i\in\Updelta_{\aff}}$ is the null root \cite[9.3]{IW9}.  Doing this, we obtain

\newcommand{\Dbase}[1][]{%
\begin{tikzpicture}[scale=0.25]
\node at (0,0) [DB] {};
\node at (1,0) [DW] {};
\node at (2,0) [DB] {};
\node at (1,1) [DB] {};
\node at (1,-1) [DW] {};
\end{tikzpicture}
}
\[
\begin{tikzpicture}
\draw[densely dotted] (-5,0) -- (5,0);
{\foreach \i in {-2,-1,0,1,2}
\filldraw[fill=white,draw=black] (-2*\i,0) circle (2pt);
}
{\foreach \i in {1}
\node at (-2*\i-1,0.5) {\Dbase};
}
\node at (-4,-0.25) {$\scriptstyle 1$};
\node at (-2,-0.25) {$\scriptstyle 2$};
\end{tikzpicture}
\]
To obtain wall crossing over the wall labelled $2$, temporarily delete the vertex corresponding to $2$, apply the Dynkin involution to the \emph{remainder} (which is trivial for $A_1\times A_1\times A_1\times A_1$), then insert back in the vertex labelled $2$. 
\[
\begin{tikzpicture}[scale=0.25]
\node at (-8,0) [DB] {};
\node at (-7,0) [DW] {};
\node at (-6,0) [DB] {};
\node at (-7,1) [DB] {};
\node at (-7,-1) [DW] {};
\draw[->] (-4,0) --node[above]{\scriptsize delete}(-2,0);
\node at (0,0) [DB] {};
\node[densely dotted] at (1,0) [DB] {};
\node at (2,0) [DB] {};
\node at (1,1) [DB] {};
\node at (1,-1) [DW] {};
\draw[->] (4,0) --node[above]{\scriptsize involution}(8,0);
\node at (10,0) [DB] {};
\node[densely dotted] at (11,0) [DB] {};
\node at (12,0) [DB] {};
\node at (11,1) [DB] {};
\node at (11,-1) [DW] {};
\draw[->] (14,0) --node[above]{\scriptsize insert}(16,0);
\node at (18,0) [DB] {};
\node at (19,0) [DW] {};
\node at (20,0) [DB] {};
\node at (19,1) [DB] {};
\node at (19,-1) [DW] {};
\end{tikzpicture}
\]
The wall crossing is thus described by 
\[
\begin{tikzpicture}
\draw[densely dotted] (-5,0) -- (5,0);
{\foreach \i in {-2,-1,0,1,2}
\filldraw[fill=white,draw=black] (-2*\i,0) circle (2pt);
}
{\foreach \i in {1,0}
\node at (-2*\i-1,0.5) {\Dbase};
}
\node at (-4,-0.25) {$\scriptstyle 1$};
\node at (-2,-0.25) {$\scriptstyle 2$};
\node at (0,-0.25) {$\scriptstyle 1$};
\end{tikzpicture}
\]
Applying the same local rule but instead at the other vertex, and repeating, gives
\[
\begin{tikzpicture}
\draw[densely dotted,->] (-5,0) -- (5,0);
\node at (5.25,0) {$\bR$};
{\foreach \i in {-2,-1,0,1,2}
\filldraw[fill=white,draw=black] (-2*\i,0) circle (2pt);
}
{\foreach \i in {-2,-1,0,1,1}
\node at (-2*\i-1,0.5) {\Dbase};
}
\node at (-4,-0.25) {$\scriptstyle 1$};
\node at (-2,-0.25) {$\scriptstyle 2$};
\node at (0,-0.25) {$\scriptstyle 1$};
\node at (2,-0.25) {$\scriptstyle 2$};
\node at (4,-0.25) {$\scriptstyle 1$};
\end{tikzpicture}
\]
The $E_6$ case is explained in detail in \cite[1.1]{Kinosaki}, and is summarised by the following.  The shaded region can be ignored for now, but will be used later in Theorem~\ref{SKMS text}.
\newcommand{\EbaseA}[1][]{%
\begin{tikzpicture}[scale=0.21]
\node at (0,0) [DB] {};
\node at (1,0) [DB] {};
\node at (2,0) [DW] {};
\node at (2,1) [DB] {};
\node at (2,2) [DW] {};
\node at (3,0) [DB] {};
\node at (4,0) [DB] {};
\end{tikzpicture}
}
\newcommand{\EbaseB}[1][]{%
\begin{tikzpicture}[scale=0.21]
\node at (0,0) [DB] {};
\node at (1,0) [DB] {};
\node at (2,0) [DW] {};
\node at (2,1) [DW] {};
\node at (2,2) [DB] {};
\node at (3,0) [DB] {};
\node at (4,0) [DB] {};
\end{tikzpicture}
}
\[
\begin{tikzpicture}
\filldraw[gray!10!white] (-2.5,-0.5) -- (-2.5,1) -- (5.5,1)--(5.5,-0.5) --cycle;
\draw[densely dotted] (-4.5,0) -- (6.5,0);
{\foreach \i in {-3,-2,-1,0,1,2}
\filldraw[fill=white,draw=black] (-2*\i,0) circle (2pt);
}
{\foreach \i in {2,1}
\node at (-2*\i+1,0.5) {\EbaseA};
}
{\foreach \i in {0,-1}
\node at (-2*\i+1,0.5) {\EbaseB};
}
\node at (5,0.5) {\EbaseA};
\node at (-4,-0.25) {$\scriptstyle 3$};
\node at (-2,-0.25) {$\scriptstyle 1$};
\node at (0,-0.25) {$\scriptstyle 3$};
\node at (2,-0.25) {$\scriptstyle 2$};
\node at (4,-0.25) {$\scriptstyle 3$};
\node at (6,-0.25) {$\scriptstyle 1$};
\end{tikzpicture}
\]

For $E_7$, $E_8(5)$ and $E_8(6)$, the calculations are, respectively,
\newcommand{\ESbaseA}[1][]{%
\begin{tikzpicture}[scale=0.21,rotate=90]
\node at (0,0) [DB] {};
\node at (1,0) [DB] {};
\node at (2,0) [DW] {};
\node at (2,1) [DB] {};
\node at (-1,0) [DB] {};
\node at (3,0) [DB] {};
\node at (4,0) [DB] {};
\node at (5,0) [DW] {};
\end{tikzpicture}
}
\newcommand{\ESbaseB}[1][]{%
\begin{tikzpicture}[scale=0.21,rotate=90]
\node at (-1,0) [DB] {};
\node at (0,0) [DB] {};
\node at (1,0) [DB] {};
\node at (2,0) [DW] {};
\node at (2,1) [DB] {};
\node at (3,0) [DW] {};
\node at (4,0) [DB] {};
\node at (5,0) [DB] {};
\end{tikzpicture}
}
\newcommand{\ESbaseC}[1][]{%
\begin{tikzpicture}[scale=0.21,rotate=90]
\node at (-1,0) [DB] {};
\node at (0,0) [DW] {};
\node at (1,0) [DB] {};
\node at (2,0) [DB] {};
\node at (2,1) [DB] {};
\node at (3,0) [DW] {};
\node at (4,0) [DB] {};
\node at (5,0) [DB] {};
\end{tikzpicture}
}

\[
\begin{tikzpicture}
\filldraw[gray!10!white] (-3,-0.5) -- (-3,2) -- (6,2)--(6,-0.5) --cycle;
\draw[densely dotted] (-5,0) -- (7.5,0);
{\foreach \i in {-3,-2,-1,0,1,2,3,4}
\filldraw[fill=white,draw=black] (-1.5*\i+2,0) circle (2pt);
}
\node at (-3.25,1) {\ESbaseA};
\node at (-1.75,1) {\ESbaseA};
\node at (-0.25,1) {\ESbaseB};
\node at (1.25,1) {\ESbaseC};
\node at (2.75,1) {\ESbaseC};
\node at (4.25,1) {\ESbaseB};
\node at (5.75,1) {\ESbaseA};
\node at (-4,-0.25) {$\scriptstyle 4$};
\node at (-2.5,-0.25) {$\scriptstyle 1$};
\node at (-1,-0.25) {$\scriptstyle 4$};
\node at (0.5,-0.25) {$\scriptstyle 3$};
\node at (2,-0.25) {$\scriptstyle 2$};
\node at (3.5,-0.25) {$\scriptstyle 3$};
\node at (5,-0.25) {$\scriptstyle 4$};
\node at (6.5,-0.25) {$\scriptstyle 1$};
\end{tikzpicture}
\]

\newcommand{\ESSbaseA}[1][]{%
\begin{tikzpicture}[scale=0.21,rotate=90]
\node at (0,0) [DB] {};
\node at (1,0) [DB] {};
\node at (2,0) [DB] {};
\node at (2,1) [DB] {};
\node at (3,0) [DW] {};
\node at (4,0) [DB] {};
\node at (5,0) [DB] {};
\node at (6,0) [DB] {};
\node at (7,0) [DW] {};
\end{tikzpicture}
}
\newcommand{\ESSbaseB}[1][]{%
\begin{tikzpicture}[scale=0.21,rotate=90]
\node at (0,0) [DB] {};
\node at (1,0) [DB] {};
\node at (2,0) [DB] {};
\node at (2,1) [DB] {};
\node at (3,0) [DW] {};
\node at (4,0) [DW] {};
\node at (5,0) [DB] {};
\node at (6,0) [DB] {};
\node at (7,0) [DB] {};
\end{tikzpicture}
}
\newcommand{\ESSbaseC}[1][]{%
\begin{tikzpicture}[scale=0.21,rotate=90]
\node at (0,0) [DB] {};
\node at (1,0) [DB] {};
\node at (2,0) [DB] {};
\node at (2,1) [DW] {};
\node at (3,0) [DB] {};
\node at (4,0) [DW] {};
\node at (5,0) [DB] {};
\node at (6,0) [DB] {};
\node at (7,0) [DB] {};
\end{tikzpicture}
}
\newcommand{\ESSbaseD}[1][]{%
\begin{tikzpicture}[scale=0.21,rotate=90]
\node at (0,0) [DB] {};
\node at (1,0) [DB] {};
\node at (2,0) [DB] {};
\node at (2,1) [DW] {};
\node at (3,0) [DW] {};
\node at (4,0) [DB] {};
\node at (5,0) [DB] {};
\node at (6,0) [DB] {};
\node at (7,0) [DB] {};
\end{tikzpicture}
}
\newcommand{\ESSbaseE}[1][]{%
\begin{tikzpicture}[scale=0.21,rotate=90]
\node at (0,0) [DW] {};
\node at (1,0) [DB] {};
\node at (2,0) [DB] {};
\node at (2,1) [DB] {};
\node at (3,0) [DW] {};
\node at (4,0) [DB] {};
\node at (5,0) [DB] {};
\node at (6,0) [DB] {};
\node at (7,0) [DB] {};
\end{tikzpicture}
}
\[
\begin{tikzpicture}
\filldraw[gray!10!white] (-3.25,-0.5) -- (-3.25,2) -- (6.75,2)--(6.75,-0.5) --cycle;
\draw[densely dotted] (-4.5,0) -- (7.5,0);
{\foreach \i in {-5,-4,-3,-2,-1,0,1,2,3,4,5,6}
\filldraw[fill=white,draw=black] (-1*\i+2,0) circle (2pt);
}
\node at (-3.5,1) {\ESSbaseA};
\node at (-2.5,1) {\ESSbaseA};
\node at (-1.5,1) {\ESSbaseB};
\node at (-0.5,1) {\ESSbaseC};
\node at (0.5,1) {\ESSbaseD};
\node at (1.5,1) {\ESSbaseE};
\node at (2.5,1) {\ESSbaseE};
\node at (3.5,1) {\ESSbaseD};
\node at (4.5,1) {\ESSbaseC};
\node at (5.5,1) {\ESSbaseB};
\node at (6.5,1) {\ESSbaseA};
\node at (-4,-0.25) {$\scriptstyle 5$};
\node at (-3,-0.25) {$\scriptstyle 1$};
\node at (-2,-0.25) {$\scriptstyle 5$};
\node at (-1,-0.25) {$\scriptstyle 4$};
\node at (0,-0.25) {$\scriptstyle 3$};
\node at (1,-0.25) {$\scriptstyle 5$};
\node at (2,-0.25) {$\scriptstyle 2$};
\node at (3,-0.25) {$\scriptstyle 5$};
\node at (4,-0.25) {$\scriptstyle 3$};
\node at (5,-0.25) {$\scriptstyle 4$};
\node at (6,-0.25) {$\scriptstyle 5$};
\node at (7,-0.25) {$\scriptstyle 1$};
\end{tikzpicture}
\]

\newcommand{\ESSSbaseA}[1][]{%
\begin{tikzpicture}[scale=0.21,rotate=90]
\node at (0,0) [DB] {};
\node at (1,0) [DB] {};
\node at (2,0) [DW] {};
\node at (2,1) [DB] {};
\node at (3,0) [DB] {};
\node at (4,0) [DB] {};
\node at (5,0) [DB] {};
\node at (6,0) [DB] {};
\node at (7,0) [DW] {};
\end{tikzpicture}
}
\newcommand{\ESSSbaseB}[1][]{%
\begin{tikzpicture}[scale=0.21,rotate=90]
\node at (0,0) [DB] {};
\node at (1,0) [DB] {};
\node at (2,0) [DW] {};
\node at (2,1) [DB] {};
\node at (3,0) [DW] {};
\node at (4,0) [DB] {};
\node at (5,0) [DB] {};
\node at (6,0) [DB] {};
\node at (7,0) [DB] {};
\end{tikzpicture}
}
\newcommand{\ESSSbaseC}[1][]{%
\begin{tikzpicture}[scale=0.21,rotate=90]
\node at (0,0) [DB] {};
\node at (1,0) [DW] {};
\node at (2,0) [DB] {};
\node at (2,1) [DB] {};
\node at (3,0) [DW] {};
\node at (4,0) [DB] {};
\node at (5,0) [DB] {};
\node at (6,0) [DB] {};
\node at (7,0) [DB] {};
\end{tikzpicture}
}
\newcommand{\ESSSbaseD}[1][]{%
\begin{tikzpicture}[scale=0.21,rotate=90]
\node at (0,0) [DB] {};
\node at (1,0) [DW] {};
\node at (2,0) [DB] {};
\node at (2,1) [DB] {};
\node at (3,0) [DB] {};
\node at (4,0) [DB] {};
\node at (5,0) [DW] {};
\node at (6,0) [DB] {};
\node at (7,0) [DB] {};
\end{tikzpicture}
}
\newcommand{\ESSSbaseE}[1][]{%
\begin{tikzpicture}[scale=0.21,rotate=90]
\node at (0,0) [DB] {};
\node at (1,0) [DB] {};
\node at (2,0) [DB] {};
\node at (2,1) [DB] {};
\node at (3,0) [DW] {};
\node at (4,0) [DB] {};
\node at (5,0) [DW] {};
\node at (6,0) [DB] {};
\node at (7,0) [DB] {};
\end{tikzpicture}
}
\newcommand{\ESSSbaseF}[1][]{%
\begin{tikzpicture}[scale=0.21,rotate=90]
\node at (0,0) [DB] {};
\node at (1,0) [DB] {};
\node at (2,0) [DB] {};
\node at (2,1) [DB] {};
\node at (3,0) [DW] {};
\node at (4,0) [DB] {};
\node at (5,0) [DB] {};
\node at (6,0) [DW] {};
\node at (7,0) [DB] {};
\end{tikzpicture}
}
\[
\begin{tikzpicture}[scale=0.9]
\filldraw[gray!10!white] (-5.25,-0.5) -- (-5.25,2.25) -- (6.75,2.25)--(6.75,-0.5) --cycle;
\draw[densely dotted] (-6.5,0) -- (7.5,0);
{\foreach \i in {-5,-4,-3,-2,-1,0,1,2,3,4,5,6,7,8}
\filldraw[fill=white,draw=black] (-1*\i+2,0) circle (2pt);
}
\node at (-5.5,1.1) {\ESSSbaseA};
\node at (-4.5,1.1) {\ESSSbaseA};
\node at (-3.5,1.1) {\ESSSbaseB};
\node at (-2.5,1.1) {\ESSSbaseC};
\node at (-1.5,1.1) {\ESSSbaseD};
\node at (-0.5,1.1) {\ESSSbaseE};
\node at (0.5,1.1) {\ESSSbaseF};
\node at (1.5,1.1) {\ESSSbaseF};
\node at (2.5,1.1) {\ESSSbaseE};
\node at (3.5,1.1) {\ESSSbaseD};
\node at (4.5,1.1) {\ESSSbaseC};
\node at (5.5,1.1) {\ESSSbaseB};
\node at (6.5,1.1) {\ESSSbaseA};
\node at (-6,-0.25) {$\scriptstyle 6$};
\node at (-5,-0.25) {$\scriptstyle 1$};
\node at (-4,-0.25) {$\scriptstyle 6$};
\node at (-3,-0.25) {$\scriptstyle 5$};
\node at (-2,-0.25) {$\scriptstyle 4$};
\node at (-1,-0.25) {$\scriptstyle 3$};
\node at (0,-0.25) {$\scriptstyle 5$};
\node at (1,-0.25) {$\scriptstyle 2$};
\node at (2,-0.25) {$\scriptstyle 5$};
\node at (3,-0.25) {$\scriptstyle 3$};
\node at (4,-0.25) {$\scriptstyle 4$};
\node at (5,-0.25) {$\scriptstyle 5$};
\node at (6,-0.25) {$\scriptstyle 6$};
\node at (7,-0.25) {$\scriptstyle 1$};
\end{tikzpicture}\qedhere
\]
\end{proof}

The following extends the pictures in  \cite[p6169]{T08} and \cite[Figure 1]{Aspinwall}, which describe the case $\ell=1$, to all higher lengths.

\begin{thm}\label{SKMS text}
Suppose that $X\to\Spec R$ is a length $\ell$ irreducible flop, where $X$ is smooth. Then the SKMS is one of the following:
\[
\begin{array}{cccc}
\begin{array}{c}
\begin{tikzpicture}
\draw[densely dotted,color=gray] (1,0) arc (0:180:1cm and 0.2cm);
\draw[densely dotted] (1,0) arc (0:-180:1cm and 0.2cm)
coordinate[pos=0.5] (A);
\draw ([shift=(-84:1cm)]0,0) arc (-84:84:1cm)  
[bend left] to (96:1cm)
arc (96:264:1cm)
[bend left] to cycle;
\draw[densely dotted] ([shift=(86.75:1cm)]0,0) arc  (86.75:94:1cm);
\draw[densely dotted] ([shift=(-86.75:1cm)]0,0) arc  (-86.75:-94:1cm);
\draw[draw=none] (1,-0.2) arc (0:-180:1cm and 0.2cm)
coordinate[pos=0.5] (a);
\filldraw[fill=white,draw=black] (A) circle (1.5pt);
\node at (a) {$\scriptstyle 1$};
\end{tikzpicture}
\end{array}
&
\begin{array}{c}
\begin{tikzpicture}
\draw[gray,densely dotted] (1,0) arc (0:180:1cm and 0.2cm);
\draw[densely dotted] (1,0) arc (0:-180:1cm and 0.2cm)
coordinate[pos=0.65] (A) coordinate[pos=0.35] (B);
\draw ([shift=(-84:1cm)]0,0) arc (-84:84:1cm)  
[bend left] to (96:1cm)
arc (96:264:1cm)
[bend left] to cycle;
\draw[densely dotted] ([shift=(86.75:1cm)]0,0) arc  (86.75:94:1cm);
\draw[densely dotted] ([shift=(-86.75:1cm)]0,0) arc  (-86.75:-94:1cm);
\draw[draw=none] (1,-0.2) arc (0:-180:1cm and 0.2cm)
coordinate[pos=0.65] (a) coordinate[pos=0.35] (b);
\filldraw[fill=white,draw=black] (A) circle (1.5pt);
\filldraw[fill=white,draw=black] (B) circle (1.5pt);
\node at (a) {$\scriptstyle 1$};
\node at (b) {$\scriptstyle 2$};
\end{tikzpicture}
\end{array}
&
\begin{array}{c}
\begin{tikzpicture}
\draw[gray,densely dotted] (1,0) arc (0:180:1cm and 0.2cm);
\draw[densely dotted] (1,0) arc (0:-180:1cm and 0.2cm)
coordinate[pos=0.71] (A) coordinate[pos=0.57] (B) coordinate[pos=0.43] (C) coordinate[pos=0.29] (D);
\draw ([shift=(-84:1cm)]0,0) arc (-84:84:1cm)  
[bend left] to (96:1cm)
arc (96:264:1cm)
[bend left] to cycle; 
\draw[densely dotted] ([shift=(86.75:1cm)]0,0) arc  (86.75:94:1cm);
\draw[densely dotted] ([shift=(-86.75:1cm)]0,0) arc  (-86.75:-94:1cm);
\draw[draw=none] (1,-0.2) arc (0:-180:1cm and 0.2cm)
coordinate[pos=0.71] (a) coordinate[pos=0.57] (b) coordinate[pos=0.43] (c) coordinate[pos=0.29] (d);
\filldraw[fill=white,draw=black] (A) circle (1.5pt);
\filldraw[fill=white,draw=black] (B) circle (1.5pt);
\filldraw[fill=white,draw=black] (C) circle (1.5pt);
\filldraw[fill=white,draw=black] (D) circle (1.5pt);
\node at (a) {$\scriptstyle 1$};
\node at (b) {$\scriptstyle 3$};
\node at (c) {$\scriptstyle 2$};
\node at (d) {$\scriptstyle 3$};
\end{tikzpicture}
\end{array}
&
\begin{array}{c}
\begin{tikzpicture}
\draw[gray,densely dotted] (1,0) arc (0:180:1cm and 0.2cm);
\draw[densely dotted] (1,0) arc (0:-180:1cm and 0.2cm)
coordinate[pos=0.8] (A) coordinate[pos=0.67] (B) coordinate[pos=0.56] (C) coordinate[pos=0.45] (D) coordinate[pos=0.33] (E) coordinate[pos=0.2] (F);
\draw ([shift=(-84:1cm)]0,0) arc (-84:84:1cm)  
[bend left] to (96:1cm)
arc (96:264:1cm)
[bend left] to cycle; 
\draw[densely dotted] ([shift=(86.75:1cm)]0,0) arc  (86.75:94:1cm);
\draw[densely dotted] ([shift=(-86.75:1cm)]0,0) arc  (-86.75:-94:1cm);
\draw[draw=none] (1,-0.2) arc (0:-180:1cm and 0.2cm)
coordinate[pos=0.8] (a) coordinate[pos=0.67] (b) coordinate[pos=0.56] (c) coordinate[pos=0.45] (d) coordinate[pos=0.33] (e) coordinate[pos=0.2] (f);
\filldraw[fill=white,draw=black] (A) circle (2pt);
\filldraw[fill=white,draw=black] (B) circle (2pt);
\filldraw[fill=white,draw=black] (C) circle (2pt);
\filldraw[fill=white,draw=black] (D) circle (2pt);
\filldraw[fill=white,draw=black] (E) circle (2pt);
\filldraw[fill=white,draw=black] (F) circle (2pt);
\node at (a) {$\scriptstyle 1$};
\node at (b) {$\scriptstyle 4$};
\node at (c) {$\scriptstyle 3$};
\node at (d) {$\scriptstyle 2$};
\node at (e) {$\scriptstyle 3$};
\node at (f) {$\scriptstyle 4$};
\end{tikzpicture}
\end{array}\\
\ell=1&\ell=2&\ell=3&\ell=4
\end{array}
\]
The cases $\ell=5,6$ behave slightly differently; respectively they are:
\[
\begin{array}{cc}
\begin{array}{c}
\begin{tikzpicture}[scale=1.5]
\draw[gray,densely dotted] (1,0) arc (0:180:1cm and 0.2cm);
\draw[densely dotted] (1,0) arc (0:-180:1cm and 0.2cm)
coordinate[pos=0.85] (A) coordinate[pos=0.75] (B) coordinate[pos=0.67] (C) coordinate[pos=0.6] (D) coordinate[pos=0.53] (E) coordinate[pos=0.465] (F)
coordinate[pos=0.4] (G) coordinate[pos=0.33] (H) coordinate[pos=0.25] (I) coordinate[pos=0.15] (J);
\draw ([shift=(-84:1cm)]0,0) arc (-84:84:1cm)  
[bend left] to (96:1cm)
arc (96:264:1cm)
[bend left] to cycle; 
\draw[densely dotted] ([shift=(86.75:1cm)]0,0) arc  (86.75:94:1cm);
\draw[densely dotted] ([shift=(-86.75:1cm)]0,0) arc  (-86.75:-94:1cm);
\draw[draw=none] (1,-0.15) arc (0:-180:1cm and 0.2cm)
coordinate[pos=0.85] (a) coordinate[pos=0.75] (b) coordinate[pos=0.67] (c) coordinate[pos=0.6] (d) coordinate[pos=0.53] (e) coordinate[pos=0.465] (f)
coordinate[pos=0.4] (g) coordinate[pos=0.33] (h) coordinate[pos=0.25] (i) coordinate[pos=0.15] (j);
\filldraw[fill=white,draw=black] (A) circle (1.5pt);
\filldraw[fill=white,draw=black] (B) circle (1.5pt);
\filldraw[fill=white,draw=black] (C) circle (1.5pt);
\filldraw[fill=white,draw=black] (D) circle (1.5pt);
\filldraw[fill=white,draw=black] (E) circle (1.5pt);
\filldraw[fill=white,draw=black] (F) circle (1.5pt);
\filldraw[fill=white,draw=black] (G) circle (1.5pt);
\filldraw[fill=white,draw=black] (H) circle (1.5pt);
\filldraw[fill=white,draw=black] (I) circle (1.5pt);
\filldraw[fill=white,draw=black] (J) circle (1.5pt);
\node at (a) {$\scriptstyle 1$};
\node at (b) {$\scriptstyle 5$};
\node at (c) {$\scriptstyle 4$};
\node at (d) {$\scriptstyle 3$};
\node at (e) {$\scriptstyle 5$};
\node at (f) {$\scriptstyle 2$};
\node at (g) {$\scriptstyle 5$};
\node at (h) {$\scriptstyle 3$};
\node at (i) {$\scriptstyle 4$};
\node at (j) {$\scriptstyle 5$};
\end{tikzpicture}
\end{array}
&
\begin{array}{c}
\begin{tikzpicture}[scale=1.5]
\draw[gray,densely dotted] (1,0) arc (0:180:1cm and 0.2cm);
\draw[densely dotted] (1,0) arc (0:-180:1cm and 0.2cm)
coordinate[pos=0.85] (A) coordinate[pos=0.76] (B) coordinate[pos=0.69] (C) coordinate[pos=0.63] (D) coordinate[pos=0.575] (E) coordinate[pos=0.525] (F)
coordinate[pos=0.475] (G) coordinate[pos=0.425] (H) coordinate[pos=0.37] (I) coordinate[pos=0.31] (J)
coordinate[pos=0.24] (K) coordinate[pos=0.15] (L);
\draw ([shift=(-84:1cm)]0,0) arc (-84:84:1cm)  
[bend left] to (96:1cm)
arc (96:264:1cm)
[bend left] to cycle; 
\draw[densely dotted] ([shift=(86.75:1cm)]0,0) arc  (86.75:94:1cm);
\draw[densely dotted] ([shift=(-86.75:1cm)]0,0) arc  (-86.75:-94:1cm);
\draw[draw=none] (1,-0.15) arc (0:-180:1cm and 0.2cm)
coordinate[pos=0.85] (a) coordinate[pos=0.76] (b) coordinate[pos=0.69] (c) coordinate[pos=0.63] (d) coordinate[pos=0.575] (e) coordinate[pos=0.525] (f)
coordinate[pos=0.475] (g) coordinate[pos=0.425] (h) coordinate[pos=0.37] (i) coordinate[pos=0.31] (j)
coordinate[pos=0.24] (k) coordinate[pos=0.15] (l);
\filldraw[fill=white,draw=black] (A) circle (1.5pt);
\filldraw[fill=white,draw=black] (B) circle (1.5pt);
\filldraw[fill=white,draw=black] (C) circle (1.5pt);
\filldraw[fill=white,draw=black] (D) circle (1.5pt);
\filldraw[fill=white,draw=black] (E) circle (1.5pt);
\filldraw[fill=white,draw=black] (F) circle (1.5pt);
\filldraw[fill=white,draw=black] (G) circle (1.5pt);
\filldraw[fill=white,draw=black] (H) circle (1.5pt);
\filldraw[fill=white,draw=black] (I) circle (1.5pt);
\filldraw[fill=white,draw=black] (J) circle (1.5pt);
\filldraw[fill=white,draw=black] (K) circle (1.5pt);
\filldraw[fill=white,draw=black] (L) circle (1.5pt);
\node at (a) {$\scriptstyle 1$};
\node at (b) {$\scriptstyle 6$};
\node at (c) {$\scriptstyle 5$};
\node at (d) {$\scriptstyle 4$};
\node at (e) {$\scriptstyle 3$};
\node at (f) {$\scriptstyle 5$};
\node at (g) {$\scriptstyle 2$};
\node at (h) {$\scriptstyle 5$};
\node at (i) {$\scriptstyle 3$};
\node at (j) {$\scriptstyle 4$};
\node at (k) {$\scriptstyle 5$};
\node at (l) {$\scriptstyle 6$};
\end{tikzpicture}
\end{array}\\
\ell=5&\ell=6
\end{array}
\]
\end{thm}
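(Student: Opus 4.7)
The plan is to combine Theorem~\ref{regular covering}\eqref{regular covering 2} with Theorem~\ref{semidirect main}, and then compute a simple quotient. Since the map $\cZ\colon\cStab{n}\scrD\to\Level_{\bC}\backslash\scrH^{\aff}_{\bC}$ is a regular covering with Galois group $\Br\scrD$, and $\Br\scrD\triangleleft\cAut{}\scrD$ with quotient $\Pic X$, one obtains a canonical homeomorphism
\[
\cStab{n}\scrD/\cAut{}\scrD\;\cong\;\bigl(\Level_{\bC}\backslash\scrH^{\aff}_{\bC}\bigr)\,/\,\Pic X,
\]
where the residual $\Pic X\cong\bZ$-action is the free translation $(x_1,y_1)\mapsto(x_1,y_1+1)$ identified by Lemma~\ref{Pic action K}, in the real coordinates of Example~\ref{Ex3.11}.

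Next, I would identify this quotient topologically. Writing $z_1=x_1+\ii y_1$, the holomorphic map $z_1\mapsto\exp(2\uppi z_1)$ factors through a biholomorphism $\Level_{\bC}/\bZ\simto\bC^*$, carrying the axis $\{x_1=0\}$ onto the unit circle and sending $x_1\to\pm\infty$ to the two poles $0,\infty\in S^2$. As already noted in Example~\ref{Ex3.11}, $\scrH^{\aff}_{\bC}$ is a discrete subset of the axis $\{x_1=0\}$, so its image in $\bC^*$ is a discrete subset of the equator. Hence the quotient is $S^2$ with the two polar points removed, together with one equatorial puncture per $\Pic X$-orbit in $\scrH^{\aff}$.

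Finally, I would count orbits. The $\bZ$-action has fundamental domain of length one in $y_1$, so the number of equatorial holes equals the number of walls of $\scrH^{\aff}$ in one period, and each hole inherits the rank label attached to its wall. Reading these directly from the six pictures in Proposition~\ref{affine hyper calc} yields exactly $1,2,4,6,10,12$ equatorial holes for $\ell=1,\ldots,6$, with precisely the rank labels recorded in the statement. The most substantive input is the compatibility between the $\Pic X$-action by line-bundle twists on stability conditions (Theorem~\ref{class action main} and \S\ref{Pic action}) and the translation action on $\Level_{\bC}$ under $\cZ$ (Lemma~\ref{Pic action K}); once this compatibility is in hand and the semidirect structure of $\cAut{}\scrD$ invoked, the entire computation reduces to this elementary counting.
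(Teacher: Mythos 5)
Your proposal is correct and follows essentially the same route as the paper: Theorem~\ref{regular covering} together with Theorem~\ref{semidirect main} and the short exact sequence \eqref{split ses groups} reduce the SKMS to $(\Level_{\bC}\backslash\scrH^{\aff}_{\bC})/\Pic X$, the $\Pic X$-action is the unit translation in the imaginary coordinate from Lemma~\ref{Pic action K}, and the number of equatorial holes is read off from the fundamental domains in Proposition~\ref{affine hyper calc}. Your explicit $z_1\mapsto\exp(2\uppi z_1)$ biholomorphism is a cleaner way of realising the cylinder identification that the paper handles by picture, but it is the same computation.
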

\begin{proof}
By Theorem~\ref{semidirect main} and the resulting exact sequence \eqref{split ses groups}, we first quotient by $\Br\scrD$, then quotient by $\Pic X$.  By Theorem~\ref{regular covering}, it suffices to identify $(\mathbb{C}^n\backslash \scrH^{\aff}_{\bC})/\Pic X$.

But this is \S\ref{Pic action}, see \eqref{Pic for ell 3}.  Indeed, the action of $\scrO(1)$ moves chambers to the left, by either $1,2,4,6,10,$ or $12$ steps, depending on the length of the flopping curve.  Thus, for example in the case $\ell=2$,  the generator $\scrO(-1)$ of $\Pic X$ acts via
\[
\begin{tikzpicture}
\filldraw[gray!10!white] (-0.5,1) -- (1.5,1) -- (1.5,-1)--(-0.5,-1) --cycle;
\draw[blue,densely dotted,->] (0,-1) -- (0,1);
\draw[blue,densely dotted,<-] (-2.5,0) -- (2.5,0);
{\foreach \i in {-2,-1,0,1,2}
\filldraw[fill=white,draw=black] (-1*\i,0) circle (2pt);
}
\draw[thick,->,yshift=1em] (-0.5,0) -- (1.5,0);
\draw[thick,->,yshift=-1em] (-0.5,0) -- (1.5,0);
\draw (-0.5,-1) -- (-0.5,1);
\draw[dotted] (1.5,-1) -- (1.5,1);
\end{tikzpicture}
\]
where we have shaded the fundamental domain. Thus, identifying edges to form a cylinder, the quotient space is
\[
\begin{array}{c}
\begin{tikzpicture}[scale=0.85]
\draw (1,-1) -- (1,1);
\draw (-1,-1) -- (-1,1);
\draw (0,1) ellipse (1cm and 0.2cm);
\draw[densely dotted] (1,-1) arc (0:180:1cm and 0.2cm);
\draw (1,-1) arc (0:-180:1cm and 0.2cm);
\filldraw[fill=white,draw=black] (-0.5,0) circle (2pt);
\filldraw[fill=white,draw=black] (0.5,0) circle (2pt);
\node at (-0.5,-0.25) {$\scriptstyle 1$};
\node at (0.5,-0.25) {$\scriptstyle 2$};
\end{tikzpicture}
\end{array}
\sim
\begin{array}{c}
\begin{tikzpicture}
\draw ([shift=(-84:1cm)]0,0) arc (-84:84:1cm)  
[bend left] to (96:1cm)
arc (96:264:1cm)
[bend left] to cycle;
\draw[densely dotted] (1,0) arc (0:180:1cm and 0.2cm);
\draw[densely dotted] (1,0) arc (0:-180:1cm and 0.2cm)
coordinate[pos=0.65] (A) coordinate[pos=0.35] (B);
\draw[densely dotted] ([shift=(86.75:1cm)]0,0) arc  (86.75:94:1cm);
\draw[densely dotted] ([shift=(-86.75:1cm)]0,0) arc  (-86.75:-94:1cm);
\draw[draw=none] (1,-0.2) arc (0:-180:1cm and 0.2cm)
coordinate[pos=0.65] (a) coordinate[pos=0.35] (b);
\filldraw[fill=white,draw=black] (A) circle (1.5pt);
\filldraw[fill=white,draw=black] (B) circle (1.5pt);
\node at (a) {$\scriptstyle 1$};
\node at (b) {$\scriptstyle 2$};
\end{tikzpicture}
\end{array}
\]
All other cases are similar, by identifying the left and right hand sides of the fundamental regions shaded in the proof of Proposition~\ref{affine hyper calc}.
\end{proof}

\appendix
\section{Combinatorial Tracking Results}\label{comb appendix}

In this appendix, which is independent of the rest of the paper, we give the proof of Propositions~\ref{complexified tracking} and \ref{complexified tracking 2}, and Lemma~\ref{path connected}.  The Propositions are proved in \S\ref{proof of Props section}, whilst Lemma~\ref{path connected} appears as Lemma~\ref{path connected app}. Throughout, we use the notation from Section~\ref{hyperplane section}. 

\subsection{Preliminary Results}
First, for $L\in\Mut_0(N)$ write $z\in(\Uptheta_L)_{\mathbb{C}}$ as
\[
z=(x_1,\hdots,x_n) + (y_1,\hdots,y_n)\ii \in\bR^n_x + \bR^n_y\,\ii.
\]
The action of $\upvarphi_L$ on K-theory is induced from $\bZ$, so it independently acts on both factors $\bR^n_x$ and $\bR^n_y$.  We will write $\scrH_{x}$ for the hyperplanes $\scrH$ viewed in $\bR^n_x$, and $\scrH_{y}$ for the hyperplanes $\scrH$ viewed in $\bR^n_y$.  Likewise for $H\in\scrH$, we will write $H_x\in\scrH_x$ and $H_y\in\scrH_y$ accordingly. Since $\scrH_{\bC}\colonequals \{ H\times H\mid H\in \scrH\}$, by definition
\begin{equation}
z=x+y\ii\in\scrH_{\bC}\iff  \exists\, H\in\scrH\mbox{ such that }x\in H_{x}\mbox{ and }y\in H_{y}.\label{both factors}
\end{equation}
On the other hand, for $L\in\Mut(N)$ write $z\in (\scrK_L)_{\mathbb{C}}$ as
\[
z=(x_0,x_1,\hdots,x_n) + (y_0,y_1,\hdots,y_n)\ii \in\bR^{n+1}_x + \bR^{n+1}_y\,\ii.
\]
The action of $\upphi_L$ again acts independently on both factors.  As in \S\ref{complexified actions subsection}, consider $\scrW$, the set of full hyperplanes in $\scrK_L\otimes{\bR}$ that separate the open chambers of $\Cone{\scrJ_{\aff}}$.

\begin{exa}\label{affine hypes picture}
In Example~\ref{cones and level example}, $\scrW$ is the following infinite collection of hyperplanes in $\mathbb{R}^2$
\[
\begin{tikzpicture}[very thin,scale=1.5,>=stealth]
\draw (-3,0) -- (3,0);
\draw (0,-1) -- (0,1);
\draw (0,0) -- (0,1);
\draw (0,0) -- (3,0);
{\foreach \i [evaluate=\i as \j using \i+1] in {1,2,3,4,5,6,7,8,9,10,11,12,13,14,15,16,17,18,19,20}
\draw (3*\i/\j,-1) -- (-3*\i/\j,1);
}
{\foreach \i [evaluate=\i as \j using \i+1] in {1,2,3,4,5,6,7,8,9,10,11,12,13,14,15,16,17,18,19,20}
\draw (-3,\i/\j) -- (3,-\i/\j);
}
\end{tikzpicture}
\] 
The hyperplanes converge on the line $\upvartheta_0+3\upvartheta_1=0$, but $\scrW$ does not contain this line.
\end{exa}

Write $\scrW_x$ for the hyperplanes $\scrW$ viewed in $\bR^{n+1}_x$, and $\scrW_{y}$ for the hyperplanes $\scrW$ viewed in $\bR^{n+1}_y$.  Mirroring the above notation, for $W\in\scrW$, similarly consider $W_x\in\scrW_x$ and $W_y\in\scrW_y$.  Again, by definition
\begin{equation}
z=x+y\ii\in\scrW_{\bC}\iff  \exists\, W\in\scrW\mbox{ such that }x\in W_{x}\mbox{ and }y\in W_{y}.\label{both factors 2}
\end{equation}

The following is clear.
\begin{lem}\label{hyper form}
With notation as above, the following statements hold.
\begin{enumerate}
\item\label{hyper form 1} The hyperplanes in $\scrH$ contain the coordinate axes, and are all of the form $\uplambda_1x_{i_1}+\hdots+\uplambda_sx_{i_s}=0$ where each $i_j\in\{1,\hdots,n\}$ and each $\uplambda_j>0$.
\item\label{hyper form 2} The hyperplanes in $\scrW$ contain the coordinate axes, and are all of the form $\uplambda_1x_{i_1}+\hdots+\uplambda_sx_{i_s}=0$ where each $i_j\in\{0,1,\hdots,n\}$ and each $\uplambda_j>0$.
\end{enumerate}
\end{lem}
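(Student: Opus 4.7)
The plan is to reduce both parts to the standard Coxeter-theoretic description of positive roots, namely that every positive root is a non-negative combination of simple roots. By Definition~\ref{T Cone def} we have $\Cone{\scrJ}=\Cone{\Updelta}\cap D_\scrJ$ and $\Cone{\scrJ_{\aff}}=\Cone{\Updelta_{\aff}}\cap D_{\scrJ_{\aff}}$, where the ambient cones are the Tits cones of $\WDelt$ and $\WDeltaff$ respectively. Theorem~\ref{HomMMP finite summary} identifies the chambers of $\scrH$ with the maximal open cells of $\Cone{\scrJ}$, and Theorem~\ref{affine summary} does the same for $\scrW$ and $\Cone{\scrJ_{\aff}}$. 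Consequently each $H\in\scrH$ is a codimension-one intersection $H_\beta\cap D_\scrJ$ for some reflecting hyperplane $H_\beta$ of $\WDelt$, and each $W\in\scrW$ is likewise of the form $H_\beta\cap D_{\scrJ_{\aff}}$ for a reflecting hyperplane of $\WDeltaff$.

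The key computation expresses $H_\beta$ in the basis $\{\upalpha_i^*\}$. Since the Weyl chamber $C_+$ is defined as the positive orthant in this basis, $\{\upalpha_i^*\}$ is precisely the basis dual to the simple roots $\{\upalpha_i\}$, and for any root $\beta=\sum_i c_i\upalpha_i$ the pairing satisfies $\beta(\upvartheta)=\sum_i c_i\upvartheta_i$ on $\upvartheta=\sum_i\upvartheta_i\upalpha_i^*$. Hence $H_\beta=\{\sum_i c_i\upvartheta_i=0\}$. When $\beta$ is a positive root of $\Updelta$ or $\Updelta_{\aff}$, we have $c_i\geq 0$ with not all $c_i$ zero. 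Restricting to $D_\scrJ$ sets $\upvartheta_i=0$ for $i\in\scrJ$, producing $\{\upvartheta\in D_\scrJ:\sum_{i\notin\scrJ}c_i\upvartheta_i=0\}$; if this is genuinely a hyperplane of $D_\scrJ$, then at least one $c_i$ with $i\notin\scrJ$ is positive, and discarding zero terms yields the asserted form $\sum_j\uplambda_j\upvartheta_{i_j}=0$ with $\uplambda_j>0$. The affine case is identical with $D_{\scrJ_{\aff}}$ in place of $D_\scrJ$.

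For the statement about coordinate axes, observe that each $\{\upvartheta_k=0\}\cap D_\scrJ$ with $k\notin\scrJ$, respectively $\{\upvartheta_k=0\}\cap D_{\scrJ_{\aff}}$ with $k\notin\scrJ_{\aff}$, is already a wall of the fundamental chamber $C_+$ and therefore belongs to $\scrH$, respectively $\scrW$. The only non-tautological ingredient in the whole argument is the standard Coxeter fact that positive roots have non-negative coefficients in the simple-root basis, valid in both the finite and affine settings, so no real obstacle is anticipated.
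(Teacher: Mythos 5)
Your proof is correct, but it takes a noticeably different route from the paper's. The paper's argument is purely synthetic: it observes that $C_+$ is a chamber of the arrangement, deduces that the coordinate hyperplanes $\{\upvartheta_k=0\}$ are walls of $C_+$ (hence members of $\scrH$ and $\scrW$), and then argues that if a defining linear form $\sum\uplambda_j\upvartheta_{i_j}$ had coefficients of mixed sign, the corresponding hyperplane would pass through the open orthant $C_+$, contradicting the fact that $C_+$ is a chamber. No root-theoretic input is required. You instead identify $\scrH$ (resp.\ $\scrW$) as the restriction arrangement $\scrT^{D_\scrJ}$ of the finite (resp.\ affine) Coxeter arrangement, note that each hyperplane is $H_\beta\cap D_\scrJ$ for some positive root $\beta$, and invoke the fact that positive roots have non-negative coefficients in the simple root basis. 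Both arguments are valid; yours has the virtue of making explicit that every hyperplane of $\scrH$ and $\scrW$ is a restriction of a reflecting hyperplane and of recording which root it came from, at the cost of relying on Theorems~\ref{HomMMP finite summary} and \ref{affine summary} plus the restriction-arrangement identification (which the paper does use, but only later in the proof of Lemma~\ref{removing gives finite}). The paper's version is more self-contained at this point in the exposition. One small caution: both proofs tacitly use that the arrangements are central (all hyperplanes pass through the origin), which follows from the chambers being linear cones; your approach makes this automatic since each $H_\beta$ passes through the origin by construction, whereas the paper leaves it implicit.
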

\begin{proof}
By definition, in both cases $C_+$ is a chamber. Since the coordinate axes bound this, both first statements follow. The second statements follow from the fact that $C_+$ is a chamber, together with the observation that if some $\uplambda_j<0$, then the hyperplane would pass through $C_+$.
\end{proof}

The following is then immediate, and establishes $\supseteq$ in Proposition~\ref{complexified tracking}.
\begin{cor}\label{subset 1}
$\bigcup\upvarphi_L(\bH_+)\subseteq \Uptheta_\bC\backslash \scrH_\bC$
\end{cor}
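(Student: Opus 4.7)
The plan is to prove the contrapositive in each factor: starting from $z = x+y\kern 1pt \ii \in \bH_+ \subset (\Uptheta_L)_\bC$, I would assume $\upvarphi_L(z) \in \scrH_\bC$ and derive a contradiction using the sign constraints built into $\bH$. By \eqref{both factors}, the hypothesis $\upvarphi_L(z) \in \scrH_\bC$ is equivalent to the existence of some $H \in \scrH$ such that $\upvarphi_L(x) \in H_x$ and $\upvarphi_L(y) \in H_y$, i.e.\ such that $x,y \in H' \colonequals \upvarphi_L^{-1}(H)$.

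The key step is to describe the linear equation cutting out $H'$. Because $\upvarphi_L(C_+) = C_L$ is a chamber of $\scrH$ by Theorem~\ref{HomMMP finite summary}, the pulled-back arrangement $\upvarphi_L^{-1}(\scrH)$ in $\Uptheta_{L,\bR}$ still has the positive orthant $C_+$ as a chamber, and so the proof of Lemma~\ref{hyper form}\eqref{hyper form 1} applies verbatim to $\upvarphi_L^{-1}(\scrH)$. Hence $H'$ can be written in the form
\[
\uplambda_1 x_{i_1} + \hdots + \uplambda_s x_{i_s} = 0
\]
with each $i_j \in \{1,\hdots,n\}$ and each $\uplambda_j > 0$.

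From $y \in H'_y$, it follows that $\uplambda_1 y_{i_1} + \hdots + \uplambda_s y_{i_s} = 0$. Since $z_{i_j} \in \bH$ forces $y_{i_j} \geq 0$, and each $\uplambda_j > 0$, necessarily $y_{i_j}=0$ for every $j$. But $z_{i_j} \in \bH$ combined with $y_{i_j}=0$ forces $x_{i_j} < 0$. Substituting into the equation $\uplambda_1 x_{i_1} + \hdots + \uplambda_s x_{i_s} = 0$ coming from $x \in H'_x$ yields a sum of strictly negative real numbers equal to zero, a contradiction.

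The only nontrivial ingredient is the transfer of Lemma~\ref{hyper form}\eqref{hyper form 1} to the pulled-back arrangement; everything else is a one-line sign check using the asymmetry of $\bH$ (which is upper half plane plus the negative real axis, excluding the positive real axis and $0$).
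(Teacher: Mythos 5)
Your proof is correct and uses essentially the same argument as the paper's: both deduce from Lemma~\ref{hyper form} that the relevant hyperplane has all-positive coefficients and then exploit the asymmetry of $\bH$ (namely $y_i=0\Rightarrow x_i<0$) to force a contradiction. The only presentational difference is that the paper first reduces to the base case $L=N$ via the bijection $\upvarphi_L\colon(\scrH_L)_\bC\to\scrH_\bC$ and then treats all hyperplanes at once, whereas you pull back the single offending hyperplane to $(\Uptheta_L)_\bC$; both moves ultimately rest on $\upvarphi_L(C_+)=C_L$ being a chamber, as supplied by Theorem~\ref{HomMMP finite summary}.
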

\begin{proof}
By \eqref{both factors} it is clear that $\upvarphi_L$ restricts to a bijection between $(\scrH_L)_\bC$ and $\scrH_\bC$, since we already know that it does this on both factors.  Hence it suffices to show that $\bH_+\subseteq \bC^n\backslash \scrH_\bC$.

For this, consider $z=x+y\ii\in\bH_+$.  If all $y_i>0$, then by Lemma~\ref{hyper form}\eqref{hyper form 1}, all $\uplambda_1y_{i_1}+\hdots+\uplambda_sy_{i_s}>0$, so $y\notin\scrH_{y}$, and hence $z\notin\scrH_\bC$ by \eqref{both factors}.  By permuting the numbering if necessary, we can thus assume that $y_1=\hdots=y_t=0$ for some $1\leq t\leq n$, and that $y_{t+1},\hdots,y_n>0$.  In this case, by the positivity of $y_{t+1},\hdots,y_n$, and the fact the rest are zero, again using Lemma~\ref{hyper form}\eqref{hyper form 1}  it follows that $y$ avoids all members of $\scrH_y$ except those hyperplanes of the form
\[
\uplambda_1y_{i_1}+\hdots+\uplambda_sy_{i_s}=0
\]
where $i_1,\hdots,i_s\in\{1,\hdots,t\}$.  But since $z\in\bH_+$, the fact that $y_1=\hdots=y_t=0$ forces $x_1,\hdots,x_t<0$.  Hence $x$ avoids all the corresponding members
\[
\uplambda_1x_{i_1}+\hdots+\uplambda_sx_{i_s}=0
\]
of $\scrH_x$. Thus, overall, $y$ avoids some hyperplanes, and the hyperplanes that it does not avoid are avoided by $x$.  Again by \eqref{both factors} it follows that $z\notin\scrH_\bC$.
 \end{proof}

For the affine version of the above, recall that 
\[
\bE_+\colonequals \biggr\{ z\in \bH_+'  \Bigm| \sum_{j=0}^n(\rk_RL_j) z_j=\ii \biggr\}
=(\Level_L)_\mathbb{C}\cap\bH'_+.
\]
The following is elementary. 

\begin{lem}[\ref{path connected}]\label{path connected app}
The subspace $\bE_+\subset (\scrK_L)_{\bC}$ is path connected.
\end{lem}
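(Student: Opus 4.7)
The plan is to show $\bE_+$ is path connected by explicitly connecting every point to a chosen basepoint by a linear path. Set $\ell_j \colonequals \rk_R L_j > 0$, and take as basepoint
\[
p^* = x^* + \ii y^* \quad\text{with}\quad x^*_j = 0,\ y^*_j = \frac{1}{(n+1)\ell_j}.
\]
The linear constraints $\sum \ell_j x^*_j = 0$ and $\sum \ell_j y^*_j = 1$ hold, and since each $y^*_j > 0$ with $x^*_j = 0$, each coordinate $\ii y^*_j$ lies in $\bH$ (the case $\vartheta = 1/2$). Hence $p^* \in \bE_+$.

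Given any $p = x + \ii y \in \bE_+$, I would take the linear interpolation
\[
p(t) \colonequals (1-t)\,x + \ii\bigl((1-t)\,y + t\,y^*\bigr), \qquad t \in [0,1],
\]
so that $p(0) = p$ and $p(1) = p^*$. The linearity of the level constraint is what makes this work: both $\sum \ell_j x_j(t) = (1-t)\cdot 0 = 0$ and $\sum \ell_j y_j(t) = (1-t)\cdot 1 + t\cdot 1 = 1$ hold at every $t$, so $p(t)$ always satisfies $\sum \ell_j z_j(t) = \ii$.

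The key step — and the only subtle one — is verifying $p(t) \in \bH_+'$ for all $t$, i.e.\ that each coordinate $z_j(t) = (1-t) x_j + \ii\bigl((1-t) y_j + t y_j^*\bigr)$ lies in $\bH$. Split into cases on $j$: if $y_j > 0$, then the imaginary part $(1-t) y_j + t y_j^*$ is a convex combination of positive numbers, hence positive, so $z_j(t)$ lies in the open upper half plane. If $y_j = 0$, then $p \in \bH_+'$ forces $x_j < 0$, which ensures $z_j(0) = x_j \in \bH$ (the negative real axis); for $t > 0$, the imaginary part becomes $t y_j^* > 0$, so $z_j(t)$ lies in the open upper half plane. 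Either way $z_j(t) \in \bH$ for all $t \in [0,1]$.

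The main obstacle one might worry about is the asymmetry of $\bH$ (closed on the negative real axis, open on the positive real axis): a naive convex-combination argument in $\bC$ would fail, since for instance $(1-t)(-1) + t(1)$ crosses $0 \notin \bH$. However, the interpolation above only moves the imaginary parts toward the strictly positive values $y_j^*$, so any coordinate starting on the negative real axis is immediately lifted into the open upper half plane for all $t > 0$; no coordinate is ever pushed onto the forbidden positive real axis or through the origin. Once this verification is complete, $p(\cdot)$ is a continuous path in $\bE_+$ from $p$ to $p^*$, and since $p$ was arbitrary, $\bE_+$ is path connected.
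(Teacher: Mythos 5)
Your proof is correct. The basepoint $p^*$ indeed lies in $\bE_+$, the linear interpolation preserves both level constraints by linearity, and your case analysis on each coordinate is exactly right: when $y_j>0$ the imaginary part $(1-t)y_j+ty_j^*$ stays strictly positive for all $t$; when $y_j=0$ one has $x_j<0$ by membership in $\bH$, so $z_j(0)$ sits on the negative real axis, and the imaginary part $ty_j^*$ becomes strictly positive the instant $t>0$, lifting $z_j(t)$ into the open upper half plane. No coordinate ever reaches the origin or the nonnegative real axis.

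The paper's own proof takes a different route. It first observes that the open region $\bE_+^\circ$ (all imaginary parts strictly positive) is path connected, essentially because it is a convex slice of $\bH'_+$, and then for a boundary point $z=x+y\ii$ with some $y_j=0$ it constructs a short path into $\bE_+^\circ$ by raising those zero coordinates to a small $t>0$ while decreasing a single positive coordinate $y_n$ by $\gamma t/\lambda_n$ to keep the weighted sum equal to one; the real parts are left untouched. Your argument collapses this into a single step: connecting every point directly to a fixed basepoint $p^*$ by moving the imaginary parts toward $y^*$ (which is automatically level-preserving since both $y$ and $y^*$ satisfy the level equation) and shrinking the real parts linearly to zero. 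This buys you a shorter and somewhat cleaner proof: you avoid the separate observation that $\bE_+^\circ$ is path connected, the reordering of coordinates, and the choice of the small parameter $s$. Both arguments hinge on the same observation, namely that moving imaginary parts toward a positive configuration while respecting the level constraint is always safe.
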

\begin{proof}
Set $\uplambda_i\colonequals \rk_RL_i$, and consider
\[
\bE_+^\circ\colonequals \biggr\{z\in(\scrK_L)_\mathbb{C} \Bigm| \sum_{j=0}^n\uplambda_jz_j=\ii\mbox{ and }\mathrm{Im}(z_j)>0\mbox{ for all }j \biggr\}=\bH_+'^{\circ}\cap(\Level_L)_{\mathbb{C}}.
\]
This is visibly path connected.  To prove the statement, it suffices to show that for every boundary point $z\in\bE_+\backslash \bE_+^\circ$, there is a path in $\bE_+$ from $z$ to a point $w$ in $\bE_+^\circ$.

Write $z=x+y\ii $, then since  $z\in\bE_+$, not all $y_j$ can be zero.  Further, since $z\in\bE_+\backslash \bE_+^\circ$, after reordering if necessary we can write
\[
y=(\underbrace{0,\hdots,0}_{k}, \underbrace{y_{k},\hdots, y_n}_{>0})\in\bH_+^\circ\cap(\Level_L)_{\mathbb{C}}.
\]
for some $k$ such that $0<k< n$.  Set $\upgamma\colonequals\sum_{i=0}^k\uplambda_i$  and fix $s$ such that  
$0<s<\frac{\uplambda_{n}y_{n}}{\upgamma}$. Then for any $t\in [0,s]$, consider the point 
\[
y(t)\colonequals (\underbrace{t,\hdots,t}_{k}, y_{k},\hdots, y_{n-1}, y_{n}-\tfrac{\upgamma t}{\uplambda_{n}})\in\scrK_L.
\]
This satisfies $\sum_{i=0}^n\uplambda_iy(t)_i=\sum_{i=k}^n\uplambda_iy_i$, which equals $1$ since $z\in\bE_+$.  Setting $z(t)=x+ y(t)\ii$, it is then clear that $z(t)\in(\Level_L)_\mathbb{C}$ for all $t\in[0,s]$.  On the other hand, by inspection $z(t)\in\bH'_+$ for all $t\in[0,s]$. It follows that $z(t)\in\bE_+$ for all $t\in[0,s]$, so setting $w\colonequals x+ y(s)\ii\in \bE_+^\circ$, the path $p\colon [0,s]\to \bE_+$ sending $t\mapsto z(t)$ connects $z$ and $w$.
\end{proof}

Recall that $\scrH^{\aff}_{\mathbb{C}}= 
\scrW_{\mathbb{C}}\cap\Level_{\bC}$. The following establishes $\supseteq$ in Proposition~\ref{complexified tracking 2}.

\begin{cor}\label{subset 22}
$\bigcup\upphi_L(\bE_+)\subseteq \Level_{\bC}\!\backslash\scrH_{\bC}^{\aff}$
\end{cor}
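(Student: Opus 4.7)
The plan is to mirror the proof of Corollary~\ref{subset 1}, adapted to the affine setting with the extra normalisation constraint.

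First, I would establish that $\upphi_L$ restricts to a bijection $(\scrH^{\aff}_L)_{\bC}\simto\scrH^{\aff}_{\bC}$. Being $\bZ$-linear, $\upphi_L$ acts identically on the real and imaginary factors of $(\scrK_L)_{\bC}$. By Theorem~\ref{affine summary} it permutes the walls of $\Cone{\scrJ_{\aff}}$, yielding $\upphi_L((\scrW_L)_{\bC})=\scrW_{\bC}$; and by the final line of the proof of Proposition~\ref{stability track comm diagram}, mutation preserves the normalisation, so $\upphi_L((\Level_L)_{\bC})=\Level_{\bC}$. Combining via \eqref{cpx of Haff} gives the required bijection. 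It thus suffices to prove $\bE_+\subseteq\Level_{\bC}\backslash\scrH^{\aff}_{\bC}$, which (since $\bE_+\subset\Level_{\bC}$ by definition) reduces to showing $\bE_+\cap\scrW_{\bC}=\emptyset$.

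For this, I would transcribe the case analysis of Corollary~\ref{subset 1}, using Lemma~\ref{hyper form}\eqref{hyper form 2} in place of Lemma~\ref{hyper form}\eqref{hyper form 1}, and \eqref{both factors 2} in place of \eqref{both factors}. Write $z=x+y\ii\in\bE_+$. The defining identity $\sum_{j=0}^{n}(\rk_R L_j)y_j=1$ forces at least one $y_j$ to be strictly positive, so after permuting indices we may assume $y_0=\hdots=y_t=0$ with $y_{t+1},\hdots,y_n>0$ for some $-1\leq t\leq n-1$ (where $t=-1$ means every $y_j>0$). Suppose for contradiction that $z\in W_{\bC}$ for some $W\in\scrW$. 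By Lemma~\ref{hyper form}\eqref{hyper form 2}, $W$ has an equation $\uplambda_1 z_{i_1}+\hdots+\uplambda_s z_{i_s}=0$ with every $\uplambda_k>0$. The condition $y\in W_y$, coupled with $y_{t+1},\hdots,y_n>0$, forces $i_1,\hdots,i_s\in\{0,\hdots,t\}$ (immediately contradictory when $t=-1$). But then $z\in\bH_+'$ together with $y_{i_k}=0$ forces $x_{i_k}<0$ for each $k$, so $\uplambda_1x_{i_1}+\hdots+\uplambda_sx_{i_s}<0$, contradicting $x\in W_x$. Hence $z\notin\scrW_{\bC}$ as required.

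The main obstacle is essentially cosmetic: the combinatorial shape of Lemma~\ref{hyper form} is identical in the finite and affine cases, and the normalisation packaged into $\bE_+$ is exactly what rules out the otherwise problematic all-zero imaginary case. Modulo these observations, the argument is a direct transcription of Corollary~\ref{subset 1}, with the only mild bookkeeping being the indexing shift from $\{1,\hdots,n\}$ to $\{0,\hdots,n\}$.
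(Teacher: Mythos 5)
Your proof is correct and follows the same route as the paper: reduce via the $\upphi_L$-bijection on $\scrW_\bC$ and preservation of the level to the claim $\bE_+\cap\scrW_\bC=\emptyset$, then run the imaginary-part/real-part dichotomy from Corollary~\ref{subset 1} with Lemma~\ref{hyper form}\eqref{hyper form 2} and~\eqref{both factors 2} in place of their finite counterparts. The paper states this adaptation in one line ("view this in $\bH_+'$, then follow the proof of Corollary~\ref{subset 1}"), whereas you have written it out explicitly, including the useful observation that the normalisation rules out the all-zero imaginary case.
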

\begin{proof}
By \eqref{both factors 2}  $\upphi_L$ restricts to a bijection between $(\scrW_L)_\bC$ and $\scrW_\bC$. Since mutation functors also preserve the level, and $\scrH^{\aff}_{\mathbb{C}}= 
\scrW_{\bC}\cap\Level_{\bC}$, it suffices to show that $\bE_+\subseteq \Level_{\bC}\!\backslash\scrH_{\bC}^{\aff}$.

For this, consider $z=x+y\ii\in\bE_+$. The key is to view this in $\bH'_+$, then follow the proof of Corollary~\ref{subset 1}.  We appeal to Lemma~\ref{hyper form}\eqref{hyper form 2} instead of Lemma~\ref{hyper form}\eqref{hyper form 1}, and \eqref{both factors 2} instead of \eqref{both factors}, then it follows that $z\notin\scrW_\bC$.  Hence  $z\in\Level_{\bC}\!\backslash\scrH_{\bC}^{\aff}$.
 \end{proof}

To obtain the converse direction in both Propositions~\ref{complexified tracking} and \ref{complexified tracking 2}  is slightly more tricky.  As preparation, recall that if $\cH$ is a real hyperplane arrangement, then the intersection poset $\scrL(\cH)$ of $\cH$ is the set of all possible intersections of subsets of hyperplanes from $\cH$. For $X,Y\in\scrL(\cH)$, consider
\begin{align*}
\cH_X&\colonequals\{ H\mid X\subseteq H\},\\
\cH^Y&\colonequals\{ H\cap Y\mid  Y\nsubseteq H\},
\end{align*}
called the   localization and restriction arrangements respectively.  The localization $\cH_X$ is a subarrangement of $\cH$, whilst $\cH^Y$ is an arrangement in $Y$. The following is elementary.

\begin{lem}\label{int then loc}
If $X\subseteq Y$, then $(\cH_X)^Y=(\cH^Y)_{X\cap Y}$.
\end{lem}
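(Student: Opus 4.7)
The plan is to directly unfold the definitions on each side and use the hypothesis $X\subseteq Y$ twice, once to simplify $X\cap Y=X$ and once to collapse the containment condition $X\subseteq H\cap Y$ to $X\subseteq H$. Concretely, by definition
\[
(\cH_X)^Y=\{H\cap Y\mid H\in\cH,\ X\subseteq H,\ Y\nsubseteq H\},
\]
while on the other side
\[
(\cH^Y)_{X\cap Y}=\{K\in\cH^Y\mid X\cap Y\subseteq K\}=\{H\cap Y\mid H\in\cH,\ Y\nsubseteq H,\ X\cap Y\subseteq H\cap Y\}.
\]

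First I would observe that since $X\subseteq Y$ the condition $X\cap Y\subseteq H\cap Y$ becomes $X\subseteq H\cap Y$. Then I would note the elementary fact that $X\subseteq H\cap Y$ is equivalent to ($X\subseteq H$ and $X\subseteq Y$), and the second of these is automatic by hypothesis. Hence the defining condition on the right reduces to $X\subseteq H$, matching the defining condition on the left. Both sets are therefore equal to $\{H\cap Y\mid H\in\cH,\ X\subseteq H,\ Y\nsubseteq H\}$, which gives the desired equality.

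There is no real obstacle here: it is a purely set-theoretic unravelling, with the hypothesis $X\subseteq Y$ entering only to identify $X\cap Y$ with $X$ and to absorb the trivial containment. The only point worth care is making sure that both descriptions are as \emph{subsets of the arrangement in $Y$} (i.e.\ that one compares hyperplanes in $Y$ of the form $H\cap Y$ with the same parametrisation), which is immediate from the definitions of localisation and restriction recalled in the text.
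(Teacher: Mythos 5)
Your proof is correct and is essentially the same argument as the paper's: both sides unfold the definitions of localisation and restriction, reducing the claim to the observation that $X\subseteq Y$ lets one replace $X\cap Y\subseteq H\cap Y$ by $X\subseteq H$. The paper leaves the final set-theoretic identification as ``clearly the same''; you have simply spelled it out.
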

\begin{proof}
On one hand $(\cH_X)^Y=\{H\cap Y\mid X\subseteq H\mbox{ and } Y\nsubseteq H\}$.  On the other hand
\[
(\cH^Y)_{X\cap Y}=\{ H\cap Y\mid Y\nsubseteq H\mbox{ and }X\cap Y\subseteq H\cap Y\}.
\]
These two sets are clearly the same, using the assumption that $X\subseteq Y$.
\end{proof}

Returning to our flops setting, recall from Lemma~\ref{hyper form} that the coordinate axes belong to $\scrH$ and $\scrW$.   As notation, for a subset $\scrI\subseteq \{1,\hdots,n\}$, consider 
\[
\mathrm{B}=\{\upvartheta_{j}=0 \mbox{ for all }j\in \scrI\}=\bigcap_{j\in \scrI}\{\upvartheta_i=0\}\in\scrL(\scrH).
\] 
Similarly, for $\scrI'\subseteq \{0,1,\hdots,n\}$, consider $\mathds{B}=\{x_{j}=0 \mbox{ for all }j\in \scrI'\}\in\scrL(\scrW)$. 
In the affine setting, the following result will be crucial.  

\begin{lem}\label{removing gives finite}
If $\scrI'\subsetneq \{0,\hdots,n\}$, then $\scrW_{\mathds{B}}$ is finite
\end{lem}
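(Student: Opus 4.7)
The plan is to reduce the finiteness of $\scrW_{\mathds{B}}$ to the local finiteness of $\scrH^{\aff}$ by intersecting with $\Level$. Because $\scrI'\subsetneq\{0,1,\hdots,n\}$, I would first fix some $k\notin\scrI'$ and consider the point $p\in\scrK\otimes\bR$ whose $k$-th coordinate equals $(\rk_R L_k)^{-1}$ and whose other coordinates vanish. By construction $\sum_j(\rk_R L_j)p_j=1$, so $p\in\Level$; and since $p_j=0$ for every $j\in\scrI'$, also $p\in\mathds{B}$. Thus $\mathds{B}\cap\Level$ is nonempty and contains the distinguished point $p$.

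Second, I would exploit the map $W\mapsto W\cap\Level$ sending $\scrW$ to components of $\scrH^{\aff}$. Every $W\in\scrW$ is linear (it passes through the origin), and moreover no such $W$ is parallel to $\Level$: the unique linear hyperplane parallel to $\Level$, namely $\{\sum_j(\rk_R L_j)z_j=0\}$, cannot separate any two chambers $\upphi_L(C_+)$ since all chambers lie strictly on the level-positive side of it, so it is not a wall in $\scrW$. It follows that each $W\in\scrW$ cuts $\Level$ in a codimension-one affine subspace, which by construction of $\scrH^{\aff}$ is a component of that arrangement. This map is injective, because a linear hyperplane through the origin is uniquely determined by any of its codimension-one affine slices disjoint from the origin.

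Finally, any $W\in\scrW_{\mathds{B}}$ contains $\mathds{B}\ni p$, so $W\cap\Level\in\scrH^{\aff}$ is a component passing through the single point $p$. By the local finiteness of $\scrH^{\aff}$, only finitely many of its components can pass through $p$; combined with the injectivity from the previous paragraph, this forces $\scrW_{\mathds{B}}$ to be finite. The main thing requiring care is the brief geometric verification that no wall $W\in\scrW$ is parallel to $\Level$, together with the observation that $\mathds{B}\cap\Level$ is nonempty exactly when $\scrI'$ is a proper subset (which is why the hypothesis is needed, and is also why $\scrW_{\mathds{B}}$ can be infinite when $\scrI'=\{0,\hdots,n\}$, i.e.\ when $\mathds{B}=\{0\}$); the remainder of the argument is a clean reduction to the locally finite setting.
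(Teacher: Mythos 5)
Your proof is correct, and it is genuinely different from the paper's. The paper argues inside the linear arrangement $\scrW$ directly: writing $\scrW$ as a restriction $\scrT^Y$ of the full affine Tits arrangement $\scrT$ for $\Updelta_{\aff}$, commuting localization and restriction (Lemma~\ref{int then loc}) reduces the claim to showing that $\scrT_X$ is finite when $X$ is a nontrivial intersection of coordinate hyperplanes, and this follows because the quotient $\scrT/X$ is the Tits arrangement of the Dynkin diagram obtained by deleting the corresponding vertices of $\Updelta_{\aff}$, which is necessarily of finite ADE type. You instead slice by $\Level$: the hypothesis $\scrI'\subsetneq\{0,\hdots,n\}$ precisely guarantees that $\mathds{B}\cap\Level$ is nonempty (your point $p$), every $W\in\scrW$ meets $\Level$ in a genuine affine hyperplane (since the level-zero hyperplane cannot be a wall, as all chambers $\upphi_L(C_+)$ lie strictly on the level-positive side), the map $W\mapsto W\cap\Level$ into $\scrH^{\aff}$ is injective by linear algebra, and all members of $\scrW_\mathds{B}$ slice to hyperplanes of $\scrH^{\aff}$ through $p$, of which there are finitely many by local finiteness. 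This is shorter and cleanly isolates why the hypothesis is needed, but it treats the local finiteness of $\scrH^{\aff}$ (stated without proof at the start of §4, and ultimately a classical fact about affine Tits arrangements inherited through restriction to $D_{\scrJ_{\aff}}$ and $\Level$) as a black box, whereas the paper's argument is essentially self-contained and, via the Dynkin-deletion step, gives finer structural information about $\scrW_\mathds{B}$ than mere finiteness. If you adopt your route, it would be worth making explicit that the facts $\ell_N\circ\upphi_L=\ell_L>0$ on $C_+$ (i.e.\ mutation preserves the level function), which you use to place all chambers strictly on one side of $\{\ell=0\}$, is the normalization-preservation noted in Proposition~\ref{stability track comm diagram}.
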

\begin{proof}
Say the Tits cone associated to the extended ADE graph $\Updelta_{\aff}$ lives inside the vector space $V\cong \bR^{|\Updelta_{\aff}|}$, with coordinates $(x_0,\hdots,x_m)$.   Write $\scrT$ for the set of full hyperplanes in $V$ that separate the chambers in the Tits cone.   The arrangement $\scrW$ is the set of full hyperplanes that separate chambers in $\Cone{\scrJ_{\aff}}$, so by Definition~\ref{T Cone def} we can write $\scrW=\scrT^Y$ for some $Y$ obtained by intersecting coordinate axes.  By the commutative diagram in Lemma~\ref{int then loc}, to show that $\scrW_\mathds{B}=(\scrT^Y)_{\mathds{B}}$ is finite, it suffices to show that $\scrW=\scrT_X$ is finite for any $X=\bigcap_{k\in \scrK}\{x_k=0\}$ with $\scrK\subsetneq \{0,\hdots,m\}$.  In turn, it suffices to show that the quotient  
\[
\scrT/X\colonequals \{ H/X\mid H\in\scrT_X\},
\]
is a finite arrangement.  This lives in $V/X$, which has lower dimension.

For $k\in \scrK$, applying the Coxeter element $s_k$ to the basis $\{x_j+X\mid j\in\scrK\}$ of $V/X$ negates the $x_k$ entry, and adds some multiple of $x_k$ to its neighbours in $\scrK$, via the standard Coxeter rule.  By inspection, this is the same as the Coxeter rule for $\Upgamma$, where $\Upgamma$ is obtained from $\Updelta_{\aff}$ by deleting the vertices  that are not in $\scrK$.  It follows that $\scrT/X$ is the Tits cone associated to the diagram $\Gamma$.  But deleting a non-empty set of vertices in an extended ADE Dynkin diagram gives a finite ADE Dynkin diagram, or a disjoint union thereof.  Hence the Tits cone for  $\Upgamma$ has finitely many hyperplanes, hence so too does $\scrT/X$, and thus $\scrT_X$.
\end{proof}

Given $\scrI\subseteq\{1,\hdots,n\}$, $\scrI'\subsetneq\{0,1,\hdots,n\}$, define
\begin{align*}
\mathrm{D}_-&\colonequals \{ \upvartheta\in\Uptheta_\bR\mid \upvartheta_j<0\mbox{ for all }j\in \scrI\}\\
\mathds{D}_-&\colonequals \{ x\in\scrK_\bR\mid x_j<0\mbox{ for all }j\in {\scrI'}\},
\end{align*}
and let $\MutTo_{\scrI}(N)$, respectively $\MutTo_{\scrI'}(N)$, be the set of all mutations $L\to\hdots\to N$ whose constituent length one paths all have labels in the set $\scrI$, respectively $\scrI'$. The following is one of the main technical results of this Appendix.

\begin{prop}\label{E lemma}
Consider subsets $\scrI\subseteq\{1,\hdots,n\}$, $\scrI'\subsetneq\{0,1,\hdots,n\}$, with associated $\mathrm{B}$ and $\mathds{B}$. Then the following statements hold.
\begin{enumerate}
\item\label{E lemma 1} The hyperplanes in $\scrH_\mathrm{B}$ are precisely those hyperplanes $\uplambda_1\upvartheta_{i_1}+\hdots+\uplambda_s\upvartheta_{i_s}=0$ from $\scrH$ such that every $i_j\in\scrI$.  Necessarily each $\uplambda_j>0$.
\item\label{E lemma 1b} The hyperplanes in $\scrW_\mathds{B}$ are precisely those hyperplanes $\uplambda_1x_{i_1}+\hdots+\uplambda_sx_{i_s}=0$ from $\scrW$ such that every $i_j\in\scrI'$.  Necessarily each $\uplambda_j>0$.
\item\label{E lemma 2} There are decompositions
\[
\bigcup_{\upalpha\in\MutTo_{\scrI}(N)}\kern -10pt\upvarphi_\upalpha(\mathrm{D}_-)=\Uptheta_\bR\backslash \scrH_\mathrm{B}
\quad\mbox{and}\quad
\bigcup_{\upbeta\in\MutTo_{\scrI'}(N)}\kern -10pt\upphi_\upalpha(\mathds{D}_-)=\scrK_\bR\backslash \scrW_\mathds{B}.
\]
\end{enumerate}
\end{prop}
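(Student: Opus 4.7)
The plan is to treat parts (1) and (2) as immediate consequences of Lemma~\ref{hyper form}, and then tackle part (3) by identifying $\mathrm{D}_-$ as a chamber of $\scrH_\mathrm{B}$ and arguing that the mutation maps $\upvarphi_\upalpha$ with $\upalpha\in\MutTo_\scrI(N)$ act transitively on such chambers.  Concretely for (1), a hyperplane $H=\{\uplambda_1\upvartheta_{i_1}+\hdots+\uplambda_s\upvartheta_{i_s}=0\}$ with $\uplambda_j>0$ contains $\mathrm{B}$ if and only if every $i_j\in\scrI$, since otherwise one can find points in $\mathrm{B}$ where the form is nonzero.  Part (2) is proved identically using Lemma~\ref{hyper form}\eqref{hyper form 2}.

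For the $\supseteq$ inclusion in part (3), the key observation I would use is that $\upvarphi_\upalpha$ fixes $\mathrm{B}$ pointwise whenever $\upalpha\in\MutTo_\scrI(N)$.  To see this, I would appeal to Lemma~\ref{k-correspond proj} and Lemma~\ref{invol}: mutating at a single index $i\in\scrI$ sends coordinates $(c_j)\mapsto(c'_j)$ where $c'_i=-c_i$ and $c'_j=c_j+b_{ij}c_i$ for $j\neq i$, which on $\mathrm{B}$ (where $c_i=0$) is the identity.  Iterating along $\upalpha$ yields $\upvarphi_\upalpha|_\mathrm{B}=\mathrm{id}$, so $\upvarphi_\upalpha$ permutes $\scrH_\mathrm{B}$ setwise.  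By part (1) each $H\in\scrH_\mathrm{B}$ has defining form $\sum_{j\in\scrI}\uplambda_j\upvartheta_j$ with $\uplambda_j>0$, which is strictly negative on $\mathrm{D}_-$; hence $\mathrm{D}_-\cap H=\emptyset$, and so $\upvarphi_\upalpha(\mathrm{D}_-)\subseteq\Uptheta_\bR\backslash\scrH_\mathrm{B}$.

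For the reverse inclusion, I will first observe that $\mathrm{D}_-$ is precisely one chamber of $\scrH_\mathrm{B}$: it is bounded by the coordinate hyperplanes $\{\upvartheta_j=0\}_{j\in\scrI}$ (which lie in $\scrH_\mathrm{B}$ by part (1)), and the previous paragraph shows no hyperplane of $\scrH_\mathrm{B}$ meets its interior.  It then suffices to show that the maps $\upvarphi_\upalpha$ with $\upalpha\in\MutTo_\scrI(N)$ act transitively on chambers of $\scrH_\mathrm{B}$.  The plan is a walking argument: given a chamber $\mathrm{E}$, I choose a minimal sequence of adjacent chambers $\mathrm{D}_-=\mathrm{E}_0,\mathrm{E}_1,\hdots,\mathrm{E}_t=\mathrm{E}$ in the finite simplicial arrangement $\scrH_\mathrm{B}$; each wall crossing $\mathrm{E}_{k-1}\to\mathrm{E}_k$ is across some $H_k\in\scrH_\mathrm{B}$ whose equation only involves coordinates in $\scrI$, so by the chamber-mutation correspondence in Theorem~\ref{HomMMP finite summary} the corresponding mutation index $i_k$ must lie in $\scrI$.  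Composing these mutations produces $\upalpha\in\MutTo_\scrI(N)$ with $\mathrm{E}=\upvarphi_\upalpha(\mathrm{D}_-)$.  The affine statement proceeds by the same argument, now using Lemma~\ref{hyper form}\eqref{hyper form 2}, Lemma~\ref{removing gives finite} (which supplies the essential finiteness of $\scrW_\mathds{B}$), and the analogous chamber-mutation correspondence in Theorem~\ref{affine summary}.

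The main obstacle I anticipate is the identification that wall crossings within $\scrH_\mathrm{B}$ (and $\scrW_\mathds{B}$) correspond exclusively to mutations at indices in $\scrI$ (respectively $\scrI'$).  This requires a careful local analysis near $\mathrm{B}$ of how the finer $\scrH$-chambers refine the $\scrH_\mathrm{B}$-chambers, which in turn relies on the simplicial structure of $\scrH$ and the way that Theorem~\ref{HomMMP finite summary} labels chambers by mutations.
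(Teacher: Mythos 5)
Your proofs of (1), (2), and the forward inclusion $\bigcup\upvarphi_\upalpha(\mathrm{D}_-)\subseteq\Uptheta_\bR\backslash\scrH_\mathrm{B}$ are sound and take essentially the same route as the paper's; in fact your observation that $\upvarphi_\upalpha|_\mathrm{B}=\mathrm{id}$ for $\upalpha\in\MutTo_\scrI(N)$, hence $\upvarphi_\upalpha$ carries the localised arrangement $\scrH_\mathrm{B}$ to itself, is a cleaner packaging of what the paper establishes via the explicit formula \eqref{k-correspond}.  The genuine gap is exactly where you flag it, but the issue is more specific than ``a careful local analysis'': your walking argument goes \emph{backwards} from an arbitrary $\mathrm{E}$ to $\mathrm{D}_-$, and at each step invokes ``the corresponding mutation index'' for a wall crossing in $\scrH_\mathrm{B}$.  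This has no a priori meaning, since the chamber--mutation correspondence of Theorem~\ref{HomMMP finite summary} is between $\scrH$-chambers and $\Mut_0(N)$, and an $\scrH_\mathrm{B}$-chamber is a union of many $\scrH$-chambers, so crossing a single $\scrH_\mathrm{B}$-wall is not obviously a single mutation.

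The paper closes this by walking \emph{outward} from $\mathrm{D}_-$ and inducting: it computes $\upvarphi_i(\mathrm{D}_-)$ and $\upvarphi_i(C_-)$ explicitly via \eqref{k-correspond}, showing that $\upvarphi_i(\mathrm{D}_-)$ is cut out precisely by the $\scrH_\mathrm{B}$-hyperplanes among the walls of $\upvarphi_i(C_-)$, and that it has no interior walls because $\upvarphi_i(C_-)$ has none (by the $C_-$ version of Theorem~\ref{HomMMP finite summary}).  In your framework the missing step can be filled as follows: since $\upvarphi_\upalpha$ bijects $(\scrH_L)_\mathrm{B}$ with $\scrH_\mathrm{B}$ and preserves adjacency, the walls of the $\scrH_\mathrm{B}$-chamber $\upvarphi_\upalpha(\mathrm{D}_-)$ are exactly $\upvarphi_\upalpha(\{\upvartheta_i=0\})$ for $i\in\scrI$, and the chamber across $\upvarphi_\upalpha(\{\upvartheta_i=0\})$ is $\upvarphi_\upalpha\upvarphi_i(\mathrm{D}_-)=\upvarphi_{\upalpha\circ s_i}(\mathrm{D}_-)$ with $\upalpha\circ s_i\in\MutTo_\scrI(N)$.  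Inducting on gallery distance from $\mathrm{D}_-$ in the finite arrangement $\scrH_\mathrm{B}$ then closes the argument.  The affine statement needs Lemma~\ref{removing gives finite} precisely so that this induction terminates, which you correctly identified.
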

\begin{proof}
(1) The first statement is elementary. The second is immediate from Lemma~\ref{hyper form}\eqref{hyper form 1}. Part (2) is identical, using instead Lemma~\ref{hyper form}\eqref{hyper form 2}.\\
(3)  Tracking across the mutation $\upnu_iN\to N$ for $i\in \scrI$, by \eqref{k-correspond} $\mathrm{D}_-$ gets sent to the region
\begin{equation}
\upvartheta_i>0, \quad\upvartheta_j+b_{ij}\upvartheta_i<0\mbox{ for }j\in \scrI-\{i\}\label{Eplus track 1}
\end{equation}
of $\Uptheta_\bR=\bR^n$, where $b_{ij}\in \bZ_{\geq0}$.  In contrast, $C_-$ gets sent to the region
\begin{equation}
\upvartheta_i>0, \quad\upvartheta_j+b_{ij}\upvartheta_i<0\mbox{ for }j\in\{1,\hdots,n\}-\{i\}.\label{Cplus track 1}
\end{equation}
Thus, by part \eqref{E lemma 1} we see that the walls of \eqref{Eplus track 1} are precisely those hyperplanes in $\scrH$ that belong to $\scrH_\mathrm{B}$ and also bound the walls of \eqref{Cplus track 1}. Hence the region \eqref{Eplus track 1} is bounded by elements of $\scrH_\mathrm{B}$.  There are no further walls inside this region, since otherwise there would be further walls within \eqref{Cplus track 1}, which is not the case, using the $C_-$ version of Theorem~\ref{HomMMP finite summary}.

Repeating the above argument, all chambers adjacent to $\mathrm{D}_-$ in $\scrH_\mathrm{B}$ can be obtained by tracking $\mathrm{D}_-$ through some mutation $\upnu_iN\to N$.   The proof  then just proceeds by induction.  Consider $\upnu_j\upnu_iN\to\upnu_iN\to N$, track $\mathrm{D}_-$ through both mutations, and just appeal to the $C_-$ version of Theorem~\ref{HomMMP finite summary}. This process finishes since the numbers of chambers in $\scrH_\mathrm{B}$ is finite, since $\scrH$ is, and at each stage mutation at the labels in $\scrI$ describes the $|\scrI|$ possible wall-crossings in each chamber.

The last statement is similar, replacing $\Updelta$ by $\Updelta_{\aff}$, and using the $C_-$ version of Theorem~\ref{affine summary} in place of Theorem~\ref{HomMMP finite summary}. The key point is that, since $\scrI'$ is a proper subset, by Lemma~\ref{removing gives finite} the hyperplane arrangement $\scrW_\mathds{B}$ is still finite, and so the conclusion of the last sentence in the above paragraph still holds.
\end{proof}

\begin{cor}\label{subset 2}
There is an inclusion $\bC^n\backslash \scrH_\bC\subseteq\bigcup_{L\in\Mut_0(N)}\upvarphi_L(\bH_+)$, and furthermore $\Level_{\bC}\!\backslash\scrH_{\bC}^{\aff}\subseteq\bigcup_{L\in\Mut(N)}\upphi_L(\bE_+)$.
\end{cor}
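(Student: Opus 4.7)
The plan is to prove the two inclusions in parallel, reducing each to Proposition~\ref{E lemma}\eqref{E lemma 2} via the characterisation of hyperplanes in Proposition~\ref{E lemma}\eqref{E lemma 1}. For the finite inclusion, take $z = x + \ii y \in \bC^n \backslash \scrH_\bC$. Since in the finite (Dynkin) case the chambers $\upvarphi_L(C_+)$ cover all of $\Uptheta_\bR$ by Theorem~\ref{HomMMP finite summary}, their closures do too, so I may choose $L_0 \in \Mut_0(N)$ with $y \in \upvarphi_{L_0}(\overline{C_+})$. Set $\tilde{y} \colonequals \upvarphi_{L_0}^{-1}(y) \in \overline{C_+}$, $\tilde{x} \colonequals \upvarphi_{L_0}^{-1}(x)$, and $\scrI \colonequals \{i : \tilde{y}_i = 0\}$. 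If $\scrI = \emptyset$ then $\upvarphi_{L_0}^{-1}(z) \in \bH^n \subseteq \bH_+$, and we are done.

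For $\scrI \neq \emptyset$, write $\mathrm{B} \colonequals \{\upvartheta_i = 0 \text{ for } i \in \scrI\}$, so $\tilde{y} \in \mathrm{B}$. Every hyperplane of $\scrH$ through $\tilde{y}$ lies in $\scrH_\mathrm{B}$, so the hypothesis $z \notin \scrH_\bC$, combined with \eqref{both factors}, forces $\tilde{x}$ to lie in $(\Uptheta_{L_0})_\bR \backslash \scrH_\mathrm{B}$. I then apply Proposition~\ref{E lemma}\eqref{E lemma 2} with $L_0$ in the role of $N$ (this is legitimate because its proof is inductive and local around the face $\mathrm{B}$, independent of basepoint): this produces a mutation sequence $\upbeta \in \MutTo_\scrI(L_0)$ with source $M$ such that $\upvarphi_\upbeta^{-1}(\tilde{x}) \in \mathrm{D}_-$. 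By Proposition~\ref{theta trivial}\eqref{theta trivial 1}, the composite $\upvarphi_M = \upvarphi_{L_0} \circ \upvarphi_\upbeta$ is well-defined on $K$-theory, and a direct check from \eqref{k-correspond} shows that a mutation at any label $i \in \scrI$ fixes every vector whose $\scrI$-coordinates all vanish, so $\upvarphi_M^{-1}(y) = \tilde{y}$. Examining $\upvarphi_M^{-1}(z)$ coordinatewise: for $j \notin \scrI$, the $j$-th imaginary part equals $\tilde{y}_j > 0$; for $j \in \scrI$, the imaginary part vanishes but the real part $\upvarphi_\upbeta^{-1}(\tilde{x})_j < 0$. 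Hence $\upvarphi_M^{-1}(z) \in \bH_+$, establishing $z \in \upvarphi_M(\bH_+)$.

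The affine inclusion follows by the identical template, replacing $\upvarphi$, $\scrH$, and $\bH_+$ with $\upphi$, $\scrW$, and $\bE_+$, and invoking the affine half of Proposition~\ref{E lemma}\eqref{E lemma 2}. The only adaptation comes in the first step: local finiteness of $\scrH^\aff$ in $\Level$, together with Theorem~\ref{affine summary}, supplies $L_0 \in \Mut(N)$ with $y \in \overline{\Alcove_{L_0}}$. The corresponding $\scrI' \colonequals \{i : \tilde{y}_i = 0\}$ is automatically a strict subset of $\{0, \ldots, n\}$, since $\sum_{j=0}^n (\rk_R L_{0,j}) \tilde{y}_j = 1 \neq 0$, and this strictness is precisely what licenses the affine version of Proposition~\ref{E lemma}\eqref{E lemma 2} (via the finiteness input from Lemma~\ref{removing gives finite}). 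The main obstacle in executing the plan is the careful bookkeeping required to rebase Proposition~\ref{E lemma}\eqref{E lemma 2} at $L_0 \neq N$, and to verify that the resulting composite map genuinely sends $z$ into $\bH_+$ (respectively $\bE_+$); the path-independence on $K$-theory provided by Proposition~\ref{theta trivial} and the explicit mutation formula \eqref{k-correspond} make both steps transparent.
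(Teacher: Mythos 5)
Your proof is correct and follows essentially the same strategy as the paper's: find a chamber whose closure captures the imaginary part, pass to the set $\scrI$ of vanishing imaginary coordinates, and then apply Proposition~\ref{E lemma}\eqref{E lemma 2} to the real part to produce a further mutation sequence with labels only in $\scrI$.

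The one genuine (if small) variation is the order of operations around strictness. You argue \emph{upfront}, directly from \eqref{both factors} and the hypothesis $z\notin\scrH_\bC$, that $\tilde x\notin\scrH_\mathrm{B}$, which lets you invoke the \emph{open} form of Proposition~\ref{E lemma}\eqref{E lemma 2} and land $\upvarphi_\upbeta^{-1}(\tilde x)$ in $\mathrm{D}_-$ (strict inequalities). The paper instead places $x''$ only in $\overline{\mathrm{D}}_-$ (closure) and deduces $x''_i\neq 0$ for $i\in\scrI$ \emph{afterwards}, again from $z\notin\scrH_\bC$, by noting that $x''_i=y''_i=0$ would put $z$ on a complexified coordinate hyperplane. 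Both routes consume the same hypothesis at a slightly different moment; yours is arguably cleaner in that it applies the proposition exactly as stated. Your handling of the rebasing at $L_0$ and the observation that $\scrI'$ is automatically proper in the affine case (via the level condition) both match the paper, which is also what enables the appeal to Lemma~\ref{removing gives finite} there.
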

\begin{proof}
Pick $z\in \bC^n\backslash \scrH_\bC$, and write $z=x+y\ii$.  By Theorem~\ref{HomMMP finite summary} applied to $\scrH_y$, we can find some $L\in\Mut_0(N)$ such that $\upvarphi_L^{-1}(y)$ has all coordinates $\geq 0$. If all coordinates are positive, then $\upvarphi_L^{-1}(z)\in\bH_+$, thus $z\in\upvarphi_L(\bH_+)$, as required.

Hence we can assume that some coordinate of $y'\colonequals\upvarphi_L^{-1}(y)\in\Uptheta_L$ is zero.  Thus there exists some non-empty subset $\scrI\subseteq\{1,\hdots,n\}$ such that $y'_i=0$ for all $i\in\scrI$, and $y'_j>0$ for $j\notin\scrI$.  Note that $\scrI=\{1,\hdots,n\}$ is possible, in which case all $y'_i=0$.  

Recall that $\upvarphi_L\colonequals \upvarphi_\upalpha$, where $\upalpha\colon L\to\hdots\to N$.  Write
\[
\upvarphi_\upalpha^{-1}(z)=\upvarphi_L^{-1}(z)=x'+y'\ii.
\]
Since $x'$ belongs to the closure of some chamber in the $x$-version of $\Uptheta_L$,  applying Proposition~\ref{E lemma}\eqref{E lemma 2} to $(\scrH_L)_x\subset\Uptheta_L$ we can find a sequence of mutations
\[
M\xrightarrow{\upnu_{j_1}}\hdots\xrightarrow{\upnu_{j_s}}L
\]
with each $j_k\in\scrI$, such that $\upvarphi_{j_1}^{-1}\hdots\upvarphi_{j_s}^{-1}(x)\in\overline{D}_-$.  Since each $j_k\in\scrI$, $y'_{j_k}=0$, and so this path has no effect on $y'$.  In particular, 
\begin{equation}
\upvarphi_{j_1}^{-1}\hdots\upvarphi_{j_s}^{-1}\upvarphi_\upalpha^{-1}(z)=x''+y'\ii,\label{show in Hplus}
\end{equation}
where $x''_i\leq 0$ provided that $i\in\scrI$. Since $z\notin\scrH_{\mathbb{C}}$, we must have $x_i''\neq 0$ for all $i\in\scrI$, else $\upvarphi_{j_1}^{-1}\hdots\upvarphi_{j_s}^{-1}\upvarphi_\upalpha^{-1}(z)$ and thus $z$ belongs to a complexified hyperplane.  Hence \eqref{show in Hplus} belongs to $\bH_+$,  and so $z\in\upvarphi_\upalpha\upvarphi_{j_s}\hdots\upvarphi_{j_1}(\bH_+)$.  This is the tracking of $\bH_+$ through the chain
\[
M\xrightarrow{\upnu_{j_1}}\hdots\xrightarrow{\upnu_{j_s}}L\overbrace{\to\hdots\to}^{\upalpha} N.
\] 
Appealing to  Proposition~\ref{theta trivial}, $\upvarphi_\upalpha\upvarphi_{j_s}\hdots\upvarphi_{j_1}=\upvarphi_M$, and so $z\in\upvarphi_M(\bH_+)$.  

For the second statement, let $z\in \Level_{\bC}\!\backslash\scrH_{\bC}^{\aff}$.  The proof proceeds as above, replacing $\upvarphi$ by $\upphi$ at all stages.  We obtain a subset $\scrI'\subseteq\{0,1,\hdots,n\}$, but crucially now $\scrI'\neq\{0,1,\hdots,n\}$ since $\upvarphi_L^{-1}(z)\in(\Level_L)_{\mathbb{C}}$.  This is due to the fact that mutation functors preserve the level, and so multiples of the $y$ coordinates must sum to one, hence not all the $y$ can be zero.  Hence $\scrI'$ is a proper subset, which still allows us to still appeal to Proposition~\ref{E lemma}\eqref{E lemma 2}.  We thus still deduce that $\upphi_M^{-1}(z)\in\bH'_+$.  Since mutation functors preserve the level, automatically it follows that $\upphi_M^{-1}(z)\in\bE_+$, and so $z\in\upphi_M(\bE_+)$.
\end{proof}

We lastly show that the unions are disjoint, which requires the following.

\begin{lem}\label{key comb g-vect}
Let $L,M\in\Mut_0(N)$, respectively $\Mut(N)$, and let $\upalpha\colon L\to M$ be a minimal path. Suppose that $\upvarphi_\upalpha(a'_1,\hdots,a'_n)=(a_1,\hdots,a_n)$ in $(\Uptheta_M)_\bR$, respectively $\upphi_\upalpha(a'_0,\hdots,a'_n)=(a_0,\hdots,a_n)$ in $(\scrK_M)_\bR$, with all $a_i,a_i'\in\mathbb{R}_{\geq 0}$, and write $\scrI=\{i\mid a'_i=0\}$. 
\begin{enumerate}
\item\label{key comb g-vect 11} If $\scrI=\emptyset$, then $L\cong M$.
\item\label{key comb g-vect 22} If $\scrI\neq\emptyset$, then the following statements hold.
\begin{enumerate}
\item\label{key comb g-vect 0} The simple mutations giving $\upalpha$ all have labels in the set $\scrI$.
\item\label{key comb g-vect 1} $a_i'=a_i$ for all $i$. 
\item\label{key comb g-vect 2} $\upvarphi_\upalpha[\scrP_i']=[\scrP_i]$, respectively $\upphi_\upalpha[\scrP_i']=[\scrP_i]$, for all $i\notin\scrI$.
\end{enumerate} 
\end{enumerate} 

\end{lem}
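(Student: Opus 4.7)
The plan is to combine the chamber-indexing in Theorems~\ref{affine summary} and \ref{HomMMP finite summary} with a direct geometric observation, then use the matrix formula of Lemma~\ref{k-correspond proj}. I will treat the finite case $L,M\in\Mut_0(N)$; the affine case is essentially verbatim, replacing $\scrH$ by $\scrW$ and $\Uptheta$ by $\scrK$.

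For part~\eqref{key comb g-vect 11}, set $p\colonequals\upvarphi_L(a')\in(\Uptheta_N)_\bR$. Applying Proposition~\ref{theta trivial}\eqref{theta trivial 1} to the two positive paths $C_L\to C_N$ given by a minimal path and by $\upalpha$ followed by a minimal path $C_M\to C_N$, we get $\upvarphi_L=\upvarphi_M\circ\upvarphi_\upalpha$, hence $p=\upvarphi_M(a)$. When $\scrI=\emptyset$, the point $a'$ lies in the open chamber $C_+\subset(\Uptheta_L)_\bR$, so $p$ lies in the open chamber $\upvarphi_L(C_+)$; on the other hand $p\in\overline{\upvarphi_M(C_+)}$. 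Since the interior of a chamber meets the closure of another chamber only when they coincide, $\upvarphi_L(C_+)=\upvarphi_M(C_+)$, and Theorem~\ref{HomMMP finite summary}\eqref{HomMMP finite summary 2} yields $L\cong M$.

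For part~\eqref{key comb g-vect 22}, the key geometric observation is that any hyperplane $H\in\scrH$ separating $\upvarphi_L(C_+)$ from $\upvarphi_M(C_+)$ must contain $p$: otherwise $p$ would lie strictly on one side of $H$, contradicting $p\in\overline{\upvarphi_L(C_+)}\cap\overline{\upvarphi_M(C_+)}$. Since $\upalpha$ is reduced, it crosses precisely the hyperplanes separating these two chambers, so every wall it crosses contains $p$. At the starting chamber $\upvarphi_L(C_+)$, the walls containing $p$ are exactly $\upvarphi_L(\{\upvartheta_i=0\})$ for $i\in\scrI$, because $\upvarphi_L$ is a linear isomorphism and $a'_i=0$ iff $i\in\scrI$; the first mutation of $\upalpha$ therefore has label $j\in\scrI$.

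I then induct on $\ell(\upalpha)$. By Lemma~\ref{invol} and Lemma~\ref{k-correspond proj}, mutation at $j$ sends the coordinate tuple $(v_0,\ldots,v_n)$ to the tuple with $i$-th entry $v_i+b_{ji}v_j$ for $i\neq j$ and $j$-th entry $-v_j$; when $j\in\scrI$ so $v_j=0$, this tuple is unchanged, hence the zero-set of $\upvarphi_j(a')$ remains $\scrI$. Any subpath of a reduced path is reduced, so applying the induction hypothesis to the shorter path $\upbeta\colon\upnu_jL\to M$ yields \eqref{key comb g-vect 0}. Since coordinates are preserved at each step, $a=a'$, giving \eqref{key comb g-vect 1}. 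For \eqref{key comb g-vect 2}, Lemma~\ref{k-correspond proj} shows that each mutation $\upvarphi_j$ with $j\neq i$ sends the $i$-th projective basis element to the correspondingly ordered projective basis element in the target; as no mutation in $\upalpha$ is at label $i\notin\scrI$, composing gives $\upvarphi_\upalpha[\scrP_i']=[\scrP_i]$. The main obstacle is the geometric step identifying that all separating hyperplanes contain the common boundary point $p$; once this is combined with the reduced-path property, everything else reduces to the mechanical matrix formula of Lemma~\ref{k-correspond proj}.
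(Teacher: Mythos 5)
Your proof is correct, and it takes a genuinely different route from the paper's. The paper proves~\eqref{key comb g-vect 0} by invoking the Coxeter-style labelling of faces from \cite{IW9}: it tracks the face $C_\scrI$ along $\upphi_\upalpha$, writes each intermediate face as $w_kC_{\scrI_k}$ for a coset representative $w_k$ in the (affine) Weyl group, uses the fact that minimal coset representative length cannot decrease along a reduced gallery, and concludes from the endpoints both having trivial representative that every $w_k$ is trivial, forcing each label into $\scrI$. You replace this entirely with basic convexity: every hyperplane separating $\upvarphi_L(C_+)$ from $\upvarphi_M(C_+)$ must contain the point $p$ lying in both closures; a reduced path crosses exactly the separating hyperplanes, so the first wall crossed contains $p$, and the walls of $\upvarphi_L(C_+)$ containing $p$ are precisely those with labels in $\scrI$; then the coordinate formula from Lemma~\ref{k-correspond proj} (and Lemma~\ref{invol}) shows mutation at $j\in\scrI$ fixes the coordinate tuple, so the induction closes and \eqref{key comb g-vect 1},~\eqref{key comb g-vect 2} fall out by the same matrix computation. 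Your argument is shorter and avoids the wall-labelling machinery of \cite{IW9}; what the paper's version buys is explicit compatibility with the Coxeter structure, but that is not needed for the statement. The one place where you are terse is the inductive step: you should say explicitly that $p\in\overline{\upvarphi_{\upnu_jL}(C_+)}$ before invoking the hypothesis for $\upbeta$ — but this is immediate from your observation that the coordinate tuple of $p$ relative to $\upnu_jL$ is still $a'\in\overline{C_+}$, so the gap is cosmetic.
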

\begin{proof}
We prove the affine case $\scrK_M$, with the case $\Uptheta_M$ being similar.

Set $p=\sum_{i=0}^na'_i[\scrP_i]$.  For a general subset $J$ of $\{0,1,\hdots,n\}$ and $ N'\in \Mut(N)$, set
\[
C_J\colonequals \left\{ \sum_{i=0}^nc_i[\scrP_i]\in(\scrK_{N'})_\bR
\left|
\begin{array}{cl}
c_i=0 &\mbox{if }i\in J\\
c_i>0 &\mbox{if }i\notin J
\end{array}
\right.
\right\}. 
\]
As calibration, note that $C_+=C_{\emptyset}$.

\noindent
(1) If all $a_i>0$, then since chambers map to chambers,  $\upphi_\upalpha(p)\in C_+$.  Thus $C_+\cap\upphi_\upalpha(C_+)\neq\emptyset$. By Theorem~\ref{affine summary} (respectively \ref{HomMMP finite summary} for $\Uptheta$), $L\cong M$. 

\noindent
(2) By assumption $\scrI\neq\emptyset$, in which case its complement $\scrI^c$ in $\{0,1,\hdots,n\}$ is a proper subset. Since $p\in C_\scrI$, which is a codimension $|\scrI^c|$ wall of $C_+$, and $\upphi_\upalpha$ maps walls to walls (maintaining codimension), it follows that $\upphi_\upalpha(p)$ lies in a codimension $|\scrI^c|$ wall of $C_+$.  These all have the form  $C_{\scrI'}$ for some $|\scrI'|=|\scrI|$, and so  it follows that $\upphi_\upalpha(p)\in C_{\scrI'}$ for some such $\scrI'$ with $|\scrI'|=|\scrI|$.  

We first argue that (a) implies  (b) and (c). Indeed, since $\upphi_j$, with $j \in \scrI$, negates the entry $j$ (which is zero) and adds zero to the neighbours, evidently this has no effect on elements in $C_{\scrI}$, and so $(a_0,\hdots,a_n)=\upphi_\upalpha(a'_0,\hdots,a'_n)=(a'_0,\hdots,a'_n)$. Consequently, we have $a'_i=a_i$ for all $i$, proving (b).  Furthermore, (c) follows immediately from (a), using Lemma \ref{k-correspond proj}.

Now we prove (a). Write $\upalpha\colon L_0\colonequals L\to L_1\to \hdots\to L_{m}\colonequals M$ for a minimal path from $L$ to $M$. By \cite[1.12]{IW9}, in every $\scrK_{L_i}$, all chambers and all walls of all codimension are labelled by Coxeter information. Namely, by $wC_J$ for some $w$ in the affine Weyl group $W_{\Updelta_{\aff}}$, and some subset $J$ of the vertices of the affine Dynkin diagram $\Updelta_{\aff}$.  The codimension $|\scrI^c|$ walls have the form $w C_{J}$ for certain $J$ and $w\in W_{\Updelta_{\aff}}$, with $|J|=|\scrI^c|$.    Consequently, tracking $C_\scrI$ in $\scrK_L\otimes\mathbb{R}$ through $\upphi_\upalpha=\upphi_{i_n}\hdots\upphi_{i_1}$ gives a sequence of labelled walls
\[
C_\scrI \mapsto w_1C_{\scrI_1} \mapsto w_2 C_{\scrI_2} \mapsto  \hdots  \mapsto w_m C_{\scrI_m} = \upphi_\upalpha(C_\scrI),  
\]
where the last term is in $\scrK_M\otimes\mathbb{R}$.  At each step, since atoms follow the weak order, the length of the smallest coset representative $w_i$ cannot decrease.  This holds just since the statement is true for chambers, 
and the labels on the walls are induced from these.

Since $\upphi_\upalpha(p)\in C_{\scrI'}$, and $\upphi_\upalpha(p)\in \upphi_\upalpha(C_\scrI)= w_m C_{\scrI_m}$, it follows that $C_{\scrI'}\cap w_m C_{\scrI_m}\neq\emptyset$.  As is standard \cite[V.4.6, Proposition 5]{Bourbaki},
 we deduce that $\scrI_m=\scrI'$, and $w_m\in W_{\scrI'}$, where $W_{\scrI'}$ is the subgroup of $W_{\Updelta_{\aff}}$ generated by $s_i$ with $i\in\scrI'$.   In particular $w_m C_{\scrI_m}=C_{\scrI'}$, and so the above chain is
\[
C_\scrI \mapsto w_1C_{\scrI_1} \mapsto w_2 C_{\scrI_2} \mapsto  \hdots  \mapsto C_{\scrI'}.  
\]
As the minimal length of the coset representative $w_i$ cannot decrease throughout the chain, and the chain starts and finishes with length zero, it follows that at each step $w_i C_{\scrI_i}=C_{\scrI_i}$.  Thus, at step one, $\upphi_{i_1}\colon C_\scrI\mapsto C_{\scrI_1}$.  By inspection, this  occurs if and only if the label $i_1$ is in the set $\scrI$, and $\scrI=\scrI_1$.  Inducting along the chain, every label $i_t$ is in the set $\scrI$, and (a) follows.
\end{proof}

\begin{cor}\label{disjoint 1}
Suppose that $L,M\in\Mut_0(N)$, and let $\upalpha$ be the minimal path from $L$ to $M$.  Then  
$\upvarphi_\upalpha(\mathbb{H}_+)\cap \mathbb{H}_+ \neq\emptyset$ in $\Uptheta_M\iff L\cong M$. The same statement holds for $\Mut(N)$, using instead $\upphi_\upalpha(\bE_+)\cap \bE_+ \neq\emptyset$.
\end{cor}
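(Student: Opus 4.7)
The direction $(\Leftarrow)$ is immediate: if $L\cong M$, the minimal path $\upalpha$ is empty, $\upvarphi_\upalpha=\mathrm{id}$, and the intersection equals $\mathbb{H}_+$ itself. For $(\Rightarrow)$, the plan is to combine Lemma~\ref{key comb g-vect} with the chamber disjointness of Proposition~\ref{E lemma}\eqref{E lemma 2}. Take $z=x+y\ii\in\mathbb{H}_+$ with $\upvarphi_\upalpha(z)\in\mathbb{H}_+$, and set $y'\colonequals\upvarphi_\upalpha(y)$. Since points of $\mathbb{H}_+$ have imaginary parts with entries $\geq 0$, both $y$ and $y'$ satisfy the hypothesis of Lemma~\ref{key comb g-vect}.

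Put $\scrI\colonequals\{i\mid y_i=0\}$. If $\scrI=\emptyset$, part~(1) of Lemma~\ref{key comb g-vect} directly yields $L\cong M$. Otherwise, assume for contradiction that $L\ncong M$, so $\upalpha$ is a nontrivial minimal path. Part~(2)(a) of Lemma~\ref{key comb g-vect} then forces every simple mutation in $\upalpha$ to have label in $\scrI$, and part~(2)(b) forces $y'_j=0$ for every $j\in\scrI$. The key step is to restrict everything to the $\scrI$-coordinates. Because $z\in\mathbb{H}_+$ and $y_j=0$ for $j\in\scrI$, the definition of $\mathbb{H}_+$ forces $x_j<0$ for each $j\in\scrI$, so $x$ lies in the region $\mathrm{D}_-$ of Proposition~\ref{E lemma}. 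Inspection of the single-step formula in Lemma~\ref{k-correspond proj} shows that for a simple mutation at $i\in\scrI$, the $\scrI$-coordinates of the output depend only on the $\scrI$-coordinates of the input; iterating, the $\scrI$-projection of $\upvarphi_\upalpha$ is exactly the composition of the corresponding mutations inside the $\scrI$-restricted sub-arrangement of Proposition~\ref{E lemma}.

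Applying the $\mathbb{H}_+$ condition to $\upvarphi_\upalpha(z)$, together with $y'_j=0$ for $j\in\scrI$, now forces $\upvarphi_\upalpha(x)_j<0$ for every $j\in\scrI$. Hence $\mathrm{D}_-$ and $\upvarphi_\upalpha(\mathrm{D}_-)$ have nonempty intersection after projection to $\bR^{|\scrI|}$, so by Proposition~\ref{E lemma}\eqref{E lemma 2} they coincide as chambers of the $\scrI$-restricted arrangement. But $\upalpha$ is minimal, hence reduced in the full arrangement (Section~\ref{arrangement prelim subsection}), and therefore reduced in the sub-arrangement; a nonempty reduced walk in a hyperplane arrangement cannot return to its starting chamber. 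This contradiction proves $L\cong M$.

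The affine case is proved by the identical argument, with $\mathbb{H}_+$ replaced by $\bE_+$, $\Uptheta$ by $\scrK$, and $\scrH$ by $\scrW$. The only extra ingredient is that the level condition $\sum\uplambda_j z_j=\ii$ built into $\bE_+$ automatically forces $\scrI'\colonequals\{i\mid y_i=0\}\subsetneq\{0,\ldots,n\}$, which is precisely the proper-subset hypothesis needed to invoke the $\scrW$-part of Proposition~\ref{E lemma}\eqref{E lemma 2}. I expect the main technical obstacle to be the careful verification that the $\scrI$-projection of $\upvarphi_\upalpha$ really equals the composition of the $\scrI$-restricted mutations: this is the bridge that reduces the problem to the disjointness statement in Proposition~\ref{E lemma}, and it relies on the single-step inspection of Lemma~\ref{k-correspond proj} combined with the preservation of $y_j=0$ for $j\in\scrI$ under every intermediate mutation.
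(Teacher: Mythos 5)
Your proof is essentially correct, but it reaches the conclusion by a genuinely different route than the paper. The paper applies Lemma~\ref{key comb g-vect} \emph{twice}: once to the imaginary parts (getting that $\upalpha$ has all labels in $\scrI$), and then again to a carefully constructed auxiliary vector with nonnegative entries. To produce this second vector the paper first splits $\scrI^c$ into the sets $I=\{i\in\scrI^c\mid x'_i-x_i\geq 0\}$ and $I^c$, uses Lemma~\ref{key comb g-vect}\eqref{key comb g-vect 22}(b) to show $x_i=x'_i$ for all $i\in\scrI^c$, and then applies Lemma~\ref{key comb g-vect}\eqref{key comb g-vect 22}(a) to deduce that $\upalpha$ also has all labels in $\scrI^c$; since $\scrI\cap\scrI^c=\emptyset$, the path is trivial. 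Your proof instead replaces the second application of Lemma~\ref{key comb g-vect} by a topological argument: after the first application, you observe that both $x$ and $\upvarphi_\upalpha(x)$ lie in $\mathrm{D}_-$, invoke Proposition~\ref{E lemma}\eqref{E lemma 2} to conclude that the $\scrH_\mathrm{B}$--chambers $\mathrm{D}_-$ and $\upvarphi_\upalpha(\mathrm{D}_-)$ coincide, and then use reducedness of the walk to force $\upalpha$ to be trivial. This avoids the paper's $I$, $I^c$ bookkeeping entirely, and in exchange leans more heavily on the chamber combinatorics of Section~\ref{hyperplane section}. Both arguments are correct; yours is arguably cleaner, while the paper's stays entirely within the linear-algebraic framework of Lemma~\ref{key comb g-vect} and so does not need to transfer information between $\scrH$ and the restricted arrangement $\scrH_\mathrm{B}$.

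One point you flag but should pin down more precisely. For the final step \emph{``a nonempty reduced walk cannot return to its starting chamber''} to yield a contradiction, you need more than that the $\scrI$--projection factors through $\upvarphi_\upalpha$: you need that \emph{every} step of $\upalpha$ actually crosses a wall of $\scrH_\mathrm{B}$, so that the induced walk in $\scrH_\mathrm{B}$ has the same length as $\upalpha$. If some $\scrI$--labelled step of $\upalpha$ crossed a hyperplane of $\scrH\setminus\scrH_\mathrm{B}$, then $\upalpha$ could be nontrivial while the induced $\scrH_\mathrm{B}$--walk is empty, and the contradiction evaporates. This does hold, but it is not a formal consequence of your single-step $\scrI$--projection observation alone: the correct justification is buried in the proof of Lemma~\ref{key comb g-vect}\eqref{key comb g-vect 22}(a), where it is shown that the labelled face $C_{\scrI_t}$ satisfies $\scrI_t=\scrI$ at every step, so the wall crossed at each step contains the span of $C_\scrI$, namely $\mathrm{B}$, and hence lies in $\scrH_\mathrm{B}$. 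You should cite that part of the proof of Lemma~\ref{key comb g-vect} (or the corresponding tracking inside the proof of Proposition~\ref{E lemma}\eqref{E lemma 2}) explicitly, as this is what legitimizes treating $\upalpha$ as a reduced walk in the sub-arrangement of the same length as the original.
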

\begin{proof}
($\Leftarrow$) is clear.  For ($\Rightarrow$),  if the intersection is nonempty, then 
\begin{align*}
 \upvarphi_\upalpha(z'_1,\hdots,z'_n) = (z_1,\hdots,z_n)
\end{align*}
where all $z_i, z'_i \in \mathbb{H}$. Splitting into real and imaginary parts,
\begin{equation}
 \upvarphi_\upalpha(x'_1,\hdots,x'_n) = (x_1,\hdots,x_n)
  \quad \text{and} \quad   \upvarphi_\upalpha(y'_1,\hdots,y'_n) = (y_1,\hdots,y_n).\label{two split}
%
\end{equation} 
Since all $z_i,z_i'\in\mathbb{H}$, necessarily the right hand equation belongs to $\upvarphi_\upalpha(\overline{C}_+)\cap \overline{C}_+$.
As before, write $\scrI$ for the set of $i$ for which $y'_i=0$. 

On one hand, if $\scrI$ is not a proper subset, then   $y_i=y'_i=0$ for all $i$.  Since every $z_i,z'_i\in\bH$, all $x_i,x'_i<0$. Using the $C_-$ version of Proposition~\ref{HomMMP finite summary} applied to the $x$-coordinate, $L\cong M$.  On the other hand, if $\scrI=\emptyset$, then by Lemma~\ref{key comb g-vect}\eqref{key comb g-vect 11} we also have $L\cong M$.  

Hence we can assume that  $\scrI\neq\emptyset$ and $\scrI$ is a proper subset.  Since $\scrI\neq\emptyset$, by Lemma~\ref{key comb g-vect}\eqref{key comb g-vect 22}, $\upalpha$ comprises of mutations with labels only from the set $\scrI$. Further, all $y_i'=y_i$ (and so in particular $y_i=0$ if $i\in\scrI$), and  $\upvarphi_\upalpha[\scrP_i']=[\scrP_i]$ if $i\notin\scrI$. Since $z_i,z_i'\in\mathbb{H}$, we then deduce that $x_i<0$ and $x_i'<0$ for all $i\in \scrI$, and that we can re-write the left hand equation in \eqref{two split} to obtain
\[
\sum_{i\in\scrI} x_i' \upvarphi_\upalpha[\scrP_i']+\sum_{i\notin\scrI} x_i' [\scrP_i]
=
\sum_{i\in\scrI} x_i [\scrP_i]
+
\sum_{i\notin\scrI} x_i [\scrP_i].
\]
Set $I\subset \scrI^c$ to consist of those $i$ such that $x'_i-x_i\geq 0$, and let $I^c=\scrI^c-I$. Re-arranging gives
\begin{align*}
\sum_{i \in I} (x'_i-x_i) [\scrP_i] 
+ \sum_{i\in\scrI}(- x_i) [\scrP_i]
&=  \sum_{i \in I^c} (x_i-x'_i) [\scrP_i] 
- \sum_{i\in\scrI} x_i' \upvarphi_\upalpha[\scrP'_i].\\
&= \upvarphi_\upalpha\left( \sum_{i \in I^c} (x_i-x'_i) [\scrP_i'] 
+ \sum_{i\in\scrI} (-x_i' )[\scrP_i']\right).
\end{align*}
The left hand side has non-negative coefficients in every entry $\{0,1\hdots,n\}$, and  the right hand side is in $\upvarphi_\upalpha(\overline{C}_+) $.  If $I=\emptyset$, so that $I^c=\scrI^c$, then the right hand side is in $\upvarphi_\upalpha(C_+)$ and so by Lemma~\ref{key comb g-vect}\eqref{key comb g-vect 11}, $L\cong M$.

Hence our final case is when $\scrI\neq\emptyset$, $\scrI$ is proper, and $I\neq\emptyset$.  We will show that this cannot occur, by exhibiting a contradiction.  Again, the above displayed equation lies in $\upvarphi_\upalpha(\overline{C}_+) \cap \overline{C}_+$  and so now  by Lemma~\ref{key comb g-vect}\eqref{key comb g-vect 22} the coefficients on both sides must match.  But coefficients in $I^c$ do not appear on the left hand side, nor do coefficients in $I$ on the right, so we deduce that $x_i-x_i'=0$ for all $i\in\scrI^c$.  But then 
\[
\upvarphi_\upalpha\left(\sum_{i\in\scrI}(- x'_i) [\scrP_i]\right)= \sum_{i\in\scrI} (-x_i) [\scrP_i],
\]
and so by  Lemma~\ref{key comb g-vect}\eqref{key comb g-vect 22} $\upalpha$ comprises mutations only from the set $\scrI^c$.  But as stated above, $\upalpha$ can only comprise labels in the set $\scrI$.  This is a contradiction, which shows that this final case cannot exist.

The proof of the second statement is similar.  The set $\scrI$ must be proper since $z\in\bE_+$, so all $y$ coordinates are nonnegative, and after weighting by $\rk_RM_i$ they sum to one.  Hence not all can be zero.  Given this, the rest of the proof remains the same: since $\bE_+\subset\mathbb{H}'_+$, replacing $\upvarphi$ by $\upphi$ throughout, and starting the indices with $0$, the logic above still holds, as we can still appeal to Lemma~\ref{key comb g-vect} in the affine case.
\end{proof}

\begin{cor}\label{disjoint}
If $L,M\in\Mut_0(N)$,  then $\upvarphi_{L}(\mathbb{H}_+)\cap \upvarphi_{M}(\mathbb{H}_+) \neq\emptyset$ in $\Uptheta_N\iff L\cong M$. The same statement holds for $\Mut(N)$, using instead $\upphi_{L}(\bE_+)\cap \upphi_{M}(\bE_+) \neq\emptyset$ in $\scrK_N$.
\end{cor}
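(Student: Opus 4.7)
The plan is to reduce Corollary~\ref{disjoint} directly to Corollary~\ref{disjoint 1} by translating the intersection from $\Uptheta_N$ (respectively $\scrK_N$) back into $\Uptheta_M$ (respectively $\scrK_M$), using that the K-theory maps $\upvarphi_L$, $\upvarphi_M$ come from morphisms in the arrangement groupoid, and are uniquely determined by their source and target chambers.

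First, I would handle the finite case. Apply $\upvarphi_M^{-1}$ to see that $\upvarphi_L(\bH_+)\cap\upvarphi_M(\bH_+)\neq\emptyset$ in $\Uptheta_N$ if and only if $\upvarphi_M^{-1}\upvarphi_L(\bH_+)\cap\bH_+\neq\emptyset$ in $\Uptheta_M$. The key observation is that the composite $\upvarphi_M^{-1}\upvarphi_L \colon \Uptheta_L \to \Uptheta_M$ equals $\upvarphi_\upalpha$ for $\upalpha$ the minimal path from $L$ to $M$. Indeed, $\upvarphi_L$ is induced by a positive minimal path $L\to\cdots\to N$ and $\upvarphi_M^{-1}$ is the K-theoretic inverse of the positive minimal path $M\to\cdots\to N$, so the composite is induced by some morphism in $\mathds{G}$ from $C_L$ to $C_M$. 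By Proposition~\ref{theta trivial}\eqref{theta trivial 3} the induced K-theory map depends only on the source and target, so this composite agrees with $\upvarphi_\upalpha$ for the minimal path. Corollary~\ref{disjoint 1} then gives $L\cong M$, and the converse is trivial.

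For the affine case, the exact same argument applies verbatim, replacing $\upvarphi$ by $\upphi$, $\bH_+$ by $\bE_+$, $\Uptheta$ by $\scrK$, and $\mathds{G}$ by $\mathds{G}^{\aff}$. The only point requiring care is that $\upphi_L$ and $\upphi_M$ preserve the level (since mutation functors preserve it), so $\upphi_M^{-1}\upphi_L(\bE_+)\subseteq(\Level_M)_{\bC}$, which is exactly the space containing $\bE_+$; thus the translated intersection lives in the right ambient space to apply Corollary~\ref{disjoint 1}.

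I do not anticipate any real obstacle: the substantive combinatorial content is already contained in Corollary~\ref{disjoint 1} (which itself relies on Lemma~\ref{key comb g-vect}), and the reduction above is purely formal, using only the groupoid-theoretic invariance statement of Proposition~\ref{theta trivial}. The mild point worth flagging is simply verifying that $\upvarphi_M^{-1}\upvarphi_L$ lifts to a bona fide morphism in the arrangement groupoid from $C_L$ to $C_M$ (so that Proposition~\ref{theta trivial}\eqref{theta trivial 3} applies), but this is clear since one can concretely exhibit such a lift by concatenating the minimal path $L\to\cdots\to N$ with the inverse of the minimal path $M\to\cdots\to N$.
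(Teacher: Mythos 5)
Your proof is correct and follows the same argument as the paper: apply $\upvarphi_M^{-1}$ to move the intersection into $\Uptheta_M$, use Proposition~\ref{theta trivial}\eqref{theta trivial 3} to identify $\upvarphi_M^{-1}\upvarphi_L$ with $\upvarphi_\upalpha$ for $\upalpha$ the minimal path from $L$ to $M$, then invoke Corollary~\ref{disjoint 1}. Your remark about level preservation in the affine case is a sensible clarification but does not change the argument.
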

\begin{proof}
($\Leftarrow$) is clear.  For ($\Rightarrow$), if  $\upvarphi_{L}(\mathbb{H}_+)\cap \upvarphi_{M}(\mathbb{H}_+) \neq\emptyset$, then $\upvarphi_{M}^{-1}\circ\upvarphi_{L}^{\phantom -}(\mathbb{H}_+)\cap \mathbb{H}_+ \neq\emptyset$.  By Proposition~\ref{theta trivial}\eqref{theta trivial 3}, $\upvarphi_{\upalpha}(\mathbb{H}_+)\cap \mathbb{H}_+ \neq\emptyset$ in $\Uptheta_{M}$, where $\upalpha$ is the minimal path from $L$ to $M$.  By Corollary~\ref{disjoint 1}, the atom $\upalpha$ is the identity, and  the result follows. The second statement is identical, as we can still appeal to Proposition~\ref{theta trivial}\eqref{theta trivial 3} and  Corollary~\ref{disjoint 1}.
\end{proof}

\subsection{Proof of Propositions~\ref{complexified tracking} and \ref{complexified tracking 2}}\label{proof of Props section}

Proposition~\ref{complexified tracking} asserts that there is an equality
\[
\Uptheta_{\bC}\backslash\scrH_{\bC}=\bigcup_{L\in\Mut_0(N)}\upvarphi_L(\bH_+),
\]
where the union on the right hand side is disjoint.  This now follows by combining Corollaries~\ref{subset 1}, \ref{subset 2} and \ref{disjoint}.  On the other hand, Proposition~\ref{complexified tracking 2} asserts that
\[
\Level_{\bC}\!\backslash\scrH_{\bC}^{\aff}=\bigcup_{L\in\Mut(N)}\upphi_L(\bE_+),
\]
where the union on the right hand side is disjoint.  This now follows by combining Corollaries~\ref{subset 22}, \ref{subset 2} and \ref{disjoint}.

\section{List of Notation}\label{notation appendix}
\noindent
\begin{longtable}[l]{p{1.8cm} p{8.25cm} p{1cm}}
$\Curve$ & $\colonequals f^{-1}(\m)$ with reduced scheme structure&p\pageref{sec:FlopsviaNC}\\
$n$&number of irreducible components of $\Curve$&p\pageref{sec:FlopsviaNC}\\
$\Curve_1,\hdots,\Curve_n$ & irreducible components of $\Curve$&p\pageref{sec:FlopsviaNC}\\
$\scrL_i$ &line bundle on $X$ such that $\scrL_i\cdot \Curve_j=\updelta_{i,j}$&\S\ref{sec:tilt and modify}\\
$\scrV_i$&universal extension of $\scrL_i$&\eqref{extension}\\
$\scrV_X$& tilting bundle $\oplus_{i=0}^n\scrV_i^*$  on $X$&\eqref{tilt equiv}\\
$N_i$, $N$&$f_*(\scrV_i^*)$ and $f_*(\scrV_X)$ respectively&\eqref{ass modi}\\
$\Lambda$&endomorphism algebra $\End_X(\scrV_X)\cong \End_R(N)$&\S\ref{sec:tilt and modify}\\
$\refl R$ &category of reflexive $R$-modules&\S\ref{sec:tilt and modify}\\
$\CM R$ & category of maximal CM $R$-modules&\S\ref{sec:tilt and modify} \\
$\add M$ & summands of finite sums of module $M$ &\S\ref{sec:mut and equiv} \\
$\upnu_iL$&  mutation of $L$ at $i$-th summand&\S\ref{sec:mut and equiv}\\
$\upnu_i\Lambda$ &endomorphism algebra $\End_R(\upnu_iN)$&\S\ref{sec:mut and equiv}\\
$\Mut(N)$ &iterated mutations of $N$&\ref{exchange def}\\
$\Mut_0(N)$ &subset mutating at only $(i\neq0)$-th summands&\ref{exchange def}\\
$\EG(N)$&exchange graph of $N$&\ref{exchange def}\\
$\EG_0(N)$&subgraph with vertices CM modules&\ref{exchange def}\\
$\Upphi_i$&mutation functor at the $i$-th summand&\eqref{mutation eqv}\\
${\sf Flop}_i$&quasi-inverse of the flop functor&\ref{flopmut}\\
$\Updelta$, $\Updelta_{\aff}$&ADE Dynkin diagram, and extended version&\S\ref{elephant subsection}\\
$\star$&extending vertex in $\Updelta_{\aff}$&\S\ref{elephant subsection}\\
$\scrJ$&subset of vertices of $\Updelta$&\S\ref{elephant subsection}\\
$\scrJ_{\aff}$&$\scrJ$ considered as a subset of $\Updelta_{\aff}$&\S\ref{elephant subsection}\\
$\Cone{\scrJ}$&$\scrJ$-finite hyperplane arrangement&\S\ref{elephant subsection}\\
$\Cone{\scrJ_{\aff}}$&$\scrJ$-affine arrangement&\S\ref{elephant subsection}\\
$\scrP_i$&projective module of the $i$-th summand&\S\ref{affine notation subsection}\\
$\Lambda_L$&endomorphism algebra $\End_R(L)$&\S\ref{affine notation subsection}\\
$\scrK_L$&$K_0$-group of  perfect complexes over $\Lambda_L$&\S\ref{affine notation subsection}\\
$\upphi_i$&isomorphism of $K_0$-groups associated to $\Upphi_i$&\S\ref{affine notation subsection}\\
$\Upphi_L$&compositions of $\Upphi_i$ along a shortest path&\eqref{eqn:defn PhiL}\\
$\upphi_L$&isomorphism of $K_0$-groups associated to $\Upphi_L$&\S\ref{affine notation subsection}\\
$C_+$&positive cone in a real vector space&\S\ref{affine notation subsection}\\
$\Uptheta_L$&$\colonequals\scrK_L/[\scrP_0]$, set of stability parameters&\S\ref{finite notation subsection}\\
$\upvarphi_L$&isomorphism on $\Uptheta$ induced from $\upphi_L$&\S\ref{finite notation subsection}\\
$\rk_RM$ &rank of $R$-module $M$&\ref{def: level}\\
$\Level_L$&$\colonequals\{z\in\scrK_L\otimes\bR\mid \sum(\rk_RL_j)z_j=1\}$&\ref{def: level}\\
$\Alcove_L$&$\colonequals\upphi_L(C_+)\cap\,\Level_N$&\S\ref{affine notation subsection}\\
$\scrH^{\aff}$&complement of the union of all $\Alcove_L$&\eqref{affine H}\\
$\scrH$&complement of the union of all $\upvarphi_L(C_+)$&\eqref{finite H}\\
$\bH$&the semi-closed upper half plane in $\bC$&\S\ref{complexified actions subsection}\\
$\bH_+$& subset of $(\Uptheta_L)_{\bC}\cong \bC^n$ corresponding to $\bH^n$&\S\ref{complexified actions subsection}\\
$\bH_+'$&subset of $(\scrK_L)_{\bC}\cong \bC^{n+1}$ corresponding to $\bH^{n+1}$&\S\ref{complexified actions subsection}\\
$\ii$&imaginary number $\sqrt{-1}$&\S\ref{complexified actions subsection}\\
$(\Level_L)_{\bC}$&$\colonequals\{z\in \scrK_L\otimes \bC\mid \sum(\rk_RL_j)z_j=\ii\}$&\S\ref{complexified actions subsection}\\
$\bE_+$&$\colonequals\bH_+'\cap(\Level_L)_{\bC}$&\S\ref{complexified actions subsection}\\
$\scrW$&hyperplanes separating chambers of $\Cone{\scrJ_{\aff}}$&p\pageref{cpx of Haff}\\
$W_{\bC}$&$W\oplus \ii W$&p\pageref{cpx of Haff}\\
$\scrW_{\bC}$&union of all $W_{\bC}$&p\pageref{cpx of Haff}\\
$\scrH_{\bC}^{\aff}$&$\scrW_{\bC}\cap(\Level_N)_{\bC}$&\eqref{cpx of Haff}\\
$\Gamma_{\cH}$&graph of a hyperplane arrangement $\cH$&\ref{def:Gamma graph}\\
$\cG_{\cH}^+$&category of positive paths in $\Gamma_{\cH}$&\ref{arrangement prelim subsection}\\
$\cG_{\cH}$&groupoid completion of $\cG_{\cH}^+$&\ref{def: Deligne groupoid completion}\\
$\mathds{G}^{\aff}$, $\mathds{G}$&$\cG_{\scrH^{\aff}}$ and $\cG_{\scrH}$ respectively&\ref{not: 4.4}\\
$\Upphi_{\upalpha}$&compositions of $\Upphi_i$ along a path $\upalpha$&\S\ref{first results}\\
$\Stab\scrT$&stability conditions on triangulated category $\scrT$&\S\ref{sec: stab generalities}\\
$\Auteq\scrT$&group of autoequivalences of $\scrT$&\S\ref{sec: stab generalities}\\
$\Phi_*$&map on stability conditions induced by  $\Phi$&\S\ref{sec: stab generalities}\\
$\scrS_i$&simple module corresponding to projective $\scrP_i$&\S\ref{sec:stab normal and mut}\\
$\scrB_L$&category of finite length $\Lambda_L$-modules&\S\ref{sec:stab normal and mut}\\
$\scrA_L$&subcategory of $\scrB_L$ without $\scrS_0$&\S\ref{sec:stab normal and mut}\\
$\scrC_L$, $\scrD_L$ & subcategory of $\Db(\fmod\Lambda_L)$ of complexes with cohomology in $\scrA_L$, respectively $\scrB_L$&\S\ref{sec:stab normal and mut}\\
$\scrA,\scrB,\scrC,\scrD$&corresponding categories when $L=N$&\S\ref{sec:stab normal and mut}\\
$\Stab\cA$&stability on heart $\cA$ satisfying HN property&\S\ref{sec:stab normal and mut}\\
$\nStab{n}\scrD_L$&space of normalised stability conditions on $\scrD_L$&\S\ref{sec:stab normal and mut}\\
$\nStab{n}\scrB_L$&$\colonequals\Stab\scrB_L\cap\nStab{n}\scrD_L$&\S\ref{sec:stab normal and mut}\\
$\cStab{}\scrC$&component of $\Stab\scrC$ containing $\Stab\scrA$&p\pageref{stab on C and D section}\\
$\cStab{n}\scrD$&component of $\nStab{n}\scrD$ containing $\Stab\scrB$&p\pageref{stab on C and D section}\\
$\Targ_0(C_+)$&morphisms in $\dsG$ which terminate at $C_+$&\S\ref{sec: chamber decomp}\\
$\Targ(C_+)$&morphisms in $\dsG^{\aff}$ which terminate at $C_+$&\S\ref{sec: chamber decomp}\\
$\mathrm{U}_L$&certain open subset of $\Stab\scrA_L$&\ref{chamber notation}\\
$\bU_L$&certain open subset of $\Stab\scrB_L$&\ref{chamber notation}\\
$\dsN_L$&$\colonequals \bU_L\cap \nStab{n}\scrB_L $&\ref{chamber notation}\\
$\Stab\scrA_{\upalpha}$&$\colonequals(\Upphi_{\upalpha})_*(\Stab\scrA_{s(\upalpha)})$&\ref{chamber notation}\\
$\Stab\scrB_{\upbeta}$&$\colonequals(\Upphi_{\upbeta})_*(\Stab\scrB_{s(\upbeta)})$&\ref{chamber notation}\\
$\mathrm{U}_{\upalpha}$&$\colonequals(\Upphi_{\upalpha})_*(\mathrm{U}_{s(\upalpha)})$&\ref{chamber notation}\\
$\bU_{\upbeta}$&$\colonequals(\Upphi_{\upbeta})_*(\bU_{s(\upbeta)})$&\ref{chamber notation}\\
$\nStab{n}\scrB_{\upbeta}$&$\colonequals (\Upphi_{\upbeta})_*(\nStab{n}\scrB_{s(\upbeta)})$&\ref{chamber notation}\\
$\dsN_\upbeta$&$\colonequals (\Upphi_{\upbeta})_*(\dsN_{s(\upbeta)})$&\ref{chamber notation}\\
$\Br\scrC$&$\colonequals \{\Upphi_{\upalpha}|_{\scrC}\mid \upalpha\in\End_{\mathds G}(C_+)\}$&\ref{not: pure braid images}\\
$\Br\scrD$&$\colonequals \{\Upphi_{\upbeta}|_{\scrD}\mid \upbeta\in\End_{\dsG^{\aff}}(C_+)\}$&\ref{not: pure braid images}\\
$\cAut{}\scrC$&certain subgroup of $\Auteq\scrC$&\S\ref{sec: auto of C}\\
$\mathsf{L}_i$&$\colonequals f_*\scrL_i$&\S\ref{section line bundles}\\
$\mathsf{L}\cdot M$&$\colonequals (\mathsf{L}\otimes M)^{**}$&\S\ref{section line bundles}\\
$\upvarepsilon$&natural isomorphism from $\Lambda_M$ to $\Lambda_{\mathsf{L}\cdot M}$&\S\ref{section line bundles}\\
$\cAut{}\scrD$&certain subgroup of $\Auteq\scrD$&\S\ref{sec: auto of D}
\end{longtable}

\end{document}